\begin{document}


\title[Entropy density and large deviation principles]
{
    Entropy density and large deviation principles\\ without upper semi-continuity of entropy
}

\author{Zhiqiang~Li \and Xianghui~Shi}

\thanks{Z.~Li and X.~Shi were partially supported by NSFC Nos.~12101017, 12090010, 12090015, and BJNSF No.~1214021.}
    
\address{Zhiqiang~Li, School of Mathematical Sciences \& Beijing International Center for Mathematical Research, Peking University, Beijing 100871, China}
\email{zli@math.pku.edu.cn}

\address{Xianghui~Shi, Beijing International Center for Mathematical Research, Peking University, Beijing 100871, China}
\email{xhshi@pku.edu.cn}


\subjclass[2020]{Primary: 37F10; Secondary: 37A50, 37A35, 37D35, 37F15, 37B99, 57M12}

\keywords{expanding Thurston map, postcritically-finite map, large deviation principles, entropy density, upper semi-continuity, subsystems, thermodynamic formalism.} 


\begin{abstract}
Expanding Thurston maps were introduced by M.~Bonk and D.~Meyer with motivation from complex dynamics and Cannon's conjecture from geometric group theory via Sullivan's dictionary. 
In this paper, we show that the entropy map of an expanding Thurston map is upper semi-continuous if and only if the map has no periodic critical points. 
For all expanding Thurston maps, even in the presence of periodic critical points, we show that ergodic measures are entropy-dense and establish level-2 large deviation principles for the distribution of Birkhoff averages, periodic points, and iterated preimages.
It follows that iterated preimages and periodic points are equidistributed with respect to the unique equilibrium state for an expanding Thurston map and a potential that is \holder continuous with respect to a visual metric on $S^2$.
In particular, our results answer two questions in \cite{li2015weak}. 

The main technical tools in this paper are called subsystems of expanding Thurston maps, inspired by a translation of the notion of subgroups via Sullivan's dictionary.
\end{abstract}


\maketitle
\tableofcontents


\section{Introduction}
\label{sec:Introduction}



A Thurston map is a (non-homeomorphic) branched covering map on a topological $2$-sphere $S^{2}$ such that each of its critical points has a finite orbit (postcritically-finite).
The most important examples are given by postcritically-finite rational maps on the Riemann sphere $\ccx$.
While Thurston maps are purely topological objects, a deep theorem due to W.~P.~Thurston characterizes Thurston maps that are, in a suitable sense, described in the language of topology and combinatorics, equivalent to postcritically-finite rational maps (see \cite{douady1993proof}). 
This suggests that for the relevant rational maps, an explicit analytic expression is not so important, but rather a geometric-combinatorial description. 
This inspires one to investigate the most essential dynamical and geometric properties of postcritically-finite rational maps in the setting of Thurston maps instead, with the conformality or smoothness assumptions removed.

In the early 1980s, D.~P.~Sullivan introduced a ``dictionary'' that is now known as \emph{Sullivan's dictionary}, which connects two branches of conformal dynamics, iterations of rational maps and actions of Kleinian groups.
Under Sullivan's dictionary, the counterpart to Thurston's theorem in geometric group theory is Cannon's Conjecture \cite{cannon1994combinatorial}. 
An equivalent formulation of Cannon's Conjecture, viewed from a quasisymmetric uniformization perspective (\cite[Conjecture~5.2]{bonk2006quasiconformal}), predicts that if the boundary at infinity $\partial_{\infty} G$ of a Gromov hyperbolic group $G$ is homeomorphic to $S^2$, then $\partial_{\infty} G$ equipped with a visual metric is quasisymmetrically equivalent to $\ccx$. 

Inspired by Sullivan's dictionary and their interest in Cannon's Conjecture, M.~Bonk and D.~Meyer \cite{bonk2010expanding,bonk2017expanding}, as well as P.~Ha{\"i}ssinsky and K.~M.~Pilgrim \cite{haissinsky2009coarse}, studied a subclass of Thurston maps, called \emph{expanding Thurston maps}, by imposing some additional condition of expansion (see Definition~\ref{def:expanding_Thurston_maps}).
In particular, a postcritically-finite rational map on $\ccx$ is expanding if and only if its Julia set is equal to $\ccx$.
For an expanding Thurston map on $S^{2}$, we can equip $S^2$ with a natural class of metrics $d$, called \emph{visual metrics}, that are quasisymmetrically equivalent to each other and are constructed in a similar way as the visual metrics on the boundary $\partial_{\infty} G$ of a Gromov hyperbolic group $G$ (see \cite[Chapter~8]{bonk2017expanding} for details, and see \cite{haissinsky2009coarse} for a related construction). 
In the language above, the following theorem was obtained in \cite{bonk2010expanding,bonk2017expanding,haissinsky2009coarse}, which can be seen as an analog of Cannon's conjecture for expanding Thurston maps.

\begin{theorem*}[M.~Bonk \& D.~Meyer \cite{bonk2010expanding,bonk2017expanding}; P.~Ha{\"i}ssinky \& K.~M.~Pilgrim \cite{haissinsky2009coarse}]
    Let $f \colon S^2 \mapping S^2$ be an expanding Thurston map and $d$ be a visual metric for $f$. 
    Then $f$ is topologically conjugate to a rational map if and only $(S^2, d)$ is quasisymmetrically equivalent to $\ccx$. 
\end{theorem*}

In this paper we study the dynamics and properties of expanding Thurston maps from the point of view of ergodic theory.
Ergodic theory for expanding Thurston maps has been investigated in \cite{li2018equilibrium,li2015weak,li2017ergodic} by the first-named author of the current paper.
In \cite{li2018equilibrium}, for expanding Thurston maps and \holder continuous potentials (with respect to a visual metric), the first-named author of the current paper works out the thermodynamic formalism and investigates the existence, uniqueness, and other properties of equilibrium states, with respect to which iterated preimages are equidistributed.
In \cite{li2015weak}, for expanding Thurston maps without periodic critical points, by proving that the entropy map is upper semi-continuous and applying a general framework devised by Y.~Kifer \cite{kifer1990large}, the first-named author of the current paper establishes level-$2$ large deviation principles for iterated preimages and periodic points with respect to equilibrium states and obtains the corresponding equidistribution results.

However, for expanding Thurston maps with a periodic critical point, upper semi-continuity of the entropy map, level-2 large deviation principles, and equidistribution of periodic points with respect to equilibrium states remained open.

In the present paper, for any expanding Thurston map, even in the presence of periodic critical points, we prove entropy density of ergodic measures, establish level-$2$ large deviation principles for the distribution of Birkhoff averages, periodic points, and iterated preimages, and conclude that periodic points and iterated preimages are equidistributed with respect to the unique equilibrium state for a potential that is \holder continuous with respect to a visual metric on $S^2$.
In particular, we answer the two questions posed in \cite[p.~523]{li2015weak} by the first-named author of the current paper.
More precisely, by constructing suitable subsystems and applying the thermodynamic formalism for subsystem developed in a series of papers \cite{shi2023thermodynamic,shi2024uniqueness}, we show that the entropy map of an expanding Thurston map $f \colon S^2 \mapping S^2$ is not upper semi-continuous when $f$ has at least one periodic critical point. 
This result gives a negative answer to Question~1 posed in \cite[p.~523]{li2015weak}.
Moreover, it suggests that the method used there to prove large deviation principles does not apply to expanding Thurston maps with at least one periodic critical point. 
In order to answer Question~2 posed in \cite[p.~523]{li2015weak} positively, i.e., obtain the equidistribution results even in the presence of periodic critical points, we show that the equilibrium state is the unique minimizer of the rate function and then apply the level-$2$ large deviation principles.

The main technical tools that facilitated the new discoveries in this paper are called \emph{subsystems} of expanding Thurston maps (see Subsection~\ref{sub:Subsystems of expanding Thurston maps}), introduced and investigated in \cite{shi2023thermodynamic,shi2024uniqueness}.
The notion of subsystems is inspired by a translation of the notion of subgroups from geometric group theory via Sullivan's dictionary.
We remark that subsystems are not only useful tools for studying ergodic theory of expanding Thurston maps, but they also have geometric significance in themselves.
Note that under Sullivan's dictionary, an expanding Thurston map corresponds to a Gromov hyperbolic group whose boundary at infinity is $S^2$.
In this sense, a subsystem corresponds to a Gromov hyperbolic group whose boundary at infinity is a subset of $S^{2}$.
In particular, for Gromov hyperbolic groups whose boundary at infinity is a \sierpinski carpet, there is an analog of Cannon's conjecture---the Kapovich--Kleiner conjecture. 
It predicts that these groups arise from some standard situation in hyperbolic geometry.
Similar to Cannon's conjecture, one can reformulate the Kapovich--Kleiner conjecture in an equivalent way as a question related to quasisymmetric uniformization.
For subsystems, it is easy to find examples where the tile maximal invariant set is homeomorphic to the standard \sierpinski carpet (see Subsection~\ref{sub:Subsystems of expanding Thurston maps} for examples of subsystems).
In this case, an analog of the Kapovich--Kleiner conjecture for subsystems is established in \cite{bonk2024dynamical}.

\subsection{Main results}%
\label{sub:Main results}

Our results consist of three parts.
We first show that the entropy map of an expanding Thurston map $f \colon S^2 \mapping S^2$ is upper semi-continuous if and only if $f$ has no periodic critical points.
Then for every expanding Thurston map, we prove that ergodic measures are entropy-dense, i.e., any invariant Borel probability measure can be approximated in the weak$^{*}$-topology by ergodic ones with similar entropy.
Finally, for every expanding Thurston, we establish level-$2$ large deviation principles for the distribution of Birkhoff averages, periodic points, and iterated preimages, and conclude that periodic points and iterated preimages are equidistributed with respect to the unique equilibrium state for a potential that is \holder continuous with respect to a visual metric on $S^2$.

We now state our results more precisely.

\subsubsection*{Upper semi-continuity of entropy}%
\label{ssub:Upper semi-continuity of entropy}

The entropy map of a continuous map $T \colon X \mapping X$ defined on a metric space $(X, d)$ is the map $\mu \mapsto h_{\mu}(T)$ which is defined on the space of $T$-invariant Borel probability measures $\invmea[X][T]$, where $h_{\mu}(T)$ is the measure-theoretic entropy of $T$ for $\mu$ (see Subsection~\ref{sub:thermodynamic formalism} and Definition~\ref{def:entropy map}).

Our first result is about upper semi-continuity of the entropy map for expanding Thurston maps.

\begin{theorem}    \label{thm:upper semi-continuous iff no periodic critical points}
    Let $f \colon S^2 \mapping S^2$ be an expanding Thurston map. 
    Then the entropy map of $f$ is upper semi-continuous if and only if $f$ has no periodic critical points.
\end{theorem}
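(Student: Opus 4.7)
The plan is to prove the two implications separately. For the direction \emph{no periodic critical points $\Rightarrow$ upper semi-continuity}, the result is already established in \cite{li2015weak}; conceptually it amounts to verifying that $f$ is asymptotically $h$-expansive, after which Misiurewicz's theorem applies. The argument uses the tile-expansion property of the visual metric together with the fact that in the absence of periodic critical points every critical orbit eventually lands in a periodic cycle containing no critical point, so no orbit of $f$ accumulates entropy in arbitrarily small neighborhoods.

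For the contrapositive of the other direction, assume $f$ has a periodic critical point $c$ of period $p$, and set $d := \deg_c(f^p) \geq 2$. I aim to exhibit a sequence of $f$-invariant Borel probability measures $(\mu_n)_{n \in \mathbb{N}}$ that converges in the weak-$*$ topology to the atomic measure $\mu_\infty := \frac{1}{p} \sum_{i=0}^{p-1} \delta_{f^i(c)}$ supported on the orbit of $c$, while satisfying $h_{\mu_n}(f) \geq \frac{1}{p} \log d$ for all $n$. Since $h_{\mu_\infty}(f) = 0$, this sequence witnesses the failure of upper semi-continuity of the entropy map at $\mu_\infty$, proving the theorem.

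The construction of the $\mu_n$ proceeds via subsystems concentrated near $c$. For each large $n$, let $Y_n$ be an $n$-tile containing $c$; visual-metric expansion gives $\mathrm{diam}(Y_n) \to 0$. Since $c$ is a fixed critical point of $f^p$ with local degree $d$, the map $f^p$ is locally modeled on $z \mapsto z^d$ near $c$, which in tile terms means that $Y_n$ contains $d$ disjoint $(n+p)$-subtiles each mapped surjectively onto $Y_n$ by $f^p$. This produces a subsystem $(X_n, f^p|_{X_n})$ with $X_n \subseteq Y_n$ admitting a semiconjugacy to the full $d$-shift, and hence of topological entropy at least $\log d$. The thermodynamic formalism for subsystems developed in \cite{shi2023thermodynamic,shi2024uniqueness} (in fact, the variational principle for the subsystem alone suffices here) then furnishes an $f^p$-invariant Borel probability measure $\nu_n$ supported on $X_n$ with $h_{\nu_n}(f^p) \geq \log d$. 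Setting $\mu_n := \frac{1}{p} \sum_{i=0}^{p-1} f^i_* \nu_n$ yields an $f$-invariant measure with $h_{\mu_n}(f) = \frac{1}{p} h_{\nu_n}(f^p) \geq \frac{1}{p}\log d$, and the support condition $\mathrm{supp}(\mu_n) \subseteq \bigcup_{i=0}^{p-1} f^i(Y_n)$ combined with $\mathrm{diam}(Y_n) \to 0$ yields $\mu_n \to \mu_\infty$ weakly.

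The main obstacle will be the precise tile-geometric step: turning the heuristic ``$f^p$ looks like $z \mapsto z^d$ near $c$'' into an honest full-branch expansion on a union of tiles, while keeping the whole structure arbitrarily close to the orbit of $c$ and ensuring that subsystem-invariant measures extend to genuinely $f$-invariant measures on $S^2$ with the entropy lower bound intact. This is exactly the setting the subsystem framework of \cite{shi2023thermodynamic,shi2024uniqueness} is engineered to handle, and invoking that machinery should resolve the technical difficulties.
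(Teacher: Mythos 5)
Your overall strategy coincides with the paper's: quote \cite[Corollary~1.3]{li2015weak} for the ``if'' direction, and for the ``only if'' direction build subsystems concentrated near the periodic critical point whose measures of maximal entropy converge weakly to $\frac{1}{p}\sum_{i=0}^{p-1}\delta_{f^i(c)}$ while their entropies stay bounded below by $\frac{1}{p}\log d$. The averaging step $\mu_n=\frac1p\sum f^i_*\nu_n$ and the weak$^*$ convergence argument are also exactly as in the paper. However, the tile-geometric step you yourself flag as the main obstacle is not merely technical: as stated it is false, and the paper's construction is organized differently precisely to avoid it. The claim that ``$Y_n$ contains $d$ disjoint $(n+p)$-subtiles each mapped surjectively onto $Y_n$ by $f^p$'' fails. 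In the local model $z\mapsto z^d$ (Remark~\ref{rem:flower structure}), the closed flower $\overline{W}^{n+p}(c)$ consists of $2\deg_{f^{n+p}}(c)$ tiles arranged cyclically around $c$, and the $d$ preimage tiles of a fixed $n$-tile $Y_n\ni c$ under $f^p$ are spread out by angle $2\pi/d$ around $c$ — generically only one of them lies inside $Y_n$. Dually, the $d$ consecutive $(n+p)$-tiles that do lie inside $Y_n$ map onto $d$ \emph{distinct consecutive} $n$-tiles of $\overline{W}^{n}(c)$, not all onto $Y_n$. So your subsystem $(X_n,f^p|_{X_n})$ has essentially one full branch, hence entropy $0$, and no factor map onto the full $d$-shift exists. (A further, more minor, gap: without first passing to an iterate admitting an invariant Jordan curve, Proposition~\ref{prop:cell decomposition: invariant Jordan curve} is unavailable and ``subtiles of $Y_n$'' is not even well defined.)

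The source of positive entropy in the paper's proof is not a full $d$-shift inside one shrinking tile under $f^p$, but the exponential growth of the \emph{number of tiles in the flower}: $\card{\overline{W}^n(c)}=2\deg_{f^n}(c)\sim 2d^{n/p}$. After replacing $f$ by an iterate $F=f^{N}$ fixing $c$ and admitting an invariant curve, the paper selects, inside the interior of each of the $2k^n$ $n$-tiles of the flower of $c$ (where $k=\deg_F(c)$), one black and one white $(n+n_F)$-tile; these tiles are mapped by $F^{n+n_F}$ onto entire $0$-tiles, so they generate a strongly primitive subsystem of $F^{n+n_F}$ whose tile matrix has spectral radius $2k^n$. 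The resulting invariant measure has entropy $\frac{\log(2k^n)}{n+n_F}\to\log k$, and its support is contained in $\overline{W}^n(c)$, which shrinks to $c$. If you want to salvage your single-tile picture, you would have to pass to $f^{Lp}$ with $L\to\infty$ and count the roughly $d^L/(2\deg_{f^n}(c))$ branches of $f^{Lp}$ from $Y_n$ to itself, which again recovers $\frac1p\log d$ only in the limit $L\to\infty$; this is a genuinely different (and more delicate) argument than the one you sketched, and it is not what the subsystem machinery of \cite{shi2023thermodynamic,shi2024uniqueness} delivers off the shelf, since that framework concerns subsystems whose tiles map onto $0$-tiles of $S^2$, not onto a small tile $Y_n$.
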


We remark that for expanding Thurston maps without periodic critical points, the upper semi-continuity of the entropy map has been established in \cite[Corollary~1.3]{li2015weak} by proving a stronger property called asymptotic $h$-expansiveness (see \cite{misiurewicz1976topological}).
In the present paper, we complete the ``only if'' part in Theorem~\ref{thm:upper semi-continuous iff no periodic critical points} through concrete constructions (see Theorem~\ref{thm:not upper semi-continuous with periodic critical points}).
These constructions show that the entropy map is not upper semi-continuous even when restricted to the set of ergodic measures.
Moreover, we estimate the defects in semi-continuity (see Theorem~\ref{thm:not upper semi-continuous with periodic critical points}).

The continuity properties of the entropy map have been studied for a long time.
A classical result is that for an expansive homeomorphism defined on a compact metric space the entropy map is upper semi-continuous (see for example, \cite[Theorem~8.2]{walters1982introduction}).
M.~Lyubich \cite[Corollary~1]{lyubich1983entropy} showed that for rational maps on the Riemann sphere the entropy map is upper semi-continuous.
Another fundamental result is that for a $C^{\infty}$ map defined on a smooth compact manifold the entropy map is upper semi-continuous (see \cite[Theorem~4.1]{newhouse1989continuity} and \cite{yomdin1987volume}).
While for $C^{r}$ diffeomorphisms with finite $r$, upper semi-continuity of the entropy map may fail (for examples in dimension four see \cite{misiurewicz1973diffeomorphism} and for examples in dimension two see \cite{buzzi2014surface}). 
In this setting J.~Buzzi, S.~Crovisier, and O.~Sarig estimated the discontinuities of the entropy map in terms of Lyapunov exponents (see \cite{buzzi2022continuity}).
In the non-compact setting, for transitive countable Markov shift, the entropy map is upper semi-continuous if the shift map has finite topological entropy (see \cite[Theorem~8.1]{iommi2022escape}).
Otherwise the entropy map may not be upper semi-continuous (see \cite[p.~774]{jenkinson2005zero}).

\subsubsection*{Entropy density of ergodic measures}%
\label{ssub:Entropy density of ergodic measures}

Let $T \colon X \mapping X$ be a continuous map on a compact metric space $(X, d)$ and $\invmea[X][T]$ be the set of $T$-invariant Borel probability measures on $X$.
We say that a subset $\mathcal{N} \subseteq \invmea[X][T]$ is \emph{entropy-dense} in $\invmea[X][T]$ if, for each $\mu \in \invmea[X][T]$, there exists a sequence $\{\mu_{n}\}_{n \in \n}$ in $\mathcal{N}$ such that $\{\mu_{n}\}_{n \in \n}$ converges to $\mu$ in the weak$^{*}$-topology and $h_{\mu_{n}}(T) \to h_{\mu}(T)$ as $n \to +\infty$.

We show that the set of ergodic measures is entropy-dense for expanding Thurston maps.

\begin{theorem}    \label{thm:entropy dense}
    For an expanding Thurston map $f \colon S^2 \mapping S^2$, the set of ergodic $f$-invariant measures is entropy-dense in $\invmea$.
\end{theorem}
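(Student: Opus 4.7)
The plan follows the classical two-step strategy for entropy density of ergodic measures (\`a la Eizenberg--Kifer--Weiss and Pfister--Sullivan), adapted to expanding Thurston maps via the tile structure and, where necessary, the subsystem framework of \cite{shi2023thermodynamic,shi2024uniqueness}.

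\textbf{Step 1 (reduction to finite convex combinations of ergodic measures).} Given $\mu \in \invmea$, I invoke the ergodic decomposition $\mu = \int \nu \, d\tau(\nu)$ together with Jacobs' theorem $h_\mu(f) = \int h_\nu(f) \, d\tau(\nu)$. Since $h_{\cdot}(f)$ is Borel measurable and bounded above by $h_{\mathrm{top}}(f) < \infty$, I partition the space of ergodic measures into level sets on which entropy is nearly constant and approximate $\tau$ in the weak$^*$ topology by finitely supported probability measures respecting this partition. Using that the entropy map is affine on $\invmea$, this yields finite convex combinations $\mu_N = \sum_{i=1}^{k_N} \lambda_i^{(N)} \nu_i^{(N)}$ of ergodic measures with $\mu_N \to \mu$ weak$^*$ and $h_{\mu_N}(f) = \sum_i \lambda_i^{(N)} h_{\nu_i^{(N)}}(f) \to h_\mu(f)$.

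\textbf{Step 2 (ergodic approximation of a finite convex combination).} Fix $\mu_* = \sum_{i=1}^{k} \lambda_i \nu_i$ with each $\nu_i$ ergodic, and small $\eta, \epsilon > 0$. By Katok's entropy formula, for each $i$ and large $n$ there exist $(n,\epsilon)$-separated sets $E_{i,n}$ of cardinality at least $e^{n(h_{\nu_i}(f)-\eta)}$ consisting of approximately $\nu_i$-generic points. Using the topological exactness of $f$ (\cite[Chapter~6]{bonk2017expanding}) as a substitute for specification, I concatenate orbit segments of lengths $n_i \approx \lambda_i N$ drawn from each $E_{i,n_i}$ via short transition blocks; a Misiurewicz--Denker-type construction on the resulting $(N, \epsilon/2)$-separated set of glued orbits produces an ergodic measure $\mu_*'$ with $\mu_*' \approx \mu_*$ in weak$^*$ and $h_{\mu_*'}(f) \geq \sum_i \lambda_i h_{\nu_i}(f) - O(\eta)$, while a crude count of glued orbits yields the matching upper bound. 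Combining Steps 1 and 2 with $\eta, \epsilon \to 0$ and a diagonal extraction gives the desired approximating sequence.

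\textbf{Main obstacle.} The principal difficulty lies entirely in Step 2: the expanding Thurston map $f$ need not satisfy the specification property when $f$ has a periodic critical point, so orbit segments ending near such an orbit cannot be freely steered to arbitrary targets. I plan to resolve this by executing the gluing only on sufficiently deep tiles lying outside a fixed small neighborhood $U$ of the (finite) postcritical set. On such tiles the Markov property of the tile decomposition provides a clean substitute for specification, and the subsystem thermodynamic formalism from \cite{shi2023thermodynamic,shi2024uniqueness} yields the quantitative control needed to ensure the entropy loss is $O(\eta)$. To know that this restriction is essentially free, I rely on the fact that ergodic measures with positive entropy assign arbitrarily small mass to $U$ as $U$ shrinks (the only measures concentrating near the periodic postcritical set are the associated periodic orbit measures, which have entropy zero and can be approximated directly by other periodic orbit measures).
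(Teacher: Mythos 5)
Your overall architecture coincides with the paper's: reduce to a finite convex combination of ergodic measures (the paper uses Choquet plus Jacobs' theorem, as you do), then build an ergodic approximant by concatenating long orbit segments generic for each $\nu_i$ through short transition blocks of uniformly bounded length, using the Markov property of the tile decompositions. Step 1 is fine. The problem is that Step 2 asserts precisely the point that carries all the difficulty: that the ``Misiurewicz--Denker-type construction \ldots produces an ergodic measure'' with entropy bounded below by $\sum_i \lambda_i h_{\nu_i}(f) - O(\eta)$. A weak$^*$ limit of empirical measures along glued orbits is invariant but not automatically ergodic, and lower-bounding its entropy by a counting argument requires more than a separated set. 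The paper closes this gap with the subsystem machinery: the concatenated tile words define a subsystem $F = f^{R}|_{\bigcup \mathbf{T}}$ which is made \emph{strongly primitive} by inserting, for each color, an extra pair contained in the interior of the corresponding $0$-tile (Lemma~\ref{lem:pair in the interior}); Theorem~\ref{thm:existence uniqueness and properties of equilibrium state} then supplies a unique measure of maximal entropy for $F|_{\limitset}$, which is ergodic by uniqueness and whose entropy equals $\log(\rho(A))$, computable exactly from the cardinality of $\mathbf{T}$ via the tile matrix. You would need either this or an equivalent horseshoe-with-unique-MME argument; without it the ergodicity claim is a genuine gap.

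Your diagnosis of the ``main obstacle'' is also off target, and the proposed fix would not work. First, expanding Thurston maps \emph{do} have the specification property even in the presence of periodic critical points (see Remark~\ref{rem:specification implies entropy density}); the transition blocks exist unconditionally because $f$ is a strongly primitive subsystem of itself (Remark~\ref{rem:expanding Thurston map is strongly primitive subsystem of itself}), so for any $n$-tile $X^n$ and any target tile there is a connector word of fixed length $N$ inside $X^n$, regardless of whether $X^n$ meets $\post{f}$. What periodic critical points break is upper semi-continuity of entropy, not the gluing. Second, restricting the gluing to tiles outside a neighborhood $U$ of $\post{f}$ is not ``essentially free'': the ergodic components $\nu_i$ of $\mu$ may include periodic-orbit measures supported on $\post{f}$ (these are genuine ergodic components and cannot be discarded), and they may be non-atomic measures fully supported on $\mathcal{C} \supseteq \post{f}$ (the case $\nu_i(\mathcal{C}) = 1$ in Lemma~\ref{lem:generator}). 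The paper instead handles all ergodic components uniformly through Lemma~\ref{lem:approximates ergodic measures by tiles}, which splits into three cases (tiles as a generator, edges as a generator, atomic/periodic) precisely because the tile partition fails to be a generator for measures charging the edges — an issue your Katok-style separated-set approach would eventually have to confront as well once you translate separated points into tile words for the gluing.
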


Entropy density of ergodic measures guarantees that one can approximate non-ergodic measures with ergodic ones with similar entropy and similar expectations.
Such a property plays an important role in large deviation theory, which was used in \cite{follmer1988large}, and has been studied in various settings such as \cite{eizenberg1994large} ($\z^{d}$ subshifts of finite type), \cite{pfister2005large} (uniformly hyperbolic systems and $\beta$-shifts), \cite{yamamoto2009weaker} (ergodic group automorphisms), and \cite{takahasi2019large,takahasi2023level} (countable Markov Shifts). 
In addition, it has applications in the multifractal analysis (see \cite{iommi2015multifractal}).

\subsubsection*{Level-2 large deviation principles}%
\label{ssub:Level-2 large deviation principles}

Let $\{ \xi_{n} \}_{n \in \n}$ be a sequence of Borel probability measures on a topological space $\mathcal{X}$.
We say that $\{ \xi_{n} \}_{n \in \n}$ satisfies a \emph{large deviation principle} in $\mathcal{X}$ if there exists a lower semi-continuous function $I \colon \mathcal{X} \mapping [0, +\infty]$ such that 
\[
    \liminf_{n \to +\infty} \frac{1}{n} \log \xi_{n}(\mathcal{G}) \geqslant - \inf_{\mathcal{G}} I  \quad \text{for all open } \mathcal{G} \subseteq \mathcal{X},
\]
and
\[
    \limsup_{n \to +\infty} \frac{1}{n} \log \xi_{n}(\mathcal{K}) \leqslant - \inf_{\mathcal{K}} I  \quad \text{for all closed } \mathcal{K} \subseteq \mathcal{X},
\]
where $\log 0 = -\infty$ and $\inf \emptyset = +\infty$ by convention.
Such a function $I$ is called a \defn{rate function}, and we call $x \in \mathcal{X}$ a \emph{minimizer} if $I(x) = 0$ holds.
See Subsection~\ref{sub:Level-2 large deviation principles} for background information and further properties.

For expanding Thurston maps, we establish a level-2 large deviation principle for the distribution of Birkhoff averages, periodic points, and iterated preimages.

\begin{theorem}    \label{thm:level-2 large deviation principle}
    Let $f \colon S^2 \mapping S^2$ be an expanding Thurston map and $d$ be a visual metric on $S^2$ for $f$.
    Let $\potential$ be a real-valued \holder continuous function on $S^2$ with respect to the metric $d$. 
    Let $\mu_{\potential}$ be the unique equilibrium state for $f$ and $\potential$.
    Let $\probsphere$ denote the space of Borel probability measures on $S^2$ equipped with the weak$^{*}$-topology.

    For each $n \in \n$, let $V_{n} \colon S^{2} \mapping \probsphere$ be the continuous function defined by 
    \begin{equation}    \label{eq:def:delta measure for orbit}
        \deltameasure{x} \define \frac{1}{n} \sum_{i = 0}^{n - 1} \delta_{f^{i}(x)},
    \end{equation}
    and denote $S_{n}\potential(x) \define \sum_{i = 0}^{n - 1} \potential(f^{i}(x))$ for each $x \in S^{2}$.
    Fix an arbitrary sequence $\sequen{w_n}$ of real-valued functions on $S^2$ satisfying $w_{n}(x) \in \bigl[ 1, \deg_{f^{n}}(x) \bigr]$ for each $n \in \n$ and each $x \in S^{2}$.
    For each $n \in \n$, we consider the following Borel probability measures on $\probsphere$.

    \smallskip

    {\bf Birkhoff averages.} $\birkhoffmeasure \define (V_{n})_{*}(\mu_{\potential})$ (i.e., $\birkhoffmeasure$ is the push-forward of $\mu_{\potential}$ by $V_{n} \colon S^{2} \mapping \probmea{S^{2}}$).

    \smallskip

    {\bf Periodic points.} With $\periodorbit \define \{ p \in S^{2} \describe f^{n}(p) = p \}$, put
    \begin{equation}    \label{eq:def:Periodic points distribution} 
        \Omega_{n} \define \sum_{p \in \periodorbit} \frac{ w_{n}(p) \myexp{ S_{n}\potential(p) } }{ \sum_{p' \in \periodorbit} w_{n}(p') \myexp{ S_{n}\potential(p') } }  \delta_{\deltameasure{p}}.
    \end{equation}

    {\bf Iterated preimages.} Given a sequence $\{ x_{j} \}_{j \in \n}$ of points in $S^{2}$, put
    \begin{equation}    \label{eq:def:Iterated preimages distribution} 
        \Omega_{n}(x_{n}) \define \sum_{y \in f^{-n}(x_{n})} \frac{ w_{n}(y) \myexp{ S_{n}\potential(y) } }{\sum_{y' \in f^{-n}(x_{n})} w_{n}(y') \myexp{ S_{n}\potential(y') } }  \delta_{\deltameasure{y}}.
    \end{equation}
    Then each of the sequences $\{ \birkhoffmeasure \}_{n \in \n}$, $\{ \Omega_{n} \}_{n \in \n}$, and $\{ \Omega_{n}(x_{n}) \}_{n \in \n}$ 
    satisfies a large deviation principle in $\probsphere$ with the rate function $\ratefun \colon \probsphere \mapping [0, +\infty]$ given by
    \begin{equation}    \label{eq:def:rate function}
        \ratefun(\mu) \define - \inf_{\mathcal{G} \ni \mu} \sup_{\mathcal{G}} \freeenergy,
    \end{equation}
    where the infimum is taken over all open sets $\mathcal{G} \subseteq \probsphere$ containing $\mu$, and $\freeenergy \colon \probsphere \mapping [-\infty, 0]$ is defined by
    \begin{equation}    \label{eq:def:free energy} 
        \freeenergy(\mu) \define 
        \begin{cases}
            h_{\mu}(f) + \int\! \phi \,\mathrm{d}\mu - P(f, \potential) & \mbox{if } \mu \in \mathcal{M}(S^2, f); \\
            -\infty & \mbox{if } \mu \in \probsphere \setminus \mathcal{M}(S^2, f).
        \end{cases}
    \end{equation}
    Moreover, $\mu_{\potential}$ is the unique minimizer of the rate function $\ratefun$, and each of the sequences $\{ \birkhoffmeasure \}_{n \in \n}$, $\{ \Omega_{n} \}_{n \in \n}$, and $\{ \Omega_{n}(x_{n}) \}_{n \in \n}$ converges to $\delta_{\mu_{\potential}}$ in the weak$^{*}$ topology.
    Furthermore, for each convex open subset $\mathcal{G}$ of $\probsphere$ containing some invariant measure, we have $\inf_{\mathcal{G}} \ratefun = \inf_{\overline{\mathcal{G}}} \ratefun$,
    \begin{equation}    \label{eq:equalities for rate function}
        \lim_{n \to +\infty} \frac{1}{n} \log \birkhoffmeasure(\mathcal{G})
        = \lim_{n \to +\infty} \frac{1}{n} \log \Omega_{n}(\mathcal{G}) 
        = \lim_{n \to +\infty} \frac{1}{n} \log \Omega_{n}(x_{n})(\mathcal{G})
        = -\inf_{\mathcal{G}} \ratefun,
    \end{equation}
    and \eqref{eq:equalities for rate function} remains true with $\mathcal{G}$ replaced by its closure $\overline{\mathcal{G}}$.
\end{theorem}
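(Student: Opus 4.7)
The plan follows Kifer's three-step framework~\cite{kifer1990large} for level-$2$ large deviation principles, but with a crucial substitution: since the entropy map of $f$ need not be upper semi-continuous (Theorem~\ref{thm:upper semi-continuous iff no periodic critical points}), Kifer's theorem does not apply directly, and we use entropy density of ergodic measures (Theorem~\ref{thm:entropy dense}) as the key replacement input for the lower bound. The three cases $\birkhoffmeasure$, $\Omega_{n}$, and $\Omega_{n}(x_{n})$ will be treated in parallel, since they share the exponential pressure asymptotics
\[
    \lim_{n \to +\infty} \frac{1}{n} \log \int \myexp{ n \int\! \psi \,\mathrm{d}V_{n}(x) } \,\mathrm{d}\xi_{n}(x) = P(f, \potential + \psi) - P(f, \potential), \qquad \psi \in C(S^{2}),
\]
which for Birkhoff averages reduces to the usual pressure limit against $\mu_{\potential}$, and for $\Omega_{n}$ and $\Omega_{n}(x_{n})$ follows from the Gibbs-type exponential equidistribution of periodic points and iterated preimages established in \cite{li2018equilibrium}.

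\textbf{Upper bound.} For a closed $\mathcal{K} \subseteq \probsphere$, a standard finite-cover argument reduces the task to the local statement: for each $\mu \in \probsphere$ and each $\eta > 0$, construct an open neighborhood $\mathcal{U}$ of $\mu$ with $\limsup_{n \to +\infty} \frac{1}{n} \log \xi_{n}(\mathcal{U}) \leqslant \sup_{\mathcal{U}} \freeenergy + \eta$. This will be obtained via exponential Chebyshev: for a suitable $\psi \in C(S^{2})$ one bounds the indicator of $\mathcal{U}$ by $\myexp{ n \int\! \psi \,\mathrm{d}V_{n} - n \inf_{\nu \in \mathcal{U}} \int\! \psi \,\mathrm{d}\nu }$, integrates against $\xi_{n}$, applies the displayed asymptotics, and optimizes over $\psi$. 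The resulting Fenchel--Legendre transform equals $-\overline{\freeenergy}$, the upper semi-continuous envelope of $\freeenergy$, which coincides with $-\ratefun$ by~\eqref{eq:def:rate function}.

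\textbf{Lower bound.} Fix an open $\mathcal{G} \subseteq \probsphere$ and an invariant $\mu \in \mathcal{G}$; the goal is $\liminf_{n \to +\infty} \frac{1}{n} \log \xi_{n}(\mathcal{G}) \geqslant \freeenergy(\mu)$. By Theorem~\ref{thm:entropy dense} we first replace $\mu$ by an ergodic $\nu \in \mathcal{G}$ with $\freeenergy(\nu)$ arbitrarily close to $\freeenergy(\mu)$. For such an ergodic $\nu$, the Shannon--McMillan--Breiman theorem applied to the natural cell-decomposition partitions of $f$ (as in \cite{li2018equilibrium}), together with Birkhoff's ergodic theorem and the Gibbs-type estimates for $\mu_{\potential}$ on tiles of level $n$, produces a set of points whose empirical measures $\deltameasure{\cdot}$ lie in $\mathcal{G}$ and whose total $\xi_{n}$-weight is at least $\myexp{ n ( \freeenergy(\nu) - \eta ) }$ for all sufficiently large $n$ and every prescribed $\eta > 0$.

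\textbf{Uniqueness and conclusions.} The variational principle yields $\freeenergy(\mu_{\potential}) = 0$ and therefore $\ratefun(\mu_{\potential}) = 0$. The main obstacle will be showing $\mu_{\potential}$ is the \emph{only} zero of $\ratefun$: unravelling~\eqref{eq:def:rate function} when $\ratefun(\mu) = 0$ produces invariant measures $\nu_{k} \to \mu$ with $\freeenergy(\nu_{k}) \to 0$, but without upper semi-continuity of entropy such an approximate equilibrium sequence is not guaranteed to converge to $\mu_{\potential}$ a priori. We plan to overcome this by combining uniqueness of the equilibrium state~\cite{li2018equilibrium} with the Gibbs-type characterization of $\mu_{\potential}$ on tiles: near-maximizers of $h_{\nu} + \int\! \potential \,\mathrm{d}\nu$ must satisfy, on every fixed level-$n$ tile partition, an approximate version of the same Gibbs condition as $\mu_{\potential}$, which forces $\nu_{k} \to \mu_{\potential}$ in the weak$^{*}$ topology and hence $\mu = \mu_{\potential}$. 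Weak$^{*}$ convergence of $\{ \birkhoffmeasure \}_{n \in \n}$, $\{ \Omega_{n} \}_{n \in \n}$, and $\{ \Omega_{n}(x_{n}) \}_{n \in \n}$ to $\delta_{\mu_{\potential}}$ is then the standard consequence of an LDP with a unique minimizer. Finally, for a convex open $\mathcal{G}$ containing an invariant measure, the identity $\inf_{\mathcal{G}} \ratefun = \inf_{\overline{\mathcal{G}}} \ratefun$ will follow by combining convexity of $\mathcal{M}(S^{2}, f)$ with entropy density to approximate any $\mu \in \overline{\mathcal{G}}$ from inside $\mathcal{G}$ without decreasing $\freeenergy$, after which the equalities in~\eqref{eq:equalities for rate function} result from pairing the matching upper and lower LDP bounds.
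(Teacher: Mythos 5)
Your overall architecture is close to the paper's: the lower bound is indeed obtained by using entropy density (in fact only entropy approachability is needed) to reduce to ergodic measures and then combining Shannon--McMillan--Breiman, Birkhoff, and the Gibbs property of $\mu_{\potential}$ on tiles. Your upper bound, however, takes a genuinely different route: the paper does not use exponential Chebyshev and Legendre duality, but instead builds, from the $n$-pairs whose Birkhoff averages exceed a threshold, a strongly primitive subsystem and extracts from its equilibrium state an invariant measure whose measure-theoretic pressure dominates $\mu_{\potential}$ of the relevant set. Your duality route is plausible (the biconjugate of $-\freeenergy$ is its lower semi-continuous regularization, which is $\ratefun$, since $\freeenergy$ is affine), but it silently requires the pressure asymptotics $\frac{1}{n}\log\sum_{y \in f^{-n}(x_n)} w_n(y)\myexp{S_n(\potential+\psi)(y)} \to P(f,\potential+\psi)$ for \emph{arbitrary} weights $w_n(y) \in [1,\deg_{f^n}(y)]$. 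You attribute this to \cite{li2018equilibrium}, but in the presence of periodic critical points it is not in the literature you cite; it is one of the new technical results of this paper (Proposition~\ref{prop:characterization of pressure iterated preimages arbitrary weight}), and its proof needs the flower/local-degree combinatorics at the $n$-vertices. Without it, the iterated-preimage case of both bounds is incomplete.

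The more serious gap is your uniqueness argument for the minimizer. You correctly identify the obstacle (an approximate-equilibrium sequence $\nu_k \to \mu$ with $\freeenergy(\nu_k)\to 0$ may lose entropy in the limit), but the proposed fix --- that near-maximizers of $h_\nu(f)+\int\potential\,\mathrm{d}\nu$ satisfy an approximate Gibbs condition on every fixed level-$n$ tile partition --- is exactly the hard claim and is left unproved. To extract such a Gibbs bound from near-maximality you need $h_{\nu_k}(f) \leqslant \frac{1}{n}H_{\nu_k}\bigl(\xi^n_f\bigr)$ for the tile partition $\xi$, i.e., that $\xi$ computes the entropy of $\nu_k$; by Lemma~\ref{lem:generator} this fails for measures charging the edge set $\alledge$ or atoms, precisely the measures that arise near periodic critical points. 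Even where it holds, near-equality in Jensen's inequality only controls $\nu_k$ on the level-$n$ partition up to an error that must be made uniform in $n$ and $k$ simultaneously, which is equivalent to the quantitative stability of the equilibrium state that one is trying to establish. The paper circumvents this entirely by lifting the sequence to the one-sided shift via the semiconjugacy of Proposition~\ref{prop:semiconjugacy with full shift}, using upper semi-continuity of entropy \emph{there} to identify the limit as an equilibrium state upstairs, and then invoking the nontrivial fact that the entropy does not drop under the projection at the equilibrium state. Some such additional input is needed; as written, your uniqueness step does not close.
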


\begin{remark}    \label{rem:rate function lower semi-continuous regularization}
    In Theorem~\ref{thm:level-2 large deviation principle}, one can check that $\sup_{\mathcal{G}} \freeenergy = \sup_{\mathcal{G}} (- \ratefun)$ for all open $\mathcal{G} \subseteq \probsphere$, and $\ratefun$ is convex and lower semi-continuous.
    We call $- \ratefun$ the upper semi-continuous regularization of $\freeenergy$.
    Note that if $f$ has at least one periodic critical point, then the rate function $\ratefun$ is not equal to $- \freeenergy$, because the entropy map of $f$ is not upper semi-continuous on $\mathcal{M}(S^{2}, f)$ (see Theorem~\ref{thm:upper semi-continuous iff no periodic critical points}).
    Indeed, it follows from Theorem~\ref{thm:upper semi-continuous iff no periodic critical points} that $\ratefun = -\freeenergy$ if and only if $f$ has no periodic critical points.
\end{remark}

The following two corollaries are immediate consequences of Theorem~\ref{thm:level-2 large deviation principle}.
See Subsection~\ref{sub:Proof of large deviation principles} for the proof.

\begin{corollary} \label{coro:level-1 large deviation principle}
	Let $\psi \colon S^2 \mapping \real$ be a continuous function, and let $\widehat{\psi} \colon \probsphere \mapping \real$ be defined by $\widehat{\psi}(\mu) \define \int \! \psi \,\mathrm{d}\mu$.
	With the assumptions and notations of Theorem~\ref{thm:level-2 large deviation principle}, each of the sequences $\sequen[\big]{\widehat{\psi}_{*}(\birkhoffmeasure)}$, $\sequen[\big]{\widehat{\psi}_{*}(\Omega_{n})}$, and $\sequen[\big]{\widehat{\psi}_{*}(\Omega_{n}(x_{n})}$ satisfies a large deviation principle in $\real$ with the rate function
	\begin{equation}    \label{eq:coro:level-1 large deviation principle:rate function}
		x \in \real \mapsto \inf \set[\bigg]{ \ratefun(\mu) \describe \mu \in \probsphere, \, \int \! \psi \,\mathrm{d}\mu = x }.
	\end{equation}
	Furthermore, if $c_{\psi} < d_{\psi}$, where $c_{\psi} \define \min\set[\big]{\int \! \psi \,\mathrm{d}\nu \describe \nu \in \invmea }$ and $d_{\psi} \define \max\set[\big]{\int \! \psi \,\mathrm{d}\nu \describe \nu \in \invmea }$, then for each interval $J \subseteq \real$ intersecting $(c_{\psi}, d_{\psi})$,
	\begin{equation}    \label{eq:coro:level-1 large deviation principle:on closed interval}
		\begin{split}
			& - \inf \set[\bigg]{ \ratefun(\mu) \describe \mu \in \probsphere, \, \int \! \psi \,\mathrm{d}\mu \in J }  \\
			&\qquad = \lim_{n \to +\infty} \frac{1}{n} \log \mu_{\potential} \parentheses[\bigg]{ \set[\bigg]{ x \in S^{2} \describe \frac{1}{n}S_n\psi(x) \in J } }  \\
			&\qquad = \lim_{n \to +\infty} \frac{1}{n} \log \parentheses[\Bigg]{ \frac{ \sum_{p \in \periodorbit, \, \frac{1}{n}S_n\psi(p) \in J }  w_{n}(p) \myexp{ S_{n}\potential(p) } }{ \sum_{p' \in \periodorbit} w_{n}(p') \myexp{ S_{n}\potential(p') } } }  \\
			&\qquad = \lim_{n \to +\infty} \frac{1}{n} \log \parentheses[\Bigg]{ \frac{ \sum_{y \in f^{-n}(x_{n}), \, \frac{1}{n} S_n\psi(y) \in J }  w_{n}(y) \myexp{ S_{n}\potential(y) } }{ \sum_{y' \in f^{-n}(x_{n})} w_{n}(y') \myexp{ S_{n}\potential(y') } } }.
		\end{split}
	\end{equation}
\end{corollary}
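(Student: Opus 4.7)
The plan is to derive the corollary from the level-$2$ LDP of Theorem~\ref{thm:level-2 large deviation principle} by regarding $\widehat{\psi} \colon \probsphere \mapping \real$ as a continuous affine map: continuity is the defining property of the weak$^{*}$ topology for $\psi \in C(S^2)$, and affinity is immediate from $\widehat{\psi}(\mu) = \int \! \psi \,\mathrm{d}\mu$. The first assertion is then an instance of the contraction principle. Since $\ratefun$ is a good rate function (lower semi-continuous on the compact space $\probsphere$), pushing forward the three sequences in Theorem~\ref{thm:level-2 large deviation principle} under $\widehat{\psi}$ yields level-$1$ LDPs in $\real$ whose rate functions at $x \in \real$ equal $\inf\{\ratefun(\mu) \colon \mu \in \probsphere,\, \widehat{\psi}(\mu) = x\}$, namely \eqref{eq:coro:level-1 large deviation principle:rate function}. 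The identification $\widehat{\psi}(V_n(x)) = \frac{1}{n} S_n\psi(x)$ makes the three pushforward sequences coincide with the distributions of $\frac{1}{n} S_n\psi$ under $\mu_{\potential}$, under the weighted atomic measure on $\periodorbit$ from \eqref{eq:def:Periodic points distribution}, and under the weighted atomic measure on $f^{-n}(x_n)$ from \eqref{eq:def:Iterated preimages distribution}, respectively.

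For the second assertion I would set $\mathcal{G} \define \widehat{\psi}^{-1}(J^{\circ})$, where $J^{\circ}$ is the interior of $J$ in $\real$. Continuity and affinity of $\widehat{\psi}$ together with convexity of $J^{\circ}$ make $\mathcal{G}$ an open convex subset of $\probsphere$. Since $(c_{\psi}, d_{\psi})$ is open and meets $J$, it also meets $J^{\circ}$; combining this with the convexity of $\invmea$ and the attainment of $c_{\psi}$ and $d_{\psi}$ on $\invmea$ produces some $\nu \in \invmea$ with $\widehat{\psi}(\nu) \in J^{\circ}$, so $\mathcal{G}$ contains an invariant measure. The ``moreover'' clause of Theorem~\ref{thm:level-2 large deviation principle} then applies, giving $\inf_{\mathcal{G}} \ratefun = \inf_{\overline{\mathcal{G}}} \ratefun$ and the equalities \eqref{eq:equalities for rate function} for each of $\birkhoffmeasure$, $\Omega_n$, and $\Omega_n(x_n)$ on both $\mathcal{G}$ and $\overline{\mathcal{G}}$.

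It remains to identify $\overline{\mathcal{G}}$ and to sandwich the quantities appearing in \eqref{eq:coro:level-1 large deviation principle:on closed interval}. I would show $\overline{\mathcal{G}} = \widehat{\psi}^{-1}(\overline{J})$: the inclusion ``$\subseteq$'' is immediate because the right-hand side is closed, and for ``$\supseteq$'' any $\mu$ with $\widehat{\psi}(\mu) \in \overline{J}$ is approximated in the weak$^{*}$ topology by the convex combinations $(1 - 1/k)\mu + (1/k)\nu$, which lie in $\mathcal{G}$ for all sufficiently large $k$ by a direct check using $\widehat{\psi}(\nu) \in J^{\circ}$. Then $V_n^{-1}$ applied to the chain $\mathcal{G} \subseteq \widehat{\psi}^{-1}(J) \subseteq \overline{\mathcal{G}}$ sandwiches the set $\{x \in S^2 \colon \frac{1}{n}S_n\psi(x) \in J\}$ between $V_n^{-1}(\mathcal{G})$ and $V_n^{-1}(\overline{\mathcal{G}})$, yielding
\[
\birkhoffmeasure(\mathcal{G}) \leqslant \mu_{\potential}\bigl(\{x \in S^2 \colon \tfrac{1}{n}S_n\psi(x) \in J\}\bigr) \leqslant \birkhoffmeasure(\overline{\mathcal{G}}),
\]
together with the analogous sandwich relations for the weighted sums defining $\Omega_n$ and $\Omega_n(x_n)$ on $\periodorbit$ and $f^{-n}(x_n)$. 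Taking $\frac{1}{n}\log$ and applying the limits from Theorem~\ref{thm:level-2 large deviation principle} produces \eqref{eq:coro:level-1 large deviation principle:on closed interval}, every limit being $-\inf_{\mathcal{G}} \ratefun = -\inf_{\widehat{\psi}^{-1}(J)} \ratefun$ (the second equality again from the chain $\mathcal{G} \subseteq \widehat{\psi}^{-1}(J) \subseteq \overline{\mathcal{G}}$ combined with $\inf_\mathcal{G} \ratefun = \inf_{\overline{\mathcal{G}}} \ratefun$). I expect the main, and rather mild, subtlety to be the closure identification $\overline{\mathcal{G}} = \widehat{\psi}^{-1}(\overline{J})$ via convex combinations; everything else is direct bookkeeping from the level-$2$ LDP.
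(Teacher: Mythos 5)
Your proof is correct, and the first assertion is handled exactly as in the paper (contraction principle applied to the continuous map $\widehat{\psi}$ on the compact space $\probsphere$). For the second assertion you take a genuinely different, though closely related, route. The paper works at level $1$: it observes that the contracted rate function $K_{\psi}$ from \eqref{eq:coro:level-1 large deviation principle:rate function} is convex (as the infimal projection of the convex $\ratefun$ under the affine $\widehat{\psi}$), finite exactly on $[c_{\psi}, d_{\psi}]$, hence continuous on $(c_{\psi}, d_{\psi})$, so that any interval $J$ meeting $(c_{\psi}, d_{\psi})$ is a $K_{\psi}$-continuity set, and then invokes \eqref{eq:property of I-continuity set}. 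You instead stay at level $2$: you pull $J$ back to the convex open set $\mathcal{G} = \widehat{\psi}^{-1}(\interior{J})$, verify it contains an invariant measure (the intermediate-value argument on the convex set $\invmea$ is correct), and invoke the ``Furthermore'' clause of Theorem~\ref{thm:level-2 large deviation principle}, which already packages the continuity-set argument; the price is the closure identification $\overline{\mathcal{G}} = \widehat{\psi}^{-1}\bigl(\overline{J}\bigr)$ and the sandwich, both of which you carry out correctly (a convex combination of a point of $\overline{J}$ with a point of $\interior{J}$ lies in $\interior{J}$). What your route buys is that you never need to re-establish convexity or continuity properties of $K_{\psi}$ itself; what the paper's route buys is that it needs no closure computation in $\probsphere$. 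One shared implicit hypothesis worth flagging: both arguments require $J$ to be non-degenerate so that $\interior{J} \ne \emptyset$ (your $\mathcal{G}$ would otherwise be empty, and the paper's $\inf_{\interior{J}} K_{\psi}$ would be $+\infty$); this is not a gap relative to the paper, but you might state it.
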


\begin{corollary} \label{coro:measure-theoretic pressure infimum on local basis}
    With the assumptions and notations of Theorem~\ref{thm:level-2 large deviation principle}, for each $\mu \in \invmea$ and each convex local basis $G_{\mu}$ of $\probsphere$ at $\mu$, we have
    \begin{equation}    \label{eq:coro:measure-theoretic pressure infimum on local basis}
        \begin{split}
            - \ratefun(\mu)
            &= \inf_{\mathcal{G} \in G_{\mu}} \biggl\{ \lim_{n \to +\infty} \frac{1}{n} \log \mu_{\potential}(\set{x \in S^{2} \describe \deltameasure[n]{x} \in \mathcal{G}}) \biggr\}   \\
            &= \inf_{\mathcal{G} \in G_{\mu}} \biggl\{ \lim_{n \to +\infty} \frac{1}{n} \log \sum_{ p \in \periodorbit, \deltameasure[n]{p} \in \mathcal{G} } w_{n}(p) \myexp{S_{n}\potential(p)} \biggr\} -P(f, \potential)   \\
            &= \inf_{\mathcal{G} \in G_{\mu}} \biggl\{ \lim_{n \to +\infty} \frac{1}{n} \log \sum_{ y \in f^{-n}(x_{n}), \deltameasure[n]{y} \in \mathcal{G} } w_{n}(y) \myexp{S_{n}\potential(y)} \biggr\} -P(f, \potential).
        \end{split}
    \end{equation}
\end{corollary}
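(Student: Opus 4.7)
The plan is to derive Corollary~\ref{coro:measure-theoretic pressure infimum on local basis} directly from the ``furthermore'' clause of Theorem~\ref{thm:level-2 large deviation principle}, combined with the lower semi-continuity of the rate function $\ratefun$ recorded in Remark~\ref{rem:rate function lower semi-continuous regularization}. Every element $\mathcal{G}$ of the convex local basis $G_{\mu}$ is a convex open subset of $\probsphere$ that contains the invariant measure $\mu$ itself, so the hypothesis of that clause holds for each $\mathcal{G} \in G_{\mu}$. Hence the three limits in \eqref{eq:equalities for rate function} exist and all equal $-\inf_{\mathcal{G}} \ratefun$. For the Birkhoff-average line, I would unfold $\birkhoffmeasure = (V_{n})_{*}\mu_{\potential}$ to rewrite $\birkhoffmeasure(\mathcal{G}) = \mu_{\potential}(\{x \in S^{2} \describe \deltameasure[n]{x} \in \mathcal{G}\})$. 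Taking the infimum over $\mathcal{G} \in G_{\mu}$ then reduces the first line of \eqref{eq:coro:measure-theoretic pressure infimum on local basis} to the identity $\sup_{\mathcal{G} \in G_{\mu}} \inf_{\mathcal{G}} \ratefun = \ratefun(\mu)$, which is the standard description of the value of a lower semi-continuous function at a point in terms of a local basis.

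For the periodic-point line, I would factor the ratio defining $\Omega_{n}(\mathcal{G})$ and split the logarithm:
\[
    \tfrac{1}{n} \log \Omega_{n}(\mathcal{G})
    = \tfrac{1}{n} \log \! \sum_{\substack{p \in \periodorbit \\ \deltameasure[n]{p} \in \mathcal{G}}} \! w_{n}(p) \myexp{S_{n}\potential(p)}
    \; - \; \tfrac{1}{n} \log \sum_{p' \in \periodorbit} w_{n}(p') \myexp{S_{n}\potential(p')}.
\]
The denominator converges to $P(f, \potential)$ by the known weighted-periodic-sum characterization of topological pressure for expanding Thurston maps (\cite{li2018equilibrium,li2015weak}); alternatively, and self-consistently within the present paper, this follows by applying \eqref{eq:equalities for rate function} with $\mathcal{G} = \probsphere$, using $\Omega_{n}(\probsphere) = 1$ and $\inf_{\probsphere} \ratefun = \ratefun(\mu_{\potential}) = 0$. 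It follows that the numerator limit exists and equals $-\inf_{\mathcal{G}} \ratefun + P(f, \potential)$, and taking the infimum over $\mathcal{G} \in G_{\mu}$ yields the second line via the same lower semi-continuity identity as above. The iterated-preimage line is handled identically, with the weighted preimage sum replacing the weighted periodic sum.

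I do not anticipate a substantive obstacle here: the corollary is essentially a repackaging of Theorem~\ref{thm:level-2 large deviation principle} through the chosen local basis, and the only bookkeeping point is the pressure normalization via the denominators. The hypothesis ``$\mathcal{G}$ contains some invariant measure'' required by the ``furthermore'' clause of Theorem~\ref{thm:level-2 large deviation principle} is automatic, since $\mu$ itself lies in $\mathcal{G} \cap \invmea$ for every $\mathcal{G} \in G_{\mu}$.
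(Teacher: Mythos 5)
Your proposal is correct and follows essentially the same route as the paper: apply the ``furthermore'' clause of Theorem~\ref{thm:level-2 large deviation principle} to each $\mathcal{G}$ in the convex local basis, use the lower semi-continuity identity $\sup_{\mathcal{G} \in G_{\mu}} \inf_{\mathcal{G}} \ratefun = \ratefun(\mu)$, and normalize the denominators via Propositions~\ref{prop:characterization of pressure weighted periodic points} and~\ref{prop:characterization of pressure iterated preimages arbitrary weight}. One caveat: your proposed ``alternative'' derivation of the denominator limit by taking $\mathcal{G} = \probsphere$ in \eqref{eq:equalities for rate function} is vacuous --- since $\Omega_{n}(\probsphere) = 1$ and $\inf_{\probsphere} \ratefun = 0$, it only yields $0 = 0$ and says nothing about $\lim_{n \to +\infty} \frac{1}{n} \log \sum_{p' \in \periodorbit} w_{n}(p') \myexp{S_{n}\potential(p')}$ --- so the appeal to the weighted-sum characterizations of pressure is in fact indispensable.
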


The following equidistribution results follow from the corresponding level-$2$ large large deviation principles and the uniqueness of the minimizer of the rate function.

\begin{theorem} \label{thm:equidistribution results}
    Let $f \colon S^2 \mapping S^2$ be an expanding Thurston map and $d$ be a visual metric on $S^2$ for $f$.
    Let $\potential$ be a real-valued \holder continuous function on $S^2$ with respect to the metric $d$.
    Let $\mu_{\potential}$ be the unique equilibrium state for $f$ and $\potential$.
    Fix an arbitrary sequence $\sequen{w_n}$ of real-valued functions on $S^2$ satisfying $w_{n}(x) \in \bigl[ 1, \deg_{f^{n}}(x) \bigr]$ for each $n \in \n$ and each $x \in S^{2}$.
    For each $n \in \n$, denote $S_{n}\potential(x) \define \sum_{i = 0}^{n - 1} \potential(f^{i}(x))$ for each $x \in S^{2}$, and consider the following Borel probability measures on $S^2$.

    \smallskip

    {\bf Periodic points.} With $\periodorbit \define \{ p \in S^{2} \describe f^{n}(p) = p \}$, put\[
        \mu_{n} \define \sum_{p \in \periodorbit} \frac{ w_{n}(p) \myexp{ S_{n}\potential(p) } }{\sum_{p' \in \periodorbit} w_{n}(p') \myexp{ S_{n}\potential(p') } } \frac{1}{n} \sum_{i = 0}^{n - 1} \delta_{f^{i}(p)}.
    \]

    {\bf Iterated preimages.} Given a sequence $\{ x_{j} \}_{j \in \n}$ of points in $S^{2}$, put\[
        \nu_{n} \define \sum_{y \in f^{-n}(x_{n})} \frac{ w_{n}(y) \myexp{ S_{n}\potential(y) } }{\sum_{z \in f^{-n}(x_{n})} w_{n}(z) \myexp{ S_{n}\potential(z) } } \frac{1}{n} \sum_{i = 0}^{n - 1} \delta_{f^{i}(y)}.
    \]
    Then each of the sequences $\sequen{\mu_{n}}$ and $\sequen{\nu_{n}}$ converges to $\mu_{\potential}$ in the weak$^{*}$-topology.
\end{theorem}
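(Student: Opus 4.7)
The plan is to deduce Theorem~\ref{thm:equidistribution results} as an immediate consequence of the convergence assertion in Theorem~\ref{thm:level-2 large deviation principle} via a continuity-of-barycenter argument. The key observation is that the measures $\mu_{n}$ and $\nu_{n}$ on $S^{2}$ are precisely the barycenters of the meta-measures $\Omega_{n}$ and $\Omega_{n}(x_{n})$ on $\probsphere$ introduced in Theorem~\ref{thm:level-2 large deviation principle}.

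Recall that for any $\Omega \in \probmea{\probsphere}$, the \emph{barycenter} $\beta(\Omega) \in \probsphere$ is the unique Borel probability measure on $S^{2}$ characterized by
\begin{equation*}
    \int_{S^{2}} \psi \,\mathrm{d}\beta(\Omega) = \int_{\probsphere} \biggl( \int_{S^{2}} \psi \,\mathrm{d}\mu \biggr) \mathrm{d}\Omega(\mu) \qquad \text{for all } \psi \in C(S^{2}).
\end{equation*}
First, I would verify the identifications $\beta(\Omega_{n}) = \mu_{n}$ and $\beta(\Omega_{n}(x_{n})) = \nu_{n}$. Substituting the explicit form \eqref{eq:def:Periodic points distribution} of $\Omega_{n}$ into the defining relation above and using the identity $\int \psi \,\mathrm{d}\deltameasure{p} = \frac{1}{n} \sum_{i=0}^{n-1} \psi(f^{i}(p))$, the claim follows by direct comparison with the definition of $\mu_{n}$; matching the weights is mechanical because the numerator and denominator coefficients in \eqref{eq:def:Periodic points distribution} are identical to those defining $\mu_{n}$. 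The argument for $\nu_{n}$ via \eqref{eq:def:Iterated preimages distribution} is entirely analogous.

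Second, I would invoke the standard fact that the barycenter map $\beta \colon \probmea{\probsphere} \mapping \probsphere$ is continuous when both spaces are equipped with their weak$^{*}$-topologies. Indeed, for each $\psi \in C(S^{2})$, the evaluation $\mu \mapsto \int \psi \,\mathrm{d}\mu$ belongs to $C(\probsphere)$ by definition of the weak$^{*}$-topology on $\probsphere$, so the map $\Omega \mapsto \int_{\probsphere} \int_{S^{2}} \psi \,\mathrm{d}\mu \,\mathrm{d}\Omega(\mu) = \int \psi \,\mathrm{d}\beta(\Omega)$ is weak$^{*}$-continuous in $\Omega$, which is exactly the continuity claim.

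Finally, Theorem~\ref{thm:level-2 large deviation principle} asserts that $\Omega_{n} \to \delta_{\mu_{\potential}}$ and $\Omega_{n}(x_{n}) \to \delta_{\mu_{\potential}}$ in the weak$^{*}$-topology on $\probmea{\probsphere}$. Applying the continuous barycenter map and noting that $\beta(\delta_{\mu_{\potential}}) = \mu_{\potential}$, I conclude $\mu_{n} \to \mu_{\potential}$ and $\nu_{n} \to \mu_{\potential}$ in the weak$^{*}$-topology on $\probsphere$, which is the desired conclusion. Since all the substantive work---namely, the level-$2$ large deviation principle together with the uniqueness of the minimizer of $\ratefun$ (the place where the failure of upper semi-continuity of entropy at periodic critical points is absorbed, cf.~Remark~\ref{rem:rate function lower semi-continuous regularization})---has already been packaged into Theorem~\ref{thm:level-2 large deviation principle}, I anticipate no real obstacle in this deduction beyond the bookkeeping for the barycenter identification.
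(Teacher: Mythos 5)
Your proposal is correct, and it takes a genuinely different (and shorter) route than the paper. The paper proves Theorem~\ref{thm:equidistribution results} by redoing a concentration argument at the level of the measures $\mu_{n}$, $\nu_{n}$ themselves: it fixes a convex open neighborhood $\mathcal{G}$ of $\mu_{\potential}$ from a convex local basis, splits $\nu_{n}$ into the contribution of preimages whose empirical measures lie in $\mathcal{G}$ and the rest, shows via Corollary~\ref{coro:measure-theoretic pressure infimum on local basis}, the uniqueness of the minimizer, and a compactness covering of $\probsphere \setminus \mathcal{G}$ that the latter contribution is exponentially negligible, and then uses convexity of $\mathcal{G}$ to place the normalized ``good'' part inside $\mathcal{G}$. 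You instead observe that $\mu_{n}$ and $\nu_{n}$ are exactly the barycenters of $\Omega_{n}$ and $\Omega_{n}(x_{n})$ (for finitely supported meta-measures this is just the finite convex combination, so the identification is immediate), that the barycenter map is weak$^{*}$-to-weak$^{*}$ continuous because each evaluation $\mu \mapsto \int \psi \,\mathrm{d}\mu$ with $\psi \in C(S^{2})$ belongs to $C(\probsphere)$, and that Theorem~\ref{thm:level-2 large deviation principle} already asserts $\Omega_{n} \to \delta_{\mu_{\potential}}$ and $\Omega_{n}(x_{n}) \to \delta_{\mu_{\potential}}$. There is no circularity, since that convergence is established in Subsection~\ref{sub:Proof of large deviation principles} independently of the equidistribution statement. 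What your approach buys is brevity and a clean conceptual statement (equidistribution is literally the image of level-$2$ concentration under the continuous affine barycenter map); what the paper's approach buys is a self-contained quantitative estimate showing the ``off-$\mathcal{G}$'' mass decays exponentially at rate at least $P(f,\potential) - a$, which is slightly more information than mere weak$^{*}$ convergence, though the theorem as stated does not need it.
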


See Subsection~\ref{sub:Equidistribution with respect to the equilibrium state} for the proof of Theorem~\ref{thm:equidistribution results}.

\begin{remark}\label{rem:simple form for periodic points}
    Since $S_{n}\potential(f^{i}(p)) = S_{n}\potential(p)$ for each $i \in \n$ if $p \in \periodorbit$, we get\[
        \mu_{n} = \sum_{p \in \periodorbit} \frac{ \frac{S_{n}w_{n}(p)}{n}  \myexp{ S_{n}\potential(p) } }{\sum_{p' \in \periodorbit} w_{n}(p') \myexp{ S_{n}\potential(p') } }  \delta_{p}
    \]
    for each $n \in \n$.
    In particular, when $w_{n}(\cdot) \equiv 1$, we have
    \[
        \mu_{n} = \sum_{p \in \periodorbit} \frac{\myexp{S_{n}\potential(p)}}{\sum_{p' \in \periodorbit} \myexp{S_{n}\potential(p')}} \delta_{p};
    \]
    when $w_{n}(\cdot) \equiv \deg_{f^{n}}(\cdot)$, since $\deg_{f^{n}}(f^{i}(p)) = \deg_{f^{n}}(p)$ for each $i \in \n$ if $p \in \periodorbit$, we have
    \[
        \mu_{n} = \sum_{p \in \periodorbit} \frac{ \deg_{f^{n}}(p) \myexp{ S_{n}\potential(p) } }{\sum_{p' \in \periodorbit} \deg_{f^{n}}(p') \myexp{ S_{n}\potential(p') } }  \delta_{p}.
    \]
\end{remark}

We remark that the novelty of Theorem~\ref{thm:level-2 large deviation principle}, Corollary~\ref{coro:measure-theoretic pressure infimum on local basis}, and Theorem~\ref{thm:equidistribution results}, which generalize the corresponding results in \cite{li2015weak}, lies in their application to \emph{all} expanding Thurston maps, including those with periodic critical points.

\subsection{Strategy and organization}%
\label{sub:Strategy and organization}

We now discuss the strategies of proofs of our main results and describe the organization of the paper.

To prove Theorem~\ref{thm:upper semi-continuous iff no periodic critical points}, by \cite[Corollary~1.3]{li2015weak}, it suffices to study expanding Thurston maps with periodic critical points and show that their entropy maps are not upper semi-continuous.
The main point here is to find a sequence of invariant measures that converges in the weak$^{*}$-topology and has an entropy drop at the limit.
Our strategy is to construct a suitable sequence of subsystems and then apply the main results in \cite{shi2023thermodynamic,shi2024uniqueness} (see for example, Theorem~\ref{thm:existence uniqueness and properties of equilibrium state}) to verify that the sequence of measures of maximal entropy associated with the sequence of subsystems satisfies our desired properties.
The construction of such sequence of subsystems is based on the key observation that the local degree at a periodic critical point increases exponentially under iteration.
Hence we can find subsystems around periodic critical points such that entropies of the associated measures of maximal entropy have a uniform positive lower bound.

The main idea behind the proof of Theorem~\ref{thm:entropy dense} is to find an ergodic measure that is close to a given invariant measure in terms of both topology and entropy.
To achieve this, we construct a suitable subsystem and use the corresponding measure of maximal entropy to approximate the given invariant measure.
The construction of the subsystem poses the main difficulty.
Our strategy involves using pairs (as described in Subsection~\ref{sub:Thurston_maps}) instead of tiles to build a strongly primitive subsystem.
While the set of pairs is not a generator (unlike the set of tiles, as shown in Lemma~\ref{lem:generator}), we first approximate ergodic measures with a finite collection of tiles in a specific sense (Lemma~\ref{lem:approximates ergodic measures by tiles}). 
We then construct pairs from these tiles. 
Furthermore, to obtain a primitive subsystem, we add one suitable pair contained in the interior of the corresponding $0$-tile for each color. 
The existence of such pairs is guaranteed by Lemma~\ref{lem:pair in the interior}.
Finally, with these preparations we are able to prove the entropy density of ergodic measures.

The proof of Theorem~\ref{thm:level-2 large deviation principle} is more involved and we divide the proof into four parts: the uniqueness of the minimizer, characterizations of topological pressures, the large deviation lower bound, and the large deviation upper bound.
Here the characterizations of topological pressures are used in the proof of large deviation lower and upper bounds, and the uniqueness of the minimizer is used to prove the convergence of distributions and equidistribution results, which is necessary since in our setting the entropy map may not be upper semi-continuous.
It should be noted that one cannot derive the uniqueness of the minimizer directly from the uniqueness of the equilibrium states (recall Remark~\ref{rem:rate function lower semi-continuous regularization}).

To prove the uniqueness of the minimizer, we use a semiconjugacy of an expanding Thurston map to a shift map, and show that the uniqueness of the minimizer follows from ergodic properties of the shift map and the uniqueness of the equilibrium state. 
Here a key property of such a semiconjugacy is that even in the presence of periodic critical points, the entropy at the equilibrium state does not drop under the projection from the symbolic space (see Proposition~\ref{prop:semiconjugacy with full shift}~\ref{item:prop:semiconjugacy with full shift:no entropy drop at equilibrium state}).
The existence and properties of such a semiconjugacy are proved in \cite{das2021thermodynamic}.

The characterization of topological pressures in terms of periodic points for expanding Thurston maps has been established in \cite[Propositions~6.8]{li2015weak} (see Theorem~\ref{prop:characterization of pressure weighted periodic points}), while for iterated preimages such characterization was only obtained for expanding Thurston maps without periodic critical points (compare Theorem~\ref{prop:characterization of pressure iterated preimages count by degree} with Theorem~\ref{prop:characterization of pressure weighted periodic points}).
By carefully analyzing the combinatorics of critical points, we establish the characterization of topological pressures in terms of iterated preimages for all expanding Thurston maps (see Theorem~\ref{prop:characterization of pressure iterated preimages arbitrary weight}).

In the proof of large deviation lower and upper bounds we take an indirect approach.
We first consider an expanding Thurston map that has an invariant Jordan curve containing the postcritical set.
In such situations the associated cell decompositions have nice compatibility properties, enabling the application of the results in ergodic theory of subsystems established in \cite{shi2023thermodynamic,shi2024uniqueness}.
However, such an invariant Jordan curve may not exist (see for example, \cite[Example~15.11]{bonk2017expanding}). 
Our strategy is to first establish large deviation bounds for sufficiently high iterates of an expanding Thurston map, where such an invariant Jordan curve does exist (see Lemma~\ref{lem:invariant_Jordan_curve}), and then prove for the original expanding Thurston map.

An important property for the proof of the large deviation lower bound for all open sets is \emph{entropy approachability of ergodic measures} (as defined in Definition~\ref{def:weak entropy density}), which guarantees that any invariant measure can be approximated by an ergodic measure with entropy sufficiently large.
This property and is weaker than the entropy density of ergodic measures and allows for the simplification of the proof of the lower bound to the case where the measure being considered is ergodic \cite{eizenberg1994large,follmer1988large,young1990large,takahasi2019large}.
Then we obtain estimates for ergodic measures by using the approximations (Lemma~\ref{lem:approximates ergodic measures by tiles}) in the proof of the entropy density established in Section~\ref{sec:Entropy density}.

The main idea for the large deviation upper bound is to construct measures whose measure-theoretic pressures provide desired upper bounds for the deviations.
Our strategy is to construct certain strongly primitive subsystems (see Definition~\ref{def:primitivity of subsystem}) and apply the results in \cite{shi2023thermodynamic,shi2024uniqueness} to produce such measures (see Proposition~\ref{prop:construction of suitable invariant measures for vector continuous function}).
Similar strategies are used in one-dimensional real dynamics \cite{chung2019large} and countable Markov shift \cite{takahasi2019large}. 
Constructions and estimates for the upper bound are much harder than those for the lower bound and are necessarily involved due to the presence of critical points and the lack of upper semi-continuity of entropy.

Finally, to prove Theorem~\ref{thm:equidistribution results}, we use the property that the equilibrium state is the unique minimizer of the rate function and then apply the results from large deviation principles.

\smallskip

We now give a brief description of the structure of this paper.

In Section~\ref{sec:Notation}, we fix some notation that will be used throughout the paper.
In Section~\ref{sec:Preliminaries}, we first review some notions from ergodic theory and dynamical systems and go over some key concepts and results on Thurston maps. Then we summarize some concepts and results on subsystems of expanding Thurston maps.
In Section~\ref{sec:The Assumptions}, we state the assumptions on some of the objects in this paper, which we will repeatedly refer to later as the \emph{Assumptions in Section~\ref{sec:The Assumptions}}. 

In Section~\ref{sec:Upper semi-continuity}, we investigate the upper semi-continuity of the entropy map of an expanding Thurston map and prove Theorem~\ref{thm:upper semi-continuous iff no periodic critical points}.

In Section~\ref{sec:Entropy density}, we show that ergodic measures are entropy-dense for expanding Thurston maps and prove Theorem~\ref{thm:entropy dense}.

Section~\ref{sec:Large deviation principles} is devoted to the study of large deviation principles and equidistribution results for Birkhoff averages, periodic points, and iterated preimages of expanding Thurston maps. 
In Subsection~\ref{sub:Level-2 large deviation principles}, we give a brief review of level-2 large deviation principles.
In Subsection~\ref{sub:Uniqueness of the minimizer}, we show that the equilibrium state is the unique minimizer of the rate function.
In Subsection~\ref{sub:Characterizations of topological pressures}, we establish characterizations of the topological pressure in terms of periodic points and iterated preimages. 
In Subsection~\ref{sub:Large deviation lower bound}, we prove the large deviations lower bound for all open sets.
In Subsection~\ref{sub:Large deviation upper bound}, we prove the large deviations upper bound for all closed sets.
In Subsection~\ref{sub:Proof of large deviation principles}, by combining the previous results together, we finish the proof of Theorem~\ref{thm:level-2 large deviation principle} and its corollaries.
In Subsection~\ref{sub:Equidistribution with respect to the equilibrium state}, we show that periodic points and iterated preimages are equidistributed with respect to the unique equilibrium state for an expanding Thurston map and a \holder continuous function, which proves Theorem~\ref{thm:equidistribution results}.

\smallskip

\textbf{Acknowledgments.}
The first-named author wants to thank Peter Ha{\"i}ssinky and Juan Rivera-Letelier for discussions on the upper semi-continuity property of the entropy map.


\section{Notation}
\label{sec:Notation}
Let $\cx$ be the complex plane and $\ccx$ be the Riemann sphere. 
Let $S^2$ denote an oriented topological $2$-sphere.
We use $\n$ to denote the set of integers greater than or equal to $1$ and write $\n_0 \define \{0\} \cup \n$. 
The symbol log denotes the logarithm to the base $e$. 
For $x \in \real$, we define $\lfloor x \rfloor$ as the greatest integer $\leqslant x$, and $\lceil x \rceil$ the smallest integer $\geqslant x$.
The cardinality of a set $A$ is denoted by $\card{A}$.


Let $g \colon X \mapping Y$ be a map between two sets $X$ and $Y$. We denote the restriction of $g$ to a subset $Z$ of $X$ by $g|_{Z}$.

Consider a map $f \colon X \mapping X$ on a set $X$. 
The inverse map of $f$ is denoted by $f^{-1}$. 
We write $f^n$ for the $n$-th iterate of $f$, and $f^{-n}\define (f^n)^{-1}$, for each $n \in \n$. 
We set $f^0 \define \id{X}$, the identity map on $X$. 
For a real-valued function $\varphi \colon X \mapping \real$, we write
\begin{equation}    \label{eq:def:Birkhoff average}
    S_n \varphi(x) = S^f_n \varphi(x) \define \sum_{j=0}^{n-1} \varphi \bigl( f^j(x) \bigr)
\end{equation}
for each $x\in X$ and each $n\in \n_0$. 
We omit the superscript $f$ when the map $f$ is clear from the context. Note that when $n = 0$, by definition we always have $S_0 \varphi = 0$.

Let $(X,d)$ be a metric space. For each subset $Y \subseteq X$, we denote the diameter of $Y$ by $\diam{d}{Y} \define \sup\{d(x, y) \describe \juxtapose{x}{y} \in Y\}$, the interior of $Y$ by $\interior{Y}$, and the characteristic function of $Y$ by $\indicator{Y}$, which maps each $x \in Y$ to $1 \in \real$ and vanishes otherwise. 
For each $r > 0$ and each $x \in X$, we denote the open (\resp closed) ball of radius $r$ centered at $x$ by $B_{d}(x,r)$ (\resp $\overline{B_{d}}(x,r)$). We often omit the metric $d$ in the subscript when it is clear from the context.

For a compact metrizable topological space $X$, we denote by $C(X)$ (\resp $B(X)$) the space of continuous (\resp bounded Borel) functions from $X$ to $\real$, by $\mathcal{M}(X)$ the set of finite signed Borel measures, and $\mathcal{P}(X)$ the set of Borel probability measures on $X$.
By the Riesz representation theorem (see for example, \cite[Theorems~7.17 and 7.8]{folland2013real}), we identify the dual of $C(X)$ with the space $\mathcal{M}(X)$.
For $\mu \in \mathcal{M}(X)$, we use $\norm{\mu}$ to denote the total variation norm of $\mu$, $\supp{\mu}$ the support of $\mu$ (the smallest closed set $A \subseteq X$ such that $|\mu|(X \setminus A) = 0$), and \[
    \functional{\mu}{u} \define \int \! u \,\mathrm{d}\mu
\]
for each $u \in C(X)$. 
For a point $x \in X$, we define $\delta_x$ as the Dirac measure supported on $\{x\}$.
For a continuous map $g \colon X \mapping X$, we set $\mathcal{M}(X, g)$ to be the set of $g$-invariant Borel probability measures on $X$.
If we do not specify otherwise, we equip $C(X)$ with the uniform norm $\normcontinuous{\cdot}{X} \define \uniformnorm{\cdot}$, and equip $\mathcal{M}(X)$, $\mathcal{P}(X)$, and $\mathcal{M}(X, g)$ with the weak$^*$ topology.

The space of real-valued \holder continuous functions with an exponent $\holderexp \in (0,1]$ on a compact metric space $(X, d)$ is denoted as $\holderspace$. 
For each $\phi \in \holderspace$, \[
    \holderseminorm{\phi}{X} \define \sup\left\{ \frac{|\phi(x) - \phi(y)|}{d(x, y)^{\holderexp}} \describe \juxtapose{x}{y} \in X, \, x \ne y \right\},
\]
and the \holder norm is defined as $\holdernorm{\phi}{X} \define \holderseminorm{\phi}{X} + \normcontinuous{\phi}{X}$. 

\section{Preliminaries}
\label{sec:Preliminaries}

\subsection{Thermodynamic formalism}  
\label{sub:thermodynamic formalism}

We first review some basic concepts from ergodic theory and dynamical systems. 
For more detailed studies of these concepts, we refer the reader to \cite[Chapter~3]{przytycki2010conformal}, \cite[Chapter~9]{walters1982introduction}, or \cite[Chapter~20]{katok1995introduction}.

Let $(X,d)$ be a compact metric space and $g \colon X \mapping X$ a continuous map. Given $n \in \n$, 
\[
    d^n_g(x, y) \define \operatorname{max} \bigl\{  d \bigl(g^k(x), g^k(y) \bigr) \describe k \in \zeroton[n - 1] \!\bigr\}, \quad \text{ for } \juxtapose{x}{y} \in X,
\]
defines a metric on $X$. 
A set $F \subseteq X$ is \emph{$(n, \epsilon)$-separated} (with respect to $g$), for some $n \in \n$ and $\epsilon > 0$, if for each pair of distinct points $\juxtapose{x}{y} \in F$, we have $d^n_g(x, y) \geqslant \epsilon$. 
Given $\epsilon > 0$ and $n \in \n$, let $F_n(\epsilon)$ be a maximal (in the sense of inclusion) $(n,\epsilon)$-separated set in $X$.

For each real-valued continuous function $\psi \in C(X)$, the following limits exist and are equal, and we denote these limits by $P(g, \psi)$ (see for example, \cite[Theorem~3.3.2]{przytycki2010conformal}):

\begin{equation}  \label{eq:def:topological pressure}
    \begin{split}
        P(g, \psi) \define  \lim \limits_{\epsilon \to 0^{+}} \limsup\limits_{n \to +\infty} \frac{1}{n} \log \sum\limits_{x \in F_n(\epsilon)} \myexp{S_n \psi(x)} 
        = \lim \limits_{\epsilon \to 0^{+}} \liminf\limits_{n \to +\infty} \frac{1}{n} \log  \sum\limits_{x\in F_n(\epsilon)} \myexp{S_n \psi(x)}, 
    \end{split}
\end{equation}
where $S_n \psi (x) = \sum_{j=0}^{n-1} \psi ( g^j(x) )$ is defined in \eqref{eq:def:Birkhoff average}. We call $P(g, \psi)$ the \emph{topological pressure} of $g$ with respect to the \emph{potential} $\psi$. 
Note that $P(g, \psi)$ is independent of $d$ as long as the topology on $X$ defined by $d$ remains the same (see for example, \cite[Section~3.2]{przytycki2010conformal}).
The quantity $h_{\operatorname{top}}(g) \define P(g, 0)$ is called the \emph{topological entropy} of $g$. The topological entropy is well-behaved under iterations. Indeed, if $n \in \n$, then $h_{\operatorname{top}}(g^{n}) = n h_{\operatorname{top}}(g)$ (see for example, \cite[Proposition~3.1.7~(3)]{katok1995introduction}).

We denote by $\mathcal{B}$ the $\sigma$-algebra of all Borel sets on $X$. 
A measure on $X$ is understood to be a Borel measure, i.e., one defined on $\mathcal{B}$. We call a measure $\mu$ on $X$ $g$-invariant if \[
    \mu\bigl(g^{-1}(A) \bigr) = \mu(A)
\]
for all $A \in \mathcal{B}$. 
We denote by $\mathcal{M}(X, g)$ the set of all $g$-invariant Borel probability measures on $X$.


Let $\mu \in \mathcal{M}(X, g)$. 
Then we say that $g$ is \emph{ergodic} for $\mu$ (or $\mu$ is \emph{ergodic} for $g$) if for each set $A \in \mathcal{B}$ with $g^{-1}(A) = A$ we have $\mu(A) = 0$ or $\mu(A) = 1$. 
We denote by $\ergmea[X][g]$ the set of all $g$-invariant ergodic measures on $X$.


Let $\mu \in \mathcal{M}(X, g)$. A \emph{measurable partition} $\xi$ for $(X, \mu)$ is a countable collection $\xi = \{ A_{i} \describe i \in I \}$ of sets in $\mathcal{B}$ such that $\mu(A_{i} \cap A_{j}) = 0$ for $i, j \in I$, $i \ne j$, and \[
    \mu\biggl( X \setminus \bigcup_{i \in I} A_i \biggr) = 0.
\]
Here $I$ is a countable (i.e., finite or countably infinite) index set. The measurable partition $\xi$ is finite if the index set $J$ is a finite set.
The \emph{symmetric difference} of two sets $\juxtapose{A}{B} \subseteq X$ is defined as\[
    A \bigtriangleup B \define (A \setminus B) \cup (B \setminus A).
\]
Two measurable partitions $\xi$ and $\eta$ for $(X, \mu)$ are called \emph{equivalent} if there exists a bijection between the sets of positive measure in $\xi$ and the sets of positive measure in $\eta$ such that corresponding sets have a symmetric difference of $\mu$-measure zero. Roughly speaking, this means that the partitions are the same up to sets of measure zero.


Let $\xi = \{A_j \describe j \in J\}$ and $\eta = \{B_k \describe k \in K\}$ be measurable partitions for $(X, \mu)$.
We say $\xi$ is a \emph{refinement} of $\eta$ if for each $A_j \in \xi$, there exists $B_k \in \eta$ such that $A_j \subseteq B_k$. 
The \emph{common refinement} (or \emph{join}) $\xi \vee \eta$ of $\xi$ and $\eta$ defined as\[
    \xi \vee \eta \define \{A_j \cap B_k \describe j \in J, \, k \in K\}
\]
is also a measurable partition. 
Put \[
    g^{-1}(\xi) \define \bigl\{ g^{-1}(A_j) \describe j \in J \bigr\},
\]
and for each $n \in \n$ define \[
    \xi^n_g \define \bigvee \limits_{j = 0}^{n - 1} g^{-j}(\xi) = \xi \vee g^{-1}(\xi) \vee \cdots \vee g^{-(n-1)}(\xi).
\]


Let $\xi$ be a finite measurable partition for $(g, \mu)$ and $\mathcal{A}$ be the smallest $\sigma$-algebra containing all sets in the partitions $\xi^{n}_{g}$, $n \in \n$.
We call $\xi$ a \emph{generator} for $(g, \mu)$ if for each Borel set $B \in \mathcal{B}$ there exists a set $A \in \mathcal{A}$ such that $\mu(A \bigtriangleup B) = 0$.

If for every set $B \in \mathcal{B}$ and every $\varepsilon > 0$, there exists $n \in \n$ and a union $A$ of sets in $\xi^{n}_{g}$ with $\mu(A \bigtriangleup B) < \varepsilon$, then $\xi$ is a generator for $(g, \mu)$.

Let $\xi = \{A_j \describe j \in J \}$ be a measurable partition of $X$ and $\mu \in \mathcal{M}(X, g)$ be a $g$-invariant Borel probability measure on $X$. 
The \emph{entropy} of $\xi$ is $H_{\mu}(\xi) \define -\sum_{j\in J} \mu(A_j) \log\left(\mu (A_j)\right) \in [0, +\infty]$, where $0 \log 0$ is defined to be zero. 
One can show that (see for example, \cite[Chapter~4]{walters1982introduction}) if $H_{\mu}(\xi) < +\infty$, then the following limit exists:
\begin{equation}    \label{eq:def:measure-theoretic entropy with respect to partition}
    h_{\mu}(g, \xi) \define \lim\limits_{n \to +\infty} \frac{1}{n} H_{\mu}(\xi^n_g) \in [0, +\infty).
\end{equation}
The quantity $h_{\mu}(g, \xi)$ is called the \emph{measure-theoretic entropy of $g$ relative to $\xi$}.
The \emph{measure-theoretic entropy} of $g$ for $\mu$ is defined as
\begin{equation}   \label{eq:def:measure-theoretic entropy}
h_{\mu}(g) \define \sup\{h_{\mu}(g,\xi) \describe \xi \text{ is a measurable partition of } X  \text{ with } H_{\mu}(\xi) < +\infty\}.   
\end{equation}
If $\mu \in \mathcal{M}(X, g)$ and $n \in \n$, then (see for example, \cite[Proposition~4.3.16~(4)]{katok1995introduction})
\begin{equation}    \label{eq:measure-theoretic entropy well-behaved under iteration}
    h_{\mu}(g^{n}) = n h_{\mu}(g).
\end{equation}
If $t \in [0, 1]$ and $\nu \in \mathcal{M}(X, g)$ is another measure, then (see for example, \cite[Theorem~8.1]{walters1982introduction})
\begin{equation}    \label{eq:measure-theoretic entropy is affine}
    h_{t \mu + (1 - t)\nu}(g) = t h_{\mu}(g) + (1 - t) h_{\nu}(g).
\end{equation}

For each real-valued continuous function $\psi \in C(X)$, the \emph{measure-theoretic pressure} $P_\mu(g, \psi)$ of $g$ for the measure $\mu \in \mathcal{M}(X, g)$ and the potential $\psi$ is
\begin{equation}  \label{eq:def:measure-theoretic pressure}
P_\mu(g, \psi) \define  h_\mu (g) + \int \! \psi \,\mathrm{d}\mu.
\end{equation}

The topological pressure is related to the measure-theoretic pressure by the so-called \emph{Variational Principle}.
It states that (see for example, \cite[Theorem~3.4.1]{przytycki2010conformal})
\begin{equation}  \label{eq:Variational Principle for pressure}
    P(g, \psi) = \sup \{P_\mu(g, \psi) \describe \mu \in \mathcal{M}(X, g)\}
\end{equation}
for each $\psi \in C(X)$.
In particular, when $\psi$ is the constant function $0$,
\begin{equation}  \label{eq:Variational Principle for entropy}
    h_{\operatorname{top}}(g) = \sup\{h_{\mu}(g) \describe\mu \in \mathcal{M}(X, g)\}.
\end{equation}
A measure $\mu$ that attains the supremum in \eqref{eq:Variational Principle for pressure} is called an \emph{equilibrium state} for the map $g$ and the potential $\psi$. A measure $\mu$ that attains the supremum in \eqref{eq:Variational Principle for entropy} is called a \emph{measure of maximal entropy} of $g$.




Let $\widetilde{X}$ be another compact metric space. If $\mu$ is a measure on $X$ and the map $\pi \colon X \mapping \widetilde{X}$ is continuous, then the \emph{push-forward} $\pi_{*} \mu$ of $\mu$ by $\pi$ is the measure given by $\pi_{*}\mu(A) \define \mu\parentheses[\big]{ \pi^{-1}(A) } $ for each Borel set $A \subseteq \widetilde{X}$. 
Note that if $\widetilde{X} = X$, then $\mu$ is $\pi$-invariant if and only if $\pi_{*}\mu = \mu$.

Suppose $\widetilde{g} \colon \widetilde{X} \mapping \widetilde{X}$ is a continuous map, $\mu \in \mathcal{M}(X, g)$, and $\widetilde{\mu} \in \mathcal{M}(\widetilde{X}, \widetilde{g})$. Then the dynamical system $(\widetilde{X}, \widetilde{g}, \widetilde{\mu})$ is called a \emph{factor} of $(X, g, \mu)$ if there exists a continuous and surjective map $\pi \colon X \mapping \widetilde{X}$ such that $\pi_{*}\mu = \widetilde{\mu}$ and $\widetilde{g} \circ \pi = \pi \circ g$. 
In this case, $h_{\widetilde{\mu}}(\widetilde{g}) \leqslant  h_{\mu}(g)$ (see for example, \cite[Proposition~4.3.16~(1)]{katok1995introduction}).

\subsection{Thurston maps}%
\label{sub:Thurston_maps}
In this subsection, we go over some key concepts and results on Thurston maps, and expanding Thurston maps in particular. For a more thorough treatment of the subject, we refer to \cite{bonk2017expanding}.

Let $S^2$ denote an oriented topological $2$-sphere. A continuous map $f \colon S^2 \mapping S^2$ is called a \emph{branched covering map} on $S^2$ if for each point $x\in S^2$, there exists a positive integer $d\in \n$, open neighborhoods $U$ of $x$ and $V$ of $y \define f(x)$, open neighborhoods $U'$ and $V'$ of $0$ in $\ccx$, and orientation-preserving homeomorphisms $\varphi \colon U \mapping U'$ and $\eta \colon V \mapping V'$ such that $\varphi(x) = 0, \eta(y) = 0$, and $\bigl(\eta \circ f\circ \varphi^{-1}\bigr)(z) = z^d$ for each $z \in U'$. The positive integer $d$ above is called the \emph{local degree} of $f$ at $x$ and is denoted by $\deg_f(x)$ or $\deg(f, x)$.

The \emph{degree} of $f$ is $\deg{f} = \sum_{x\in f^{-1}(y)} \deg_{f}(x)$ for $y\in S^2$ and is independent of $y$. If $f \colon S^2 \mapping S^2$ and $g \colon S^2 \mapping S^2$ are two branched covering maps on $S^2$, then so is $f\circ g$, and $\deg(f\circ g, x) = \deg(g, x) \deg(f, g(x))$ for each $x\in S^2$, and moreover, $\deg(f\circ g) = (\deg{f})(\deg{g})$.

A point  $x\in S^2$ is a \emph{critical point} of $f$ if $\deg_f(x) \geqslant 2$. The set of critical points of $f$ is denoted by $\crit{f}$. A point $y\in S^2$ is a \emph{postcritical point} of $f$ if $y = f^n(x)$ for some $x \in \crit{f}$ and $n\in \n$. The set of postcritical points of $f$ is denoted by $\post{f}$. Note that $\post{f} = \post{f^n}$ for all $n\in \n$.

\begin{definition}[Thurston maps]
    A Thurston map is a branched covering map $f \colon S^2 \mapping S^2$ on $S^2$ with $\deg f \geqslant 2$ and $\card{\post{f}}< +\infty$.
\end{definition}

We now recall the notation for cell decompositions of $S^2$ used in \cite{bonk2017expanding} and \cite{li2017ergodic}. A \emph{cell of dimension $n$} in $S^2$, $n \in \{1, \, 2\}$, is a subset $c \subseteq S^2$ that is homeomorphic to the closed unit ball $\overline{\mathbb{B}^n}$ in $\real^n$, where $\mathbb{B}^{n}$ is the open unit ball in $\real^{n}$. We define the \emph{boundary of $c$}, denoted by $\partial c$, to be the set of points corresponding to $\partial \mathbb{B}^n$ under such a homeomorphism between $c$ and $\overline{\mathbb{B}^n}$. The \emph{interior of $c$} is defined to be $\inte{c} = c \setminus \partial c$. For each point $x\in S^2$, the set $\{x\}$ is considered as a \emph{cell of dimension $0$} in $S^2$. For a cell $c$ of dimension $0$, we adopt the convention that $\partial c = \emptyset$ and $\inte{c} = c$. 

We record the following definition of cell decompositions from \cite[Definition~3.2]{bonk2017expanding}.

\begin{definition}[Cell decompositions]    \label{def:cell decomposition}
    Let $\mathbf{D}$ be a collection of cells in $S^2$. We say that $\mathbf{D}$ is a \emph{cell decomposition of $S^2$} if the following conditions are satisfied:
    \begin{enumerate}[label= (\roman*)]
        \smallskip
        
        \item the union of all cells in $\mathbf{D}$ is equal to $S^2$,
        
        \smallskip
        
        \item if $c \in \mathbf{D}$, then $\partial c$ is a union of cells in $\mathbf{D}$,
        
        \smallskip
        
        \item for $\juxtapose{c_1}{c_2} \in \mathbf{D}$ with $c_1 \ne c_2$, we have $\inte{c_1} \cap \inte{c_2} = \emptyset$,
        
        \smallskip
        
        \item every point in $S^2$ has a neighborhood that meets only finitely many cells in $\mathbf{D}$.
    \end{enumerate}
\end{definition}

We record \cite[Lemma~5.3]{bonk2017expanding} here to review some facts about cell decompositions.

\begin{lemma}    \label{lem:intersection of cells}
    Let $\mathbf{D}$ be a cell decomposition of $S^2$.
    \begin{enumerate}[label=\rm{(\roman*)}]
        \smallskip
        
        \item     \label{item:lem:intersection of cells:intersection of two tiles} 
            If $\sigma$ and $\tau$ are two distinct cells in $\mathbf{D}$ with $\sigma \cap \tau \ne \emptyset$, then one of the following statements hold: $\sigma \subseteq \partial \tau$, $\tau \subseteq \partial \sigma$, or $\sigma \cap \tau = \partial\sigma \cap \partial\tau$ and this intersection consists of cells in $\mathbf{D}$ of dimension strictly less than $\min\{\dim\sigma, \dim\tau\}$.

        \smallskip
        
        \item     \label{item:lem:intersection of cells:intersection with union of tiles}
            If $\sigma, \, \tau_1, \, \dots, \, \tau_n$ are cells in $\mathbf{D}$ and $\inte{\sigma}\, \cap\, (\tau_1 \, \cup \cdots \cup\, \tau_n) \ne \emptyset$, then $\sigma \subseteq \tau_i$ for some $i \in \oneton$.
    \end{enumerate}
\end{lemma}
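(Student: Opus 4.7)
The fulcrum of the argument is the observation that every point $x \in S^{2}$ belongs to the interior of exactly one cell of $\mathbf{D}$. I plan to establish this unique-interior-cell principle first, using only the axioms of Definition~\ref{def:cell decomposition}. Existence follows by a dimension descent: by (i) of that definition, $x$ lies in some cell $c$; if $x \notin \inte{c}$ then $x \in \partial c$, which by (ii) is a union of cells of strictly smaller dimension; iterating, and using that a $0$-cell coincides with its interior, produces a cell whose interior contains $x$. Uniqueness is immediate from (iii), since two distinct cells in $\mathbf{D}$ have disjoint interiors.

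Part~(ii) then follows almost at once. Given any $x \in \inte{\sigma} \cap \tau_{i}$, apply the existence step of the principle \emph{inside} $\tau_{i}$: since $\tau_{i} = \inte{\tau_{i}} \cup \partial\tau_{i}$ with $\partial\tau_{i}$ a union of cells of lower dimension, the descent produces a cell $c \subseteq \tau_{i}$ with $x \in \inte{c}$. By uniqueness, $c = \sigma$, hence $\sigma \subseteq \tau_{i}$.

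For part~(i), I will assume without loss of generality that $\dim \sigma \leqslant \dim \tau$; the symmetric exchange of roles accounts for the possibility $\tau \subseteq \partial \sigma$. If $\inte{\sigma} \cap \tau \ne \emptyset$, part~(ii) gives $\sigma \subseteq \tau$, and since $\sigma \ne \tau$, Definition~\ref{def:cell decomposition}~(iii) forces $\inte{\sigma} \subseteq \tau \setminus \inte{\tau} = \partial\tau$; taking closures, and noting that $\partial\tau$ is closed while $\sigma = \overline{\inte{\sigma}}$, yields $\sigma \subseteq \partial\tau$. Otherwise $\inte{\sigma} \cap \tau = \emptyset$, in which case I plan to rule out $\inte{\tau} \cap \partial\sigma \ne \emptyset$ by a dimension count: were some cell $d \subseteq \partial\sigma$ to meet $\inte{\tau}$, part~(ii) would give $\tau \subseteq d$ and hence $\dim\tau \leqslant \dim d < \dim\sigma$, contradicting the standing assumption. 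Thus $\sigma \cap \tau \subseteq \partial\sigma \cap \partial\tau$, with the reverse inclusion automatic. To recognize $\partial\sigma \cap \partial\tau$ as a union of cells of dimension strictly less than $\min\{\dim\sigma, \dim\tau\}$, I apply the uniqueness principle pointwise: for each $x$ in that intersection, the unique cell $c$ with $x \in \inte{c}$ must sit inside both $\partial\sigma$ and $\partial\tau$ (by part~(ii) applied twice), and therefore has the required dimension bound.

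The only genuine obstacle is avoiding circularity between (i) and (ii). The plan sidesteps this by isolating the unique-interior-cell principle as a direct consequence of Definition~\ref{def:cell decomposition} alone, and then proving (ii) and (i) in that order. The dimension-count argument eliminating $\inte{\tau} \cap \partial\sigma \ne \emptyset$ is the only step where the WLOG on dimensions is genuinely used, and it is precisely what collapses what would otherwise be a multi-branch case analysis into the two clean cases above.
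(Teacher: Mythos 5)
Your proof is correct, and it follows the same route as the source the paper cites for this lemma (\cite[Lemma~5.3]{bonk2017expanding}, which the paper records without proof): the unique-interior-cell principle you isolate is precisely Bonk--Meyer's Lemma~5.2, and parts (ii) and (i) are then derived from it in the same order and by the same dimension-count arguments. The only facts you use without proof --- that a cell contained in $\partial c$ has dimension strictly less than $\dim c$, and that $\sigma = \overline{\inte{\sigma}}$ --- are standard consequences of invariance of domain and are acceptable to assert at this level.
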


\begin{definition}[Refinements]
    Let $\mathbf{D}'$ and $\mathbf{D}$ be two cell decompositions of $S^2$. We say that $\mathbf{D}'$ is a \emph{refinement} of $\mathbf{D}$ if the following conditions are satisfied:
    \begin{enumerate}[label = (\roman*)]
        \smallskip

        \item every cell $c \in \mathbf{D}$ is the union of all cells $c' \in \mathbf{D}'$ with $c' \subseteq c$.

        \smallskip

        \item for every cell $c' \in \mathbf{D}'$ there exists a cell $c \in \mathbf{D}$ with $c' \subseteq c$.
    \end{enumerate}
\end{definition}

\begin{definition}[Cellular maps and cellular Markov partitions]
    Let $\mathbf{D}'$ and $\mathbf{D}$ be two cell decompositions of $S^2$. We say that a continuous map $f \colon S^2 \mapping S^2$ is \emph{cellular} for $(\mathbf{D}', \mathbf{D})$ if for every cell $c \in \mathbf{D}'$, the restriction $f|_c$ of $f$ to $c$ is a homeomorphism of $c$ onto a cell in $\mathbf{D}$. We say that $(\mathbf{D}',\mathbf{D})$ is a \emph{cellular Markov partition} for $f$ if $f$ is cellular for $(\mathbf{D}', \mathbf{D})$ and $\mathbf{D}'$ is a refinement of $\mathbf{D}$.
\end{definition}

Let $f \colon S^2 \mapping S^2$ be a Thurston map, and $\mathcal{C}\subseteq S^2$ be a Jordan curve containing $\operatorname{post}f$. Then the pair $f$ and $\mathcal{C}$ induces natural cell decompositions $\mathbf{D}^n(f,\mathcal{C})$ of $S^2$, for each $n\in \n_0$, in the following way:

By the Jordan curve theorem, the set $S^2 \setminus \mathcal{C}$ has two connected components. We call the closure of one of them the \emph{white $0$-tile} for $(f,\mathcal{C})$, denoted by $X^0_{\white}$, and the closure of the other one the \emph{black $0$-tile} for $(f,\mathcal{C})$, denoted be $X^0_{\black}$. 
The set of $0$-\emph{tiles} is $\mathbf{X}^0(f, \mathcal{C}) \define \bigl\{X^0_{\black}, \, X^0_{\white} \bigr\}$. 
The set of $0$-\emph{vertices} is $\mathbf{V}^0(f, \mathcal{C}) \define \post{f}$. 
We set $\overline{\mathbf{V}}^0(f, \mathcal{C}) \define \bigl\{ \{x\} \describe x\in \mathbf{V}^0(f,\mathcal{C}) \bigr\}$. 
The set of $0$-\emph{edges} $\mathbf{E}^0(f,\mathcal{C})$ is the set of the closures of the connected components of $\mathcal{C} \setminus \post{f}$. 
Then we get a cell decomposition\[
    \mathbf{D}^0(f,\mathcal{C}) \define \mathbf{X}^0(f, \mathcal{C}) \cup \mathbf{E}^0(f,\mathcal{C}) \cup \overline{\mathbf{V}}^0(f,\mathcal{C})
\]
of $S^2$ consisting of \emph{cells of level }$0$, or $0$-\emph{cells}.

We can recursively define the unique cell decomposition $\mathbf{D}^n(f,\mathcal{C})$, $n\in \n$, consisting of $n$-\emph{cells} such that $f$ is cellular for $(\mathbf{D}^{n+1}(f,\mathcal{C}), \mathbf{D}^n(f,\mathcal{C}))$. We refer to \cite[Lemma~5.12]{bonk2017expanding} for more details. We denote by $\mathbf{X}^n(f,\mathcal{C})$ the set of $n$-cells of dimension 2, called $n$-\emph{tiles}; by $\mathbf{E}^n(f,\mathcal{C})$ the set of $n$-cells of dimension $1$, called $n$-\emph{edges}; by $\overline{\mathbf{V}}^n(f,\mathcal{C})$ the set of $n$-cells of dimension $0$; and by $\mathbf{V}^n(f,\mathcal{C})$ the set $\{x \describe \{x\} \in \overline{\mathbf{V}}^n(f,\mathcal{C})\}$, called the set of $n$-\emph{vertices}. The $k$-\emph{skeleton}, for $k\in \{0,1,2\}$, of $\mathbf{D}^n(f,\mathcal{C})$ is the union of all $n$-cells of dimension $k$ in this cell decomposition.

We record \cite[Proposition~5.16]{bonk2017expanding} here to summarize properties of the cell decompositions $\mathbf{D}^n(f,\mathcal{C})$ defined above.

\begin{proposition}[M.~Bonk \& D.~Meyer \cite{bonk2017expanding}]     \label{prop:properties cell decompositions}
    Let $\juxtapose{k}{n} \in \n_0$, $f \colon S^2 \mapping S^2$ be a Thurston map, $\mathcal{C} \subseteq S^2$ be a Jordan curve with $\post{f} \subseteq \mathcal{C}$, and $m \define \card{\post{f}}$.
    \begin{enumerate}[label=\rm{(\roman*)}]
        \smallskip

        \item     \label{item:prop:properties cell decompositions:cellular} 
            The map $f^k$ is cellular for $(\mathbf{D}^{n+k}(f,\mathcal{C}), \mathbf{D}^n(f,\mathcal{C}))$. 
            In particular, if $c$ is any $(n+k)$-cell, then $f^k(c)$ is an $n$-cell, and $f^k|_c$ is a homeomorphism of $c$ onto $f^k(c)$.
        
        \smallskip

        \item     \label{item:prop:properties cell decompositions:union of cells}
        Let $c$ be an $n$-cell. Then $f^{-k}(c)$ is equal to the union of all $(n+k)$-cell $c'$ with $f^k(c') = c$.
        
        \smallskip

        \item     \label{item:prop:properties cell decompositions:skeletons}
            The $1$-skeleton of $\mathbf{D}^n(f,\mathcal{C})$ is equal to $f^{-n}(\mathcal{C})$. The $0$-skeleton of $\mathbf{D}^n(f,\mathcal{C})$ is the set $\mathbf{V}^n(f,\mathcal{C}) = f^{-n}(\post{f})$, and we have $\mathbf{V}^n(f,\mathcal{C}) \subseteq \mathbf{V}^{n+k}(f,\mathcal{C})$.

        \smallskip

        \item     \label{item:prop:properties cell decompositions:cardinality}
            $\card{\mathbf{X}^n(f,\mathcal{C})} = 2(\deg f)^n$, $\card{\mathbf{E}^n(f,\mathcal{C})} = m(\deg f)^n$, and $\card{\mathbf{V}^n(f,\mathcal{C})} \leqslant m (\deg f)^n$.

        \smallskip

        \item     \label{item:prop:properties cell decompositions:edge is boundary of tile}
            The $n$-edges are precisely the closures of the connected components of $f^{-n}(\mathcal{C}) \setminus f^{-n}(\post{f})$. The $n$-tiles are precisely the closures of the connected components of $S^2\setminus f^{-n}(\mathcal{C})$.

        \smallskip

        \item     \label{item:prop:properties cell decompositions:tile is gon}
        Every $n$-tile is an $m$-gon, i.e., the number of $n$-edges and the number of $n$-vertices contained in its boundary are equal to $m$.

        \smallskip

        \item     \label{item:prop:properties cell decompositions:iterate of cell decomposition}
            Let $F \define f^k$ be an iterate of $f$ with $k\in \n$. Then $\mathbf{D}^n(F,\mathcal{C}) = \mathbf{D}^{nk}(f,\mathcal{C})$.
    \end{enumerate}
\end{proposition}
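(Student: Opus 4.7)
My plan is to prove the seven statements in the order \ref{item:prop:properties cell decompositions:skeletons} $\Rightarrow$ \ref{item:prop:properties cell decompositions:edge is boundary of tile} $\Rightarrow$ \ref{item:prop:properties cell decompositions:cellular} $\Rightarrow$ \ref{item:prop:properties cell decompositions:union of cells} $\Rightarrow$ \ref{item:prop:properties cell decompositions:cardinality} $\Rightarrow$ \ref{item:prop:properties cell decompositions:tile is gon} $\Rightarrow$ \ref{item:prop:properties cell decompositions:iterate of cell decomposition}, relying throughout on the recursive construction of $\mathbf{D}^{n+1}(f,\mathcal{C})$ as the unique cell decomposition making $f$ cellular onto $\mathbf{D}^{n}(f,\mathcal{C})$ (established by \cite[Lemma~5.12]{bonk2017expanding}). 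First, I would prove \ref{item:prop:properties cell decompositions:skeletons} by induction on $n$: for $n=0$ the $0$-skeleton equals $\post f$ and the $1$-skeleton equals $\mathcal{C}$ by construction, and for the inductive step the cellularity of $f \colon \mathbf{D}^{n+1} \to \mathbf{D}^{n}$ guarantees that an $(n+1)$-cell of dimension $k$ maps homeomorphically onto an $n$-cell of dimension $k$, so the $k$-skeleton of $\mathbf{D}^{n+1}$ is precisely $f^{-1}$ of the $k$-skeleton of $\mathbf{D}^{n}$; the nesting $\mathbf{V}^n \subseteq \mathbf{V}^{n+k}$ then follows because $\post f$ is forward invariant. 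Statement \ref{item:prop:properties cell decompositions:edge is boundary of tile} follows immediately from \ref{item:prop:properties cell decompositions:skeletons}: by Lemma~\ref{lem:intersection of cells} and the fact that $\mathbf{D}^n(f,\mathcal{C})$ is a cell decomposition, the $n$-tiles (respectively $n$-edges) must be the closures of the connected components of the complement of the $1$-skeleton in $S^2$ (respectively the complement of the $0$-skeleton in the $1$-skeleton).

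Next, I would prove \ref{item:prop:properties cell decompositions:cellular} by induction on $k$: the base case $k=1$ is the defining property of $\mathbf{D}^{n+1}$, and for the inductive step, writing $f^{k+1}=f\circ f^{k}$, any $(n+k+1)$-cell $c'$ maps by $f$ homeomorphically onto an $(n+k)$-cell (applying the case $k=1$ with shifted index), which in turn maps by $f^{k}$ homeomorphically onto an $n$-cell by the inductive hypothesis; composition of homeomorphisms is a homeomorphism onto a cell. Statement \ref{item:prop:properties cell decompositions:union of cells} is then immediate: since the $(n+k)$-cells cover $S^{2}$ and each is either contained in $f^{-k}(c)$ or has interior disjoint from $f^{-k}(c)$ by \ref{item:prop:properties cell decompositions:cellular} and Lemma~\ref{lem:intersection of cells}, the preimage $f^{-k}(c)$ is exactly the union of the cells whose image equals $c$.

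For \ref{item:prop:properties cell decompositions:cardinality}, I would count preimages of cells of a given dimension. By \ref{item:prop:properties cell decompositions:cellular}, each $n$-cell $c$ of dimension $2$ or $1$ has the property that every $(n+k)$-cell mapping onto $c$ does so homeomorphically, and a standard counting argument using the degree formula $\deg f^{k}=(\deg f)^{k}$ (applied to a generic point in $\inte c$) gives $(\deg f)^{k}$ such cells; summing over the two $0$-tiles and $m$ $0$-edges yields the displayed cardinalities of $\mathbf{X}^n$ and $\mathbf{E}^n$, while for vertices the inequality arises because local degrees at critical points can exceed $1$, collapsing preimages. Statement \ref{item:prop:properties cell decompositions:tile is gon} follows from \ref{item:prop:properties cell decompositions:cellular}: since $f^{n}$ restricted to any $n$-tile $X$ is a homeomorphism onto a $0$-tile $X^{0}$ that preserves the cell-decomposition structure on the boundary (because $f^{n}$ sends $(n)$-cells on $\partial X$ to $0$-cells on $\partial X^{0}$), the number of edges and vertices on $\partial X$ equals the number on $\partial X^{0}$, which is $m$ because the $m$ postcritical points lie on $\mathcal{C}=\partial X^{0}$.

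Finally, \ref{item:prop:properties cell decompositions:iterate of cell decomposition} follows from the uniqueness clause in the recursive construction: by \ref{item:prop:properties cell decompositions:cellular}, the map $F=f^{k}$ is cellular for $(\mathbf{D}^{(n+1)k}(f,\mathcal{C}),\mathbf{D}^{nk}(f,\mathcal{C}))$ with $\mathbf{D}^{(n+1)k}$ a refinement of $\mathbf{D}^{nk}$, and both decompositions have $\post F = \post f$ as $0$-skeleton at level zero; by uniqueness of the cell decomposition for which $F$ is cellular over the previous level, induction on $n$ gives $\mathbf{D}^{n}(F,\mathcal{C})=\mathbf{D}^{nk}(f,\mathcal{C})$. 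The main obstacle in this plan is the bookkeeping underlying \ref{item:prop:properties cell decompositions:skeletons} and \ref{item:prop:properties cell decompositions:cellular}; in particular, one must be careful that pulling back cells of $\mathbf{D}^{n}$ through the branched cover $f$ produces \emph{cells} (i.e., topological closed disks, arcs, and points) rather than more general sets, which ultimately rests on the local normal form $z\mapsto z^{d}$ of $f$ at critical points and the fact that $\crit f \subseteq f^{-1}(\post f)$ lies in the $0$-skeleton of $\mathbf{D}^{1}$.
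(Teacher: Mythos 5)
This proposition is not proved in the paper at all: it is recorded verbatim from \cite[Proposition~5.16]{bonk2017expanding}, so there is no internal argument to compare yours against. Your sketch is essentially a reconstruction of the Bonk--Meyer development (induction on the level using the recursive definition of $\mathbf{D}^{n+1}(f,\mathcal{C})$, then counting via the degree formula, then the uniqueness clause for part~\ref{item:prop:properties cell decompositions:iterate of cell decomposition}), and the overall architecture is sound.

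There is, however, one step that would fail as written: your argument for part~\ref{item:prop:properties cell decompositions:union of cells} claims that every $(n+k)$-cell $c'$ is ``either contained in $f^{-k}(c)$ or has interior disjoint from $f^{-k}(c)$.'' This dichotomy is false when $f^{k}(c')$ is a proper face of $c$: if $c$ is an $n$-tile and $c'$ is an $(n+k)$-edge with $f^{k}(c')$ an $n$-edge in $\partial c$, then $\inte{c'}$ meets $f^{-k}(c)$ even though $f^{k}(c')\neq c$, so $c'$ is neither contained in the union you want nor separated from $f^{-k}(c)$. The actual content of \ref{item:prop:properties cell decompositions:union of cells} is that every point $x$ with $f^{k}(x)\in c$ lies in \emph{some} $(n+k)$-cell mapping \emph{onto} $c$; when $f^{k}(x)$ lies in $\partial c$ this requires the local surjectivity of the cell-to-cell correspondence at $x$ (the flower structure of Remark~\ref{rem:flower structure}, i.e.\ that the $(n+k)$-cells containing $x$ map onto all the $n$-cells containing $f^{k}(x)$), which Bonk and Meyer establish separately and which your Lemma~\ref{lem:intersection of cells} alone does not give. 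With that ingredient added, the rest of your plan (including the counting in \ref{item:prop:properties cell decompositions:cardinality}, where you correctly note that interiors of $0$-tiles and $0$-edges avoid $\post{f}$ so the fiber count is exactly $(\deg f)^{n}$) goes through.
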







\begin{remark}\label{rem:intersection of two tiles}
Note that for each $n$-edge $e^{n} \in \mathbf{E}^{n}(f,\mathcal{C})$, $n \in \n_0$, there exist exactly two $n$-tiles in $\mathbf{X}^n(f,\mathcal{C})$ containing $e^{n}$.
\end{remark}

For $n\in \n_0$, we define the \emph{set of black $n$-tiles} as\[
    \textbf{X}^n_{\black}(f,\mathcal{C}) \define \left\{ X \in \mathbf{X}^n (f,\mathcal{C}) \describe f^n(X) = X^0_{\black} \right\},
\]
and the \emph{set of white $n$-tiles} as\[
    \mathbf{X}^n_{\white}(f,\mathcal{C}) \define \left\{X\in \mathbf{X}^n(f,\mathcal{C}) \describe f^n(X) = X^0_{\white}\right\}.
\]

From now on, if the map $f$ and the Jordan curve $\mathcal{C}$ are clear from the context, we will sometimes omit $(f,\mathcal{C})$ in the notation above.


If we fix the cell decomposition $\mathbf{D}^n(f,\mathcal{C})$, $n\in \n_0$, we can define for each $v\in \mathbf{V}^n$ the \emph{$n$-flower of $v$} as
\begin{equation}    \label{eq:n-flower}
    W^n(v) \define \bigcup \left\{\inte{c} \describe c\in \mathbf{D}^n(f,\mathcal{C}), v\in c\right\}.
\end{equation}
Note that flowers are open (in the standard topology on $S^2$). Let $\overline{W}^n(v)$ be the closure of $W^n(v)$. 

\begin{remark} \label{rem:flower structure}
    For each $n \in \n_0$ and each $v \in \vertex{n} $, we have
    \[
        \cflower{n}{v} = X_1 \cup X_2 \cup \cdots \cup X_m,
    \]
    where $m \define 2\deg_{f^n}(v)$, and $X_1, \, X_2, \, \dots \, X_m$ are all the $n$-tiles that contain $v$ as a vertex (see \cite[Lemma~5.28]{bonk2017expanding}). 
    Moreover, each flower is mapped under $f$ to another flower in such a way that is similar to the map $z \mapsto z^k$ on the complex plane. 
    More precisely, for each $n \in \n_0$ and each $v \in \vertex{n + 1}$, there exist orientation preserving homeomorphisms $\varphi \colon W^{n+1}(v) \rightarrow \mathbb{D}$ and $\eta\colon W^{n}(f(v)) \rightarrow \mathbb{D}$ such that $\mathbb{D}$ is the unit disk on $\mathbb{C}$, $\varphi(v) = 0$, $\eta(f(v)) = 0$, and 
    \begin{equation*}
        (\eta\circ f \circ \varphi^{-1}) (z) = z^k
    \end{equation*}
    for all $z \in \mathbb{D}$, where $k \coloneqq \deg_f(v)$. Let $\overline{W}^{n+1}(v) = X_1 \cup X_2\cup \cdots \cup X_m$ and $\overline{W}^n(f(v)) = X'_1 \cup X'_2 \cup \cdots \cup X'_{m'}$, where $X_1, \, X_2, \, \dots \, X_m$ are all the $(n+1)$-tiles that contain $v$ as a vertex, listed counterclockwise, and $X'_1, \, X'_2, \, \dots \, X'_{m'}$ are all the $n$-tiles that contain $f(v)$ as a vertex, listed counterclockwise, and $f(X_1) = X'_1$. 
    Then $m = m'k$, and $f(X_i) = X'_j$ if $i\equiv j \pmod{k}$, where $ k = \deg_f(v)$ (see Case~3 of the proof of \cite[Lemma~5.24]{bonk2017expanding} for more details). 
    In particular, $W^n(v)$ is simply connected.
\end{remark}

\begin{remark}\label{rem:flower preserve under iterate}
It follows from Remark~\ref{rem:flower structure} and Proposition~\ref{prop:properties cell decompositions} that the map $f$ preserves the structure of flowers, or more precisely,
\begin{equation}    \label{eq:flower preserve under iterate}
    f(W^n(x)) = W^{n-1}(f(x))
\end{equation}
for each $n \in \n$ and each $x \in \Vertex{n}$.
\end{remark}

We denote, for each $x \in S^2$ and each $n \in \z$, the \emph{$n$-bouquet of $x$}
\begin{equation}    \label{eq:Un bouquet of point}
    U^n(x) \define \bigcup \bigl\{ Y^n \in \mathbf{X}^n \describe \text{there exists } X^n \in \mathbf{X}^n \text{ with } x\in X^n, \, X^n \cap Y^n \ne \emptyset \bigr\}
\end{equation}
if $n \geqslant 0$, and set $U^n(x) \define S^2$ otherwise. 


We can now define expanding Thurston maps.

\begin{definition}[Expansion]     \label{def:expanding_Thurston_maps}
    A Thurston map $f \colon S^2 \mapping S^2$ is called \emph{expanding} if there exists a metric $d$ on $S^2$ that induces the standard topology on $S^2$ and a Jordan curve $\mathcal{C} \subseteq S^2$ containing $\post{f}$ such that
    \begin{equation}    \label{eq:definition of expansion}
        \lim_{n \to +\infty} \max\{ \diam{d}{X} \describe X \in \mathbf{X}^n(f,\mathcal{C}) \} = 0.
    \end{equation}
\end{definition}

\begin{remark}\label{rem:Expansion_is_independent}
    It is clear from Proposition~\ref{prop:properties cell decompositions}~\ref{item:prop:properties cell decompositions:iterate of cell decomposition} and Definition~\ref{def:expanding_Thurston_maps} that if $f$ is an expanding Thurston map, so is $f^n$ for each $n\in \n$. We observe that being expanding is a topological property of a Thurston map and independent of the choice of the metric $d$ that generates the standard topology on $S^2$. By Lemma~6.2 in \cite{bonk2017expanding}, it is also independent of the choice of the Jordan curve $\mathcal{C}$ containing $\post{f}$. More precisely, if $f$ is an expanding Thurston map, then\[
        \lim_{n \to +\infty} \max \bigl\{ \diam{\widetilde{d}}{X} \describe X\in \mathbf{X}^n(f,\widetilde{\mathcal{C}}) \bigr\} = 0,        
    \]
    for each metric $\widetilde{d}$ that generates the standard topology on $S^2$ and each Jordan curve $\widetilde{\mathcal{C}} \subseteq S^2$ that contains $\post{f}$.
\end{remark}

For an expanding Thurston map $f$, we can fix a particular metric $d$ on $S^2$ called a \emph{visual metric for $f$}. 
For the existence and properties of such metrics, see \cite[Chapter~8]{bonk2017expanding}. 
For a visual metric $d$ for $f$, there exists a unique constant $\Lambda > 1$ called the \emph{expansion factor} of $d$ (see \cite[Chapter~8]{bonk2017expanding} for more details). 
One major advantage of a visual metric $d$ is that in $(S^2,d)$ we have good quantitative control over the sizes of the cells in the cell decompositions discussed above. We summarize several results of this type (\cite[Proposition~8.4, Lemmas~8.10, and~8.11]{bonk2017expanding}) in the lemma below.

\begin{lemma}[M.~Bonk \& D.~Meyer \cite{bonk2017expanding}]    \label{lem:visual_metric}
    Let $f \colon S^2 \mapping S^2$ be an expanding Thurston map, and $\mathcal{C} \subseteq S^2$ be a Jordan curve containing $\post{f}$. Let $d$ be a visual metric on $S^2$ for $f$ with expansion factor $\Lambda > 1$. Then there exist constants $C \geqslant 1$, $K \geqslant 1$, and $n_0 \in \n_0$ with the following properties:
    \begin{enumerate}[label=\rm{(\roman*)}]

        \smallskip
        
        \item     \label{item:lem:visual_metric:distinct cell separated} 
            $d(\sigma,\tau) \geqslant C^{-1}\Lambda^{-n}$ whenever $\sigma$ and $\tau$ are disjoint $n$-cells for some $n \in \n_0$.

        \smallskip

        \item     \label{item:lem:visual_metric:diameter of cell}
            $C^{-1}\Lambda^{-n} \leqslant \diam{d}{\tau} \leqslant C\Lambda^{-n}$ for all $n$-edges and all $n$-tiles $\tau$ and for all $n \in \n_0$.
        
        \smallskip
        
        \item     \label{item:lem:visual_metric:bouquet bounded by ball}
            $B_{d}(x, K^{-1}\Lambda^{-n}) \subseteq U^n(x) \subseteq B_{d}(x, K\Lambda^{-n})$ for each $x \in S^2$ and each $n \in \n_0$.

        \smallskip
        
        \item     \label{item:lem:visual_metric:ball bounded by bouquet}
            $U^{n+n_0}(x) \subseteq B_{d}(x,r) \subseteq U^{n-n_0}(x)$ where $n \define \lceil -\log{r}/\log{\Lambda} \rceil$ for all $r > 0$ and $x \in S^2$.
        
        \smallskip
                
        \item     \label{item:lem:visual_metric:tile contain ball and bounded by ball}
            For every $n$-tile $X^n \in \mathbf{X}^n(f,\mathcal{C})$, $n\in \n_0$, there exists a point $p\in X^n$ such that $B_{d}(p, C^{-1}\Lambda^{-n}) \subseteq X^n \subseteq B_{d}(p, C\Lambda^{-n})$.
    \end{enumerate}

    Conversely, if $\widetilde{d}$ is a metric on $S^2$ satisfying conditions $\textnormal{(i)}$ and $\textnormal{(ii)}$ for some constant $C \geqslant 1$, then $\widetilde{d}$ is a visual metric with expansion factor $\Lambda > 1$.
\end{lemma}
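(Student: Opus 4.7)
The plan is to work from the construction of a visual metric for $f$, in which, up to bi-Lipschitz equivalence, one has $d(x, y) \asymp \Lambda^{-m(x, y)}$, where $m(x, y) \in \n_0 \cup \{+\infty\}$ is the largest integer $n$ for which there exist $n$-tiles $X^n, Y^n \in \mathbf{X}^n(f, \mathcal{C})$ with $x \in X^n$, $y \in Y^n$, and $X^n \cap Y^n \neq \emptyset$. Once this identification is in place, each of \textup{(i)}--\textup{(v)} reduces to a combinatorial statement about the cell decompositions $\mathbf{D}^n(f, \mathcal{C})$, and the converse amounts to verifying that this bi-Lipschitz comparison is itself implied by \textup{(i)} and \textup{(ii)}.

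For \textup{(i)}, if $\sigma$ and $\tau$ are disjoint $n$-cells and $x \in \sigma$, $y \in \tau$, then by Lemma~\ref{lem:intersection of cells} any $n$-tiles containing $x$ and $y$ are themselves disjoint, so $m(x,y) \leq n - 1$ and $d(x, y) \gtrsim \Lambda^{-n}$. For \textup{(ii)}, the upper bound follows because any two points in an $n$-tile or $n$-edge $\tau$ lie in $n$-tiles meeting $\tau$, forcing $m(x, y) \geq n$; the lower bound uses that $n$-edges and $n$-tiles contain distinct $n$-vertices which, viewed as $0$-cells in $\overline{\mathbf{V}}^n$, are disjoint and hence separated by $C^{-1}\Lambda^{-n}$ via \textup{(i)}. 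Properties \textup{(iii)}--\textup{(iv)} then follow by combining \textup{(i)} and \textup{(ii)} with the description of $U^n(x)$ in \eqref{eq:Un bouquet of point} as a finite union of $n$-tiles adjacent to an $n$-tile containing $x$ (Remark~\ref{rem:flower structure}): the inclusion $U^n(x) \subseteq B_d(x, K\Lambda^{-n})$ is immediate from \textup{(ii)}, while the reverse inclusion follows from \textup{(i)} applied to $n$-tiles outside $U^n(x)$. Property \textup{(v)} is obtained by choosing $p$ to be an interior point of $X^n$ contained in some $(n + n_1)$-tile, for a fixed $n_1$ depending only on $\card{\post{f}}$, that is itself contained in $X^n$; the lower bound comes from \textup{(i)} at level $n + n_1$, and the upper bound from \textup{(ii)}.

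The main obstacle is the converse: showing that any metric $\widetilde{d}$ satisfying \textup{(i)} and \textup{(ii)} is automatically a visual metric with expansion factor $\Lambda$. The plan is to establish the two-sided bound $\widetilde{d}(x, y) \asymp \Lambda^{-m(x, y)}$ for the same combinatorial quantity $m(x, y)$ used above. The upper bound is immediate: with intersecting $m$-tiles $X^m, Y^m$ containing $x, y$ respectively, we have $\widetilde{d}(x, y) \leq \diam{\widetilde{d}}{X^m} + \diam{\widetilde{d}}{Y^m} \leq 2C\Lambda^{-m}$ by \textup{(ii)}. The lower bound requires extracting, from the maximality of $m = m(x, y)$, a pair of disjoint $(m + 1)$-cells of comparable size containing $x$ and $y$ respectively, and then invoking \textup{(i)} at level $m + 1$; because $x$ and $y$ may lie on shared lower-dimensional $(m+1)$-cells (edges or vertices), a careful case analysis based on Lemma~\ref{lem:intersection of cells} is needed to locate disjoint cells of dimension $2$ at a controlled level above $m + 1$. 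This is the delicate technical step of the converse, and I would expect it to be the place where the precise structure of $n$-flowers (Remark~\ref{rem:flower structure}) is used to guarantee that nothing degenerates at vertices of high local degree.
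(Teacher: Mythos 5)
The paper does not actually prove this lemma: it is quoted directly from Bonk and Meyer (\cite[Proposition~8.4, Lemmas~8.10, and~8.11]{bonk2017expanding}), so there is no in-paper argument to compare against. Your overall strategy --- reducing everything to the two-sided bound $d(x,y)\asymp \Lambda^{-m(x,y)}$, where $m(x,y)$ is the combinatorial quantity you describe --- is the right one, since that comparison is essentially the definition of a visual metric in \cite{bonk2017expanding}, and it is how the cited source proceeds.

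There is, however, a genuine gap in your argument for \textup{(i)}. You claim that if $\sigma$ and $\tau$ are disjoint $n$-cells with $x\in\sigma$ and $y\in\tau$, then any $n$-tiles containing $x$ and $y$ are disjoint, so that $m(x,y)\leqslant n-1$. This is false: take $\sigma=\{v\}$ and $\tau=\{w\}$ to be two distinct $n$-vertices on the boundary of a single $n$-tile $X$; these are disjoint $n$-cells, yet $X$ contains both points, so $m(v,w)\geqslant n$. Lemma~\ref{lem:intersection of cells} does not help here --- it constrains how two cells of the same decomposition can intersect, not which tiles contain a given point. The correct statement, which is what Bonk and Meyer actually prove, is that there is a \emph{uniform} $k_0\in\n_0$, independent of $n$, such that any two $(n+k_0)$-tiles meeting the disjoint $n$-cells $\sigma$ and $\tau$ respectively are themselves disjoint; this gives $m(x,y)\leqslant n+k_0$ and hence \textup{(i)} with $\Lambda^{k_0}$ absorbed into $C$. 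Producing that uniform $k_0$ is a genuine combinatorial step (it is where expansion and the flower structure really enter), and the same ingredient is silently needed in your sketch of \textup{(v)} to separate the chosen $(n+n_1)$-tile from $\partial X^n$, since those are cells of different levels and \textup{(i)} alone does not apply to them. By contrast, you substantially overestimate the difficulty of the converse: if $m=m(x,y)<\infty$, then by maximality of $m$ \emph{every} pair of $(m+1)$-tiles $X\ni x$, $Y\ni y$ is disjoint, and since tiles are themselves $(m+1)$-cells, condition \textup{(i)} applies directly at level $m+1$ to give $\widetilde{d}(x,y)\geqslant C^{-1}\Lambda^{-(m+1)}$; no descent to lower-dimensional cells and no case analysis at vertices is required.
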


Recall $U^n(x)$ is defined in \eqref{eq:Un bouquet of point}.



        

\begin{remark}\label{rem:chordal metric visual metric qs equiv}
    If $f \colon \ccx \mapping \ccx$ is a rational expanding Thurston map, then a visual metric is quasisymmetrically equivalent to the chordal metric on the Riemann sphere $\ccx$ (see \cite[Theorem~18.1~(ii)]{bonk2017expanding}). 
    Here the chordal metric $\sigma$ on $\ccx$ is given by $\sigma (z, w) \define \frac{2\abs{z - w}}{\sqrt{1 + \abs{z}^2} \sqrt{1 + \abs{w}^2}}$ for all $\juxtapose{z}{w} \in \cx$, and $\sigma(\infty, z) = \sigma(z, \infty) \define \frac{2}{\sqrt{1 + \abs{z}^2}}$ for all $z \in \cx$. 
    We also note that quasisymmetric embeddings of bounded connected metric spaces are \holder continuous (see \cite[Section~11.1 and Corollary~11.5]{heinonen2001lectures}). 
    Accordingly, the classes of \holder continuous functions on $\ccx$ equipped with the chordal metric and on $S^2 = \ccx$ equipped with any visual metric for $f$ are the same (up to a change of the \holder exponent).
\end{remark}

A Jordan curve $\mathcal{C} \subseteq S^2$ is \emph{$f$-invariant} if $f(\mathcal{C}) \subseteq \mathcal{C}$. If $\mathcal{C}$ is $f$-invariant with $\post{f} \subseteq \mathcal{C}$, then the cell decompositions $\mathbf{D}^{n}(f, \mathcal{C})$ have nice compatibility properties. In particular, $\mathbf{D}^{n+k}(f, \mathcal{C})$ is a refinement of $\mathbf{D}^{n}(f, \mathcal{C})$, whenever $\juxtapose{n}{k} \in \n_0$. Intuitively, this means that each cell $\mathbf{D}^{n}(f, \mathcal{C})$ is ``subdivided'' by the cells in $\mathbf{D}^{n+k}(f, \mathcal{C})$. A cell $c\in \mathbf{D}^{n}(f, \mathcal{C})$ is actually subdivided by the cells in $\mathbf{D}^{n+k}(f, \mathcal{C})$ ``in the same way'' as the cell $f^n(c) \in \mathbf{D}^{0}(f, \mathcal{C})$ by the cells in $\mathbf{D}^{k}(f, \mathcal{C})$. 

For convenience we record Proposition~12.5~(ii) of \cite{bonk2017expanding} here, which is easy to check but useful. 

\begin{proposition}[M.~Bonk \& D.~Meyer \cite{bonk2017expanding}]    \label{prop:cell decomposition: invariant Jordan curve}
    Let $\juxtapose{k}{n} \in \n_0$, $f \colon S^2 \mapping S^2$ be a Thurston map, and $\mathcal{C} \subseteq S^2$ be an $f$-invariant Jordan curve with $\post{f} \subseteq \mathcal{C}$. Then every $(n+k)$-tile $X^{n+k}$ is contained in a unique $k$-tile $X^k$.
\end{proposition}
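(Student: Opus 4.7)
The plan is to reduce the statement to a purely set-theoretic fact about inclusions of preimages of $\mathcal{C}$, and then read off the tile containment from Proposition~\ref{prop:properties cell decompositions}\ref{item:prop:properties cell decompositions:edge is boundary of tile}, which identifies tiles with closures of connected components of the complement of the $1$-skeleton. There is no real obstacle; the only thing one has to do carefully is convert ``open component sits inside an open component'' into ``closed tile sits inside a closed tile,'' while keeping uniqueness in view.

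First, I would use the $f$-invariance of $\mathcal{C}$ to establish the chain
\[
    \mathcal{C} \subseteq f^{-1}(\mathcal{C}) \subseteq f^{-2}(\mathcal{C}) \subseteq \cdots.
\]
Indeed, $f(\mathcal{C}) \subseteq \mathcal{C}$ gives $\mathcal{C} \subseteq f^{-1}(\mathcal{C})$, and applying $f^{-1}$ repeatedly preserves this inclusion. Applying $f^{-k}$ to $\mathcal{C} \subseteq f^{-n}(\mathcal{C})$ then yields
\[
    f^{-k}(\mathcal{C}) \subseteq f^{-(n+k)}(\mathcal{C}).
\]
By Proposition~\ref{prop:properties cell decompositions}\ref{item:prop:properties cell decompositions:skeletons}, this is exactly the statement that the $1$-skeleton of $\mathbf{D}^{k}(f,\mathcal{C})$ is contained in the $1$-skeleton of $\mathbf{D}^{n+k}(f,\mathcal{C})$.

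Next, fix an $(n+k)$-tile $X^{n+k}$. By Proposition~\ref{prop:properties cell decompositions}\ref{item:prop:properties cell decompositions:edge is boundary of tile}, $\inte X^{n+k}$ is a connected component of $S^{2} \setminus f^{-(n+k)}(\mathcal{C})$. Since $f^{-k}(\mathcal{C}) \subseteq f^{-(n+k)}(\mathcal{C})$, the set $\inte X^{n+k}$ is connected, nonempty, and disjoint from $f^{-k}(\mathcal{C})$, so it is contained in a unique connected component $U$ of $S^{2} \setminus f^{-k}(\mathcal{C})$. Applying Proposition~\ref{prop:properties cell decompositions}\ref{item:prop:properties cell decompositions:edge is boundary of tile} once more, $X^{k} \define \overline{U}$ is a $k$-tile, and
\[
    X^{n+k} = \overline{\inte X^{n+k}} \subseteq \overline{U} = X^{k}.
\]

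Finally, for uniqueness, suppose $X^{n+k} \subseteq \widetilde{X}^{k}$ for some $k$-tile $\widetilde{X}^{k}$. Then $\inte X^{n+k} \subseteq \widetilde{X}^{k}$; since $\inte X^{n+k}$ is open and disjoint from $f^{-k}(\mathcal{C})$, it meets $\inte \widetilde{X}^{k}$. Hence $\inte X^{n+k}$ is a connected subset of $S^{2} \setminus f^{-k}(\mathcal{C})$ meeting the component $\inte \widetilde{X}^{k}$, which forces $\inte X^{n+k} \subseteq \inte \widetilde{X}^{k}$; by the uniqueness of the component $U$ above, $\inte \widetilde{X}^{k} = U$ and therefore $\widetilde{X}^{k} = X^{k}$. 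This completes the proof.
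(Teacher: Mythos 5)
Your proof is correct. Note that the paper does not actually prove this statement---it records it as Proposition~12.5~(ii) of \cite{bonk2017expanding} and cites it without argument---so there is no in-text proof to compare against; your reasoning, which derives the nesting $f^{-k}(\mathcal{C}) \subseteq f^{-(n+k)}(\mathcal{C})$ from $f(\mathcal{C}) \subseteq \mathcal{C}$ and then reads off both existence and uniqueness of the containing $k$-tile from Proposition~\ref{prop:properties cell decompositions}~\ref{item:prop:properties cell decompositions:skeletons} and \ref{item:prop:properties cell decompositions:edge is boundary of tile} (tiles as closures of components of the complement of the $1$-skeleton), is exactly the standard argument and is complete.
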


We are interested in $f$-invariant Jordan curves that contain $\post{f}$, since for such a Jordan curve $\mathcal{C}$, we get a cellular Markov partition $\bigl( \mathbf{D}^{1}(f,\mathcal{C}), \mathbf{D}^{0}(f,\mathcal{C}) \bigr)$ for $f$. 
According to Example~15.11 in \cite{bonk2017expanding}, such $f$-invariant Jordan curves containing $\post{f}$ need not exist. 
However, M.~Bonk and D.~Meyer \cite[Theorem~15.1]{bonk2017expanding} proved that there exists an $f^n$-invariant Jordan curve $\mathcal{C}$ containing $\post{f}$ for each sufficiently large $n$ depending on $f$. We record it below for the convenience of the reader.

\begin{lemma}[M.~Bonk \& D.~Meyer \cite{bonk2017expanding}]    \label{lem:invariant_Jordan_curve}
    Let $f \colon S^2 \mapping S^2$ be an expanding Thurston map, and $\widetilde{\mathcal{C}} \subseteq S^2$ be a Jordan curve with $\post{f} \subseteq \widetilde{\mathcal{C}}$. Then there exists an integer $N(f, \widetilde{\mathcal{C}}) \in \n$ such that for each $n \geqslant N(f,\widetilde{\mathcal{C}})$ there exists an $f^n$-invariant Jordan curve $\mathcal{C}$ isotopic to $\widetilde{\mathcal{C}}$ rel. $\post{f}$.
\end{lemma}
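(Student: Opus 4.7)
Proof plan. The plan is to build $\mathcal{C}$ by iteratively replacing the $0$-edges of $\widetilde{\mathcal{C}}$ with edge-paths in the preimage graph $f^{-n}(\widetilde{\mathcal{C}})$, using the expansion of $f$ as the engine of convergence. Throughout I write $F \define f^n$ and $P \define \post{f} = \post{F}$, and I let $d$ be a visual metric for $f$ with expansion factor $\Lambda > 1$.

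First, I would establish an edge-path substitution step: for each sufficiently large $n$, the $0$-edges $e_1, \ldots, e_m$ of $(f,\widetilde{\mathcal{C}})$ can be replaced by simple arcs $E_1, \ldots, E_m$, each a concatenation of $n$-edges in $\mathbf{E}^n(f,\widetilde{\mathcal{C}})$, having the same endpoints in $P$ as the corresponding $e_i$, lying in the same isotopy class rel $P$, and pairwise interior-disjoint; their union is then a Jordan curve $\mathcal{C}_1$ contained in the $1$-skeleton of $\mathbf{D}^n(f,\widetilde{\mathcal{C}})$ and isotopic to $\widetilde{\mathcal{C}}$ rel $P$. The existence of such a family follows from Lemma~\ref{lem:visual_metric}: since the $n$-tiles have diameter $O(\Lambda^{-n})$, the graph $f^{-n}(\widetilde{\mathcal{C}})$ approximates $S^2$ finely enough to contain a representative in every prescribed isotopy class of arcs rel $P$ within any prescribed tubular neighborhood of $e_i$. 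Interior-disjointness of the $E_i$ near each vertex $p \in P$ is arranged with the help of the local flower structure of Remark~\ref{rem:flower structure}, which describes how $n$-tiles meet at $p$ in a standard cyclic pattern and lets us read off the allowed combinatorial exits from $p$ along $n$-edges.

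Second, since this construction gives $F(\mathcal{C}_1) \subseteq \widetilde{\mathcal{C}}$ but only $\subseteq \widetilde{\mathcal{C}}$ and not $\subseteq \mathcal{C}_1$, I iterate: given $\mathcal{C}_k$ (viewed as a Jordan curve containing $P$), apply the edge-path substitution to the $0$-edges of $(F,\mathcal{C}_k)$ inside $F^{-1}(\mathcal{C}_k) = f^{-n}(\mathcal{C}_k)$ to produce $\mathcal{C}_{k+1}$. At each step $\mathcal{C}_{k+1}$ lies within the Hausdorff $O(\Lambda^{-n(k+1)})$-neighborhood of $\mathcal{C}_k$ (by Lemma~\ref{lem:visual_metric}), so $\{\mathcal{C}_k\}$ is Cauchy in the Hausdorff metric and converges to a closed set $\mathcal{C}$. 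The limit is still a Jordan curve in the isotopy class of $\widetilde{\mathcal{C}}$ rel $P$ because the intermediate isotopies have exponentially shrinking support and compose. Since by construction $F(\mathcal{C}_{k+1}) \subseteq \mathcal{C}_k$ for every $k$, passing to the Hausdorff limit yields $F(\mathcal{C}) \subseteq \mathcal{C}$, as required.

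The main obstacle is verifying that the limit $\mathcal{C}$ is a genuine Jordan curve isotopic to $\widetilde{\mathcal{C}}$ rel $P$ rather than a degenerate closed set with self-touchings or pinched at postcritical points (where the local degree $\deg_{f^n}$ can be arbitrarily large). The quantitative separation in Lemma~\ref{lem:visual_metric}~\ref{item:lem:visual_metric:distinct cell separated}--\ref{item:lem:visual_metric:diameter of cell}, together with the cyclic flower picture of Remark~\ref{rem:flower structure}, is used to show that each substitution is supported in a uniformly thin collar of the previous curve that avoids all other strands, so the successive isotopies fit together into a global isotopy rel $P$. This is precisely the step that forces $n \geqslant N(f,\widetilde{\mathcal{C}})$: $n$ must be large enough that the first collar is thinner than the minimum separation between distinct $0$-edges and between distinct local branches of $\widetilde{\mathcal{C}}$ at each $p \in P$.
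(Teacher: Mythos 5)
This lemma is quoted verbatim from M.~Bonk and D.~Meyer (\cite[Theorem~15.1]{bonk2017expanding}); the paper gives no proof of its own, so your proposal can only be measured against the known argument. Your broad strategy does match it: one first produces a Jordan curve $\mathcal{C}_1\subseteq f^{-n}(\widetilde{\mathcal{C}})$ containing $\post{f}$ and isotopic to $\widetilde{\mathcal{C}}$ rel $\post{f}$ (your edge-path substitution step, which is essentially their Lemma~15.9/15.10 and is fine as sketched), and then iterates to get nested preimage curves whose limit is the invariant curve. But there are two genuine gaps. The decisive one is the final step: a uniformly convergent sequence of embeddings of a circle need not converge to an embedding, so ``the intermediate isotopies have exponentially shrinking support and compose'' only yields a continuous surjection $\gamma_\infty$ of $\widetilde{\mathcal{C}}$ onto the Hausdorff limit $\mathcal{C}$, not injectivity. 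Injectivity is exactly where the work lies: one must show that for $s\ne t$ the points $\gamma_k(s),\gamma_k(t)$ eventually lie in \emph{disjoint} $nk$-cells, so that their separation $\gtrsim C^{-1}\Lambda^{-nk}$ (Lemma~\ref{lem:visual_metric}~\ref{item:lem:visual_metric:distinct cell separated}) dominates the total remaining displacement $\lesssim \Lambda^{-n(k+1)}$. Without this combinatorial tracking of the parametrization, the limit could a priori be pinched, precisely at the postcritical points you worry about, and your appeal to ``uniformly thin collars'' does not rule this out.

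The second issue is the iteration mechanism itself. The actual proof does not re-run the combinatorial substitution at each stage; it constructs only the \emph{first} isotopy $H^0$ rel $\post{f}$ deforming $\mathcal{C}_0=\widetilde{\mathcal{C}}$ into $\mathcal{C}_1$, and then repeatedly \emph{lifts} it under $f^n$ to isotopies $H^{k+1}$ with $f^n\circ H^{k+1}_t=H^{k}_t\circ f^n$, setting $\mathcal{C}_{k+1}\define H^{k}_1(\mathcal{C}_{k})$. This automatically guarantees that each $\mathcal{C}_{k+1}$ is a Jordan curve (it is a homeomorphic image of $\mathcal{C}_k$), that $\mathcal{C}_{k+1}\subseteq f^{-n}(\mathcal{C}_k)$, and that the track of $H^{k}$ is contained in preimage cells and hence decays geometrically. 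Your version must instead re-establish, at every stage $k$, that the substituted arcs are simple, pairwise interior-disjoint near each $p\in\post{f}$ where $\deg_{f^{nk}}(p)$ grows without bound, and in the correct isotopy class rel $\post{f}$; the scale separation needed for this is plausibly uniform in $k$ (disjoint $nk$-cells are $\gtrsim\Lambda^{-nk}$ apart while the new collar has width $\lesssim\Lambda^{-n(k+1)}$), but you assert rather than verify it, and you lose the compatibility with $f^n$ that makes the lifted isotopies compose into a single convergent deformation. As written, the proposal identifies the right skeleton but leaves unproved exactly the two points on which the theorem actually rests.
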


The following distortion lemma for expanding Thurston maps follows immediately from \cite[Lemma~5.1]{li2018equilibrium}.

\begin{lemma}    \label{lem:distortion_lemma}
    Let $f \colon S^2 \mapping S^2$ be an expanding Thurston map, and $\mathcal{C} \subseteq S^2$ be a Jordan curve that satisfies $\post{f} \subseteq \mathcal{C}$ and $f^{n_{\mathcal{C}}}(\mathcal{C}) \subseteq \mathcal{C}$ for some $n_{\mathcal{C}} \in \n$. 
    Let $d$ be a visual metric on $S^2$ for $f$ with expansion factor $\Lambda > 1$.
    Let $\potential \in \holderspacesphere$ be a real-valued \holder continuous function with an exponent $\holderexp \in (0, 1]$.
    Then there exists a constant $C_1 \geqslant 0$ depending only on $f$, $\mathcal{C}$, $d$, $\phi$, and $\holderexp$ such that for all $n \in \n_0$, $X^n \in \Tile{n}$, and $\juxtapose{x}{y} \in X^n$,
    \begin{equation}    \label{eq:distortion_lemma}
        \left| S_n\phi(x) - S_n\phi(y) \right| \leqslant C_1 d(f^n(x), f^n(y))^{\holderexp} \leqslant \Cdistortion.
    \end{equation}
    Quantitatively, we choose
    \begin{equation}     \label{eq:const:C_1}
        C_1 \define C_0  \holderseminorm{\potential}{S^2}  \big/ \bigl( 1 - \Lambda^{-\holderexp} \bigr) ,
    \end{equation}
    where $C_0 > 1$ is a constant depending only on $f$, $\mathcal{C}$, and $d$.
\end{lemma}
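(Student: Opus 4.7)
The plan is to decompose $S_n\phi(x) - S_n\phi(y)$ as a telescoping sum along the orbit, control each term using \holder continuity of $\phi$, estimate each distance $d(f^j(x), f^j(y))$ via the cellular structure, and sum a geometric series whose ratio $\Lambda^{-\holderexp}$ produces the denominator $1 - \Lambda^{-\holderexp}$ in the formula for $C_1$. The $n = 0$ case is trivial since $S_0\phi \equiv 0$, so assume $n \geqslant 1$.

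By Proposition~\ref{prop:properties cell decompositions}~\ref{item:prop:properties cell decompositions:cellular}, for each $j \in \{0, 1, \ldots, n\}$ the map $f^j$ is cellular for $(\mathbf{D}^n(f, \mathcal{C}), \mathbf{D}^{n-j}(f, \mathcal{C}))$, so $f^j(X^n)$ is an $(n-j)$-tile and $f^j$ restricts to a homeomorphism from $X^n$ onto it. In particular, $f^j(x)$ and $f^j(y)$ both lie in the common $(n - j)$-tile $f^j(X^n)$. The key geometric ingredient I will need is a backward distortion estimate of the form
\begin{equation}    \label{eq:backward distortion proposal}
    d(f^j(x), f^j(y)) \leqslant C_0 \Lambda^{-(n-j)} d(f^n(x), f^n(y))
\end{equation}
for every $j \in \{0, 1, \ldots, n-1\}$, with $C_0 > 1$ depending only on $f$, $\mathcal{C}$, and $d$. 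The anticipated inputs for \eqref{eq:backward distortion proposal} are the quantitative tile-scaling properties in Lemma~\ref{lem:visual_metric} applied to the inverse branch $\bigl(f^{n-j}|_{f^j(X^n)}\bigr)^{-1}$; passing to the iterate $F \define f^{n_\mathcal{C}}$ (for which $\mathcal{C}$ is invariant) so that the nested refinement in Proposition~\ref{prop:cell decomposition: invariant Jordan curve} applies should yield a clean induction on the level. Establishing \eqref{eq:backward distortion proposal} is the one step that requires genuine work, and is the main technical obstacle; the weaker bound $d(f^j(x), f^j(y)) \leqslant C \Lambda^{-(n - j)}$ from Lemma~\ref{lem:visual_metric}~\ref{item:lem:visual_metric:diameter of cell} would only give a constant bound on $|S_n \phi(x) - S_n\phi(y)|$ and not the sharper form involving $d(f^n(x), f^n(y))^\holderexp$.

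Given \eqref{eq:backward distortion proposal}, the remainder is a direct computation. By the triangle inequality and \holder continuity of $\phi$,
\begin{align*}
    |S_n\phi(x) - S_n\phi(y)|
        &\leqslant \sum_{j = 0}^{n - 1} |\phi(f^j(x)) - \phi(f^j(y))| \leqslant \holderseminorm{\phi}{S^2} \sum_{j = 0}^{n - 1} d(f^j(x), f^j(y))^{\holderexp} \\
        &\leqslant C_0^{\holderexp} \holderseminorm{\phi}{S^2} d(f^n(x), f^n(y))^{\holderexp} \sum_{k = 1}^{n} \Lambda^{-k \holderexp} \\
        &\leqslant \frac{C_0^{\holderexp} \holderseminorm{\phi}{S^2}}{1 - \Lambda^{-\holderexp}} d(f^n(x), f^n(y))^{\holderexp},
\end{align*}
which yields the first inequality in \eqref{eq:distortion_lemma} in the form dictated by \eqref{eq:const:C_1}, after absorbing $C_0^{\holderexp}$ into the constant $C_0$ of the statement. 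The second inequality is immediate: $f^n(x)$ and $f^n(y)$ both lie in the $0$-tile $f^n(X^n)$, so $d(f^n(x), f^n(y))^{\holderexp}$ is bounded above by a constant depending only on $d$ and $\mathcal{C}$, and multiplying this by $C_1$ gives a bound $\Cdistortion$ depending on $f$, $\mathcal{C}$, $d$, $\phi$, and $\holderexp$.
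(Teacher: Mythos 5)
Your skeleton — telescoping $S_n\phi(x)-S_n\phi(y)$ along the orbit, applying \holder continuity termwise, and summing the geometric series in $\Lambda^{-\holderexp}$ to produce the denominator $1-\Lambda^{-\holderexp}$ — is exactly the standard argument, and it is the argument behind the result the paper actually invokes: the paper proves nothing here, deferring entirely to \cite[Lemma~5.1]{li2018equilibrium}. Your final computation is also correct in its details, including the absorption of $C_0^{\holderexp}$ into $C_0$ (harmless since $C_0>1$ and $\holderexp\leqslant 1$) and the derivation of the second inequality in \eqref{eq:distortion_lemma} from the fact that $f^n(x)$ and $f^n(y)$ lie in the $0$-tile $f^n(X^n)$, whose diameter is controlled by Lemma~\ref{lem:visual_metric}~\ref{item:lem:visual_metric:diameter of cell}.

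The genuine gap is the backward distortion estimate $d(f^j(x),f^j(y))\leqslant C_0\,\Lambda^{-(n-j)}\,d(f^n(x),f^n(y))$, which you rightly identify as the crux but do not prove, and whose sketched derivation points in the wrong direction. This estimate is a known two-sided comparison for visual metrics (it is \cite[Lemma~8.12]{bonk2017expanding}, restated and used in the proof of \cite[Lemma~5.1]{li2018equilibrium}), but it is \emph{not} among the properties quoted in Lemma~\ref{lem:visual_metric} of this paper, and it does not follow from the tile-diameter bounds of Lemma~\ref{lem:visual_metric}~\ref{item:lem:visual_metric:diameter of cell} or from passing to the iterate $f^{n_{\mathcal{C}}}$ and invoking the refinement structure of Proposition~\ref{prop:cell decomposition: invariant Jordan curve}: the diameter bounds only yield $d(f^j(x),f^j(y))\leqslant C\Lambda^{-(n-j)}$ with no dependence on $d(f^n(x),f^n(y))$, which, as you yourself observe, is too weak, and the invariant curve plays no role in the correct proof. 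The standard derivation instead uses the bouquet--ball comparisons of Lemma~\ref{lem:visual_metric}~\ref{item:lem:visual_metric:bouquet bounded by ball}: one chooses $k\in\n_{0}$ essentially maximal with $f^{n}(y)\in U^{k}(f^{n}(x))$, so that $d(f^{n}(x),f^{n}(y))\geqslant K^{-1}\Lambda^{-(k+1)}$, and pulls the witnessing pair of intersecting $k$-tiles back under the inverse branches of $f^{n-j}$ through $f^{j}(X^{n})$ to conclude $f^{j}(y)\in U^{k+n-j}(f^{j}(x))$, hence $d(f^{j}(x),f^{j}(y))\leqslant K\Lambda^{-(k+n-j)}\leqslant K^{2}\Lambda\cdot\Lambda^{-(n-j)}d(f^{n}(x),f^{n}(y))$. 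With that lemma supplied, or simply cited, your proof closes; without it, the proposal establishes only the constant bound $\Cdistortion$ and not the sharper first inequality in \eqref{eq:distortion_lemma}.
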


We summarize the existence, uniqueness, and some basic properties of equilibrium states for expanding Thurston maps in the following theorem.
\begin{theorem}[Z.~Li \cite{li2018equilibrium}]     \label{thm:properties of equilibrium state}
    Let $f \colon S^2 \mapping S^2$ be an expanding Thurston map and $d$ a visual metric on $S^2$ for $f$. 
    Let $\juxtapose{\phi}{\gamma} \in C^{0,\holderexp}(S^2,d)$ be real-valued \holder continuous functions with an exponent $\holderexp \in (0,1]$. 
    Then the following statements are satisfied:
    \begin{enumerate}[label=\rm{(\roman*)}]
        \smallskip
        
        \item     \label{item:thm:properties of equilibrium state:existence and uniqueness}
        There exists a unique equilibrium state $\mu_{\phi}$ for the map $f$ and the potential $\phi$.

        

        \smallskip
        
        \item     \label{item:thm:properties of equilibrium state:edge measure zero}
        If $\mathcal{C} \subseteq S^2$ is a Jordan curve containing $\post{f}$ with the property that $f^{n_{\mathcal{C}}}(\mathcal{C}) \subseteq \mathcal{C}$ for some $n_{\mathcal{C}} \in \n$, then $\mu_{\phi} \bigl( \bigcup_{i=0}^{+\infty} f^{-i}(\mathcal{C}) \bigr) = 0$.
    \end{enumerate}
\end{theorem}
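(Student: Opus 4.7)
My plan is to use the Ruelle--Perron--Frobenius transfer operator framework adapted to the cell decompositions $\mathbf{D}^{n}(f,\mathcal{C})$ of an expanding Thurston map, leveraging the uniform tile geometry provided by the visual metric (Lemma~\ref{lem:visual_metric}) and the distortion control from Lemma~\ref{lem:distortion_lemma}. After replacing $\phi$ with $\phi - P(f,\phi)$, I assume without loss of generality that $P(f,\phi) = 0$, which does not alter the set of equilibrium states.

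For part~(i), I would introduce the transfer operator $\mathcal{L}_{\phi} \colon C(S^{2}) \mapping C(S^{2})$ defined by
\[
    (\mathcal{L}_{\phi} u)(x) \define \sum_{y \in f^{-1}(x)} \deg_{f}(y) \myexp{\phi(y)} u(y),
\]
whose $n$-th iterate is a sum over preimages weighted by $\myexp{S_{n}\phi}$. The distortion bound \eqref{eq:distortion_lemma} yields a Lasota--Yorke type inequality on $\holderspacesphere$, and an Ionescu-Tulcea--Marinescu argument then produces a simple leading eigenvalue $\lambda > 0$ with a strictly positive Hölder eigenfunction $h$ and a Borel probability measure $\nu$ satisfying $\mathcal{L}_{\phi}^{*}\nu = \lambda \nu$. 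Normalizing $\int h \,\mathrm{d}\nu = 1$, I set $\mu_{\phi} \define h \nu$ and verify via Rokhlin's formula that $h_{\mu_{\phi}}(f) + \int \phi \,\mathrm{d}\mu_{\phi} = \log \lambda = P(f,\phi)$. A crucial byproduct of the construction is the Gibbs estimate
\[
    C^{-1} \leqslant \mu_{\phi}(X^{n}) \big/ \myexp{S_{n}\phi(y) - n P(f,\phi)} \leqslant C
\]
for every $n \in \n_{0}$, every $X^{n} \in \Tile{n}$, and every $y \in X^{n}$, with $C \geqslant 1$ independent of $n$ and $X^{n}$. Uniqueness then follows from a standard argument: the Gibbs bound forces absolute continuity of any other equilibrium state with respect to $\mu_{\phi}$, while the mixing properties of $\mathcal{L}_{\phi}$ on $\holderspacesphere$ yield ergodicity of $\mu_{\phi}$, so the two equilibrium states must coincide.

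For part~(ii), the Gibbs bound, combined with comparability of tile diameters at levels $n$ and $n+k$ from Lemma~\ref{lem:visual_metric}, forces every $n$-edge to have $\mu_{\phi}$-measure zero, so that $\mu_{\phi}(f^{-n}(\mathcal{C})) = 0$ for every $n \in \n_{0}$ whenever $\mathcal{C}$ lies in the $1$-skeleton of a cell decomposition at some level. Under $f^{n_{\mathcal{C}}}(\mathcal{C}) \subseteq \mathcal{C}$, the union $\bigcup_{i=0}^{+\infty} f^{-i}(\mathcal{C})$ is a countable union of such null sets (grouped by blocks of $n_{\mathcal{C}}$ iterations), hence $\mu_{\phi}$-null. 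The main obstacle, as in all such thermodynamic formalism arguments in the presence of branching, is proving the Lasota--Yorke inequality and extracting a spectral gap in a setting where $\deg_{f^{n}}$ grows exponentially along orbits of periodic critical points; the visual-metric estimates of Lemma~\ref{lem:visual_metric} together with the tile-wise distortion bound \eqref{eq:distortion_lemma} (whose constant depends only on $\phi$, $f$, $\mathcal{C}$, and $d$) are precisely what allow the argument to close uniformly across tiles of all generations, so that the Gibbs bound---and therefore uniqueness and negligibility of the iterated preimages of $\mathcal{C}$---goes through.
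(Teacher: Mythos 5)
Note first that the paper does not prove this theorem: part~(i) is quoted from \cite[Theorem~1.1]{li2018equilibrium} and part~(ii) from \cite[Proposition~7.1]{li2018equilibrium}. Your construction of $\mu_{\phi}$ — the degree-weighted transfer operator, the eigenmeasure/eigenfunction pair, and the Gibbs estimate over tiles — does match the route taken in that reference (the Gibbs estimate is exactly Proposition~\ref{prop:equilibrium state is gibbs measure}). The genuine gap is in your uniqueness step. The claim that ``the Gibbs bound forces absolute continuity of any other equilibrium state with respect to $\mu_{\phi}$'' is unjustified: the Gibbs estimate is a property of $\mu_{\phi}$ alone, and to compare a second equilibrium state $\mu$ with it you must first establish a Gibbs-type lower bound for $\mu$ over tiles. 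The Bowen-style derivation of such a bound needs the tiles to form a generating partition for $\mu$ and needs $\tfrac{1}{n}H_{\mu}$ of the tile partitions to compute $h_{\mu}(f)$; by Lemma~\ref{lem:generator} the tiles are a generator only for measures giving zero mass to $\bigcup_{i \geqslant 0}f^{-i}(\mathcal{C})$, and an ergodic equilibrium-state candidate could a priori be concentrated on that set or on a periodic (critical) orbit. These are precisely the measures behind the failure of upper semi-continuity of the entropy map (Theorem~\ref{thm:upper semi-continuous iff no periodic critical points}), so they cannot be dismissed. The cited reference avoids this entirely: uniqueness there is obtained by showing that $t \mapsto P(f, \phi + t\gamma)$ is differentiable at $t = 0$ with derivative $\int \gamma \, \mathrm{d}\mu_{\phi}$ for every H\"older $\gamma$, and noting that each equilibrium state supplies a supporting line of this convex function, so all equilibrium states integrate H\"older functions identically — this is why the otherwise unused second potential $\gamma$ appears in the hypotheses. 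To salvage your route you would have to treat separately the candidates charging $\bigcup_{i}f^{-i}(\mathcal{C})$ and the atomic ones, e.g.\ by a strict pressure gap for the restricted systems.

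A secondary problem is part~(ii). A fixed $n$-edge has diameter comparable to $\Lambda^{-n}$ while $(n+k)$-tiles have diameter comparable to $\Lambda^{-(n+k)}$ (Lemma~\ref{lem:visual_metric}), so covering the edge uses on the order of $\Lambda^{k}$ tiles (more near vertices of large local degree), each of $\mu_{\phi}$-measure at most a constant times $e^{(n+k)(\sup\phi - P(f,\phi))}$. The product need not tend to $0$ as $k \to +\infty$, since $P(f,\phi) - \sup\phi$ can be smaller than $\log\Lambda$. So ``the Gibbs bound forces every $n$-edge to have measure zero'' does not follow from a direct covering count; the proof of \cite[Proposition~7.1]{li2018equilibrium} is more delicate, combining the Gibbs property with invariance and the combinatorics of how $\mathcal{C}$ sits inside the $1$-skeleta. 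Your reduction of $\mu_{\phi}\bigl(\bigcup_{i=0}^{+\infty} f^{-i}(\mathcal{C})\bigr) = 0$ to the vanishing of the measure of the edges is, however, correct.
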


Theorem~\ref{thm:properties of equilibrium state}~\ref{item:thm:properties of equilibrium state:existence and uniqueness} is part of \cite[Theorem~1.1]{li2018equilibrium}. 
Theorem~\ref{thm:properties of equilibrium state}~\ref{item:thm:properties of equilibrium state:edge measure zero} was established in \cite[Proposition~7.1]{li2018equilibrium}.

Let $f \colon S^2 \mapping S^2$ be an expanding Thurston map, $\mathcal{C} \subseteq S^2$ be a Jordan curve containing $\post{f}$, and $\varphi \in C(S^2)$ be a real-valued continuous function. 
We now define the Gibbs measures with respect to $f$, $\mathcal{C}$, and $\varphi$.
\begin{definition}[Gibbs measures]    \label{def:gibbs measure}
    A Borel probability measure $\mu \in \mathcal{P}(S^2)$ is a \emph{Gibbs measure} with respect to $f$, $\mathcal{C}$, and $\varphi$ if there exist constants $P_{\mu} \in \real$ and $C_{\mu} \geqslant 1$ such that for each $n\in \n_0$, each $n$-tile $X^n \in \mathbf{X}^n(f,\mathcal{C})$, and each $x \in X^n$, we have
    \begin{equation*}
        \frac{1}{C_{\mu}} \leqslant \frac{\mu(X^n)}{\myexp{S_{n}\varphi(x) - nP_{\mu}}} \leqslant C_{\mu}.
    \end{equation*}
\end{definition}

One observes that for each Gibbs measure $\mu$ with respect to $f$, $\mathcal{C}$, and $\varphi$, the constant $P_{\mu}$ is unique. Actually, the equilibrium state $\mu_{\phi}$ is an $f$-invariant Gibbs measure with respect to $f$, $\mathcal{C}$, and $\phi$, with $P_{\mu_{\phi}} = P(f,\phi)$ (see \cite[Theorem~5.16, Proposition~5.17]{li2018equilibrium}). 
We record this result below for the convenience of the reader.
\begin{proposition}[Z.~Li \cite{li2018equilibrium}]    \label{prop:equilibrium state is gibbs measure}
     Let $f \colon S^2 \mapping S^2$ be an expanding Thurston map and $\mathcal{C} \subseteq S^2$ be a Jordan curve containing $\post{f}$ with the property that $f^{n_{\mathcal{C}}}(\mathcal{C}) \subseteq \mathcal{C}$ for some $n_{\mathcal{C}} \in \n$. Let $d$ be a visual metric on $S^2$ for $f$ and $\phi \in C^{0,\holderexp}(S^2,d)$ be a real-valued \holder continuous function with an exponent $\holderexp \in (0,1]$. 
     Then the equilibrium state $\mu_{\phi}$ for $f$ and $\phi$ is a Gibbs measure with respect to $f$, $\mathcal{C}$, and $\phi$, with the constant $P_{\mu_{\phi}} = P(f, \phi)$, i.e., there exists a constant $C_{\mu_{\phi}} \geqslant 1$ such that for each $n\in \n_0$, each $n$-tile $X^n \in \mathbf{X}^n(f,\mathcal{C})$, and each $x \in X^n$, we have
     \begin{equation}    \label{eq:equilibrium state is gibbs measure}
         \frac{1}{C_{\mu_{\phi}}} \leqslant \frac{\mu_{\phi}(X^n)}{\myexp{ S_{n}\phi(x) - n P(f,\phi) }} \leqslant C_{\mu_{\phi}}.
     \end{equation}
\end{proposition}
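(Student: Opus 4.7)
The plan is to realize $\mu_\phi$ as $h\,\nu$, where $\nu$ is a conformal-type reference measure and $h$ is a strictly positive H\"older continuous density, both obtained from the Ruelle transfer operator
\[
    \mathcal{L}_\phi u(y) \define \sum_{x \in f^{-1}(y)} \deg_f(x)\, e^{\phi(x)} u(x), \qquad u \in C(S^2).
\]
A Schauder--Tychonoff fixed-point argument, whose equicontinuity input is the bounded distortion estimate of Lemma~\ref{lem:distortion_lemma} combined with the tile shrinking of Lemma~\ref{lem:visual_metric}~\ref{item:lem:visual_metric:diameter of cell}, produces both a Borel probability measure $\nu$ with $\mathcal{L}_\phi^* \nu = e^{P(f,\phi)} \nu$ and a strictly positive continuous eigenfunction $h$ with $\mathcal{L}_\phi h = e^{P(f,\phi)} h$, normalized so that $\int \! h\, \mathrm{d}\nu = 1$. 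A direct computation shows $h\,\nu$ is $f$-invariant, and by the Variational Principle and the uniqueness of Theorem~\ref{thm:properties of equilibrium state}~\ref{item:thm:properties of equilibrium state:existence and uniqueness}, it coincides with $\mu_\phi$.

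The core step is to establish the Gibbs property for $\nu$. Iterating the eigenmeasure identity,
\[
    \nu(X^n) \;=\; e^{-n P(f,\phi)} \int \! \mathcal{L}_\phi^n \mathbf{1}_{X^n}\, \mathrm{d}\nu, \qquad n \in \n_0,\ X^n \in \mathbf{X}^n(f,\mathcal{C}).
\]
By Proposition~\ref{prop:properties cell decompositions}~\ref{item:prop:properties cell decompositions:cellular}, $f^n|_{X^n}$ is a homeomorphism onto a $0$-tile $X^0 \define f^n(X^n)$, and by forward invariance of $\post{f}$ one has $\crit{f^n} \subseteq f^{-n}(\mathcal{C})$, i.e., every critical point of $f^n$ lies in the $1$-skeleton of $\mathbf{D}^n(f,\mathcal{C})$. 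Consequently, for $y \in \interior{X^0}$ the unique preimage $x_y \define (f^n|_{X^n})^{-1}(y)$ lies in $\interior{X^n}$ with $\deg_{f^n}(x_y) = 1$, so $(\mathcal{L}_\phi^n \mathbf{1}_{X^n})(y) = e^{S_n\phi(x_y)}$. Since $\mu_\phi \bigl( \bigcup_{i \geqslant 0} f^{-i}(\mathcal{C}) \bigr) = 0$ by Theorem~\ref{thm:properties of equilibrium state}~\ref{item:thm:properties of equilibrium state:edge measure zero} and $h > 0$, the same holds for $\nu$, so the pointwise identity is valid $\nu$-almost everywhere on $X^0$. Combining with the distortion bound $|S_n\phi(x_y) - S_n\phi(x)| \leqslant C_1$ of Lemma~\ref{lem:distortion_lemma} (for every $x \in X^n$) yields
\[
    e^{-C_1} e^{S_n\phi(x)}\, \nu(X^0) \;\leqslant\; e^{n P(f,\phi)}\, \nu(X^n) \;\leqslant\; e^{C_1} e^{S_n\phi(x)}\, \nu(X^0).
\]
Both $0$-tiles have strictly positive $\nu$-measure, since $\mathcal{L}_\phi \mathbf{1}_{X^0_\bullet}$ is continuous and strictly positive on $S^2$, so this is the Gibbs bound for $\nu$.

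The estimate now transfers to $\mu_\phi$. Since $h$ is continuous and strictly positive on the compact space $S^2$, there exist $0 < c_h \leqslant C_h < +\infty$ with $c_h \leqslant h \leqslant C_h$, so $c_h\,\nu(X^n) \leqslant \mu_\phi(X^n) \leqslant C_h\,\nu(X^n)$, which combined with the previous display gives \eqref{eq:equilibrium state is gibbs measure} with $C_{\mu_\phi} \define (C_h/c_h)\, e^{C_1} / \min\{\nu(X^0_\white), \nu(X^0_\black)\}$.

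The main obstacle is the first step: constructing $h$ as a \emph{strictly positive} H\"older eigenfunction of $\mathcal{L}_\phi$ with explicit regularity bounds. The presence of periodic critical points introduces large local-degree factors into the kernel of $\mathcal{L}_\phi$ that must be balanced against the distortion of Birkhoff sums by carefully using the flower structure of Remark~\ref{rem:flower structure}. Once this eigenfunction is in hand with the expected regularity, the remainder of the argument is the tile-by-tile computation above.
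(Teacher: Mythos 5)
This proposition is not proved in the paper; it is quoted from \cite{li2018equilibrium} (Theorem~5.16 and Proposition~5.17 there), and the argument given in that reference is exactly the one you outline: build an eigenmeasure $\nu$ of $\mathcal{L}_{\phi}^{*}$ and a strictly positive continuous eigenfunction $h$, derive the Gibbs inequality for $\nu$ tile by tile from the eigenmeasure identity and the bounded distortion of Lemma~\ref{lem:distortion_lemma}, and transfer it to $\mu_{\phi} = h\nu$. Your central computation (the identity $\nu(X^n) = e^{-nP(f,\phi)}\int \mathcal{L}_{\phi}^{n}\indicator{X^n}\,\mathrm{d}\nu$, the observation that $\crit{f^{n}} \subseteq f^{-n}(\post{f}) \subseteq f^{-n}(\mathcal{C})$ so that the local degree factor disappears off the $1$-skeleton, and the distortion estimate) is correct. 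Two points, however, do not hold up as written. First, $\mathcal{L}_{\phi}\indicator{X^{0}_{\colour}}$ is neither continuous nor necessarily strictly positive: when $\mathcal{C}$ is not $f$-invariant there is no reason for every point of $S^2$ to have an $f$-preimage in a given $0$-tile. The standard fix is to note that some $0$-tile has $\nu$-measure at least $1/2$, so by your level-$n$ Gibbs bound every $n$-tile mapping onto that $0$-tile has positive $\nu$-measure, and then primitivity (Remark~\ref{rem:expanding Thurston map is strongly primitive subsystem of itself} and Lemma~\ref{lem:strongly primitive:tile in interior tile for high enough level}) places such an $n$-tile inside the other $0$-tile. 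Second, and more substantially, the identification of the eigenvalue with $e^{P(f,\phi)}$ and of $h\nu$ with the equilibrium state — i.e., the verification that $h\nu$ actually attains the supremum in the Variational Principle — is the most laborious part of \cite{li2018equilibrium} and cannot be dispatched by ``a direct computation plus uniqueness''; as your sketch stands, the appeal to Theorem~\ref{thm:properties of equilibrium state}~\ref{item:thm:properties of equilibrium state:existence and uniqueness} only yields $\mu_{\phi} = h\nu$ once one already knows $h\nu$ is an equilibrium state. If those two inputs are granted (as they are in the cited source), the rest of your argument correctly yields \eqref{eq:equilibrium state is gibbs measure}.
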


We next introduce pair structures associated with tile structures induced by an expanding Thurston map.
We refer the reader to \cite[Section~7.2]{shi2023thermodynamic} for details.

\begin{definition}[Pair structures]    \label{def:pair structures}
    Let $f \colon S^2 \mapping S^2$ be an expanding Thurston map with a Jordan curve $\mathcal{C}\subseteq S^2$ satisfying $\post{f} \subseteq \mathcal{C}$.
    Fix an arbitrary $0$-edge $e^0 \in \mathbf{E}^0(f,\mathcal{C})$. 
    For each $n\in \n$, we can pair a white $n$-tile $X^n_{\white} \in \mathbf{X}^n_{\white}$ and a black $n$-tile $X^n_{\black} \in \mathbf{X}^n_{\black}$ whose intersection $X^n_{\white} \cap X^n_{\black}$ contains an $n$-edge contained in $f^{-n}(e^0)$. 
    We define the \emph{set of $n$-pairs} (with respect to $f$, $\mathcal{C}$, and $e^{0}$), denoted by $\mathbf{P}^n(f,\mathcal{C},e^0)$, to be the collection of the union $X^n_{\white} \cup X^n_{\black}$ of such pairs (called \emph{$n$-pairs}), i.e.,
    \begin{equation}    \label{eq:definition of n-pairs}
        \mathbf{P}^n(f,\mathcal{C}, e^0) \define \bigl\{ X^n_{\white} \cup X^n_{\black} \describe X^n_{\white} \in \mathbf{X}^n_{\white}, \, X^n_{\black} \in \mathbf{X}^n_{\black}, \, X^n_{\white} \cap X^n_{\black} \cap f^{-n}\bigl(e^0\bigr) \in \mathbf{E}^n(f,\mathcal{C}) \bigr\}.
    \end{equation}
\end{definition}
Figure~\ref{fig:pair example} illustrates the structure of $n$-pairs. 
Note that there are a total of $(\deg f)^n$ such pairs and each $n$-tile is in exactly one such pair (see Lemma~\ref{lem:pairs are disjoint}).

\begin{figure}[H]
    \centering
    \vspace*{.5cm}
    \begin{overpic}
        [width=12cm, tics=5]{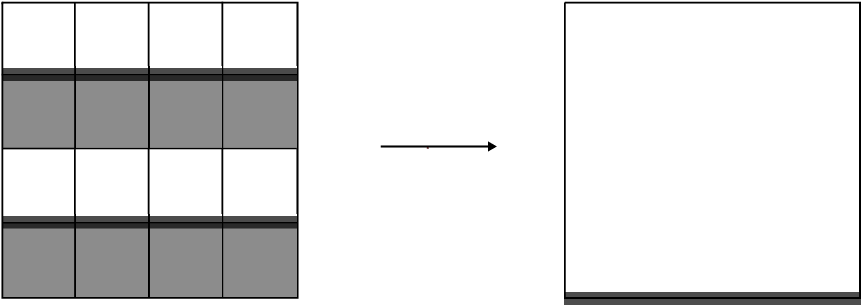}
        \put(13,38){$\mathbf{D}^n(f,\mathcal{C})$}
        \put(78,38){$\mathbf{D}^0(f,\mathcal{C})$}
        \put(49,22){$f^n$}
        \put(82,-3.5){$e^0$}
    \end{overpic}
    \vspace{.1cm}
    \caption{The graph of $n$-pairs.}
    \label{fig:pair example}
\end{figure} 


From now on, if the map $f$, the Jordan curve $\mathcal{C}$, and the $0$-edge $e^0$ are clear from the context, we will sometimes omit $(f,\mathcal{C}, e^0)$ in the notation above.

We record \cite[Lemma~7.6]{shi2023thermodynamic} here. 
\begin{lemma}[Z.~Li, X.~Shi, Y.~Zhang \cite{shi2023thermodynamic}]    \label{lem:pairs are disjoint}
    Let $f \colon S^2 \mapping S^2$ be an expanding Thurston map with a Jordan curve $\mathcal{C}\subseteq S^2$ satisfying $\post{f} \subseteq \mathcal{C}$.
    Fix an arbitrary $0$-edge $e^0 \in \mathbf{E}^0(f,\mathcal{C})$. 
    Then for each $n\in\n$ and any two distinct $n$-pairs $\juxtapose{P^n}{\widetilde{P}^n} \in \mathbf{P}^n$, their interiors are disjoint.
\end{lemma}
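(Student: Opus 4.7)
The plan is to show that if two $n$-pairs $P^n = X^n_{\white} \cup X^n_{\black}$ and $\widetilde{P}^n = \widetilde{X}^n_{\white} \cup \widetilde{X}^n_{\black}$ have overlapping interiors, then they must coincide. The argument rests on two observations: first, each $n$-tile belongs to exactly one $n$-pair; and second, a local analysis at any point $p \in \inte{P^n} \cap \inte{\widetilde{P}^n}$ using the cell structure of $\mathbf{D}^n(f, \mathcal{C})$.

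First I would establish uniqueness of the pairing. By Proposition~\ref{prop:properties cell decompositions}~\ref{item:prop:properties cell decompositions:cellular}, the restriction $f^n|_{X^n_{\white}}$ is a homeomorphism onto $X^0_{\white}$, sending the $m = \card{\post{f}}$ boundary $n$-edges of $X^n_{\white}$ bijectively to the $m$ $0$-edges in $\mathcal{C} = \partial X^0_{\white}$. Consequently, exactly one boundary $n$-edge of $X^n_{\white}$, call it $e^n_*$, is mapped to $e^0$. Each $n$-edge is shared by exactly two $n$-tiles (Remark~\ref{rem:intersection of two tiles}), and since these two tiles map under $f^n$ to the two $0$-tiles on opposite sides of $e^0$, one is white and the other black. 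Hence there is a unique $n$-pair containing $X^n_{\white}$, and symmetrically a unique one containing any given black $n$-tile.

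Next I would argue by contradiction: assume $P^n \ne \widetilde{P}^n$ but $p \in \inte{P^n} \cap \inte{\widetilde{P}^n}$, and fix a small open neighborhood $U_p$ of $p$ contained in $P^n \cap \widetilde{P}^n$. The proof then splits into three cases according to where $p$ sits in $\mathbf{D}^n(f, \mathcal{C})$. If $p \in \inte{Y^n}$ for some $n$-tile $Y^n$, then $Y^n$ is the unique $n$-tile whose interior contains $p$ (as the interiors of distinct tiles are disjoint by Definition~\ref{def:cell decomposition}), so $Y^n$ belongs to both pairs, and the uniqueness established above forces $P^n = \widetilde{P}^n$. If $p$ lies in the interior of an $n$-edge $e^n$, then a small enough $U_p$ meets the interiors of only the two $n$-tiles sharing $e^n$; since $U_p \subseteq P^n \cap \widetilde{P}^n$, these two tiles must simultaneously be $\{X^n_{\white}, X^n_{\black}\}$ and $\{\widetilde{X}^n_{\white}, \widetilde{X}^n_{\black}\}$, giving $P^n = \widetilde{P}^n$. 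Finally, if $p$ is an $n$-vertex, then by Remark~\ref{rem:flower structure} the flower $W^n(p)$ is an open neighborhood of $p$ whose closure is the union of the $2\deg_{f^n}(p) \geqslant 2$ distinct $n$-tiles incident to $p$; all such tiles have $p$ as a boundary vertex, so each of them has interior points arbitrarily close to $p$, hence each must lie in $\{X^n_{\white}, X^n_{\black}\}$. This forces $\deg_{f^n}(p) = 1$ and $\{X^n_{\white}, X^n_{\black}\} = \{\widetilde{X}^n_{\white}, \widetilde{X}^n_{\black}\}$, again yielding $P^n = \widetilde{P}^n$.

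I do not anticipate any substantive obstacle. The only mildly delicate step is verifying that in each case every $n$-tile whose interior meets a small enough neighborhood of $p$ must be among the constituent tiles of both pairs; this follows immediately from the disjointness of distinct tile interiors together with Lemma~\ref{lem:intersection of cells}. The vertex subcase uses the flower structure of Remark~\ref{rem:flower structure}, but this is a purely local statement and presents no difficulty once the preceding observation about pair-membership of nearby tiles is in place.
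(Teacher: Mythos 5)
Your argument is correct. Note that the paper you are working from does not actually prove this lemma---it is quoted verbatim from \cite[Lemma~7.6]{shi2023thermodynamic}---so there is no in-paper proof to compare against; your write-up is a valid self-contained substitute. The two pillars are sound: the uniqueness of the pairing (exactly one boundary $n$-edge of each $n$-tile maps under $f^n$ to $e^0$, and the two tiles sharing that edge have opposite colors), and the local analysis showing that any tile whose interior meets $\inte{P^n}$ must be a constituent of $P^n$ via Lemma~\ref{lem:intersection of cells}~\ref{item:lem:intersection of cells:intersection with union of tiles}. One remark: the three-case analysis at $p$ can be collapsed. Since $\inte[\big]{X^n_{\white}} \cup \inte[\big]{X^n_{\black}}$ is dense in $P^n$ (each tile is the closure of its interior), the non-empty open set $\inte{P^n} \cap \inte[\big]{\widetilde{P}^n}$ already meets $\inte[\big]{X^n_{\colour}}$ for some color $\colour$; then Lemma~\ref{lem:intersection of cells}~\ref{item:lem:intersection of cells:intersection with union of tiles} applied to $X^n_{\colour}$ and $\widetilde{P}^n$ gives a common tile of the two pairs directly, and your uniqueness-of-pairing step finishes. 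This avoids the edge and vertex cases entirely, though your versions of those cases are also correct (in particular the flower count $2\deg_{f^n}(p)$ in the vertex case is used appropriately).
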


We record the following lemma from \cite[Lemma~7.9]{shi2023thermodynamic}. 
Recall from \eqref{eq:Un bouquet of point} that $U^{n}(x)$ is the \emph{$n$-bouquet of $x$}.

\begin{lemma}[Z.~Li, X.~Shi, Y.~Zhang \cite{shi2023thermodynamic}]    \label{lem:pair neighbor disjoint with Jordan curve}
    Let $f \colon S^2 \mapping S^2$ be an expanding Thurston map with a Jordan curve $\mathcal{C}\subseteq S^2$ satisfying $\post{f} \subseteq \mathcal{C}$.
    Let $d$ be a visual metric on $S^2$ for $f$.
    Fix an arbitrary $0$-edge $e^0 \in \mathbf{E}^0(f,\mathcal{C})$.
    We assume in addition that $f(\mathcal{C}) \subseteq \mathcal{C}$. 
    Then there exists an integer $M \in \n$ depending only on $f$, $\mathcal{C}$, $d$, and $e^0$ such that for each color $\colour \in \colours$, there exists an $M$-pair $P^{M}_{\colour} \in \mathbf{P}^{M}$ such that for each integer $n \geqslant M$ and each $x \in P^{M}_{\colour}$, we have $U^{n}(x) \subseteq \inte[\big]{X^{0}_{\colour}}$.
\end{lemma}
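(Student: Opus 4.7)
The strategy is to place an $M$-pair deep inside $\inte{X^0_\colour}$ for each color and then use the visual metric's size control on bouquets (Lemma~\ref{lem:visual_metric}~\ref{item:lem:visual_metric:bouquet bounded by ball}) to trap all finer bouquets around the points of this pair inside $\inte{X^0_\colour}$.

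I would first choose, for each color $\colour \in \colours$, a point $p_\colour \in \inte{X^0_\colour}$ and set $r \define \min\bigl\{d(p_\black, \mathcal{C}),\, d(p_\white, \mathcal{C})\bigr\}$, which is strictly positive since $\mathcal{C} = \partial X^0_\colour$ is closed and disjoint from $\inte{X^0_\colour}$. Let $K \geqslant 1$ and $\Lambda > 1$ be the constants supplied by Lemma~\ref{lem:visual_metric}. I would then pick $M \in \n$ large enough that $2K\Lambda^{-M} < r$; the integer $M$ depends only on $f$, $\mathcal{C}$, $d$, and $e^0$ (through $p_\colour$, $r$, $K$, and $\Lambda$), as required.

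Next, for each color $\colour \in \colours$, I would pick any $M$-tile $X^M$ containing $p_\colour$ and take $P^M_\colour \in \mathbf{P}^M$ to be the $M$-pair containing $X^M$, which exists by Definition~\ref{def:pair structures}. Since $P^M_\colour$ is a union of two $M$-tiles meeting at an $M$-edge, both of these tiles intersect $X^M$, and hence $P^M_\colour \subseteq U^M(p_\colour)$ by the definition \eqref{eq:Un bouquet of point} of the bouquet. By Lemma~\ref{lem:visual_metric}~\ref{item:lem:visual_metric:bouquet bounded by ball}, $U^M(p_\colour) \subseteq B_d(p_\colour, K\Lambda^{-M})$, so in particular $U^M(p_\colour) \cap \mathcal{C} = \emptyset$. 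Since $U^M(p_\colour)$ is connected (it is a union of tiles all meeting the common tile $X^M$), is disjoint from $\mathcal{C}$, and contains the point $p_\colour \in \inte{X^0_\colour}$, it must lie entirely in $\inte{X^0_\colour}$, because $S^2 \setminus \mathcal{C}$ decomposes into the two connected components $\inte{X^0_\black}$ and $\inte{X^0_\white}$. In particular $P^M_\colour \subseteq \inte{X^0_\colour}$.

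Finally, I would verify the bouquet condition. Fix any $n \geqslant M$ and any $x \in P^M_\colour$. Lemma~\ref{lem:visual_metric}~\ref{item:lem:visual_metric:bouquet bounded by ball} gives $U^n(x) \subseteq B_d(x, K\Lambda^{-n}) \subseteq B_d(x, K\Lambda^{-M})$, and since $x \in P^M_\colour \subseteq B_d(p_\colour, K\Lambda^{-M})$, the triangle inequality yields $U^n(x) \subseteq B_d(p_\colour, 2K\Lambda^{-M})$, which is disjoint from $\mathcal{C}$ by the choice of $M$. As $U^n(x)$ is connected and contains $x \in \inte{X^0_\colour}$ (established in the previous step), it must in turn lie in $\inte{X^0_\colour}$, giving the desired conclusion. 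There is no serious obstacle in this approach; the only subtlety is that one should invoke the connectedness of the bouquet $U^n(x)$ rather than that of the ball $B_d(p_\colour, 2K\Lambda^{-M})$ in order to conclude containment in a single component of $S^2 \setminus \mathcal{C}$. The invariance hypothesis $f(\mathcal{C}) \subseteq \mathcal{C}$ is not directly used in the size estimates, but it underpins the definition and good behavior of the pair structure.
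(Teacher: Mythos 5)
Your proof is correct: fixing a point $p_\colour$ in each $\inte[\big]{X^0_{\colour}}$, choosing $M$ with $2K\Lambda^{-M} < \min\bigl\{d(p_{\black},\mathcal{C}),\, d(p_{\white},\mathcal{C})\bigr\}$, taking the unique $M$-pair through an $M$-tile containing $p_\colour$, and combining the bouquet--ball comparison of Lemma~\ref{lem:visual_metric}~\ref{item:lem:visual_metric:bouquet bounded by ball} with the connectedness of bouquets is exactly what this lemma needs, and every step checks out (including your correct observation that one must invoke connectedness of $U^{n}(x)$ to place it in a single component of $S^2 \setminus \mathcal{C}$). Note that the paper does not reproduce a proof of this statement --- it is quoted from \cite[Lemma~7.9]{shi2023thermodynamic} --- so there is no in-paper argument to compare against; the only cosmetic remark is that your $M$ also depends on the auxiliary points $p_\colour$, which is harmless since these can be fixed once and for all in terms of $\mathcal{C}$ and $d$.
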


\subsection{Subsystems of expanding Thurston maps}%
\label{sub:Subsystems of expanding Thurston maps}

In this subsection, we review some concepts and results on subsystems of expanding Thurston maps. 
We refer the reader to \cite{shi2023thermodynamic,shi2024uniqueness} for details.

\smallskip

We first introduce the definition of subsystems along with relevant concepts and notations that will be used frequently throughout this paper.
Additionally, we will provide examples to illustrate these ideas.

\begin{definition}    \label{def:subsystems}
    Let $f \colon S^2 \mapping S^2$ be an expanding Thurston map with a Jordan curve $\mathcal{C}\subseteq S^2$ satisfying $\post{f} \subseteq \mathcal{C}$. 
    We say that a map $F \colon \domF \mapping S^2$ is a \emph{subsystem of $f$ with respect to $\mathcal{C}$} if $\domF = \bigcup \mathfrak{X}$ for some non-empty subset $\mathfrak{X} \subseteq \Tile{1}$ and $F = f|_{\domF}$.
    We denote by $\subsystem$ the set of all subsystems of $f$ with respect to $\mathcal{C}$.
    Define \[
        \operatorname{Sub}_{*}(f, \mathcal{C}) \define \{ F \in \subsystem \describe \domF \subseteq F(\domF) \}.
    \]
\end{definition}

Consider a subsystem $F \in \subsystem$. 
For each $n \in \n_0$, we define the \emph{set of $n$-tiles of $F$} to be
\begin{equation}    \label{eq:definition of tile of subsystem}
    \Domain{n} \define \{ X^n \in \Tile{n} \describe X^n \subseteq F^{-n}(F(\domF)) \},
\end{equation}
where we set $F^0 \define \id{S^{2}}$ when $n = 0$. We call each $X^n \in \Domain{n}$ an \emph{$n$-tile} of $F$. 
We define the \emph{tile maximal invariant set} associated with $F$ with respect to $\mathcal{C}$ to be
\begin{equation}    \label{eq:def:limitset}
    \limitset(F, \mathcal{C}) \define \bigcap_{n \in \n} \Bigl( \bigcup \Domain{n} \Bigr), 
\end{equation}
which is a compact subset of $S^{2}$. 
Indeed, $\limitset(F, \mathcal{C})$ is forward invariant with respect to $F$, namely, $F(\limitset(F, \mathcal{C})) \subseteq \limitset(F, \mathcal{C})$ (see Proposition~\ref{prop:subsystem:preliminary properties}~\ref{item:subsystem:properties:limitset forward invariant}). 
We denote by $\limitmap$ the map $F|_{\limitset(F, \mathcal{C})} \colon \limitset(F, \mathcal{C}) \mapping \limitset(F, \mathcal{C})$.

Let $\juxtapose{X^0_{\black}}{X^0_{\white}} \in \mathbf{X}^0(f, \mathcal{C})$ be the black $0$-tile and the white $0$-tile, respectively. 
We define the \emph{color set of $F$} as \[
    \colourset \define \bigl\{ \colour \in \colours \describe X^0_{\colour} \in \Domain{0} \bigr\}.
\]
For each $n \in \n_0$, we define the \emph{set of black $n$-tiles of $F$} as\[
    \bFTile{n} \define \bigl\{ X \in \Domain{n} \describe F^{n}(X) = X^0_{\black} \bigr\},
\] 
and the \emph{set of white $n$-tiles of $F$} as\[
    \wFTile{n} \define \bigl\{ X \in \Domain{n} \describe F^{n}(X) = X^0_{\white} \bigr\}. 
\]
Moreover, for each $n \in \n_0$ and each pair of $\juxtapose{\colour}{\colour'} \in \colours$ we define 
\[
    \ccFTile{n}{\colour}{\colour'} \define \bigl\{ X \in \cFTile{n} \describe X \subseteq X^0_{\colour'} \bigr\}.
\]
In other words, for example, a tile $X \in \ccFTile{n}{\black}{\white}$ is a \emph{black $n$-tile of $F$ contained in $\whitetile$}, i.e., an $n$-tile of $F$ that is contained in the white $0$-tile $X^0_{\white}$ as a set, and is mapped by $F^{n}$ onto the black $0$-tile $\blacktile$.

By abuse of notation, we often omit $(F, \mathcal{C})$ in the notations above when it is clear from the context.

We discuss two examples below and refer the reader to \cite[Subsection~5.1]{shi2023thermodynamic} for more examples.

\begin{example}    \label{exam:subsystems}
    Let $f \colon S^2 \mapping S^2$ be an expanding Thurston map with a Jordan curve $\mathcal{C}\subseteq S^2$ satisfying $\post{f} \subseteq \mathcal{C}$.
    Consider $F \in \subsystem$.
    \begin{enumerate}[label=(\roman*)]


        \item     \label{item:exam:subsystems:Sierpinski carpet}
            The map $F \colon \domF \mapping S^2$ is represented by Figure~\ref{fig:subsystem:example:carpet}.
            Here $S^{2}$ is identified with a pillow that is obtained by gluing two squares together along their boundaries.
            Moreover, each square is subdivided into $3\times 3$ subsquares, and $\dom{F}$ is obtained from $S^2$ by removing the interior of the middle subsquare $X^{1}_{\white} \in \cTile{1}{\white}$ and $X^{1}_{\black} \in \cTile{1}{\black}$ of the respective squares. 
            In this case, $\limitset$ is a \sierpinski carpet. 
            It consists of two copies of the standard square \sierpinski carpet glued together along the boundaries of the squares.
            \begin{figure}[H]
                \centering
                \begin{overpic}
                    [width=12cm, tics=20]{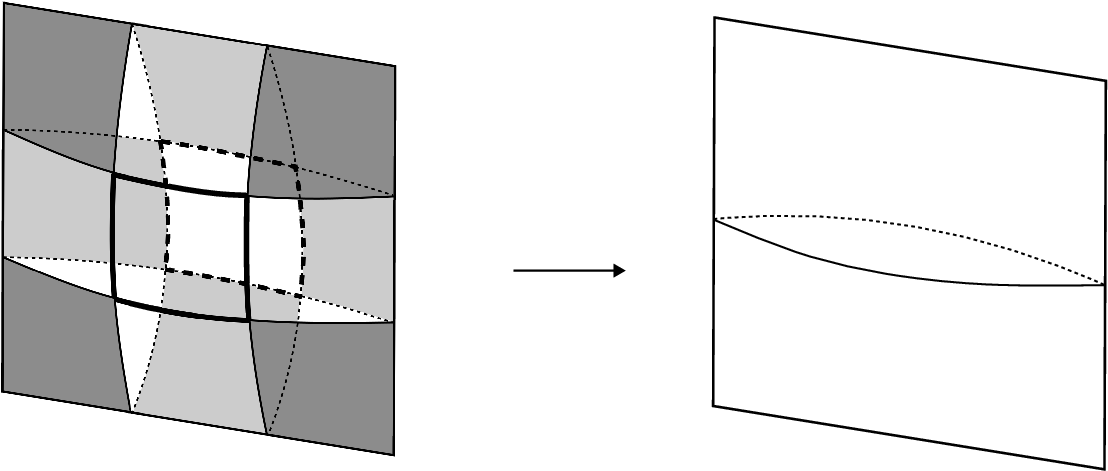}
                    \put(50,20){$F$}
                    \put(16,41){$\domF$}
                    \put(83,40){$S^2$}
                \end{overpic}
                \caption{A \sierpinski carpet subsystem.} 
                \label{fig:subsystem:example:carpet}
            \end{figure}

        \smallskip

        \item     \label{item:exam:subsystems:Sierpinski gasket}
            The map $F \colon \domF \mapping S^2$ is represented by Figure~\ref{fig:subsystem:example:gasket}.
            Here $S^{2}$ is identified with a pillow that is obtained by gluing two equilateral triangles together along their boundaries.
            Moreover, each triangle is subdivided into $4$ small equilateral triangles, and $\dom{F}$ is obtained from $S^2$ by removing the interior of the middle small triangle $X^{1}_{\black} \in \cTile{1}{\black}$ and $X^{1}_{\white} \in \cTile{1}{\white}$ of the respective triangle. 
            In this case, $\limitset$ is a \sierpinski gasket. 
            It consists of two copies of the standard \sierpinski gasket glued together along the boundaries of the triangles.
            \begin{figure}[H]
                \centering
                \begin{overpic}
                    [width=12cm, tics=20]{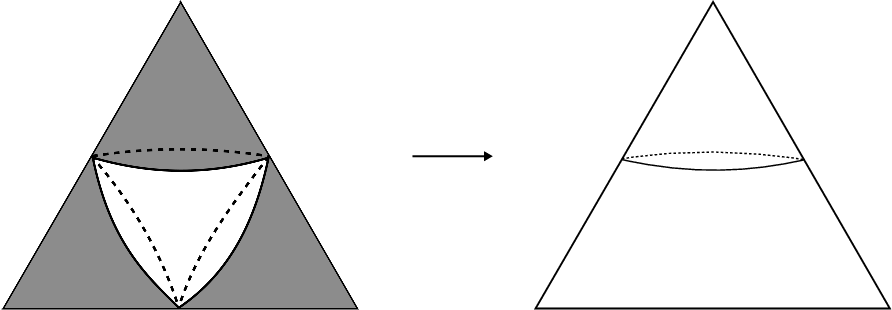}
                    \put(50,19){$F$}
                    \put(0,23){$\domF$}
                    \put(90,24){$S^2$}
                \end{overpic}
                \caption{A \sierpinski gasket subsystem.} 
                \label{fig:subsystem:example:gasket}
            \end{figure}
    \end{enumerate}
\end{example}

We summarize some preliminary results for subsystems in the following proposition.

\begin{proposition}[Z.~Li, X.~Shi, Y.~Zhang \cite{shi2023thermodynamic}]    \label{prop:subsystem:preliminary properties}
    Let $f \colon S^2 \mapping S^2$ be an expanding Thurston map with a Jordan curve $\mathcal{C}\subseteq S^2$ satisfying $\post{f} \subseteq \mathcal{C}$.
    Consider $F \in \subsystem$.
    Then the following statements hold:
    \begin{enumerate}[label=\rm{(\roman*)}]
        \smallskip

        \item     \label{item:subsystem:properties:limitset forward invariant}
            The tile maximal invariant set $\limitset$ is forward invariant with respect to $F$, i.e., $F(\limitset) \subseteq \limitset$.

        \smallskip

        \item     \label{item:subsystem:properties invariant Jordan curve:relation between color and location of tile}
            If $f(\mathcal{C}) \subseteq \mathcal{C}$, then $\cFTile{m} = \ccFTile{m}{\colour}{\black} \cup \ccFTile{m}{\colour}{\white}$ for each $m \in \n_{0}$ and each $\colour \in \colours$.

        \smallskip

        \item     \label{item:subsystem:properties invariant Jordan curve:backward invariant limitset outside invariant Jordan curve}
            If $f(\mathcal{C}) \subseteq \mathcal{C}$, then $F^{-1}(\limitset \setminus \mathcal{C}) \subseteq \limitset \setminus \mathcal{C}$.
    \end{enumerate}
\end{proposition}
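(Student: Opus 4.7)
The proof proceeds by tracking $n$-tiles of $F$ under iterates of $f$. The key observation for \ref{item:subsystem:properties:limitset forward invariant} is that if $X^{n+1} \in \Domain{n+1}$, then $f(X^{n+1}) \in \Domain{n}$: the image is an $n$-tile of $f$ by Proposition~\ref{prop:properties cell decompositions}~\ref{item:prop:properties cell decompositions:cellular}, and the defining chain $f^k(X^{n+1}) \subseteq \domF$ for $0 \leqslant k \leqslant n$ together with $f^{n+1}(X^{n+1}) \subseteq F(\domF)$ translates directly into the analogous conditions on $f(X^{n+1})$. Thus for any $x \in \limitset$ and any $n \in \n$, choosing $X^{n+1} \in \Domain{n+1}$ containing $x$ yields $F(x) \in f(X^{n+1}) \in \Domain{n}$, and intersecting over $n$ gives $F(x) \in \limitset$.

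Statement \ref{item:subsystem:properties invariant Jordan curve:relation between color and location of tile} is almost immediate from Proposition~\ref{prop:cell decomposition: invariant Jordan curve}: under the hypothesis $f(\mathcal{C}) \subseteq \mathcal{C}$, every $m$-tile is contained in a unique $0$-tile, so each $X \in \cFTile{m}$ must lie entirely in $X^0_{\black}$ or in $X^0_{\white}$, giving the stated decomposition.

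Statement \ref{item:subsystem:properties invariant Jordan curve:backward invariant limitset outside invariant Jordan curve} is the most intricate. Fix $x \in \domF$ with $F(x) \in \limitset \setminus \mathcal{C}$. That $x \notin \mathcal{C}$ is immediate, since otherwise $F(x) = f(x) \in f(\mathcal{C}) \subseteq \mathcal{C}$. The main task is to exhibit an $n$-tile of $F$ containing $x$ for each $n \in \n$; the case $n = 1$ is just $x \in \domF$, so assume $n \geqslant 2$. The plan is to choose $X^1 \in \Domain{1}$ with $x \in X^1$ and $X^{n-1} \in \Domain{n-1}$ with $f(x) \in X^{n-1}$, then glue them together using cellularity. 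Since $f(x) \notin \mathcal{C}$, the point $f(x)$ lies in $\inte{X^0_{\colour'}}$ for a unique $\colour' \in \colours$; since interiors of distinct $0$-tiles are disjoint from the opposite $0$-tile, one deduces both $f(X^1) = X^0_{\colour'}$ and, via Proposition~\ref{prop:cell decomposition: invariant Jordan curve}, $X^{n-1} \subseteq X^0_{\colour'}$. Because $f|_{X^1} \colon X^1 \mapping X^0_{\colour'}$ is a cellular homeomorphism sending $n$-tiles of $\mathbf{D}^n$ inside $X^1$ bijectively to $(n-1)$-tiles of $\mathbf{D}^{n-1}$ inside $X^0_{\colour'}$, the set $Y^n \define (f|_{X^1})^{-1}(X^{n-1})$ is an $n$-tile of $f$ containing $x$ with $f(Y^n) = X^{n-1}$. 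Concatenating $Y^n \subseteq X^1 \subseteq \domF$ with the orbit conditions inherited from $X^{n-1} \in \Domain{n-1}$ produces the full chain certifying $Y^n \in \Domain{n}$, hence $x \in \bigcup \Domain{n}$, and intersecting over $n$ concludes the argument. I expect the delicate point to be the possibility that $x$ lies on the boundary of several $n$-tiles of $f$; selecting $Y^n$ through the homeomorphism $f|_{X^1}$ removes the ambiguity and simultaneously guarantees containment in $\domF$.
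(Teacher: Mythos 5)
The paper does not actually prove this proposition; it is quoted verbatim from \cite{shi2023thermodynamic} as a known preliminary result, so there is no in-paper argument to compare against. Judged on its own, your proof is correct and complete. Part~\ref{item:subsystem:properties:limitset forward invariant} correctly reduces to the observation that $X^{n+1}\in \Domain{n+1}$ forces $f\bigl(X^{n+1}\bigr)\in\Domain{n}$ (the orbit conditions $f^{k}\bigl(X^{n+1}\bigr)\subseteq \domF$ shift down by one index), and intersecting over $n$ does the rest. Part~\ref{item:subsystem:properties invariant Jordan curve:relation between color and location of tile} is indeed immediate from Proposition~\ref{prop:cell decomposition: invariant Jordan curve}. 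For part~\ref{item:subsystem:properties invariant Jordan curve:backward invariant limitset outside invariant Jordan curve}, your gluing construction is sound: since $F(x)\notin\mathcal{C}$, the point $F(x)$ lies in the interior of exactly one $0$-tile, which pins down both the color of $f\bigl(X^{1}\bigr)$ and the $0$-tile containing $X^{n-1}$, so $Y^{n}\define (f|_{X^{1}})^{-1}\bigl(X^{n-1}\bigr)$ is a well-defined $n$-tile containing $x$; concatenating $Y^{n}\subseteq X^{1}\subseteq \domF$ with the orbit conditions for $X^{n-1}$ certifies $Y^{n}\in\Domain{n}$. Two small points you glossed over but which do check out: the identity $\bigcup\Domain{1}=\domF$ (needed for your $n=1$ base case and for choosing $X^{1}$) follows because $F^{-1}(F(\domF))=\domF$, and the fact that $(f|_{X^{1}})^{-1}$ of an $(n-1)$-tile inside $f\bigl(X^{1}\bigr)$ is an $n$-tile is exactly \cite[Lemma~5.17~(i)]{bonk2017expanding}, which the present paper also invokes elsewhere for the same purpose.
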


We introduce a $2 \times 2$ matrix called the tile matrix to describe tiles of a subsystem according to their colors and locations.

\smallskip

\begin{definition}[Tile matrices]     \label{def:tile matrix} 
    Let $f \colon S^2 \mapping S^2$ be an expanding Thurston map with a Jordan curve $\mathcal{C}\subseteq S^2$ satisfying $\post{f} \subseteq \mathcal{C}$.
    Consider $F \in \subsystem$.
    We define the \emph{tile matrix} of $F$ with respect to $\mathcal{C}$ as
    \begin{equation}    \label{eq:definition of tile matrix}
            A = A(F, \mathcal{C}) \define \begin{bmatrix}
            N_{\white \white} & N_{\black \white} \\
            N_{\white \black} & N_{\black \black}
        \end{bmatrix}
    \end{equation}
    where \[
        N_{\colour \colour'} = N_{\colour\colour'}(A) \define \operatorname{card}{\! \bigl\{ X \in \cFTile{1} \describe X \subseteq X^0_{\colour'} \bigr\}} = \card[\big]{\ccFTile{1}{\colour}{\colour'}}
    \]
    for each pair of colors $\juxtapose{\colour}{\colour'} \in \colours$. For example, $N_{\black \white}$ is the number of black tiles in $\Domain{1}$ which are contained in the white $0$-tile $X^0_{\white}$.
    Recall that $\juxtapose{X^0_{\black}}{X^0_{\white}} \in \mathbf{X}^0(f, \mathcal{C})$ is the black $0$-tile and the white $0$-tile, respectively.
\end{definition}


\begin{remark}    \label{rem:tile matrix associated with set of tiles}
    Note that the tile matrix $A(F, \mathcal{C})$ of $F$ with respect to $\mathcal{C}$ is completely determined by the set $\Domain{1}$. Thus for each integer $n \in \n_0$ and each set of $n$-tiles $\mathbf{E} \subseteq \mathbf{X}^{n}(f, \mathcal{C})$, similarly, we can define the tile matrix $A(\mathbf{E})$ of $\mathbf{E}$ as 
    \[
        A(\mathbf{E}) \define 
        \begin{bmatrix}
            N_{\white \white}(\mathbf{E}) & N_{\black \white}(\mathbf{E}) \\
            N_{\white \black}(\mathbf{E}) & N_{\black \black}(\mathbf{E}),
        \end{bmatrix}
    \]
    where $N_{\colour \colour'}(\mathbf{E}) \define \card[\big]{ \set[\big]{X \in \mathbf{E} \describe X \in \cTile{n}{\colour}, \, X \subseteq X^0_{\colour'}} } = \card[\big]{\ccFTile{n}{\colour}{\colour'}}$ for each pair of $\juxtapose{\colour}{\colour'} \in \colours$.
\end{remark}

\smallskip

We record the following definition from \cite[Subsection~5.5]{shi2023thermodynamic}.
\begin{definition}[Primitivity]    \label{def:primitivity of subsystem}
    Let $f \colon S^2 \mapping S^2$ be an expanding Thurston map with a Jordan curve $\mathcal{C}\subseteq S^2$ satisfying $\post{f} \subseteq \mathcal{C}$.
    Consider $F \in \subsystem$.
    We say that $F$ is a \emph{primitive} (\resp \emph{strongly primitive}) subsystem (of $f$ with respect to $\mathcal{C}$) if there exists an integer $n_{F} \in \n$ such that for each pair of $\juxtapose{\colour}{\colour'} \in \colours$ and each integer $n \geqslant n_{F}$, there exists $X^n \in \cFTile{n}$ satisfying $X^n \subseteq X^0_{\colour'}$ (\resp $X^n \subseteq \inte[\big]{X^0_{\colour'}}$). 
\end{definition}


\begin{remark}\label{rem:expanding Thurston map is strongly primitive subsystem of itself}
    By \cite[Lemma~5.10]{li2018equilibrium}, every expanding Thurston map $f$ is a strongly primitive subsystem of itself with respect to every Jordan curve $\mathcal{C} \subseteq S^2$ satisfying $\post{f} \subseteq \mathcal{C}$.
\end{remark}

We record \cite[Lemmas~5.22]{shi2023thermodynamic} here, which shows that primitive subsystems have nice combinatorial and topological properties.

\begin{lemma}[Z.~Li, X.~Shi, Y.~Zhang \cite{shi2023thermodynamic}]    \label{lem:strongly primitive:tile in interior tile for high enough level}
    Let $f \colon S^2 \mapping S^2$ be an expanding Thurston map with a Jordan curve $\mathcal{C}\subseteq S^2$ satisfying $\post{f} \subseteq \mathcal{C}$.
    Let $F \in \subsystem$ be primitive (\resp strongly primitive).
    Let $n_F \in \n$ be the constant from Definition~\ref{def:primitivity of subsystem}, which depends only on $F$ and $\mathcal{C}$. 
    Then for each $n \in \n$ with $n \geqslant n_{F}$, each $m \in \n_0$, each $\colour \in \colours$, and each $m$-tile $X^m \in \Domain{m}$, there exists an $(n + m)$-tile $X^{n + m}_{\colour} \in \cFTile{n + m}$ such that $X^{n + m}_{\colour} \subseteq X^m$ (\resp $X^{n + m}_{\colour} \subseteq \inte{X^m}$).
\end{lemma}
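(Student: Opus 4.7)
The plan is to construct the required $(n+m)$-tile inside $X^m$ by pulling back a primitive $n$-tile of $F$ through the cellular homeomorphism $f^m|_{X^m}$. Since $X^m \in \Domain{m}$, Proposition~\ref{prop:properties cell decompositions}~\ref{item:prop:properties cell decompositions:cellular} yields that $f^m|_{X^m}$ is a homeomorphism onto the $0$-tile $X^0_{\colour''} \define F^m(X^m)$ for a suitable $\colour'' \in \colours$. Applying primitivity (\resp strong primitivity) at level $n \geqslant n_F$ to the pair of colors $(\colour, \colour'')$ produces an $n$-tile $X^n \in \cFTile{n}$ with $X^n \subseteq X^0_{\colour''}$ (\resp $X^n \subseteq \inte{X^0_{\colour''}}$), and I then define the candidate pullback
\[
    Y \define \bigl( f^m|_{X^m} \bigr)^{-1}(X^n) \subseteq X^m.
\]

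The main technical step is to verify that $Y$ coincides with a single $(n+m)$-tile of $f$. By Proposition~\ref{prop:properties cell decompositions}~\ref{item:prop:properties cell decompositions:union of cells}, $f^{-m}(X^n) = Z_1 \cup \cdots \cup Z_k$ is a union of $(n+m)$-tiles of $f$, each mapping homeomorphically onto $X^n$ under $f^m$. Cellularity (Proposition~\ref{prop:properties cell decompositions}~\ref{item:prop:properties cell decompositions:cellular} and~\ref{item:prop:properties cell decompositions:skeletons}) sends the $1$-skeleton of $\mathbf{D}^{n+m}$ into the $1$-skeleton of $\mathbf{D}^n$; combined with Lemma~\ref{lem:intersection of cells}, this shows that every point $z \in f^{-m}(\inte{X^n})$ lies in the interior of a unique $Z_j$. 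Now $\bigl( f^m|_{X^m} \bigr)^{-1}(\inte{X^n})$ is the image of the connected set $\inte{X^n}$ under a homeomorphism, hence connected, and is contained in the disjoint open union $\bigsqcup_j \inte{Z_j}$; therefore it lies inside a single $\inte{Z_{j_0}}$. Passing to closures yields $Y \subseteq Z_{j_0}$, and since $f^m$ restricts to a bijection onto $X^n$ from each of $Y$ and $Z_{j_0}$, I conclude $Y = Z_{j_0}$ is indeed an $(n+m)$-tile of $f$.

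Finally, I verify the membership $Y \in \cFTile{n+m}$, and, in the strong primitivity case, $Y \subseteq \inte{X^m}$. Since $X^m \in \Domain{m}$ and $X^n \in \Domain{n}$, every intermediate iterate $F^j(Y)$ for $j \in \{0, 1, \dots, n+m-1\}$ lies in $\domF$ (being contained in $F^{j}(X^m) \subseteq \domF$ for $j < m$ and in $F^{j-m}(X^n) \subseteq \domF$ for $m \leqslant j < n+m$), and $F^{n+m}(Y) = F^n(X^n) = X^0_{\colour} \in F(\domF)$, so $Y \in \Domain{n+m}$, and then $F^{n+m}(Y) = X^0_{\colour}$ gives $Y \in \cFTile{n+m}$. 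For the strong primitivity case, the homeomorphism $f^m|_{X^m}$ restricts to a homeomorphism $\inte{X^m} \to \inte{X^0_{\colour''}}$, so $X^n \subseteq \inte{X^0_{\colour''}}$ yields $Y \subseteq \inte{X^m}$. Setting $X^{n+m}_{\colour} \define Y$ completes the construction. The hardest part is the second paragraph: since we do not assume $f$-invariance of $\mathcal{C}$, the cell decomposition $\mathbf{D}^{n+m}$ need not refine $\mathbf{D}^m$, so one cannot directly invoke refinement to conclude that $Y$ is a single $(n+m)$-cell; the connectedness argument above circumvents this by working fiberwise via $f^m$.
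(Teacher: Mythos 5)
This lemma is imported into the paper from \cite[Lemma~5.22]{shi2023thermodynamic} without proof, so there is no in-paper argument to compare against; judged on its own terms, your proof is correct. The construction (pull a primitive $n$-tile $X^n \subseteq X^0_{\colour''}$, where $X^{0}_{\colour''} = f^m(X^m)$, back through the cellular homeomorphism $f^m|_{X^m}$) is the natural one, and you correctly isolate and resolve the one genuine subtlety: since $\mathcal{C}$ is not assumed $f$-invariant, $\mathbf{D}^{n+m}(f,\mathcal{C})$ need not refine $\mathbf{D}^{m}(f,\mathcal{C})$, so it is not automatic that the pullback $Y$ is a single $(n+m)$-tile. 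Your fiberwise argument — $(f^m|_{X^m})^{-1}(\inte{X^n})$ is connected, avoids the $1$-skeleton $f^{-(n+m)}(\mathcal{C})$ because $f^{n+m}$ maps it into $\inte{X^n} \cap f^{-n}(\mathcal{C})^{c}$'s image off $\mathcal{C}$, hence lies in the interior of a single tile $Z_{j_0}$ among those composing $f^{-m}(X^n)$, after which injectivity of $f^m|_{Z_{j_0}}$ forces $Y = Z_{j_0}$ — is sound. The bookkeeping that $Y \in \cFTile{n+m}$ (all intermediate iterates stay in $\domF$ and $F^{n+m}(Y) = X^{0}_{\colour} \subseteq F(\domF)$) and the interior refinement in the strongly primitive case (because $f^m|_{X^m}$ carries $X^m \setminus f^{-m}(\mathcal{C}) = \inte{X^m}$ bijectively onto $X^{0}_{\colour''} \setminus \mathcal{C} = \inte[\big]{X^{0}_{\colour''}}$) are both correct.
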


We now review some concepts and results on ergodic theory of subsystems of expanding Thurston maps. 
We refer the reader to \cite[Section~6]{shi2023thermodynamic} for more details and the proofs. 

\smallskip

We first recall the topological pressure for subsystems.

\begin{definition}[Topological pressure]    \label{def:pressure for subsystem}
    Let $f \colon S^2 \mapping S^2$ be an expanding Thurston map with a Jordan curve $\mathcal{C}\subseteq S^2$ satisfying $\post{f} \subseteq \mathcal{C}$.
    Consider $F \in \subsystem$.
    For a real-valued function $\varphi \colon S^2 \mapping \real$, we denote \[
        Z_{n}(F, \varphi ) \define \sum_{X^n \in \Domain{n}} \myexp[\big]{ \sup\bigl\{S^F_n \varphi(x) \describe x \in X^n \bigr\} } 
    \]
    for each $n \in \n$. We define the \emph{topological pressure} of $F$ with respect to the \emph{potential} $\varphi$ by
    \begin{equation}    \label{eq:pressure of subsystem}
        \pressure[\varphi] \define \liminf_{n \mapping +\infty} \frac{1}{n} \log ( Z_{n}(F, \varphi) ).
    \end{equation}
    In particular, when $\varphi$ is the constant function $0$, the quantity $h_{\operatorname{top}}(F) \define \pressure[0]$ is called the \emph{topological entropy} of $F$.
\end{definition}

We record \cite[Proposition~6.5]{shi2023thermodynamic} below, which shows that the topological entropy of $F$ can in fact be computed explicitly via tile matrices defined in Definition~\ref{def:tile matrix}.

\begin{proposition}[Z.~Li, X.~Shi, Y.~Zhang \cite{shi2023thermodynamic}]    \label{prop:topological entropy of subsystem}
    Let $f \colon S^2 \mapping S^2$ be an expanding Thurston map with a Jordan curve $\mathcal{C}\subseteq S^2$ satisfying $\post{f} \subseteq \mathcal{C}$ and $f(\mathcal{C}) \subseteq \mathcal{C}$.
    Consider a subsystem $F \in \subsystem$.
    Let $A$ be the tile matrix of $F$ with respect to $\mathcal{C}$.
    Then we have
    \begin{equation}    \label{eq:topological entropy of subsystem}
         h_{\operatorname{top}}(F) = \log(\rho(A)),
    \end{equation}
    where $\rho(A)$ is the spectral radius of $A$.
\end{proposition}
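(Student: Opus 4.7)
\medskip

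\noindent\textbf{Proof proposal.} By Definition~\ref{def:pressure for subsystem} (with $\varphi \equiv 0$),
\[
    h_{\operatorname{top}}(F) = \liminf_{n \to +\infty} \frac{1}{n} \log \card{\Domain{n}},
\]
so the plan is to compute $\card{\Domain{n}}$ explicitly via powers of $A$ and then invoke Gelfand's spectral radius formula. For each $n \in \n$ and each pair of colors $\juxtapose{\colour}{\colour'} \in \colours$, set $N^{(n)}_{\colour \colour'} \define \card[\big]{\ccFTile{n}{\colour}{\colour'}}$ and let $A^{(n)}$ be the $2 \times 2$ matrix with entries $N^{(n)}_{\colour \colour'}$; note that $A^{(1)} = A$. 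By Proposition~\ref{prop:subsystem:preliminary properties}~\ref{item:subsystem:properties invariant Jordan curve:relation between color and location of tile}, the hypothesis $f(\mathcal{C}) \subseteq \mathcal{C}$ yields the partition $\cFTile{n} = \ccFTile{n}{\colour}{\black} \cup \ccFTile{n}{\colour}{\white}$, so that $\card{\Domain{n}} = \sum_{\colour,\, \colour'} N^{(n)}_{\colour \colour'}$.

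The core step is to prove the recursion $A^{(n+1)} = A \cdot A^{(n)}$, which by induction gives $A^{(n)} = A^n$. For this, fix $n \in \n$ and an $n$-tile $X^n \in \cFTile{n}$ of color $\colour$ contained in $X^0_{\colour'}$. By Proposition~\ref{prop:cell decomposition: invariant Jordan curve} (which applies since $f(\mathcal{C})\subseteq\mathcal{C}$), each $(n+1)$-tile of $f$ is contained in a unique $n$-tile of $f$; combined with Proposition~\ref{prop:properties cell decompositions}~\ref{item:prop:properties cell decompositions:cellular}, the map $f^n|_{X^n}$ is a homeomorphism of $X^n$ onto $X^0_{\colour}$ that induces a bijection between $(n+1)$-tiles $X^{n+1} \subseteq X^n$ and $1$-tiles $Y^1 \subseteq X^0_{\colour}$. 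Unraveling the definition \eqref{eq:definition of tile of subsystem} of $\Domain{n+1}$ and using that $f^j(X^{n+1}) \subseteq f^j(X^n)$ for $0 \leqslant j \leqslant n-1$ (so the ``ancestor'' conditions at levels $\leqslant n$ are inherited from $X^n \in \Domain{n}$), one checks that $X^{n+1} \in \Domain{n+1}$ if and only if the corresponding $Y^1$ lies in $\Domain{1}$. Its color equals the color of $Y^1$, so summing over $X^n \in \ccFTile{n}{\colour}{\colour'}$ yields
\[
    N^{(n+1)}_{\widetilde{\colour}\, \colour'} = \sum_{\colour \in \colours} N_{\widetilde{\colour}\, \colour}(A) \cdot N^{(n)}_{\colour \colour'} = (A \cdot A^{(n)})_{\widetilde{\colour}\, \colour'},
\]
establishing the recursion.

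Finally, writing $\mathbf{1} = (1, 1)^{T}$, we have $\card{\Domain{n}} = \mathbf{1}^{T} A^{n} \mathbf{1}$. Since $A$ has non-negative entries, all entries of $A^n$ are non-negative, and therefore $\mathbf{1}^T A^n \mathbf{1}$ is comparable (up to a multiplicative factor of $4$) to any matrix norm $\|A^n\|$ in the case $\rho(A) > 0$. Gelfand's formula gives $\lim_{n \to +\infty} \frac{1}{n} \log \|A^n\| = \log \rho(A)$, so the $\liminf$ defining $h_{\operatorname{top}}(F)$ is in fact a limit equal to $\log \rho(A)$. In the degenerate case $\rho(A) = 0$, the non-negative integer matrix $A$ must be nilpotent with $A^2 = 0$, so $\card{\Domain{n}} = 0$ for all $n \geqslant 2$ and both sides of \eqref{eq:topological entropy of subsystem} equal $-\infty$, completing the proof.

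The main obstacle is the combinatorial recursion: carefully verifying that the condition ``$X^{n+1} \in \Domain{n+1}$'' decouples into the condition ``$X^n \in \Domain{n}$'' at ancestor levels together with the single condition ``$f^n(X^{n+1}) \in \Domain{1}$'' at the top level. This decoupling is precisely what makes the tile counts multiply like a matrix power, and it rests essentially on the refinement property granted by $f(\mathcal{C}) \subseteq \mathcal{C}$.
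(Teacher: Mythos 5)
Your argument is correct, and it is the natural (and surely the intended) one; note that this paper only records the statement as \cite[Proposition~6.5]{shi2023thermodynamic} without reproducing a proof, so there is no in-paper proof to compare against. Two small points are worth tightening. First, for the recursion to count correctly you need both halves of the decoupling: not only that, for a fixed $X^n \in \Domain{n}$, the child $X^{n+1}=(f^n|_{X^n})^{-1}(Y^1)$ lies in $\Domain{n+1}$ iff $Y^1 \in \Domain{1}$ (the half you argue), but also that \emph{every} $X^{n+1}\in\Domain{n+1}$ has its unique parent $n$-tile in $\Domain{n}$, so that the sum over $\Domain{n}$ exhausts $\Domain{n+1}$. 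The only non-obvious part of the latter is the condition $f^n(X^n)\subseteq F(\domF)$ in \eqref{eq:definition of tile of subsystem}; it follows because $f^{n-1}(X^{n+1})$ is a $2$-tile with non-empty interior contained in $\domF$, forcing the $1$-tile $f^{n-1}(X^n)$ containing it to belong to $\mathfrak{X}$ (two distinct $1$-tiles meet only along cells of lower dimension, by Lemma~\ref{lem:intersection of cells}), whence $f^n(X^n)=f\bigl(f^{n-1}(X^n)\bigr)\subseteq f(\domF)$. This step genuinely needs $n\geqslant 1$ — the analogous claim fails at $n=0$ — but your induction starts at $A^{(1)}=A$, so that is harmless. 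Second, with the paper's indexing convention in \eqref{eq:definition of tile matrix} (rows labelled by location, columns by color) the recursion reads $A^{(n+1)}=A^{(n)}A$ rather than $A\cdot A^{(n)}$; this is a transpose-level bookkeeping issue with no effect on $A^{(n)}=A^{n}$ or on the conclusion. Finally, $\mathbf{1}^{T}A^{n}\mathbf{1}$ is \emph{exactly} the entry-sum norm $\norm{A^{n}}$ used in the remark following the proposition (no comparability constant is needed), so Gelfand's formula gives the limit directly, and your treatment of the nilpotent case $\rho(A)=0$ is fine.
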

\begin{rmk}
    The spectral radius $\rho(A)$ can easily be computed from any matrix norm. If for an $(m \times m)$-matrix $B = (b_{ij})$ we set $\norm{B} \define \sum_{i, j = 1}^{m} |b_{ij}|$ for example, then $\rho(A) = \lim_{n \to +\infty} (\norm{A^n})^{1/n}$.
\end{rmk}

We summarize the existence and some basic properties of equilibrium states for strongly primitive subsystems in the following theorem, which is part of \cite[Theorem~1.1]{shi2024uniqueness}.
Recall that $F(\limitset) \subseteq \limitset$ by Proposition~\ref{prop:subsystem:preliminary properties}~\ref{item:subsystem:properties:limitset forward invariant}.

\begin{theorem}[Z.~Li \& X.~Shi \cite{shi2024uniqueness}]    \label{thm:existence uniqueness and properties of equilibrium state}
    Let $f \colon S^2 \mapping S^2$ be an expanding Thurston map with a Jordan curve $\mathcal{C}\subseteq S^2$ satisfying $\post{f} \subseteq \mathcal{C}$ and $f(\mathcal{C}) \subseteq \mathcal{C}$.
    Let $d$ be a visual metric on $S^2$ for $f$ and $\potential$ be a real-valued \holder continuous function on $S^2$ with respect to the metric $d$.
    Consider a strongly primitive subsystem $F \in \subsystem$.
    Then there exists a unique equilibrium state $\mu_{F, \potential}$ for $F|_{\limitset}$ and $\phi|_{\limitset}$.    
    Moreover, $\mu_{F, \potential}$ is ergodic for $F|_{\limitset}$.
\end{theorem}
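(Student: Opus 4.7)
The plan is to adapt the Ruelle--Perron--Frobenius approach to the subsystem $F|_{\limitset}$, exploiting that the hypothesis $f(\mathcal{C}) \subseteq \mathcal{C}$ together with Proposition~\ref{prop:cell decomposition: invariant Jordan curve} turns the cell decompositions $\mathbf{D}^n(f, \mathcal{C})$ into a nested Markov refinement adapted to $F$. I would introduce the transfer operator
\[
    \mathcal{L}_\phi u(x) \define \sum_{y \in F^{-1}(x) \cap \limitset} \myexp{\phi(y)} u(y), \qquad u \in C(\limitset),
\]
so that $\mathcal{L}_\phi^n \indicator{\limitset}(x)$ is comparable to $Z_n(F, \phi)$ from Definition~\ref{def:pressure for subsystem} up to multiplicative constants, via the distortion estimate of Lemma~\ref{lem:distortion_lemma} applied on each $n$-tile of $F$.

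Strong primitivity plays the role of topological mixing: Lemma~\ref{lem:strongly primitive:tile in interior tile for high enough level} guarantees that every tile of $F$ of every generation contains, after $n_F$ further subdivisions, a tile of each color in its interior, yielding a uniform positivity estimate $\mathcal{L}_\phi^{n_F} u \geqslant c \sup_{\limitset} u$ for some $c > 0$ and every non-negative $u \in C(\limitset)$. A Schauder--Tychonoff fixed-point argument applied to the normalized dual action on $\mathcal{P}(\limitset)$ then produces an eigenmeasure $\nu$ with eigenvalue $\lambda \define \myexp{P(F, \phi)}$. Combining the eigenmeasure equation with the tile distortion yields the Gibbs bounds
\[
    C^{-1} \leqslant \frac{\nu(X^n)}{\myexp{S_n \phi(x) - n P(F, \phi)}} \leqslant C, \qquad X^n \in \Domain{n}, \ x \in X^n \cap \limitset.
\]

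Averaging $\bigl\{ \lambda^{-n} \mathcal{L}_\phi^n \indicator{\limitset} \bigr\}_{n \in \n}$ in the Ces\`aro sense and extracting a uniform limit through equicontinuity from the distortion bound produces a continuous strictly positive invariant density $h$, and $\mu_{F, \phi} \define h \nu / \int \! h \,\mathrm{d}\nu$ is an $F|_{\limitset}$-invariant Borel probability measure inheriting the Gibbs property. The partition $\alpha \define \set{X \cap \limitset \describe X \in \Domain{1}}$ is a generator for $\mu_{F, \phi}$ (the shrinking of tile diameters in Lemma~\ref{lem:visual_metric}~\ref{item:lem:visual_metric:diameter of cell} makes $\bigvee_{n \geqslant 0} F^{-n}(\alpha)$ separate points), and hence $h_{\mu_{F, \phi}}(F|_{\limitset}) = \lim_{n \to +\infty} \frac{1}{n} H_{\mu_{F, \phi}}(\alpha^n_F)$. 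Plugging the Gibbs bounds into this limit converts it directly into $P(F, \phi) - \int \! \phi \,\mathrm{d}\mu_{F, \phi}$, which is the equilibrium identity. For uniqueness, any equilibrium state $\mu'$ must be absolutely continuous with respect to $\nu$ with a density that is a non-negative fixed point of $\lambda^{-1} \mathcal{L}_\phi$; strong primitivity forces the fixed space of this operator to be one-dimensional, so $\mu' = \mu_{F, \phi}$. Ergodicity then follows, since a non-trivial ergodic decomposition of $\mu_{F, \phi}$ would produce distinct equilibrium states, contradicting uniqueness.

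The main obstacle is the behavior of $F|_{\limitset}$ near the Jordan curve $\mathcal{C}$, where tiles of $F$ may meet and the local branch structure is degenerate. Proposition~\ref{prop:subsystem:preliminary properties}~\ref{item:subsystem:properties invariant Jordan curve:backward invariant limitset outside invariant Jordan curve} is the essential input here, since $F^{-1}(\limitset \setminus \mathcal{C}) \subseteq \limitset \setminus \mathcal{C}$ lets one prove the analog of Theorem~\ref{thm:properties of equilibrium state}~\ref{item:thm:properties of equilibrium state:edge measure zero} in the subsystem setting, namely that $\mu_{F, \phi}$ and $\nu$ give zero mass to $\bigcup_{i \geqslant 0} F^{-i}(\mathcal{C})$, so the boundary pathology does not obstruct the spectral analysis. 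Establishing the strict positivity of $h$ on the whole of $\limitset$, including its boundary tiles, is the technical crux, and is where strong primitivity (rather than mere primitivity from Definition~\ref{def:primitivity of subsystem}) is indispensable, since the latter does not preclude certain boundary tiles being missed by preimages arising from interior orbits.
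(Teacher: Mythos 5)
First, a point of reference: this paper does not prove Theorem~\ref{thm:existence uniqueness and properties of equilibrium state} at all --- it is quoted as part of \cite[Theorem~1.1]{shi2024uniqueness} and used as a black box. So your sketch is being measured against the cited source, which does indeed run a Ruelle--Perron--Frobenius argument; your overall strategy is the right one in spirit.

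There is, however, a genuine gap at the very first step. The operator $\mathcal{L}_{\phi}u(x) = \sum_{y \in F^{-1}(x) \cap \limitset} \myexp{\phi(y)}u(y)$ does \emph{not} map $C(\limitset)$ into $C(\limitset)$. Two distinct discontinuities occur. At a critical value $x_{0}$, several preimages merge into a single critical preimage as $x \to x_{0}$, so the unweighted sum jumps; for the full map $f$ this is repaired by weighting each preimage with $\deg_{f}(y)$. More seriously for a subsystem, the set $F^{-1}(x) = f^{-1}(x) \cap \domF$ changes cardinality as $x$ crosses $\mathcal{C}$: a $1$-tile of $F$ contributes a preimage of $x$ only when $x$ lies in the $0$-tile of the matching color, so $\card{F^{-1}(x)}$ genuinely jumps across the curve whenever $\mathfrak{X}$ is not all of $\Tile{1}$. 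This is precisely why \cite{shi2023thermodynamic,shi2024uniqueness} introduce \emph{split} Ruelle operators, indexed by the $0$-tile containing $x$, rather than a single operator on $C(\limitset)$. Without continuity of $\mathcal{L}_{\phi}$ on $C(\limitset)$, the Schauder--Tychonoff fixed point on the dual, and the equicontinuity/Ces\`aro extraction of the density $h$, do not get off the ground. Your closing paragraph correctly senses that $\mathcal{C}$ is the obstacle, but locates it in the measure of $\bigcup_{i} F^{-i}(\mathcal{C})$ rather than in the failure of the operator itself to be well defined on continuous functions.

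Two further steps are asserted rather than proved. The positivity estimate $\mathcal{L}_{\phi}^{n_{F}}u \geqslant c \sup_{\limitset}u$ for every non-negative $u \in C(\limitset)$ with a \emph{fixed} $n_{F}$ is too strong: strong primitivity spreads the preimages of $x$ among tiles of level $n$, but to dominate $\sup u$ for an arbitrary continuous $u$ one needs the preimage sets $F^{-n}(x) \cap \limitset$ to become $\varepsilon$-dense, which requires $n \to +\infty$ together with an oscillation control. And the uniqueness step --- that every equilibrium state for $F|_{\limitset}$ is absolutely continuous with respect to the eigenmeasure $\nu$ with density fixed by $\lambda^{-1}\mathcal{L}_{\phi}$ --- is the hard half of the theorem (in the classical treatments it goes through showing every equilibrium state satisfies the Gibbs inequalities, via strict concavity of $\log$ and the generator property of the tile partition), and cannot be taken for granted in a setting where the entropy map need not be upper semi-continuous.
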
 
\section{The Assumptions}
\label{sec:The Assumptions}

We state below the hypotheses under which we will develop our theory in most parts of this paper. 
We will selectively use some of those assumptions in the later sections. 

\begin{assumptions}
\quad
    \begin{enumerate}[label=\textrm{(\arabic*)}]
        \smallskip

        \item \label{assumption:expanding Thurston map}
            $f \colon S^2 \mapping S^2$ is an expanding Thurston map.

        \smallskip

        \item \label{assumption:Jordan curve}
            $\mathcal{C} \subseteq S^2$ is a Jordan curve containing $\post{f}$ with the property that there exists an integer $n_{\mathcal{C}} \in \n$ such that $f^{n_{\mathcal{C}}}(\mathcal{C}) \subseteq \mathcal{C}$ and $f^m(\mathcal{C}) \not\subseteq \mathcal{C}$ for each $m \in \oneton[n_{\mathcal{C}} - 1]$.
        
        \smallskip

        \item \label{assumption:subsystem}
            $F \in \subsystem$ is a subsystem of $f$ with respect to $\mathcal{C}$.
        
        \smallskip

        \item \label{assumption:visual metric and expansion factor}
            $d$ is a visual metric on $S^2$ for $f$ with expansion factor $\Lambda > 1$. 

        \smallskip

        \item \label{assumption:holder exponent}
        $\holderexp \in (0, 1]$.

        \smallskip

        \item \label{assumption:holder potential}
        $\potential \in C^{0,\holderexp}(S^2, d)$ is a real-valued H\"{o}lder continuous function with exponent $\holderexp$.

        \smallskip

        \item \label{assumption:equilibrium state}
            $\mu_{\potential}$ is the unique equilibrium state for the map $f$ and the potential $\potential$. 

        \smallskip

        \item \label{assumption:0-edge}
            $e^0 \in \mathbf{E}^0(f,\mathcal{C})$ is a $0$-edge.
    \end{enumerate}
\end{assumptions}

    
Observe that by Lemma~\ref{lem:invariant_Jordan_curve}, for each $f$ in \ref{assumption:expanding Thurston map}, there exists at least one Jordan curve $\mathcal{C}$ that satisfies \ref{assumption:Jordan curve}. 
Since for a fixed $f$, the number $n_{\mathcal{C}}$ is uniquely determined by $\mathcal{C}$ in \ref{assumption:Jordan curve}, in the remaining part of the paper, we will say that a quantity depends on $\mathcal{C}$ even if it also depends on $n_{\mathcal{C}}$.

Recall that the expansion factor $\Lambda$ of a visual metric $d$ on $S^2$ for $f$ is uniquely determined by $d$ and $f$. We will say that a quantity depends on $f$ and $d$ if it depends on $\Lambda$.


In the discussion below, depending on the conditions we will need, we will sometimes say ``Let $f$, $\mathcal{C}$, $d$, $\potential$ satisfy the Assumptions.'', and sometimes say ``Let $f$ and $\mathcal{C}$ satisfy the Assumptions.'', etc.  

\section{Upper semi-continuity}%
\label{sec:Upper semi-continuity}

In this section we show that the entropy map of an expanding Thurston map is upper semi-continuous if and only if the map has no periodic critical points.

\begin{definition}    \label{def:entropy map}
    Let $X$ be a compact metrizable topological space and $T \colon X \mapping X$ be a continuous map.
    The \emph{entropy map} of $T$ is the map $\mu \mapsto h_{\mu}(T)$ which is defined on $\mathcal{M}(X, T)$ and has values in $[0, +\infty]$.
    Here $\mathcal{M}(X, T)$ is the set of all $T$-invariant Borel probability measures on $X$ and is equipped with the weak$^*$ topology.
    We say that the entropy map of $T$ is \emph{upper semi-continuous} if $\limsup_{n \to +\infty} h_{\mu_{n}}(T) \leqslant h_{\mu}(T)$ holds for every sequence $\sequen{\mu_{n}}$ of Borel probability measures on $X$ which converges to $\mu \in \invmea[X][T]$ in the weak$^*$ topology.
\end{definition}

The proof of the following lemma is straightforward, and we include it for the sake of completeness. 

\begin{lemma}    \label{lem:entropy of push-forward average}
    Let $X$ be a compact metrizable topological space and $T \colon X \mapping X$ be a continuous map.
    Consider arbitrary $n \in \n$ and $\mu \in \mathcal{M}(X, T^{n})$.
    Define $\nu \define \frac{1}{n} \sum_{j = 0}^{n - 1} T_{*}^{j} \mu$.
    Then $\nu \in \mathcal{M}(X, T)$ and
    \begin{equation}    \label{eq:entropy of push-forward average}
         h_{T_{*}^{j}\mu}(T^{n}) = h_{\mu}(T^{n}) = h_{\nu}(T^{n}) = n h_{\nu}(T)    \qquad  \text{for each } j \in \{0, \, 1, \, \dots, \, n - 1\}.    
    \end{equation}
    Moreover, if $\mu$ is ergodic for $T^{n}$, then $\nu$ is ergodic for $T$.
\end{lemma}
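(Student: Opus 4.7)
The plan is to verify each assertion by unpacking the definitions and invoking standard properties of measure-theoretic entropy established earlier in the paper. First, to see $\nu \in \mathcal{M}(X, T)$, note that $\mu \in \mathcal{M}(X, T^{n})$ means $T_{*}^{n} \mu = \mu$, and a direct telescoping computation gives
\[
T_{*} \nu = \frac{1}{n}\sum_{j=0}^{n-1} T_{*}^{j+1}\mu = \frac{1}{n}\left(\sum_{j=1}^{n-1} T_{*}^{j}\mu + T_{*}^{n}\mu\right) = \frac{1}{n}\left(T_{*}^{0}\mu + \sum_{j=1}^{n-1} T_{*}^{j}\mu\right) = \nu.
\]
The same identity shows $T_{*}^{j}\mu \in \mathcal{M}(X, T^{n})$ for each $j$, since $T_{*}^{n}(T_{*}^{j}\mu) = T_{*}^{j}(T_{*}^{n}\mu) = T_{*}^{j}\mu$.

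Next, I would establish the entropy equality $h_{T_{*}^{j}\mu}(T^{n}) = h_{\mu}(T^{n})$. The key observation is that $T^{j}$ semiconjugates $T^{n}$ with itself, $T^{n} \circ T^{j} = T^{j} \circ T^{n}$, and pushes $\mu$ forward to $T_{*}^{j}\mu$. Working from the partition definition, for any finite measurable partition $\eta$ of $X$ one has $H_{\mu}((T^{j})^{-1}(\eta)) = H_{T_{*}^{j}\mu}(\eta)$, and commutativity yields $\bigvee_{i=0}^{m-1} (T^{n})^{-i}((T^{j})^{-1}(\eta)) = (T^{j})^{-1}\bigl(\bigvee_{i=0}^{m-1} (T^{n})^{-i}(\eta)\bigr)$ for every $m \in \n$. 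Passing to the limit in \eqref{eq:def:measure-theoretic entropy with respect to partition} gives $h_{\mu}(T^{n}, (T^{j})^{-1}(\eta)) = h_{T_{*}^{j}\mu}(T^{n}, \eta)$, and taking the supremum over $\eta$ yields $h_{T_{*}^{j}\mu}(T^{n}) \leqslant h_{\mu}(T^{n})$. Applying the same argument with $T^{n-j}$ in place of $T^{j}$, and noting $T_{*}^{n-j}(T_{*}^{j}\mu) = T_{*}^{n}\mu = \mu$, produces the reverse inequality.

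Combining this with the affinity of the entropy map (equation~\eqref{eq:measure-theoretic entropy is affine} applied inductively to convex combinations) yields
\[
h_{\nu}(T^{n}) = \frac{1}{n}\sum_{j=0}^{n-1} h_{T_{*}^{j}\mu}(T^{n}) = h_{\mu}(T^{n}),
\]
and the final equation $h_{\nu}(T^{n}) = n h_{\nu}(T)$ is the scaling identity \eqref{eq:measure-theoretic entropy well-behaved under iteration} applied to $\nu \in \mathcal{M}(X, T)$. For the ergodicity claim, I would assume $A \in \mathcal{B}$ satisfies $T^{-1}(A) = A$; iterating gives $T^{-j}(A) = A$ for every $j \in \n_{0}$, so $T_{*}^{j}\mu(A) = \mu(T^{-j}(A)) = \mu(A)$ for each $j$, whence $\nu(A) = \mu(A)$. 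Since $T^{-n}(A) = A$, the $T^{n}$-ergodicity of $\mu$ forces $\mu(A) \in \{0, 1\}$, so $\nu(A) \in \{0, 1\}$. I expect no substantial obstacle; the only mildly delicate point is the symmetric use of $T^{j}$ and $T^{n-j}$ to establish $h_{T_{*}^{j}\mu}(T^{n}) = h_{\mu}(T^{n})$, which is needed because $T$ is not assumed invertible.
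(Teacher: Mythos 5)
Your proposal is correct and follows essentially the same route as the paper: invariance of $\nu$ by telescoping, affinity and power-scaling of entropy to reduce to $h_{T_{*}^{j}\mu}(T^{n}) = h_{\mu}(T^{n})$, the observation that push-forward by a power of $T$ gives an entropy-non-increasing factor of $(X, T^{n}, \mu)$, and the same ergodicity argument. The only cosmetic difference is that you obtain the equality $h_{T_{*}^{j}\mu}(T^{n}) = h_{\mu}(T^{n})$ by pairing the factor maps $T^{j}$ and $T^{n-j}$ (proving the factor inequality directly at the partition level), whereas the paper chains the one-step inequality $h_{T_{*}\mu}(T^{n}) \leqslant h_{\mu}(T^{n})$ around the full cycle using $T_{*}^{n}\mu = \mu$; both are valid.
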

\begin{proof}
    Fix arbitrary $n \in \n$ and $\mu \in \mathcal{M}(X, T^{n})$.
    Then $T_{*}\nu = \nu \in \mathcal{M}(X, T)$ since $T_{*}^{n} \mu = \mu$.
    By \eqref{eq:measure-theoretic entropy well-behaved under iteration} and \eqref{eq:measure-theoretic entropy is affine}, we have
    \begin{equation}    \label{eq:temp:lem:entropy of push-forward average:equality for entropy}
        n h_{\nu}(T) = h_{\nu}(T^{n}) = \frac{1}{n} \sum_{j = 0}^{n - 1} h_{T_{*}^{j} \mu}(T^{n}).
    \end{equation}

    We now show that $h_{T_{*}^{j} \mu}(T^{n}) = h_{\mu}(T^{n})$ for each $j \in \zeroton[n - 1]$. 
    Indeed, the measure $T_{*} \mu$ is $T^{n}$-invariant and the triple $(X, T^{n}, T_{*}\mu)$ is a factor of $(X, T^{n}, \mu)$ by the map $T$. 
    It follows that $h_{T_{*}\mu}(T^{n}) \leqslant h_{\mu}(T^{n})$ (see Subsection~\ref{sub:thermodynamic formalism}). 
    Iterating this and noting that $T_{*}^{n} \mu = \mu$ by $T^{n}$-invariance of $\mu$, we obtain\[
        h_{\mu}(T^{n}) = h_{T_{*}^{n} \mu}(T^{n}) 
        \leqslant h_{T_{*}^{n - 1} \mu}(T^{n}) \leqslant \cdots 
        \leqslant h_{T_{*} \mu}(T^{n}) \leqslant h_{\mu}(T^{n}).
    \]
    Hence $h_{T_{*}^{j} \mu}(T^n) = h_{\mu}(T^{n})$ for each $j \in \zeroton[n - 1]$. Combining this with \eqref{eq:temp:lem:entropy of push-forward average:equality for entropy}, we establish \eqref{eq:entropy of push-forward average}.

    Finally, we assume that $\mu$ is ergodic for $T^{n}$.
    Let $A$ be a Borel subset of $X$ satisfying $T^{-1}(A) = A$.
    Since $T^{-n}(A) = A$ and $\mu$ is ergodic for $T^{n}$, we have $\mu(A) \in \{0, \, 1\}$.
    Then \[
        \nu(A) = \frac{1}{n} \sum_{i = 0}^{n - 1} T_{*}^{j} \mu 
        = \frac{1}{n} \sum_{i = 0}^{n - 1} \mu\bigl( T^{-i}(A) \bigr) = \mu(A) \in \{0, \, 1\}.
    \]
    This implies that $\nu$ is ergodic for $T$.
\end{proof}

The following proposition is a consequence of Lemma~\ref{lem:entropy of push-forward average}. 
\begin{proposition}    \label{prop:upper semi-continuous equivalence for iteration}
    Let $X$ be a compact metrizable topological space and $T \colon X \mapping X$ be a continuous map.
    Consider arbitrary $n \in \n$.
    Then the entropy map of $T^{n}$ is upper semi-continuous if and only if the entropy map of $T$ is upper semi-continuous.
\end{proposition}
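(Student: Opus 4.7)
The plan is to derive both directions from the identity $h_{\mu}(T^{n}) = n h_{\mu}(T)$ for $T$-invariant $\mu$, together with Lemma~\ref{lem:entropy of push-forward average}, which lets us pass between $T$-invariant and $T^{n}$-invariant measures without changing the entropy (up to the factor $n$).

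For the easier direction, suppose the entropy map of $T^{n}$ is upper semi-continuous, and let $\sequen[\big]{\mu_{k}} \subseteq \mathcal{M}(X, T)$ converge in the weak$^{*}$ topology to $\mu \in \mathcal{M}(X, T)$. Since every $T$-invariant measure is also $T^{n}$-invariant, the hypothesis yields $\limsup_{k \to +\infty} h_{\mu_{k}}(T^{n}) \leqslant h_{\mu}(T^{n})$. Dividing by $n$ and applying \eqref{eq:measure-theoretic entropy well-behaved under iteration} gives $\limsup_{k \to +\infty} h_{\mu_{k}}(T) \leqslant h_{\mu}(T)$, so the entropy map of $T$ is upper semi-continuous.

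For the other direction, suppose the entropy map of $T$ is upper semi-continuous, and let $\sequen[\big]{\mu_{k}} \subseteq \mathcal{M}(X, T^{n})$ converge in the weak$^{*}$ topology to $\mu \in \mathcal{M}(X, T^{n})$. Define the averaged measures
\[
    \nu_{k} \define \frac{1}{n} \sum_{j = 0}^{n - 1} T_{*}^{j} \mu_{k}, \qquad \nu \define \frac{1}{n} \sum_{j = 0}^{n - 1} T_{*}^{j} \mu.
\]
Since $T$ is continuous, the push-forward $T_{*}$ is weak$^{*}$-continuous on $\mathcal{P}(X)$, so $\nu_{k} \to \nu$ in the weak$^{*}$ topology. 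By Lemma~\ref{lem:entropy of push-forward average}, $\juxtapose{\nu_{k}}{\nu} \in \mathcal{M}(X, T)$ and
\[
    h_{\mu_{k}}(T^{n}) = h_{\nu_{k}}(T^{n}) = n h_{\nu_{k}}(T), \qquad h_{\mu}(T^{n}) = h_{\nu}(T^{n}) = n h_{\nu}(T).
\]
Invoking upper semi-continuity of the entropy map of $T$ at $\nu$ gives $\limsup_{k \to +\infty} h_{\nu_{k}}(T) \leqslant h_{\nu}(T)$, and multiplying through by $n$ yields $\limsup_{k \to +\infty} h_{\mu_{k}}(T^{n}) \leqslant h_{\mu}(T^{n})$, as required.

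There is no real obstacle here: the argument is essentially bookkeeping once Lemma~\ref{lem:entropy of push-forward average} is in hand. The only point worth noting is that in the nontrivial direction, the approximating sequence $\sequen[\big]{\mu_{k}}$ is only $T^{n}$-invariant, so one cannot apply upper semi-continuity of $h_{\cdot}(T)$ directly to $\mu_{k}$; the averaging step producing $\nu_{k}$ is precisely what is needed to land in $\mathcal{M}(X, T)$ while preserving entropy, and the weak$^{*}$-continuity of $T_{*}$ ensures the convergence $\nu_{k} \to \nu$ carries over.
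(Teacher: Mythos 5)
Your proof is correct and follows essentially the same route as the paper: the easy direction via $\mathcal{M}(X,T) \subseteq \mathcal{M}(X,T^{n})$ and $h_{\mu}(T^{n}) = n h_{\mu}(T)$, and the nontrivial direction by averaging the push-forwards $\frac{1}{n}\sum_{j=0}^{n-1} T_{*}^{j}\mu_{k}$, invoking Lemma~\ref{lem:entropy of push-forward average}, and checking weak$^{*}$ convergence of the averages. The paper merely phrases the nontrivial direction contrapositively, which is a cosmetic difference.
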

\begin{proof}
    Fix arbitrary $n \in \n$.

    Suppose that the entropy map of $T^{n}$ is upper semi-continuous. Since $\mathcal{M}(X, T) \subseteq \mathcal{M}(X, T^{n})$, it follows immediately from \eqref{eq:measure-theoretic entropy well-behaved under iteration} that the entropy map of $T$ is also upper semi-continuous.

    For the converse direction suppose that the entropy map of $T^{n}$ is not upper semi-continuous. 
    Then there exists a $T^{n}$-invariant Borel probability measure $\mu_{0} \in \mathcal{M}(X, T^{n})$ and a sequence $\{ \mu_{k} \}_{k \in \n}$ of $T^{n}$-invariant Borel probability measures in $\mathcal{M}(X, T^{n})$ such that $\{ \mu_{k} \}_{k \in \n}$ converges to $\mu_{0}$ in the weak$^*$ topology and satisfies
    \begin{equation}    \label{eq:temp:prop:upper semi-continuous equivalence for iteration:not upper semi-continuous limsup of entropy}
        \limsup\limits_{k \to +\infty} h_{\mu_{k}}(T^{n}) > h_{\mu_{0}}(T^{n}).
    \end{equation}
    
    We define $\nu_{0} \define \frac{1}{n} \sum_{j = 0}^{n - 1} T_{*}^{j} \mu_{0}$ and $\nu_{k} \define \frac{1}{n} \sum_{j = 0}^{n - 1} T_{*}^{j} \mu_{k}$ for each $k \in \n$.
    Then $\{ \nu_{k} \}_{k \in \n}$ converges to $\nu_{0}$ in the weak$^*$ topology. 
    Indeed, for each $\varphi \in C(X)$, since $S_{n}^{T} \varphi \in C(X)$ and $\{ \mu_{k} \}_{k \in \n}$ converges to $\mu_{0}$ in the weak$^*$ topology, we obtain
    \[
        \int \! \varphi \,\mathrm{d} \nu_{k} = \frac{1}{n} \int \! S_{n}^{T} \varphi \,\mathrm{d} \mu_{k} \longrightarrow \frac{1}{n} \int \! S_{n}^{T} \varphi \,\mathrm{d} \mu_{0} = \int \! \varphi \,\mathrm{d} \nu_{0} 
    \]
    as $k \to +\infty$.
    
    Now we show that the entropy map of $T$ is not upper semi-continuous at $\nu_{0}$.
    It follows immediately from \eqref{eq:temp:prop:upper semi-continuous equivalence for iteration:not upper semi-continuous limsup of entropy} and Lemma~\ref{lem:entropy of push-forward average} that
    \[
        \limsup\limits_{k \to +\infty} h_{\nu_{k}}(T) = \frac{1}{n} \limsup\limits_{k \to +\infty} h_{\mu_{k}}(T^{n}) > \frac{1}{n} h_{\mu_{0}}(T^{n}) = h_{\nu_{0}}(T).
    \]
    This completes the proof.
\end{proof}

By constructing suitable subsystems, we can prove the ``only if'' part in Theorem~\ref{thm:upper semi-continuous iff no periodic critical points}.
We first establish the following proposition and then prove the general cases (Theorem~\ref{thm:not upper semi-continuous with periodic critical points}) by applying Proposition~\ref{prop:upper semi-continuous equivalence for iteration}.

\begin{proposition}    \label{prop:not upper semi-continuous fixed critical point}
    Let $f \colon S^2 \mapping S^2$ be an expanding Thurston map with a Jordan curve $\mathcal{C} \subseteq S^2$ satisfying $\post{f} \subseteq \mathcal{C}$ and $f(\mathcal{C}) \subseteq \mathcal{C}$. 
    Suppose that $f$ has a fixed critical point $p$. 
    Then there exists a sequence $\{ \mu_{n} \}_{n \in \n}$ of ergodic $f$-invariant Borel probability measures in $\mathcal{M}(S^2, f)$ such that $\{ \mu_{n} \}_{n \in \n}$ converges to $\delta_{p}$ in the weak$^*$ topology and satisfies
    \begin{equation}    \label{eq:temp:not upper semi-continuous:limsup of entropy}
        \lim\limits_{n \to +\infty} h_{\mu_{n}}(f) = \log \parentheses[\big]{ \deg_{f}(p) }  > 0 = h_{\delta_{p}}(f).
    \end{equation}
    In particular, the entropy map of $f$ is not upper semi-continuous at $\delta_{p}$.
\end{proposition}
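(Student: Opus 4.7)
Since $p$ is a fixed critical point of $f$, I write $k:=\deg_f(p)\geqslant 2$, so that $\deg_{f^n}(p)=k^n$ and the $n$-flower $\overline{W}^{n}(p)$ is the union of exactly $2k^n$ $n$-tiles of $f$ meeting $p$, whose two colors alternate cyclically around $p$ (Remark~\ref{rem:flower structure}). For each sufficiently large $n$ I plan to construct a strongly primitive subsystem $F_n$ of $f^n$ with respect to $\mathcal{C}$ whose unique ergodic measure of maximal entropy (supplied by Theorem~\ref{thm:existence uniqueness and properties of equilibrium state}) is almost entirely supported on the flower; symmetrizing it over one $f$-period will then yield an ergodic $f$-invariant measure $\mu_n$ with entropy tending to $\log k$ and mass concentrating at $p$.

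Concretely, let $\mathfrak{X}_n^{\mathrm{fl}}$ denote the set of $n$-tiles of $f$ inside $\overline{W}^n(p)$, and for each pair $(\colour,\colour')\in\{\black,\white\}^2$ pick an auxiliary $n$-tile $Y^n_{\colour,\colour'}$ of color $\colour$ contained in $\operatorname{int}X^0_{\colour'}$ (available for large $n$ by the strong primitivity of $f$ itself, Remark~\ref{rem:expanding Thurston map is strongly primitive subsystem of itself}). Setting $\mathfrak{X}_n:=\mathfrak{X}_n^{\mathrm{fl}}\cup\{Y^n_{\colour,\colour'}\}$ and $F_n:=f^n|_{\bigcup\mathfrak{X}_n}$, the four $Y$-tiles ensure strong primitivity of $F_n$, so Theorem~\ref{thm:existence uniqueness and properties of equilibrium state} (with $\phi\equiv 0$) produces a unique ergodic MME $\widehat{\mu}_n$ for $F_n|_{\limitset(F_n,\mathcal{C})}$, which I regard as an ergodic $f^n$-invariant Borel probability measure on $S^2$. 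A direct tile count, using that $\mathcal{C}$ locally bisects the flower into two arcs of $k^n$ alternately colored tiles, shows that the $2\times 2$ tile matrix $A_n$ has every row sum and every column sum equal to $k^n+2$; hence $\rho(A_n)=k^n+2$ and Proposition~\ref{prop:topological entropy of subsystem} gives
\begin{equation*}
    h_{\widehat{\mu}_n}(f^n)=h_{\mathrm{top}}(F_n)=\log(k^n+2)=n\log k+O(k^{-n}).
\end{equation*}
Moreover, the Markov transition matrix on the $2k^n+4$ states $\mathfrak{X}_n$, with $X\to X'$ allowed exactly when $X'\subseteq f^n(X)$, inherits the same doubly regular row/column sum $k^n+2$, so the Parry-type MME $\widehat{\mu}_n$ has \emph{uniform} one-step marginal $1/(2k^n+4)$; in particular $\widehat{\mu}_n$ gives total mass $4/(2k^n+4)$ to the four auxiliary tiles and mass $1-O(k^{-n})$ to the flower.

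The final step is to set $\mu_n:=\frac{1}{n}\sum_{j=0}^{n-1}f^j_{*}\widehat{\mu}_n$. By Lemma~\ref{lem:entropy of push-forward average} this $\mu_n$ is ergodic $f$-invariant and satisfies $h_{\mu_n}(f)=\frac{1}{n}h_{\widehat{\mu}_n}(f^n)\to\log k$. To show $\mu_n\to\delta_p$ in the weak$^{*}$-topology I will use that, for any open $U\ni p$, there is $n_0$ with $\overline{W}^{n_0}(p)\subseteq U$, and that for any $x$ lying in a flower tile $X\in\mathfrak{X}_n^{\mathrm{fl}}$ and any $0\leqslant j\leqslant n-n_0$, the image $f^j(X)$ is an $(n-j)$-tile containing $f^j(p)=p$, so $f^j(x)\in\overline{W}^{n-j}(p)\subseteq\overline{W}^{n_0}(p)\subseteq U$. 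This will give
\begin{equation*}
    \mu_n(U)\geqslant\frac{1}{n}\sum_{j=0}^{n-n_0}\widehat{\mu}_n\bigl(f^{-j}(U)\bigr)\geqslant\frac{n-n_0+1}{n}\Bigl(1-\frac{2}{k^n+2}\Bigr)\longrightarrow 1,
\end{equation*}
from which $\int\varphi\,\mathrm{d}\mu_n\to\varphi(p)$ for every $\varphi\in C(S^2)$ follows via the standard estimate $\bigl|\int\varphi\,\mathrm{d}\mu_n-\varphi(p)\bigr|\leqslant\operatorname{osc}_U\varphi+2\uniformnorm{\varphi}(1-\mu_n(U))$. Since $h_{\delta_p}(f)=0<\log k=\lim_n h_{\mu_n}(f)$, this will prove that the entropy map is not upper semi-continuous at $\delta_p$.

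The main obstacle will be decoupling the bulk entropy of the flower tiles from the four auxiliary tiles needed for strong primitivity: the spectral radius estimate $\rho(A_n)=k^n+O(1)$ alone is not enough to push $\mu_n$ to $\delta_p$ after $f$-averaging, because the $f$-iterates of an auxiliary tile can sit anywhere on $S^2$. What rescues the argument is the observation that the auxiliary tiles can be placed so that the $F_n$-transition matrix on $1$-tiles is \emph{doubly} regular, yielding the exact uniform marginal $1/(2k^n+4)$ rather than merely a Gibbs bound with possibly $n$-dependent constants; this is precisely what makes the mass on the $Y$-tiles quantitatively negligible.
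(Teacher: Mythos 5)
Your construction is genuinely different from the paper's, and while the entropy asymptotics survive, the weak$^{*}$-convergence argument has a real gap. The paper never adjoins auxiliary tiles away from $p$: inside each of the $2k^n$ $n$-tiles of the flower it selects one black and one white $(n+n_f)$-tile contained in the \emph{interior} of that $n$-tile (using strong primitivity of $f$ itself), and takes these $4k^n$ tiles as a strongly primitive subsystem of $f^{n+n_f}$. The whole point of this arrangement is that the tile maximal invariant set, hence $\supp\widehat{\mu}_n$, is contained in $\overline{W}^{n}(p)$ outright, so that $f^{j}_{*}\widehat{\mu}_n$ is supported in $\overline{W}^{n-j}(p)$ and no mass estimate on exceptional tiles is ever needed; the tile matrix then satisfies $N_{\black\colour'}=N_{\white\colour'}$ and $N_{\colour\black}+N_{\colour\white}=2k^{n}$, giving $\rho(A_n)=2k^n$ by inspection.

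Two concrete problems with your version. First, the ``doubly regular'' claim is false in general: the invariant curve $\mathcal{C}$ passes through $p$ and cuts the cyclic arrangement of the $2k^n$ flower tiles into two arcs, one inside $X^0_{\black}$ and one inside $X^0_{\white}$, and there is no reason these arcs have equal length (depending on which two of the $n$-edge germs at $p$ lie on $\mathcal{C}$, one side may contain very few tiles). Hence the row sums of your tile matrix need not both equal $k^n+2$. The column sums \emph{are} both $k^n+2$, and for a nonnegative matrix with constant column sums that constant is the spectral radius, so $\rho(A_n)=k^n+2$ and your entropy computation stands — but the exactly uniform marginal does not. Second, and this is the essential gap: the assertion that $\widehat{\mu}_n$ is the Parry measure of the $(2k^n+4)$-state SFT on tiles, with one-step marginal $\leqslant 1/(k^n+2)$ per tile, is not supplied by anything you invoke. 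Theorem~\ref{thm:existence uniqueness and properties of equilibrium state} and Proposition~\ref{prop:topological entropy of subsystem} give existence, uniqueness, ergodicity of the MME and the value of $h_{\operatorname{top}}(F_n)$; they do not identify $\widehat{\mu}_n$ with a Parry measure nor give a per-tile Gibbs bound with constants uniform in $n$ (the coding of $\limitset(F_n,\mathcal{C})$ by tiles is also not literally a conjugacy because of boundary identifications). Without such a bound, $\widehat{\mu}_n\bigl(\bigcup\mathfrak{X}_n^{\mathrm{fl}}\bigr)\geqslant 1-O(k^{-n})$ is unproved, and your estimate $\mu_n(U)\geqslant\frac{n-n_0+1}{n}\bigl(1-\frac{2}{k^n+2}\bigr)$ collapses. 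The clean repair is the paper's: arrange for \emph{all} tiles of the subsystem, including the ones forcing strong primitivity, to lie inside $\overline{W}^n(p)$, so that the support of the MME is confined to the flower by construction.
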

\begin{proof}
    Let $p \in S^2$ be a critical point of $f$ that is fixed by $f$. 
    Set $k \define \deg_{f}(p)$. 
    Then $k > 1$. 
    Note that $\delta_{p} \in \mathcal{M}(S^2, f)$ and $h_{\delta_{p}}(f) = 0$, where $\delta_{p}$ is the Dirac measure supported on $\{p\}$.
    It suffices to construct a sequence $\{ \mu_{n} \}_{n \in \n}$ of ergodic $f$-invariant Borel probability measures in $\mathcal{M}(S^2, f)$ such that $\{ \mu_{n} \}_{n \in \n}$ converges to $\delta_{p}$ in the weak$^*$ topology and satisfies \eqref{eq:temp:not upper semi-continuous:limsup of entropy}.

    \smallskip

    We first give the construction of $\sequen{\mu_{n}}$.

    Fix arbitrary $n \in \n$. 
    The set of $n$-tiles of $f$ at $p$ is defined as $\neighbortile[f]{n}{}{}{p} \define \{X \in \Tile{n} \describe p \in X \}$. Then by Remark~\ref{rem:flower structure}, we have $\cflower{n}{p} = \bigcup \neighbortile[f]{n}{}{}{p}$ and $\card{\neighbortile[f]{n}{}{}{p}} = 2 (\deg_{f}(p))^{n} = 2 k^{n}$, where $\flower{n}{p}$ is defined in \eqref{eq:n-flower} and $\cflower{n}{p}$ is the closure of $\flower{n}{p}$.
    
    Since $f \in \subsystem$ is strongly primitive, by Lemma~\ref{lem:strongly primitive:tile in interior tile for high enough level}, there exists an integer $n_{f} \in \n$ depending only on $f$ and $\mathcal{C}$ such that for each $n$-tile $X^{n} \in \Tile{n}$, there exists a black $(n + n_{f})$-tile $X^{n + n_{f}}_{\black} \in \cTile{n + n_{f}}{\black}$ and a white $(n + n_{f})$-tile $X^{n + n_{f}}_{\white} \in \cTile{n + n_{f}}{\white}$ such that $X^{n + n_{f}}_{\black} \cup X^{n + n_{f}}_{\white} \subseteq \inte{X^n}$.
    We denote by $\mathbf{E}_{n}$ the set consisting of two such $(n + n_{f})$-tiles, one black and one white, for each $n$-tile $X^n \in \neighbortile[f]{n}{}{}{p}$.
    In particular, we have $\card{\mathbf{E}_{n}} = 2 \card{\neighbortile[f]{n}{}{}{p}} = 4k^{n}$. 
    
    We set $F_{n} \define f^{n + n_{f}}|_{\bigcup \mathbf{E}_{n}}$ and $\widehat{F}_{n} \define F_n|_{\limitset_{n}}$, where $\limitset_{n} \define \limitset(F_n, \mathcal{C})$ is the tile maximal invariant set associated with $F_n$ with respect to $\mathcal{C}$.
    Note that $p \in \post{f} \subseteq \mathcal{C}$.
    By Remark~\ref{rem:flower structure} and Proposition~\ref{prop:cell decomposition: invariant Jordan curve}, $F_n \in \subsystem[f^{n + n_{f}}]$ is strongly primitive.
    Then it follows from Theorem~\ref{thm:existence uniqueness and properties of equilibrium state} and \cite[Theorem~1.1]{shi2023thermodynamic} that there exists $\widehat{\mu}_{n} \in \mathcal{M}(\limitset_{n}, \widehat{F}_{n}) \subseteq \mathcal{M}(S^2, f^{n + n_{f}})$ such that $\supp{\widehat{\mu}_{n}} \subseteq \limitset_{n} \subseteq \bigcup \mathbf{E}_{n} \subseteq \bigcup \neighbortile[f]{n}{}{}{p} = \cflower{n}{p}$ and
    \[
        h_{\widehat{\mu}_{n}}(f^{n + n_{f}}) = h_{\widehat{\mu}_{n}}(\widehat{F}_{n}) = P(F_n, 0) = h_{\operatorname{top}}(F_{n}),
    \]
    where $P(F_n, 0)$ and $h_{\operatorname{top}}(F_{n})$ are defined in Definition~\ref{def:pressure for subsystem}.
    Put \[
        \mu_{n} \define \frac{1}{n + n_{f}} \sum_{j = 0}^{n + n_{f} - 1} f_{*}^{j} \widehat{\mu}_{n}.
    \]
    Applying Lemma~\ref{lem:entropy of push-forward average}, we have $\mu_{n} \in \mathcal{M}(S^{2}, f)$ and $(n + n_{f}) h_{\mu_{n}}(f) = h_{\widehat{\mu}_{n}}(f^{n + n_{f}}) = h_{\operatorname{top}}(F_{n})$.
    
    \smallskip

    Now we calculate $h_{\mu_{n}}(f)$ for each $n \in \n$ and show that \eqref{eq:temp:not upper semi-continuous:limsup of entropy} holds. 
    
    By Definition~\ref{def:pressure for subsystem} and Proposition~\ref{prop:topological entropy of subsystem}, we have $h_{\operatorname{top}}(F_{n}) = \log(\rho(A_n))$, where $A_n$ is the tile matrix of $F_n$ with respect to $\mathcal{C}$ and $\rho(A_{n})$ is the spectral radius of $A_{n}$.
    Recall from Definition~\ref{def:tile matrix} and Remark~\ref{rem:tile matrix associated with set of tiles} that \[
        A_n = A(\mathbf{E}_{n}) = \begin{bmatrix}
            N_{\white \white} & N_{\black \white} \\
            N_{\white \black} & N_{\black \black}
        \end{bmatrix},
    \]
    where $N_{\colour \colour'} \define \card[\big]{ \set[\big]{ X \in \mathbf{E}_{n} \describe X \in \cTile{n + n_{f}}{\colour}, \, X \subseteq X^0_{\colour'} } }$ for each pair of $\juxtapose{\colour}{\colour'} \in \colours$. 
    In particular, since $f(\mathcal{C}) \subseteq \mathcal{C}$, by the construction of $\mathbf{E}_{n}$ and Proposition~\ref{prop:subsystem:preliminary properties}~\ref{item:subsystem:properties invariant Jordan curve:relation between color and location of tile}, one has $N_{\black \colour'} = N_{\white \colour'}$ and $N_{\colour \black} + N_{\colour \white} = \card{\mathbf{E}_{n}} / 2 = 2k^{n}$ for each pair of $\juxtapose{\colour}{\colour'} \in \colours$. 
    Then it follows that $\rho(A_{n}) = 2 k^{n}$. 
    Hence $h_{\operatorname{top}}(F_{n}) = \log(2 k^{n})$ and \eqref{eq:temp:not upper semi-continuous:limsup of entropy} holds since\[
        h_{\mu_{n}}(f) = \frac{ h_{\operatorname{top}}(F_{n}) }{n + n_{f}} = \frac{\log(2 k^{n})}{n + n_{f}} \converge \log k \qquad \text{as } n \to +\infty.
    \]
    
    \smallskip

    Finally, we show that $\{ \mu_{n} \}_{n \in \n}$ converges to $\delta_{p}$ in the weak$^*$ topology.
    It suffices to show that for each $\varphi \in C(S^2)$, $\int \! \varphi \,\mathrm{d}\mu_{n} \to \varphi(p)$ as $n \to +\infty$.
    
    We fix a visual metric $d$ that satisfies the Assumptions in Section~\ref{sec:The Assumptions}.

    Fix arbitrary $\varphi \in C(S^{2})$ and $\varepsilon > 0$. 
    Since $\varphi$ is continuous at $p$, there exists a number $\delta > 0$ such that for each $x \in S^2$ with $d(x, p) < \delta$, we have $|\varphi(x) - \varphi(p)| < \varepsilon$.
    By Lemma~\ref{lem:visual_metric}~\ref{item:lem:visual_metric:diameter of cell}, there exists an integer $N \in \n$ such that for each integer $\ell > N$, $\cflower{\ell}{p} \subseteq B_{d}(p, \delta)$.
    For each $n \in \n$ and each $j \in \{0, \, 1, \, \dots, \, n - 1\}$, it follows from $\supp{\widehat{\mu}_{n}} \subseteq \cflower{n}{p}$ and \eqref{eq:flower preserve under iterate} in Remark~\ref{rem:flower preserve under iterate} that $\supp{f_{*}^{j}\widehat{\mu}_{n}} \subseteq f^{j}\bigl( \cflower{n}{p} \bigr) = \cflower{n - j}{p}$.
    Thus for sufficiently large $n$, we have $\cflower{n - j}{p} \subseteq B_{d}(p, \delta)$ for all $j \in \{0, \, 1, \, \dots, \, n - N - 1\}$. 
    Then
    \begin{align*}
        \Big|\varphi(p) - \int \! \varphi \,\mathrm{d}\mu_{n} \Big| 
        &= \bigg|\varphi(p) - \frac{1}{n + n_{f}} \sum_{j = 0}^{n + n_{f}} \int \! \varphi \,\mathrm{d} f_{*}^{j}\widehat{\mu}_{n} \bigg|  \\
        &\leqslant \bigg|\varphi(p) - \frac{1}{n + n_{f}} \sum_{j = 0}^{n - N - 1} \int \! \varphi \,\mathrm{d} f_{*}^{j}\widehat{\mu}_{n} \bigg|   +   \frac{N + n _{f}}{n + n_{f}} \uniformnorm{\varphi}  \\
        &\leqslant \frac{1}{n+ n_{f}} \sum_{j = 0}^{n - N - 1} \int \! | \varphi - \varphi(p) | \,\mathrm{d} f_{*}^{j}\widehat{\mu}_{n}   +   \frac{2(N + n _{f})}{n + n_{f}} \uniformnorm{\varphi}  \\
        &\leqslant \frac{n - N}{n + n _{f}} \varepsilon  +  \frac{2(N + n _{f})}{n + n_{f}} \uniformnorm{\varphi} \\
        &\leqslant 2 \varepsilon
    \end{align*}
    for sufficiently large $n$.
    This implies $\int \! \varphi \,\mathrm{d}\mu_{n} \to \varphi(p)$ as $n \to +\infty$ for each $\varphi \in C(S^2)$.

    The proof is complete.
\end{proof}

Recall that a point $x \in S^{2}$ is a periodic point of $f \colon S^{2} \mapping S^{2}$ with period $n \in \n$ if $f^{n}(x) = x$ and $f^{i}(x) \ne x$ for each $i \in \{1, \, 2, \, \dots, \, n - 1\}$.

\begin{theorem}    \label{thm:not upper semi-continuous with periodic critical points}
    Let $f \colon S^2 \mapping S^2$ be an expanding Thurston map.
    Suppose that $f$ has a periodic critical point $p$ with period $n$ for some $n \in \n$. 
    Denote $\deltameasure{p} \define \frac{1}{n} \sum_{i = 0}^{n - 1} \delta_{f^{i}(p)}$.
    Then there exists a sequence $\{ \nu_{k} \}_{k \in \n}$ of ergodic $f$-invariant Borel probability measures in $\mathcal{M}(S^2, f)$ such that $\{ \nu_{k} \}_{k \in \n}$ converges to $\deltameasure{p}$ in the weak$^*$ topology and satisfies
    \begin{equation}    \label{eq:thm:not upper semi-continuous with periodic critical points:not upper semi-continuous:limsup of entropy}
        \lim\limits_{k \to +\infty} h_{\nu_{k}}(f) = \frac{1}{n} \log \parentheses[\big]{ \deg_{f^{n}}(p) } 
        > 0 = h_{\deltameasure{p}}(f).
    \end{equation}
    In particular, the entropy map of $f$ is not upper semi-continuous at $\deltameasure{p}$.
\end{theorem}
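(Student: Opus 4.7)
The plan is to reduce the theorem to Proposition~\ref{prop:not upper semi-continuous fixed critical point} by passing to a sufficiently high iterate of $f$ and then symmetrizing to recover an $f$-invariant measure. Note that $p$ is a fixed point of $f^{n}$, and since the local degree multiplies along the orbit we have $\deg_{f^{n}}(p) = \prod_{i=0}^{n-1} \deg_{f}(f^{i}(p)) \geqslant \deg_{f}(p) \geqslant 2$, so $p$ is a fixed critical point of $f^{n}$. Moreover, for every $m \in \n$, the orbit $p, f(p), \dots, f^{nm-1}(p)$ consists of $m$ repetitions of the periodic orbit, and hence $\deg_{f^{nm}}(p) = \bigl( \deg_{f^{n}}(p) \bigr)^{m}$.

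First I would choose an auxiliary Jordan curve $\widetilde{\mathcal{C}} \supseteq \post{f}$. By Lemma~\ref{lem:invariant_Jordan_curve} applied to $f^{n}$ and $\widetilde{\mathcal{C}}$, we can fix $m \in \n$ large enough that there exists an $f^{nm}$-invariant Jordan curve $\mathcal{C}$ (which automatically contains $\post{f^{nm}} = \post{f}$, since it is isotopic to $\widetilde{\mathcal{C}}$ rel. $\post{f}$). Setting $g \define f^{nm}$, the pair $(g, \mathcal{C})$ satisfies the hypotheses of Proposition~\ref{prop:not upper semi-continuous fixed critical point}, and $p$ is a fixed critical point of $g$. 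That proposition therefore supplies a sequence $\{\mu_{k}\}_{k \in \n}$ of ergodic $g$-invariant Borel probability measures converging to $\delta_{p}$ in the weak$^{*}$ topology and satisfying
\[
    \lim_{k \to +\infty} h_{\mu_{k}}(g) = \log\bigl( \deg_{g}(p) \bigr) = m \log\bigl( \deg_{f^{n}}(p) \bigr).
\]

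Next I would define the $f$-invariant averages
\[
    \nu_{k} \define \frac{1}{nm} \sum_{j = 0}^{nm - 1} f_{*}^{j} \mu_{k}, \qquad k \in \n.
\]
By Lemma~\ref{lem:entropy of push-forward average} applied to $T = f$ and $\mu = \mu_{k}$, each $\nu_{k}$ is $f$-invariant and ergodic (since $\mu_{k}$ is ergodic for $f^{nm} = g$), and
\[
    h_{\nu_{k}}(f) = \frac{1}{nm} h_{\mu_{k}}(g) \xrightarrow[k \to +\infty]{} \frac{1}{nm} \cdot m \log\bigl(\deg_{f^{n}}(p)\bigr) = \frac{1}{n} \log\bigl(\deg_{f^{n}}(p)\bigr) > 0,
\]
giving the entropy statement in \eqref{eq:thm:not upper semi-continuous with periodic critical points:not upper semi-continuous:limsup of entropy}. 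The trivial identity $h_{\deltameasure{p}}(f) = 0$ follows from $\deltameasure{p}$ being supported on the finite set $\{p, f(p), \dots, f^{n-1}(p)\}$.

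Finally, for the weak$^{*}$ convergence $\nu_{k} \to \deltameasure{p}$, I would simply push the convergence $\mu_{k} \to \delta_{p}$ forward under each $f^{j}$: by continuity of $f$, $f_{*}^{j}\mu_{k} \to \delta_{f^{j}(p)}$ in the weak$^{*}$ topology for each fixed $j$. Averaging over $j \in \{0, 1, \dots, nm - 1\}$ and using that the orbit of $p$ has period $n$ (so each of the $n$ points $p, f(p), \dots, f^{n-1}(p)$ is hit exactly $m$ times), we obtain
\[
    \nu_{k} \xrightarrow[k \to +\infty]{} \frac{1}{nm} \sum_{j = 0}^{nm - 1} \delta_{f^{j}(p)} = \frac{1}{n} \sum_{i = 0}^{n - 1} \delta_{f^{i}(p)} = \deltameasure{p}.
\]
The only real subtlety is the initial reduction: since an $f$-invariant Jordan curve through $\post{f}$ need not exist, one has to pass to the iterate $f^{nm}$, which is harmless because both entropy and ergodicity rescale compatibly under the averaging in Lemma~\ref{lem:entropy of push-forward average}. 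The non-upper-semi-continuity of the entropy map at $\deltameasure{p}$ is then immediate from \eqref{eq:thm:not upper semi-continuous with periodic critical points:not upper semi-continuous:limsup of entropy}.
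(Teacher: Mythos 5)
Your proposal is correct and follows essentially the same route as the paper's proof: pass to the iterate $f^{nm}$ for which an invariant Jordan curve exists, apply Proposition~\ref{prop:not upper semi-continuous fixed critical point} to get ergodic measures $\mu_k \to \delta_p$ with the right entropy, and then average the push-forwards $f_*^j\mu_k$ using Lemma~\ref{lem:entropy of push-forward average} to recover ergodic $f$-invariant measures converging to $\deltameasure{p}$. The only cosmetic differences are that you apply Lemma~\ref{lem:invariant_Jordan_curve} to $f^n$ rather than to $f$ (both yield an $f^{nm}$-invariant curve) and that you verify the weak$^*$ convergence via push-forwards under each $f^j$ rather than via the Birkhoff-sum identity $\int \varphi\,\mathrm{d}\nu_k = \frac{1}{nm}\int S_{nm}^f\varphi\,\mathrm{d}\mu_k$, which is the same computation.
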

\begin{proof}
    Suppose that $p \in S^{2}$ is a periodic critical point of $f$ with period $n$ for some $n \in \n$.
    
    By Lemma~\ref{lem:invariant_Jordan_curve}, we can find a sufficiently high iterate $f^{m}$ of $f$ that has an $f^{m}$-invariant Jordan curve $\mathcal{C} \subseteq S^{2}$ with $\post{f^{nm}} = \post{f} \subseteq \mathcal{C}$. 
    We fix such Jordan curve $\mathcal{C}$ and set $F \define f^{N}$ with $N \define nm$. 
    Thus $F(\mathcal{C}) \subseteq \mathcal{C}$ and $p$ is a fixed critical point of $F$ with $\deg_{F}(p) = \parentheses[\big]{ \deg_{f^{n}}(p) } ^{m}$.
    Note that $F$ is also an expanding Thurston map by Remark~\ref{rem:Expansion_is_independent}.

    By Proposition~\ref{prop:not upper semi-continuous fixed critical point}, there exists a sequence $\{ \mu_{k} \}_{k \in \n}$ of ergodic $F$-invariant Borel probability measures in $\mathcal{M}(S^2, F)$ such that $\{ \mu_{k} \}_{k \in \n}$ converges to $\delta_{p}$ in the weak$^*$ topology and satisfies
    \begin{equation}    \label{eq:temp:thm:not upper semi-continuous with periodic critical points:not upper semi-continuous:limsup of entropy}
        \lim\limits_{k \to +\infty} h_{\mu_{k}}(F) = \log \parentheses[\big]{ \deg_{F}(p) }  > h_{\delta_{p}}(F) = 0.
    \end{equation}

    We define $\nu_{k} \define \frac{1}{N} \sum_{j = 0}^{N - 1} f_{*}^{j} \mu_{k}$ for each $k \in \n$.
    It follows immediately from Lemma~\ref{lem:entropy of push-forward average} that $\nu_{k} \in \invmea[S^2][f]$ and $\nu_{k}$ is ergodic for $f$.
    Note that $\{ \nu_{k} \}_{k \in \n}$ converges to $\deltameasure{p}$ in the weak$^*$ topology. 
    Indeed, for each $\varphi \in C(X)$, since $S_{N}^{f} \varphi \in C(X)$ and $\{ \mu_{k} \}_{k \in \n}$ converges to $\delta_{p}$ in the weak$^*$ topology, we have
    \[
        \int \! \varphi \,\mathrm{d} \nu_{k} 
        = \frac{1}{N} \int \! S_{N}^{f} \varphi \,\mathrm{d} \mu_{k} \longrightarrow \frac{1}{N} \int \! S_{N}^{f} \varphi \,\mathrm{d} \delta_{p}
        = \int \! \varphi \,\mathrm{d} \deltameasure{p}
    \]
    as $k \to +\infty$.
    
    Finally, by \eqref{eq:temp:thm:not upper semi-continuous with periodic critical points:not upper semi-continuous:limsup of entropy} and Lemma~\ref{lem:entropy of push-forward average} we have
    \[
        \begin{split}
            \lim\limits_{k \to +\infty} h_{\nu_{k}}(f) 
            &= \frac{1}{N} \lim\limits_{k \to +\infty} h_{\mu_{k}}(F) = \frac{1}{N} \log \parentheses[\big]{ \deg_{F}(p) } = \frac{1}{n} \log \parentheses[\big]{ \deg_{f^{n}}(p) } \\
            &> \frac{1}{N} h_{\delta_{p}}(F) = h_{\deltameasure{p}}(f) = 0.
        \end{split}
    \]
    This completes the proof.
\end{proof}

\begin{proof}[Proof of Theorem~\ref{thm:upper semi-continuous iff no periodic critical points}]
    By \cite[Corollary~1.3]{li2015weak}, if $f$ has no periodic critical points, then the entropy map of $f$ is upper semi-continuous.
    The other direction follows immediately from Theorem~\ref{thm:not upper semi-continuous with periodic critical points}.
\end{proof}


\section{Entropy density}
\label{sec:Entropy density}

This section is devoted to the proof of entropy density of ergodic measures for expanding Thurston maps, with the main result being Theorem~\ref{thm:entropy dense}.



We first introduce some notations.

\noindent\textbf{Notations.} 
For $\ell \in \n$ we define\[
    \multispace \define \{ \vecfun = \mutifun \describe \varphi_{j} \in C(S^{2}) \text{ for each } j \in \oneton[\ell] \}.
\]
For $\vecfun = \mutifun \in \multispace$, $\vecavg = \mutiavg \in \real^{\ell}$, and $\mu \in \probsphere$, the expression $\int \! \vecfun \,\mathrm{d}\mu > \vecavg$ indicates that $\int \! \varphi_{j} \,\mathrm{d}\mu > \alpha_{j}$ holds for each $j \in \oneton[\ell]$.
The meaning of $\int \! \vecfun \,\mathrm{d}\mu \geqslant \vecavg$ is analogous.
We write $S_{n}\vecfun \define \mutifun[\ell][S_{n}\varphi] \in \multispace$. 
Put $\norm{\vecavg} \define \max_{1 \leqslant j \leqslant \ell} |\alpha_{j}|$ and $\norm{\vecfun} \define \max_{1 \leqslant j \leqslant \ell} \uniformnorm{\varphi_{j}}$.
For $\varepsilon \in \real$ we use the convention that $\vecavg + \varepsilon \define ( \alpha_{1} + \varepsilon, \, \dots, \, \alpha_{\ell} + \varepsilon ) \in \real^{\ell}$.

Let $f \colon S^2 \mapping S^2$ be an expanding Thurston map and $\mathcal{C} \subseteq S^2$ be a Jordan curve containing $\post{f}$. 
For a real-valued function $\psi : S^2 \mapping \real$ and an integer $n\in \n$ define
\begin{equation}    \label{eq:def:distortion of potential on tiles}
    D_n(\psi) = D_{n}^{f, \, \mathcal{C}}(\psi) \define \sup_{X^n \in \mathbf{X}^n(f, \mathcal{C})} \sup_{\juxtapose{x}{y} \in X^n} | S_n\psi(x) - S_n\psi(y) |.
\end{equation}
Note that $D_{n}(\psi) \leqslant n D_{1}(\psi)$ holds for every $n \in \n$. 
For $\ell \in \n$ and $\vecfun \in \multispace$, we write $D_{n}(\vecfun) \define \max_{1 \leqslant j \leqslant \ell} D_{n}(\varphi_{j})$ for each $n \in \n$.

Indeed, for $\varphi \in C(S^{2})$ we have $\lim_{n \to +\infty} D_{n}(\varphi) / n = 0$.

\begin{lemma} \label{lem:distortion lemma for continuous function}
    Let $f$ and $\mathcal{C}$ satisfy the Assumptions in Section~\ref{sec:The Assumptions}.
    Then \[
        \lim_{n \to +\infty} \frac{1}{n} D_{n}(\varphi) = 0
    \]
    for each $\varphi \in C(S^{2})$.
\end{lemma}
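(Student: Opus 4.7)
The plan is to exploit uniform continuity of $\varphi$ together with the exponential shrinking of forward images of $n$-tiles under iteration, which is guaranteed by expansion of $f$. Since $S^2$ is compact, every $\varphi \in C(S^2)$ is uniformly continuous with respect to the visual metric $d$: given $\varepsilon > 0$, there exists $\delta = \delta(\varepsilon) > 0$ such that $|\varphi(a) - \varphi(b)| < \varepsilon$ whenever $d(a,b) < \delta$. By Lemma~\ref{lem:visual_metric}~\ref{item:lem:visual_metric:diameter of cell}, the diameters of $k$-tiles are bounded by $C\Lambda^{-k}$, and since $\Lambda > 1$ we can choose $N = N(\varepsilon) \in \n$ with $C\Lambda^{-N} < \delta$.

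The key observation is that by Proposition~\ref{prop:properties cell decompositions}~\ref{item:prop:properties cell decompositions:cellular}, for each $n$-tile $X^n$ and each $i \in \{0,1,\dots,n-1\}$ the image $f^i(X^n)$ is an $(n-i)$-tile. Hence for $x,y \in X^n$ we have $d(f^i(x), f^i(y)) \leqslant C\Lambda^{-(n-i)}$. Split the indices $\{0,1,\dots,n-1\}$ into the ``good'' range $\{0,1,\dots,n-N\}$, where $n - i \geqslant N$ forces $d(f^i(x), f^i(y)) < \delta$ and thus $|\varphi(f^i(x)) - \varphi(f^i(y))| < \varepsilon$, and the ``bad'' range containing at most $N$ terms, each bounded crudely by $2\uniformnorm{\varphi}$. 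Summing yields
\[
|S_n\varphi(x) - S_n\varphi(y)| \leqslant n\varepsilon + 2N\uniformnorm{\varphi},
\]
uniformly in $X^n \in \Tile{n}$ and $x,y \in X^n$, so $D_n(\varphi)/n \leqslant \varepsilon + 2N\uniformnorm{\varphi}/n$. Letting $n \to +\infty$ yields $\limsup_{n \to +\infty} D_n(\varphi)/n \leqslant \varepsilon$, and since $\varepsilon > 0$ is arbitrary, the desired limit is $0$.

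There is no real obstacle here: the proof is essentially a standard Birkhoff-type bound that combines uniform continuity with exponential contraction on tiles, and the necessary contraction estimate is already packaged in Lemma~\ref{lem:visual_metric}~\ref{item:lem:visual_metric:diameter of cell}. As a sanity check, for \holder $\psi$ one recovers the stronger bound $D_n(\psi) \leqslant \Cdistortion$ from Lemma~\ref{lem:distortion_lemma}, which is of course $o(n)$; so the statement is consistent with (and strictly weaker than) the \holder distortion lemma. An alternative route, which I would mention only if a cleaner narrative is desired, is to approximate $\varphi$ uniformly by \holder functions (the algebra $\bigcup_{\alpha \in (0,1]} C^{0,\alpha}(S^2, d)$ separates points and contains constants, hence is dense in $C(S^2)$ by Stone--Weierstrass) and apply Lemma~\ref{lem:distortion_lemma} to the \holder approximants; but the direct argument above is shorter and self-contained.
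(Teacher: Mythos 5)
Your proof is correct and follows essentially the same route as the paper's: both use uniform continuity of $\varphi$, the diameter bound $\diam_{d}(X^{k}) \leqslant C\Lambda^{-k}$ from Lemma~\ref{lem:visual_metric}~\ref{item:lem:visual_metric:diameter of cell} together with Proposition~\ref{prop:properties cell decompositions}~\ref{item:prop:properties cell decompositions:cellular} to see that $f^{i}(x), f^{i}(y)$ lie in a common $(n-i)$-tile, and then split the Birkhoff sum into the first $n-N$ terms (each $< \varepsilon$) and a bounded tail. The only differences are cosmetic (your tail bound $2N\uniformnorm{\varphi}$ versus the paper's, and your optional Stone--Weierstrass remark), so nothing further is needed.
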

\begin{proof}
    We fix a visual metric $d$ that satisfies the Assumptions in Section~\ref{sec:The Assumptions}.

    Fix arbitrary $\varphi \in C(S^{2})$ and $\varepsilon > 0$. 
    Since $\varphi$ is uniformly continuous on $S^{2}$, there exists a number $\delta > 0$ such that for each pair of $p, \, q \in S^2$ that satisfy $d(p, q) < \delta$, we have $|\varphi(p) - \varphi(q)| < \varepsilon$.
    By Lemma~\ref{lem:visual_metric}~\ref{item:lem:visual_metric:diameter of cell}, we have
    \begin{equation}    \label{eq:temp:lem:distortion lemma for continuous function:diam bound for tile}
        \diam[\big]{d}{X^{k}} \leqslant C \Lambda^{-k} \qquad \text{for all } k \in \n_0 \text{ and } X^{k} \in \Tile{k},
    \end{equation}
    where $C \geqslant 1$ is the constant from Lemma~\ref{lem:visual_metric}.  
    This implies that there exists $N \in \n$ such that $\diam[\big]{d}{X^{k}} < \delta$ for all integer $k > N$ and $X^{k} \in \Tile{k}$.
    Then for each sufficiently large $n \in \n$, each $X^{n} \in \Tile{n}$, and each pair of $\juxtapose{x}{y} \in X^{n}$, by Proposition~\ref{prop:properties cell decompositions}~\ref{item:prop:properties cell decompositions:cellular}, we have\[
        \begin{split}
            | S_{n}\varphi(x) - S_{n}\varphi(y) | &\leqslant \sum^{n - N - 1}_{k = 0} | \varphi(f^{i}(x)) - \varphi(f^{i}(y)) | + \sum^{n - 1}_{k = n - N} | \varphi(f^{i}(x)) - \varphi(f^{i}(y)) | \\
            &\leqslant (n - N) \varepsilon + N \uniformnorm{\varphi}.
        \end{split}
    \]
    It follows that\[
        \limsup_{n \to +\infty} \frac{1}{n} D_{n}(\varphi) \leqslant \lim_{n \to +\infty}  \frac{n - N}{n}\varepsilon + \frac{N \uniformnorm{\varphi}}{n} = \varepsilon.
    \]
    Since $\varepsilon > 0$ is arbitrary, the proof is complete.
\end{proof}

Let $f$ and $\mathcal{C}$ satisfy the Assumptions in Section~\ref{sec:The Assumptions}.
We assume in addition that $f(\mathcal{C}) \subseteq \mathcal{C}$.
In the following, we set
\begin{equation}    \label{eq:def:set of all edges}
    \alledge \define \bigcup_{n \in \n_{0}} f^{-n}(\mathcal{C}).
\end{equation}
Then $\alledge$ is a Borel set. Proposition~\ref{prop:properties cell decompositions}~\ref{item:prop:properties cell decompositions:skeletons} implies that $\alledge$ is equal to the union of all edges.
Since every vertex is contained in an edge, the set $\alledge$ also contains all vertices.
Moreover, we have
\begin{equation}    \label{eq:set of all edges is f-invariant}
    f^{-1}(\alledge) = \alledge.
\end{equation}
Indeed, note that $\mathcal{C} \subseteq f^{-1}(\mathcal{C})$ and so\[
    \begin{split}
        f^{-1}(\alledge) &= f^{-1} \biggl( \bigcup_{n \in \n_{0}} f^{-n}(\mathcal{C}) \biggr) = \bigcup_{n \in \n_{0}} f^{-(n + 1)}(\mathcal{C})  \\
        &= \bigcup_{n \in \n} f^{-n}\mathcal{C} = \mathcal{C} \cup \bigcup_{n \in \n} f^{-n}\mathcal{C} = \alledge.
    \end{split}
\]

The next lemma approximates ergodic measures with a finite collection of tiles in a particular sense.

\begin{lemma} \label{lem:approximates ergodic measures by tiles}    \def \approximation #1{ \mathbf{T}^{#1} }
    Let $f$ and $\mathcal{C}$ satisfy the Assumptions in Section~\ref{sec:The Assumptions}.
    We assume in addition that $f(\mathcal{C}) \subseteq \mathcal{C}$.
    Consider $\ell \in \n$ and $\vecfun \in \multispace$.
    Then for each ergodic $f$-invariant measure $\mu \in \probsphere$ and each $\varepsilon > 0$, there exists $n_0 \in \n$ such that for each integer $n \geqslant n_{0}$, there exists a non-empty subset $\approximation{n}$ of $\Tile{n}$ such that 
    \begin{align}
        \label{eq:lem:approximates ergodic measures by tiles:cardinality}
        \bigg| \frac{1}{n} \log \card{\approximation{n}} - h_{\mu}(f) \bigg| &\leqslant \varepsilon \qquad \text{and}  \\
        \label{eq:lem:approximates ergodic measures by tiles:Birkhoff average}
        \sup_{x \in \bigcup \approximation{n}} \norm[\bigg]{ \frac{1}{n} S_n \vecfun(x) - \int \! \vecfun \,\mathrm{d}\mu} &\leqslant \varepsilon.
    \end{align}
\end{lemma}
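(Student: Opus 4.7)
The plan is to combine a Shannon--McMillan--Breiman-type entropy estimate with the Birkhoff ergodic theorem, and then to promote the resulting pointwise estimates to uniform estimates over entire $n$-tiles using the distortion bound in Lemma~\ref{lem:distortion lemma for continuous function}.

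First, I would appeal to the generator property of tiles announced as Lemma~\ref{lem:generator} in Subsection~\ref{sub:Strategy and organization} to produce a finite measurable partition $\xi$ of $S^{2}$ built from $0$-tiles such that (i) $h_{\mu}(f, \xi) = h_{\mu}(f)$, and (ii) for $\mu$-almost every $x \in S^{2}$, the element $\xi^{n}_{f}(x)$ of $\xi^{n}_{f}$ containing $x$ agrees, modulo a $\mu$-null set, with the interior of the unique $n$-tile $X^{n}(x) \in \Tile{n}$ containing $x$. Since $\mu$ is ergodic and $H_{\mu}(\xi) < +\infty$, the Shannon--McMillan--Breiman theorem yields $-n^{-1}\log \mu(\xi^{n}_{f}(x)) \to h_{\mu}(f)$ for $\mu$-a.e.\ $x$, while the Birkhoff ergodic theorem applied coordinate-wise to $\vecfun$ gives $n^{-1} S_{n}\vecfun(x) \to \int \vecfun \,\mathrm{d}\mu$ for $\mu$-a.e.\ $x$. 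Egorov's theorem then produces, for each $\delta > 0$, a Borel set $G \subseteq S^{2}$ with $\mu(G) > 1 - \delta$ and an integer $n_{1} \in \n$ on which both convergences are uniform with error at most $\varepsilon/2$ whenever $n \geqslant n_{1}$.

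The desired family is then $\mathbf{T}^{n} \define \{X^{n} \in \Tile{n} \describe \interior{X^{n}} \cap G \ne \emptyset\}$. To verify \eqref{eq:lem:approximates ergodic measures by tiles:Birkhoff average}, pick a witness $y_{X^{n}} \in \interior{X^{n}} \cap G$ for each $X^{n} \in \mathbf{T}^{n}$; for any $x \in X^{n}$, the triangle inequality combined with $D_{n}(\vecfun)/n \to 0$ (Lemma~\ref{lem:distortion lemma for continuous function}) gives
\[
    \norm[\big]{ n^{-1} S_{n}\vecfun(x) - \textstyle\int \vecfun \,\mathrm{d}\mu } \leqslant n^{-1} D_{n}(\vecfun) + \varepsilon/2,
\]
which is at most $\varepsilon$ for $n$ large. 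For \eqref{eq:lem:approximates ergodic measures by tiles:cardinality}, the uniform SMB estimate on $G$ combined with property (ii) yields $\myexp{-n(h_{\mu}(f) + \varepsilon/2)} \leqslant \mu(X^{n}) \leqslant \myexp{-n(h_{\mu}(f) - \varepsilon/2)}$ for each $X^{n} \in \mathbf{T}^{n}$, and the bounds $\mu(G) \leqslant \sum_{X^{n} \in \mathbf{T}^{n}} \mu(X^{n}) \leqslant 1$ immediately deliver the two-sided cardinality estimate after choosing $\delta$ small and $n$ sufficiently large.

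The main technical obstacle is the dichotomy, forced by ergodicity of $\mu$ together with the invariance $f^{-1}(\alledge) = \alledge$ from \eqref{eq:set of all edges is f-invariant}, that $\mu(\alledge) \in \{0, 1\}$. When $\mu(\alledge) = 1$, the interiors of tiles have zero $\mu$-measure, so one must instead define $\mathbf{T}^{n}$ via tiles whose closures meet $G$ and apply SMB to the partition refined along the $1$-skeleton, verifying that $\xi^{n}_{f}(x)$ still lies within the closure of a single $n$-tile. The construction of $\xi$ in Lemma~\ref{lem:generator} is designed to accommodate both cases simultaneously, so the argument above adapts with only cosmetic changes.
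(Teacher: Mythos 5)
Your argument for the case $\mu(\alledge) = 0$ is essentially the paper's own Case~1: Lemma~\ref{lem:generator}~\ref{item:lem:generator:tiles} supplies exactly the generator/partition properties (i)--(ii) you postulate, and the Shannon--McMillan--Breiman/Birkhoff/Egorov chain followed by the distortion bound of Lemma~\ref{lem:distortion lemma for continuous function} is the intended way to produce $\mathbf{T}^{n}$; your two-sided cardinality count via $1 - \delta < \mu(G) \leqslant \sum_{X^{n}} \mu(X^{n}) \leqslant 1$ is correct there.

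The gap is in your final paragraph: when $\mu(\alledge) = 1$ the adaptation is not cosmetic. First, in that case distinct $n$-tiles can share an $n$-edge of positive $\mu$-measure, so neither $\Tile{n}$ nor any ``partition refined along the $1$-skeleton'' built from tile closures is a measurable partition for $(S^{2}, \mu)$, and SMB cannot be applied to it. The paper instead observes that $\mu$ is concentrated on $\mathcal{C}$, proves that the edge partition $\Edge{1}$ is a generator when $\mu$ is non-atomic (Lemma~\ref{lem:generator}~\ref{item:lem:generator:edges}, which requires its own inductive argument on edges), runs SMB and Birkhoff on $\Edge{n}$ to select a set $\mathbf{S}^{n}$ of $n$-edges, and only then passes to the set $\mathbf{T}^{n}$ of $n$-tiles whose boundaries contain these edges, using $\card{\mathbf{S}^{n}} / \card{\post{f}} \leqslant \card{\mathbf{T}^{n}} \leqslant 2 \card{\mathbf{S}^{n}}$ (from Proposition~\ref{prop:properties cell decompositions}~\ref{item:prop:properties cell decompositions:tile is gon} and Remark~\ref{rem:intersection of two tiles}) together with Lemma~\ref{lem:distortion lemma for continuous function} to transfer the Birkhoff estimate from edge points to all points of the tiles. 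Second, even the edge partition fails to be measurable mod $\mu$ if $\mu$ has an atom at a vertex, so the atomic sub-case needs separate treatment: the paper shows (Lemma~\ref{lem:generator}~\ref{item:lem:generator:periodic orbit}) that an ergodic measure with an atom is the uniform measure on a periodic orbit, whence $h_{\mu}(f) = 0$ and one simply takes one $n$-tile per orbit point, with no SMB argument at all. Your construction as written does not cover these two sub-cases, and they are where most of the actual work in the paper's proof lies.
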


To prove Lemma~\ref{lem:approximates ergodic measures by tiles}, we need the following lemma, which is a generalization of \cite[Lemma~17.7]{bonk2017expanding}. 
\begin{lemma} \label{lem:generator}
    Let $f$ and $\mathcal{C}$ satisfy the Assumptions in Section~\ref{sec:The Assumptions}.
    We assume in addition that $f(\mathcal{C}) \subseteq \mathcal{C}$.
    Consider an $f$-invariant Borel probability measure $\mu \in \probmea{S^{2}}$.
    Then the following statements hold:
    \begin{enumerate}[label=\rm{(\roman*)}]
        \smallskip

        \item     \label{item:lem:generator:tiles}
            Suppose that $\mu(\alledge) = 0$.
            Then for each $n \in \n$, the set $\Tile{n}$ forms a measurable partition for $(S^{2}, \mu)$ and is equivalent to the partition $\xi^{n}_{f}$ where $\xi = \Tile{1}$.
            Moreover, $\xi = \Tile{1}$ is a generator for $(f, \mu)$.

        \smallskip

        \item     \label{item:lem:generator:edges}
            Suppose that $\mu(\alledge) = 1$ and $\mu$ is non-atomic.
            Then $\mu(\mathcal{C}) = 1$.
            Additionally, for each $n \in \n$, the set $\Edge{n}$ forms a measurable partition for $(S^{2}, \mu)$ and is equivalent to the partition $\eta^{n}_{f}$ where $\eta = \Edge{1}$.
            Moreover, $\eta = \Edge{1}$ is a generator for $(f, \mu)$.

        \smallskip

        \item     \label{item:lem:generator:periodic orbit}
            Suppose that there exists a point $x \in S^{2}$ such that $\mu(\{x\}) > 0$.
            Then $x$ is a periodic point of $f$.
            Moreover, if we assume in addition that $\mu$ is ergodic, then $\mu = \frac{1}{n} \sum_{i = 0}^{n - 1} \delta_{f^{i}(x)}$, where $n$ is the period of $x$.
     \end{enumerate}
\end{lemma}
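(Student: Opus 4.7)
The plan is to handle the three parts in turn, exploiting the cellular compatibility of $\mathbf{D}^n(f,\mathcal{C})$ with $\mathbf{D}^{n+k}(f,\mathcal{C})$ provided by $f(\mathcal{C}) \subseteq \mathcal{C}$ (Proposition~\ref{prop:cell decomposition: invariant Jordan curve}) together with the vanishing of tile diameters (Lemma~\ref{lem:visual_metric}~\ref{item:lem:visual_metric:diameter of cell}).

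For part~\ref{item:lem:generator:tiles}, the boundary of every $n$-tile lies in $\alledge$, so the hypothesis $\mu(\alledge) = 0$ ensures that distinct $n$-tiles meet in $\mu$-null sets; consequently $\Tile{n}$ is a measurable partition of $(S^{2}, \mu)$. To identify it with $\xi^n_f$, I would prove by induction on $n$ that for every sequence of $1$-tiles $(X^1_1, \ldots, X^1_n)$ satisfying the admissibility condition $X^1_{j+1} \subseteq f(X^1_j)$ for $1 \leqslant j \leqslant n-1$, there is a unique $n$-tile $X^n$ with $f^{j-1}(X^n) \subseteq X^1_j$ for every $j$. The inductive step identifies such $n$-tiles inside $X^1_1$ bijectively with $(n-1)$-tiles inside $f(X^1_1)$ via the cellular bijection $f|_{X^1_1}$ supplied by Proposition~\ref{prop:properties cell decompositions}~\ref{item:prop:properties cell decompositions:cellular}. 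The corresponding element $\bigcap_{j=0}^{n-1} f^{-j}(X^1_{j+1})$ of $\xi^n_f$ then coincides with this unique $n$-tile if the address is admissible and is otherwise contained in $\alledge$, giving $\Tile{n} \sim \xi^n_f$ modulo $\mu$-null sets. For the generator property I would apply the sufficient criterion recorded in Subsection~\ref{sub:thermodynamic formalism}: for an open $U \subseteq S^{2}$ the union $A_n$ of all $n$-tiles contained in $U$ is increasing in $n$ (each $n$-tile decomposes into $(n+1)$-subtiles), and since $\max\{\diam{d}{X^n} \describe X^n \in \Tile{n}\} \to 0$ every point of $U \setminus \alledge$ eventually enters some $A_n$, whence $\mu(A_n) \to \mu(U)$; regularity of $\mu$ then handles general Borel sets.

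For part~\ref{item:lem:generator:edges} I would first establish $\mu(\mathcal{C}) = 1$. The inclusion $f(\mathcal{C}) \subseteq \mathcal{C}$ iteratively gives $\mathcal{C} \subseteq f^{-n}(\mathcal{C})$ for all $n \in \n_{0}$, so $f$-invariance of $\mu$ yields $\mu(f^{-n}(\mathcal{C}) \setminus \mathcal{C}) = \mu(f^{-n}(\mathcal{C})) - \mu(\mathcal{C}) = 0$; combined with $\mu(\alledge) = 1$ and the definition \eqref{eq:def:set of all edges} this forces $\mu(\mathcal{C}) = 1$. Since $\mu$ is non-atomic, each finite $n$-vertex set is $\mu$-null, so $\Edge{n}$ is a measurable partition of $(S^{2}, \mu)$. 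The identification $\eta^n_f \sim \Edge{n}$ runs in parallel to part~\ref{item:lem:generator:tiles}, but now inside $\mathcal{C}$: because $\mathbf{D}^{n+k}(f,\mathcal{C})$ refines $\mathbf{D}^{n}(f,\mathcal{C})$, each $n$-edge in $\mathcal{C}$ is uniquely determined by an admissible address of $1$-edges, and any element of $\eta^n_f$ corresponding to a non-admissible address is either contained in $S^{2} \setminus \mathcal{C}$ or consists of vertices, hence is $\mu$-null. The generator property again follows from $\diam{d}{X^n} \to 0$ applied to open arcs of $\mathcal{C}$.

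For part~\ref{item:lem:generator:periodic orbit} set $a \define \mu(\{x\}) > 0$. The $f$-invariance of $\mu$ gives $\mu(\{f(y)\}) = \sum_{z \in f^{-1}(\{f(y)\})} \mu(\{z\}) \geqslant \mu(\{y\})$ for every $y \in S^{2}$, so $\mu(\{f^{j}(x)\}) \geqslant a$ for all $j \geqslant 0$; finiteness of $\mu$ then forces the forward orbit of $x$ to be finite and hence to land in a periodic cycle $C$ after some minimal pre-period $k \geqslant 0$. If $k \geqslant 1$ then $f^{k-1}(x) \in f^{-1}(C) \setminus C$ by minimality of $k$, while $C \subseteq f^{-1}(C)$ and invariance give $\mu(f^{-1}(C) \setminus C) = 0$, contradicting $\mu(\{f^{k-1}(x)\}) \geqslant a > 0$. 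Hence $k = 0$ and $x$ is periodic of some period $n \in \n$. If $\mu$ is in addition ergodic, the Birkhoff ergodic theorem supplies a set $G$ of $\mu$-generic points with $\mu(G) = 1$, and $\mu(\{x\}) > 0$ forces $x \in G$; taking $N \to +\infty$ through multiples of $n$, the Birkhoff averages $\frac{1}{N} \sum_{i=0}^{N-1} \varphi(f^{i}(x))$ equal $\int \! \varphi \,\mathrm{d}\nu$ with $\nu \define \frac{1}{n} \sum_{i=0}^{n-1} \delta_{f^{i}(x)}$ and also converge to $\int \! \varphi \,\mathrm{d}\mu$ for every $\varphi \in C(S^{2})$, yielding $\mu = \nu$. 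The main technical obstacle is the uniqueness-of-address argument underlying parts~\ref{item:lem:generator:tiles} and~\ref{item:lem:generator:edges}; the remaining steps follow routinely from $f$-invariance, expansion, and the Birkhoff ergodic theorem.
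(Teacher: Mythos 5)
Your proposal is correct. For parts (i) and (ii) it follows essentially the same route as the paper: the paper simply cites \cite[Lemma~17.7]{bonk2017expanding} for (i), and for (ii) it proves the same facts you outline ($\mu(\mathcal{C})=1$ from $\mathcal{C}\subseteq f^{-n}(\mathcal{C})$ plus invariance; the identification of each $n$-edge with an admissible string of $1$-edges via injectivity of the iterates on edges; the generator property via regularity and shrinking diameters). In part (iii) you deviate in two small ways, both valid: for periodicity, the paper first shows the forward orbit is finite exactly as you do, but then closes the loop by setting $y=f^{nm}(x)$ and using $\mu(\{y\})=\mu(f^{-nm}(y))\geqslant\mu(\{x\})+\mu(\{y\})$ to force $x=y$, whereas you argue that the minimal pre-period must be zero because $f^{-1}(C)\setminus C$ is $\mu$-null for the terminal cycle $C$ --- the same invariance mechanism packaged differently. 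For the ergodic case, the paper argues directly that the fully backward-invariant set generated by the orbit has full measure and that the point masses along the cycle are all equal, while you invoke Birkhoff genericity of $x$; your version trades an elementary computation for an appeal to the ergodic theorem (plus separability of $C(S^2)$ to get a single full-measure set of generic points), which is slightly heavier machinery but equally sound. One cosmetic caveat: in (i), the set $\bigcap_{j=0}^{n-1}f^{-j}\bigl(X^1_{j+1}\bigr)$ coincides with the corresponding $n$-tile only up to a subset of the edge set, but since that set is $\mu$-null under the hypothesis of (i) this does not affect the equivalence of the measurable partitions.
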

\begin{proof}
    \ref{item:lem:generator:tiles}
    Statement~\ref{item:lem:generator:tiles} was established in \cite[Lemma~17.7]{bonk2017expanding}.

    \smallskip

    \ref{item:lem:generator:edges}      
    We first note that it follows immediately from our assumptions that $\mu(\mathcal{C}) = 1$.
    Indeed, since $\mu$ is $f$-invariant, we have $\mu(f^{-n}(\mathcal{C})) = \mu(\mathcal{C})$ for all $n \in \n_{0}$.
    On the other hand, $\mathcal{C} \subseteq f^{-n}(\mathcal{C})$, and so $\mu(f^{-n}(\mathcal{C}) \setminus \mathcal{C}) = 0$. 
    This implies that $\mu(\alledge \setminus \mathcal{C}) = 0$. So $\mu$ is actually concentrated on $\mathcal{C}$.

    Since $\mu$ is non-atomic, we have $\mu(v) = 0$ for all vertices $v \in \bigcup_{n \in \n_{0}} \Vertex{n}$.
    For each $n \in \n$, since $f(\mathcal{C}) \subseteq \mathcal{C} \subseteq f^{-n}(\mathcal{C})$, the set $\bigcup \Edge{n}$ has full measure, and two distinct $n$-edges have only vertices, i.e., a set of $\mu$-measure zero, in common.
    Hence $\Edge{n}$ is a measurable partition for $(S^{2}, \mu)$.

    Fix arbitrary $n \in \n$ and $e \in \Edge{n}$.
    We now show that $\Edge{n}$ is equivalent to the partition $\eta^{n}_{f}$, where $\eta = \Edge{1}$.
    For $i = 1, \, \dots, \, n$ there exist unique $i$-edges $e^{i} \in \Edge{i}$ with $e = e^{n} \subseteq e^{n - 1} \subseteq \cdots \subseteq e^{1}$.
    Set $e_{i} \define f^{i - 1}(e^{i})$ for $i = 1, \, \dots, \, n$. Then $e_{1}, \, \dots, \, e_{n}$ are $1$-edges.
    We claim that
    \begin{equation}    \label{eq:temp:item:lem:generator:edges:edge}
        e = e_{1} \cap f^{-1}(e_{2}) \cap \cdots \cap f^{-(n - 1)}(e_{n}).
    \end{equation}
    To see this, denote the right hand side in this equation by $\widetilde{e}$. Then it is clear that $e \subseteq \widetilde{e}$.
    We verify $e = \widetilde{e}$ by inductively showing that for any point $x \in \widetilde{e}$ we have $x \in e^{i}$ for $i = 1, \, \dots, \, n$, and so $x \in e^{n} = e$.

    Indeed, since $\widetilde{e} \subseteq e_{1} = e^{1}$ this is clear for $i = 1$.
    Suppose $x \in e^{i - 1}$ for some $i \in \n$ with $2 \leqslant i \leqslant n$. 
    To complete the inductive step, we have to show $x \in e^{i}$.
    Note that $x \in \widetilde{e} \subseteq f^{-(i - 1)}(e_{i})$ and so $f^{i - 1}(x) \in e_{i}$.
    By Proposition~\ref{prop:properties cell decompositions}~\ref{item:prop:properties cell decompositions:cellular}, the map $f^{i - 1}|_{e^{i - 1}}$ is a homeomorphism of $e^{i - 1}$ onto the $0$-edge $f^{i - 1}(e^{i - 1})$.
    Moreover, $x \in e^{i - 1}$, $e^{i} \subseteq e^{i - 1}$, and $f^{i - 1}(x) \in e_{i} = f^{i - 1}(e^{i})$.
    Hence by injectivity of $f^{i - 1}$ on $e^{i - 1}$ we have $x \in e^{i}$ as desired.

    Equation~\eqref{eq:temp:item:lem:generator:edges:edge} shows that every element in $\Edge{n}$ belongs to $\eta^{n}_{f}$, where $\eta = \Edge{1}$.
    This implies that the measurable partitions $\Edge{n}$ and $\eta^{n}_{f}$ are equivalent ($\eta^{n}_{f}$ may contain additional sets, but they have to be of measure zero).

    To establish that $\eta = \Edge{1}$ is a generator, let $B \subseteq S^2$ be an arbitrary Borel set and $\varepsilon > 0$.
    Since the measurable partitions $\Edge{n}$ and $\eta^{n}_{f}$ are equivalent for each $n \in \n$, it suffices to show that there exists $k \in \n$ and a union $A$ of $k$-edges such that $\mu(A \bigtriangleup B) < \varepsilon$.

    By regularity of $\mu$ there exists a compact set $K \subseteq B$ and an open set $U \subseteq S^2$ with $K \subseteq B \subseteq U$ and $\mu(U \setminus K) < \varepsilon$.
    Since the diameters of edges approach $0$ uniformly as their levels become larger, we can choose $k \in \n$ large enough such that every $k$-edge that meets $K$ is contained in the open neighborhood $U$ of $K$.
    Define $K_{\mathcal{C}} \define K \cap \mathcal{C}$ and \[
        A \define \bigcup \bigl\{ e^{k} \in \Edge{k} \describe e^{k} \cap K_{\mathcal{C}} \ne \emptyset \bigr\}.
    \]
    Then $K_{\mathcal{C}} \subseteq A \subseteq U$. This implies $A \bigtriangleup B \subseteq U \setminus K_{\mathcal{C}}$, and so\[
        \mu(A \bigtriangleup B) \leqslant \mu(U \setminus K_{\mathcal{C}}) \leqslant \mu(U \setminus K) + \mu(U \setminus \mathcal{C}) 
        = \mu(U \setminus K) < \varepsilon
    \]
    as desired. The proof of statement~\ref{item:lem:generator:edges} is complete.

    \smallskip

    \ref{item:lem:generator:periodic orbit}
    Assume first that there exists a pint $x \in S^{2}$ such that $\mu(\{x\}) > 0$.

    We claim that $x$ is preperiodic.
    Otherwise, for each pair of $\juxtapose{k}{\ell} \in \n$ with $k \ne \ell$, we have $f^{k}(x) \ne f^{\ell}(x)$. 
    We write $x_{n} \define f^{n}(x)$ for each $n \in \n$. 
    Then $x \in f^{-n}(x_{n})$ and $\mu(\{x_{n}\}) = \mu(f^{-n}(x_{n})) \geqslant \mu(\{x\})$ for each $n \in \n$ since $\mu$ is $f$-invariant.
    This implies \[
        1 = \mu(S^{2}) \geqslant \sum_{n = 1}^{+\infty} \mu(\{x_{n}\}) \geqslant \sum_{n = 1}^{+\infty} \mu(\{x\}) = +\infty,
    \]
    which is a contradiction. This proves the claim that $x$ is preperiodic.

    We now show that $x$ is periodic.
    Since $x$ is preperiodic, there exist $m \in \n_{0}$ and $n \in \n$ such that $f^{n + m}(x) = f^{m}(x)$. 
    Denote $y \define f^{nm}(x)$. Then $f^{n}(y) = f^{n + nm}(x) = f^{nm}(x) = y$, i.e., $y$ is a periodic point of $f$. 
    Since $f^{nm}(x) = f^{nm}(y) = y$ and $\mu$ is $f$-invariant, we have \[
        \mu(\{y\}) = \mu(f^{-nm}(y)) \geqslant \mu(\{x\} \cup \{y\}).
    \]
    This implies $x = y$ since $\mu(\{x\}) > 0$. Thus $x$ is periodic.

    Finally, we assume in addition that $\mu$ is ergodic. 
    Let $n \in \n$ be the period of $x$.
    We set $\operatorname{Orb}(x) \define \bigl\{ f^{k}(x) \describe k \in \n_{0} \bigr\} = \bigl\{ f^{k}(x) \describe k \in \{0, \, \dots, \, n - 1\} \bigr\}$ and \[
        x^{\infty} \define \{x\} \cup \bigcup_{k \in \n} f^{-k}(x) = \bigcup_{k \in \n_0} f^{-k}(\operatorname{Orb}(x)).
    \]
    Since $x \in f^{-n}(x)$, we have\[
        \begin{split}
            f^{-1}(x^{\infty}) &= f^{-1} \biggl( \{x\} \cup \bigcup_{k \in \n} f^{-k}(x) \biggr) = f^{-1}(x) \cup \bigcup_{k \in \n} f^{-(k + 1)}(x)  \\
            &= \bigcup_{k \in \n} f^{-k}(x) = \{x\} \cup \bigcup_{k \in \n} f^{-k}(x) = x^{\infty}.
        \end{split}
    \]
    This implies $\mu(x^{\infty}) = 1$ since $\mu$ is ergodic and $\mu(x^{\infty}) \geqslant \mu(\{x\}) > 0$.
    Since $\mu$ is $f$-invariant, we have $\mu \bigl( f^{-k}(\operatorname{Orb}(x)) \bigr) = \mu(\operatorname{Orb}(x))$ for all $k \in \n_{0}$. On the other hand, $\operatorname{Orb}(x) \subseteq f^{-k}(\operatorname{Orb}(x))$, and so $\mu\bigl( f^{-k}(\operatorname{Orb}(x)) \setminus \operatorname{Orb}(x) \bigr) = 0$. 
    Thus we have $\mu(x^{\infty} \setminus \operatorname{Orb}(x)) = 0$. 
    So $\mu$ actually concentrates on $\operatorname{Orb}(x)$.

    It suffices to show that $\mu(\{f^{k}(x)\}) = 1/n$ for each $k \in \{0, \, \dots, \, n - 1\}$. 
    Indeed, since $\mu$ is $f$-invariant and $f^{k}(x) \in f^{-1}( f^{k + 1}(x) )$, we have $\mu(\{f^{k + 1}(x)\}) = \mu( f^{-1}(f^{k + 1}(x))) \geqslant \mu(\{f^{k}(x)\})$. 
    This implies \[
        \mu(\{x\}) \leqslant \mu(\{f^{k}(x)\}) \leqslant \mu(\{f^{n}(x)\}) = \mu(\{x\}).
    \]
    Hence $\mu(\{f^{k}(x)\}) = \mu(\{x\}) = 1/n$ for each $k \in \{0, \, \dots, \, n - 1\}$.

    The proof of statement~\ref{item:lem:generator:periodic orbit} is complete.
\end{proof}

Now we can prove Lemma~\ref{lem:approximates ergodic measures by tiles}. 

\begin{proof}[Proof of Lemma~\ref{lem:approximates ergodic measures by tiles}]   \def \approximation#1{\mathbf{T}^{#1}}  \def \edgeapproximation#1{\mathbf{S}^{#1}}
    Fix arbitrary $\varepsilon > 0$, $\ell \in \n$, $\vecfun \in \multispace$, and ergodic $f$-invariant measure $\mu \in \probmea{S^{2}}$.
    It suffices to show that for every sufficiently large $n \in \n$, there exists a non-empty subset $\approximation{n}$ of $\Tile{n}$ such that \eqref{eq:lem:approximates ergodic measures by tiles:cardinality} and \eqref{eq:lem:approximates ergodic measures by tiles:Birkhoff average} hold. 
    We split the proof into three cases according to the properties of measure $\mu$. 
    Recall the definition of $\alledge$ in \eqref{eq:def:set of all edges}. 

    \smallskip
    \emph{Case 1:} $\mu(\alledge) = 0$.
    \smallskip

    Since $\mu$ is $f$-invariant and $\mu(\alledge) = 0$, by Lemma~\ref{lem:generator}~\ref{item:lem:generator:tiles}, $\Tile{1}$ is a generator for $(f, \mu)$, and for each $n \in \n$ the set $\Tile{n}$ forms a measurable partition for $(S^{2}, \mu)$ and is equivalent to the partition $\xi^{n}_{f}$ where $\xi = \Tile{1}$.
    Then one can use Birkhoff's ergodic theorem and Shannon--McMillan--Breiman's theorem to show that for every sufficiently large $n \in \n$, there exists a non-empty subset $\approximation{n}$ of $\Tile{n}$ such that \eqref{eq:lem:approximates ergodic measures by tiles:cardinality} and \eqref{eq:lem:approximates ergodic measures by tiles:Birkhoff average} hold.  

    \smallskip
    \emph{Case 2:} $\mu(\alledge) > 0$ and $\mu$ is non-atomic.
    \smallskip

    First note that $\mu(\alledge) = 1$ in this case since $\mu$ is ergodic and $f^{-1}(\alledge) = \alledge$ (see \eqref{eq:set of all edges is f-invariant}).
    Then it follows from Lemma~\ref{lem:generator}~\ref{item:lem:generator:edges} that $\Edge{1}$ is a generator for $(f, \mu)$, and for each $n \in \n$ the set $\Edge{n}$ forms a measurable partition for $(S^{2}, \mu)$ and is equivalent to the partition $\eta^{n}_{f}$ where $\eta = \Edge{1}$.
    Similar to Case~1, one can use Birkhoff's ergodic theorem and Shannon--McMillan--Breiman's theorem to show that there exists $N \in \n$ such that for each integer $n \geqslant N$, there exists a non-empty subset $\edgeapproximation{n}$ of $\Edge{n}$ such that
    \begin{align}
        \label{eq:temp:lem:approximates ergodic measures by tiles:case 2:cardinality for edge}
        \bigg| \frac{1}{n} \log \card{\edgeapproximation{n}} - h_{\mu}(f) \bigg| &\leqslant \varepsilon/2 \qquad \text{and}  \\
        \label{eq:temp:lem:approximates ergodic measures by tiles:case 2:Birkhoff average for edge}
        \sup_{x \in \bigcup \edgeapproximation{n}} \norm[\bigg]{\frac{1}{n} S_n \vecfun(x) - \int \! \vecfun \,\mathrm{d}\mu} &\leqslant \varepsilon/2.    
    \end{align}
    For each integer $n \geqslant N$, we set \[
        \approximation{n} \define \{ X^{n} \in \Tile{n} \describe e^{n} \subseteq \partial X^{n}, \, e^{n} \in \edgeapproximation{n} \}.
    \]
    Then by Proposition~\ref{prop:properties cell decompositions}~\ref{item:prop:properties cell decompositions:tile is gon} and Remark~\ref{rem:intersection of two tiles}, we have \[
        \card{\edgeapproximation{n}} / \card{\post{f}} \leqslant \card{\approximation{n}} \leqslant 2 \card{\edgeapproximation{n}}.
    \]
    Combining this with \eqref{eq:temp:lem:approximates ergodic measures by tiles:case 2:cardinality for edge}, we see that \eqref{eq:lem:approximates ergodic measures by tiles:cardinality} holds for every sufficiently large $n \in \n$.
    Noting that $\vecfun = \mutifun \in \multispace$, by Lemma~\ref{lem:distortion lemma for continuous function}, we have \[
        \sup_{X^n \in \mathbf{X}^n(f,\mathcal{C})} \sup_{\juxtapose{x}{y} \in X^n} \norm[\bigg]{ \frac{1}{n} S_n \vecfun(x) - \frac{1}{n} S_n \vecfun(y) } \longrightarrow 0 \quad \text{ as } n \to +\infty.
    \]
    Therefore, \eqref{eq:temp:lem:approximates ergodic measures by tiles:case 2:Birkhoff average for edge} implies that \eqref{eq:lem:approximates ergodic measures by tiles:Birkhoff average} holds for every sufficiently large $n \in \n$.

    \smallskip
    \emph{Case 3:} $\mu(\alledge) > 0$ and there exists $p \in S^{2}$ such that $\mu(p) > 0$.
    \smallskip

    By Lemma~\ref{lem:generator}~\ref{item:lem:generator:periodic orbit}, $p$ is a periodic point of $f$ and $\mu = \deltameasure[k]{p}$, where $k$ is the period of $p$.
    In particular, we have $h_{\mu}(f) = 0$.
    Fix arbitrary $n \in \n$. For each $i \in \oneton[k - 1]$, we choose an $n$-tile $X^{n}_{i} \in \Tile{n}$ such that $f^{i}(p) \in X^{n}_{i}$. We denote by $\approximation{n}$ the set of those $n$-tiles. 
    Then $1 \leqslant \card{\approximation{n}} \leqslant k$. This implies \eqref{eq:lem:approximates ergodic measures by tiles:cardinality} holds for every sufficiently large $n \in \n$.
    Since $\lim_{n \to +\infty} \frac{1}{n} S_n \vecfun(f^{i}(x)) = k^{-1} S_{k}\vecfun(p) = \int \! \vecfun \,\mathrm{d}\mu$ for each $i \in \oneton[k - 1]$, by Lemma~\ref{lem:distortion lemma for continuous function}, \eqref{eq:lem:approximates ergodic measures by tiles:Birkhoff average} holds for every sufficiently large $n \in \n$.

    \smallskip

    The proof is complete.
\end{proof}

Apart from Lemma~\ref{lem:approximates ergodic measures by tiles}, we need the following lemma to construct strongly primitive subsystems in the proof of Theorem~\ref{thm:entropy dense}.

\begin{lemma} \label{lem:pair in the interior}
	Let $f$, $\mathcal{C}$, $e^0$ satisfy the Assumptions in Section~\ref{sec:The Assumptions}. 
	We assume in addition that $f(\mathcal{C}) \subseteq \mathcal{C}$. 
	Consider $\ell \in \n$ and $\vecfun = \mutifun \in \multispace$.
	Let $\mu \in \invmea$ be an ergodic measure.
	Then for each $\varepsilon > 0$, there exists an integer $N \in \n$ such that for each integer $n \geqslant N$ and each color $\colour \in \colours$, there exists an $n$-pair $P^{n}_{\colour} \in \Pair{n}$ such that $P^{n}_{\colour} \subseteq \inte[\big]{X^0_{\colour}}$ and \[
		\sup_{x \in P^{n}_{\colour}} \norm[\bigg]{ \frac{1}{n} S_n \vecfun(x) - \int \! \vecfun \,\mathrm{d} \mu} \leqslant \varepsilon.
	\]
\end{lemma}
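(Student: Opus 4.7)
The plan is to combine Lemma~\ref{lem:pair neighbor disjoint with Jordan curve}, which provides a ``template'' $M$-pair $P^M_\colour$ sitting deep inside $\inte[\big]{X^0_\colour}$ for each color $\colour \in \colours$, with Lemma~\ref{lem:approximates ergodic measures by tiles}, which supplies tiles whose Birkhoff averages approximate $\int \! \vecfun \,\mathrm{d}\mu$. The strategy is to pull back such an approximating tile through the appropriate $M$-tile of $P^M_\colour$ to obtain a high-level tile inside $\inte[\big]{X^0_\colour}$, and then to upgrade this tile to the unique $(M+k)$-pair containing it, verifying that both the interior placement and the Birkhoff-average control survive.

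Concretely, for $\varepsilon > 0$ I would first invoke Lemma~\ref{lem:pair neighbor disjoint with Jordan curve} to fix $M \in \n$ and, for each $\colour \in \colours$, an $M$-pair $P^M_\colour = X^M_{\white, \colour} \cup X^M_{\black, \colour} \subseteq \inte[\big]{X^0_\colour}$ such that $U^m(x) \subseteq \inte[\big]{X^0_\colour}$ for every $m \geqslant M$ and $x \in P^M_\colour$. Then I would apply Lemma~\ref{lem:approximates ergodic measures by tiles} (with tolerance $\varepsilon/3$) to find $k_0 \in \n$ such that for every integer $k \geqslant k_0$ there exists a tile $X^k \in \Tile{k}$ with $\sup_{y \in X^k}\norm[\big]{ \frac{1}{k} S_k \vecfun(y) - \int \! \vecfun \,\mathrm{d}\mu } \leqslant \varepsilon/3$. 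Since $f(\mathcal{C}) \subseteq \mathcal{C}$ forces $X^k$ to lie inside a unique $0$-tile $X^0_{\colour'}$ (Proposition~\ref{prop:cell decomposition: invariant Jordan curve}), and since $f^M|_{X^M_{\colour', \colour}}$ is a homeomorphism onto $X^0_{\colour'}$ (Proposition~\ref{prop:properties cell decompositions}~\ref{item:prop:properties cell decompositions:cellular}), I would define $Y^{M+k}_\colour \define \bigl( f^M|_{X^M_{\colour', \colour}} \bigr)^{-1}(X^k)$, so that $Y^{M+k}_\colour$ is an $(M+k)$-tile contained in $X^M_{\colour', \colour} \subseteq P^M_\colour$. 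Let $P^{M+k}_\colour \in \Pair{M+k}$ be the unique $(M+k)$-pair containing $Y^{M+k}_\colour$ (whose existence and uniqueness follow from Lemma~\ref{lem:pairs are disjoint} and the counting given after Definition~\ref{def:pair structures}), and let $Y'^{M+k}_\colour$ denote its pair partner. Setting $N \define M + k_0$ and $n \define M + k \geqslant N$, the candidate pair is $P^n_\colour \define P^{M+k}_\colour$.

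Two verifications remain. For the interior-containment, $Y^{M+k}_\colour$ and $Y'^{M+k}_\colour$ share an $(M+k)$-edge whose endpoints are vertices of $Y^{M+k}_\colour$; picking any such endpoint $v$, one has $v \in Y^{M+k}_\colour \subseteq P^M_\colour$ and $v \in Y'^{M+k}_\colour$, whence $Y'^{M+k}_\colour \subseteq U^{M+k}(v) \subseteq \inte[\big]{X^0_\colour}$ by Lemma~\ref{lem:pair neighbor disjoint with Jordan curve}, giving $P^{M+k}_\colour \subseteq \inte[\big]{X^0_\colour}$. For the Birkhoff-average bound, the cocycle identity $S_{M+k}\vecfun(x) = S_M \vecfun(x) + S_k \vecfun(f^M(x))$ together with $f^M(x) \in X^k$ yields
\[
    \sup_{x \in Y^{M+k}_\colour} \norm[\bigg]{ \frac{1}{M+k} S_{M+k} \vecfun(x) - \int \! \vecfun \,\mathrm{d}\mu } \leqslant \frac{M \norm{\vecfun} + M \norm[\big]{\int \! \vecfun \,\mathrm{d}\mu}}{M+k} + \frac{k}{M+k} \cdot \frac{\varepsilon}{3},
\]
which is at most $\varepsilon/2$ once $k$ is large. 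To transport this estimate to $Y'^{M+k}_\colour$, I would pick $z$ on the shared $(M+k)$-edge and apply the tile-distortion estimate from Lemma~\ref{lem:distortion lemma for continuous function} coordinatewise inside each of the two tiles, bounding $\norm[\big]{ \frac{1}{M+k} S_{M+k} \vecfun(x') - \frac{1}{M+k} S_{M+k} \vecfun(x) }$ by $\frac{2 D_{M+k}(\vecfun)}{M+k}$, which tends to $0$ as $k \to +\infty$. Enlarging $k_0$ so that these error terms are together bounded by $\varepsilon$ completes the construction.

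The main obstacle is the interior-containment of the pair partner $Y'^{M+k}_\colour$: the pullback construction forces $Y^{M+k}_\colour$ to lie inside the chosen $M$-tile of $P^M_\colour$, but the pair partner may cross an $M$-tile boundary and is therefore not automatically contained in $X^M_{\colour', \colour}$. It is precisely the strong flower-type control $U^n(x) \subseteq \inte[\big]{X^0_\colour}$ for \emph{all} $n \geqslant M$ and $x \in P^M_\colour$ supplied by Lemma~\ref{lem:pair neighbor disjoint with Jordan curve} that handles this, by accommodating the full $(M+k)$-bouquet at any boundary vertex of $Y^{M+k}_\colour$.
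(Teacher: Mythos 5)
Your proposal is correct and follows essentially the same route as the paper: both arguments rest on the template $M$-pairs supplied by Lemma~\ref{lem:pair neighbor disjoint with Jordan curve}, a pullback through $f^{M}$ of a Birkhoff-good object into that template, the containment $P^{n}_{\colour} \subseteq U^{n}(\cdot) \subseteq \inte[\big]{X^0_{\colour}}$, and a distortion estimate ($D_{n}(\vecfun)/n \to 0$, Lemma~\ref{lem:distortion lemma for continuous function}) to spread the Birkhoff bound over the whole pair. The only difference is that the paper sources the good Birkhoff averages more cheaply, applying Birkhoff's ergodic theorem to a single point $y$ and lifting it to a point $x_{\colour} \in P^{M}_{\colour}$ with $f^{M}(x_{\colour}) = y$, rather than invoking Lemma~\ref{lem:approximates ergodic measures by tiles} and lifting an entire $k$-tile.
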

\begin{proof}
	Let $\varepsilon > 0$ be arbitrary.
	Since $\mu$ is ergodic, it follows from Birkhoff's ergodic theorem that for $\mu$-a.e.\ $x \in S^2$, $\frac{1}{n} S_n \vecfun(x) \to \int \! \vecfun \,\mathrm{d} \mu$ as $n \to +\infty$.
	Thus we can fix a point $y \in S^2$ and an integer $n_{0} \in \n$ such that for each integer $n \geqslant n_{0}$,
	\begin{equation}    \label{eq:temp:approximation of ergodic average}
		\norm[\bigg]{ \frac{1}{n} S_n \vecfun(y) - \int \! \vecfun \,\mathrm{d} \mu} \leqslant \frac{\varepsilon}{2}.
	\end{equation}

	By Lemma~\ref{lem:pair neighbor disjoint with Jordan curve}, there exists an integer $M \in \n$ such that for each color $\colour \in \colours$, there exists an $M$-pair $P_{\colour}^{M} \in \Pair{M}$ such that for each integer $n \geqslant M$ and each $x \in P^{M}_{\colour}$, we have $U^{n}(x) \subseteq \inte[\big]{X^0_{\colour}}$.

	We fix such an integer $M \in \n$ and the corresponding $M$-pairs $P^{M}_{\black}$ and $P^{M}_{\white}$ in the following.

	Let color $\colour \in \colours$ and integer $n \geqslant M + n_{0}$ be arbitrary.
	Since $n - M \geqslant n_{0}$, by \eqref{eq:temp:approximation of ergodic average}, we have\[
		\norm[\bigg]{ \frac{1}{n - M} S_{n - M} \vecfun(y) - \int \! \vecfun \,\mathrm{d} \mu} \leqslant \frac{\varepsilon}{2}.
	\] 
	Since $f^{M}(P^{M}_{\colour}) = S^2$, there exists $x_{\colour} \in P^{M}_{\colour}$ such that $f^{M}(x_{\colour}) = y$.
	Then we have\[
		\begin{split}
			\norm[\bigg]{ \frac{1}{n} S_n \vecfun(x_{\colour}) - \int \! \vecfun \,\mathrm{d} \mu} 
			&= \norm[\bigg]{ \frac{1}{n} S_M \vecfun(x_{\colour}) + \frac{1}{n} S_{n - M} \vecfun(y) - \int \! \vecfun \,\mathrm{d} \mu } \\
			&\leqslant \frac{1}{n} \norm{S_{M} \vecfun} + \abs[\bigg]{\frac{1}{n} - \frac{1}{n - M}} \norm{S_{n - M} \vecfun} + \norm[\bigg]{ \frac{1}{n - M} S_{n - M} \vecfun(y) - \int \! \vecfun \,\mathrm{d} \mu} \\
			&\leqslant \frac{2M}{n} \norm{\vecfun} + \frac{\varepsilon}{2}.
		\end{split}
	\]
	By Definition~\ref{def:pair structures}, there exists an $n$-pair $P^{n}_{\colour} \in \Pair{n}$  containing $x_{\colour}$.
	Noting that $x_{\colour} \in P^{M}_{\colour}$ and $n \geqslant M$ , by Lemma~\ref{lem:pair neighbor disjoint with Jordan curve}, we get $U^{n}(x_{\colour}) \subseteq \inte[\big]{X^0_{\colour}}$.
	Thus it follows from the definition of $U^{n}(X_{\colour})$ and $P^{n}_{\colour}$ that $x_{\colour} \in P^{n}_{\colour} \subseteq U^{n}(x_{\colour}) \subseteq \inte[\big]{X^{0}_{\colour}}$.
	Then we have \[
		\begin{split}
			\sup_{x \in P^{n}_{\colour}} \norm[\bigg]{ \frac{1}{n} S_n \vecfun(x) - \int \! \vecfun \,\mathrm{d} \mu}
			&\leqslant \sup_{x \in P^{n}_{\colour}} \norm[\bigg]{ \frac{1}{n} S_n \vecfun(x) - \frac{1}{n} S_n \vecfun(x_{\colour}) } + \norm[\bigg]{ \frac{1}{n} S_n \vecfun(x_{\colour}) - \int \! \vecfun \,\mathrm{d} \mu } \\
			&\leqslant \frac{2 D_{n}(\vecfun)}{n} + \frac{2M}{n} \norm{\vecfun} + \frac{\varepsilon}{2}.
		\end{split}
	\]
	Therefore, by Lemma~\ref{lem:distortion lemma for continuous function}, we can find a sufficiently large integer $N \in \n$ such that for each integer $n \geqslant N$ and each color $\colour \in \colours$, there exists an $n$-pair $P^{n}_{\colour} \in \Pair{n}$ such that $P^{n}_{\colour} \subseteq \inte[\big]{X^0_{\colour}}$ and \[
		\sup_{x \in P^{n}_{\colour}} \norm[\bigg]{ \frac{1}{n} S_n \vecfun(x) - \int \! \vecfun \,\mathrm{d} \mu} \leqslant \varepsilon.
	\]
	The proof is complete.
\end{proof}

The following result shows that in order to prove Theorem~\ref{thm:entropy dense}, it suffices to prove entropy density of ergodic measures for some iterate of the map $f$.

\begin{proposition}    \label{prop:entropy-approachablility equivalence for iteration}
    Let $(X, d)$ be a compact metric space and $T \colon X \mapping X$ be a continuous map.
    Consider arbitrary $n \in \n$.
    Then $\ergmea[X][T]$ is entropy-dense in $\invmea[X][T]$ if and only if $\ergmea[X][T^{n}]$ is entropy-dense in $\invmea[X][T^{n}]$.
\end{proposition}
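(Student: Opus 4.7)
The strategy rests entirely on Lemma~\ref{lem:entropy of push-forward average}: for any $\mu \in \mathcal{M}(X, T^{n})$, the orbit-average $\nu \define \frac{1}{n}\sum_{j=0}^{n-1} T_{*}^{j}\mu$ lies in $\mathcal{M}(X, T)$, satisfies $h_{\nu}(T) = \frac{1}{n} h_{\mu}(T^{n})$, and inherits ergodicity---if $\mu$ is $T^{n}$-ergodic, then $\nu$ is $T$-ergodic. This averaging operator is the vehicle for transporting approximating sequences between $\mathcal{M}(X, T^{n})$ and $\mathcal{M}(X, T)$ in both directions of the desired equivalence.

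For the implication ``$\ergmea[X][T^{n}]$ entropy-dense in $\mathcal{M}(X, T^{n})$ $\Rightarrow$ $\ergmea[X][T]$ entropy-dense in $\mathcal{M}(X, T)$''---which is the direction used to reduce Theorem~\ref{thm:entropy dense} to the case of a sufficiently high iterate---fix $\mu \in \mathcal{M}(X, T) \subseteq \mathcal{M}(X, T^{n})$ and apply the hypothesis to produce $\mu_{k} \in \ergmea[X][T^{n}]$ with $\mu_{k} \to \mu$ in the weak$^{*}$-topology and $h_{\mu_{k}}(T^{n}) \to h_{\mu}(T^{n}) = n\, h_{\mu}(T)$, where the last identity is \eqref{eq:measure-theoretic entropy well-behaved under iteration}. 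Set $\nu_{k} \define \frac{1}{n}\sum_{j=0}^{n-1} T_{*}^{j}\mu_{k}$. Then Lemma~\ref{lem:entropy of push-forward average} delivers $\nu_{k} \in \ergmea[X][T]$ with $h_{\nu_{k}}(T) = \frac{1}{n} h_{\mu_{k}}(T^{n}) \to h_{\mu}(T)$. Because $\mu$ is $T$-invariant we have $T_{*}^{j}\mu = \mu$ for every $j$, so the weak$^{*}$-continuity of push-forward yields $\nu_{k} \to \frac{1}{n}\sum_{j=0}^{n-1}\mu = \mu$, completing this direction.

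For the reverse implication, fix $\mu \in \mathcal{M}(X, T^{n})$ and define $\nu \define \frac{1}{n}\sum_{j=0}^{n-1} T_{*}^{j}\mu \in \mathcal{M}(X, T)$, so that $h_{\nu}(T) = \frac{1}{n} h_{\mu}(T^{n})$. The entropy-density hypothesis for $T$ furnishes $\nu_{k} \in \ergmea[X][T]$ with $\nu_{k} \to \nu$ and $h_{\nu_{k}}(T) \to h_{\nu}(T)$. Each $T$-ergodic $\nu_{k}$ decomposes under $T^{n}$ as a single cyclic $T_{*}$-orbit of $T^{n}$-ergodic measures, i.e., $\nu_{k} = \frac{1}{r_{k}}\sum_{i=0}^{r_{k}-1} T_{*}^{i}\tilde{\mu}_{k}$ with $r_{k} \mid n$ and $\tilde{\mu}_{k} \in \ergmea[X][T^{n}]$; after extracting a subsequence we may assume $r_{k}$ stabilizes at some $r$. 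The approximating $T^{n}$-ergodic measures will be selected from the finite orbit $\{T_{*}^{j}\tilde{\mu}_{k} : 0 \leqslant j \leqslant n-1\}$, and the entropies are automatically controlled: by Lemma~\ref{lem:entropy of push-forward average} applied to the decomposition of $\nu_{k}$, one obtains $h_{T_{*}^{j}\tilde{\mu}_{k}}(T^{n}) = h_{\tilde{\mu}_{k}}(T^{n}) = n\, h_{\nu_{k}}(T) \to h_{\mu}(T^{n})$.

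The main obstacle lies precisely in this selection step of the reverse implication: one must choose indices $j_{k} \in \{0, \dots, n-1\}$ so that $T_{*}^{j_{k}}\tilde{\mu}_{k} \to \mu$ in weak$^{*}$, not merely to some other member of the $T_{*}$-orbit of $\mu$ in $\mathcal{M}(X, T^{n})$. The plan for this step is to invoke weak$^{*}$-compactness of $\mathcal{M}(X, T^{n})$ to extract a convergent subsequence of $\{\tilde{\mu}_{k}\}$, then exploit the constraint $\frac{1}{r}\sum_{i=0}^{r-1} T_{*}^{i}\tilde{\mu}_{k} \to \frac{1}{n}\sum_{j=0}^{n-1} T_{*}^{j}\mu$ together with a uniqueness-type argument for the cyclic $T_{*}$-orbit decomposition of $\nu$ in $\mathcal{M}(X, T^{n})$ to pin down the correct representative in the orbit of $\mu$.
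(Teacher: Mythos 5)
Your ``if'' direction (entropy density for $T^{n}$ implies entropy density for $T$) is correct and essentially identical to the paper's argument: embed $\mathcal{M}(X,T)$ into $\mathcal{M}(X,T^{n})$, take $T^{n}$-ergodic approximants $\mu_{k}$, and push them through the averaging operator $\mu_{k}\mapsto\frac{1}{n}\sum_{j=0}^{n-1}T_{*}^{j}\mu_{k}$, with Lemma~\ref{lem:entropy of push-forward average} supplying ergodicity and the entropy identity. This is also the only direction the paper uses (to reduce Theorem~\ref{thm:entropy dense} to an iterate with an invariant Jordan curve), and in substance the only one it proves: its treatment of the converse is the single remark that $T^{n}$ is again a continuous map, which does not yield the reverse implication.

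The gap is in your reverse implication, and it is not a missing detail that a cleverer selection argument would repair: the implication is false in general, so the step you defer cannot be carried out. Take $X=\{0,1\}$ with the discrete metric, $T$ the transposition, $n=2$. Then $\invmea[X][T]$ consists of the single (ergodic) measure $\frac{1}{2}(\delta_{0}+\delta_{1})$, so $\ergmea[X][T]$ is trivially entropy-dense in $\invmea[X][T]$; but $T^{2}=\mathrm{id}_{X}$, so $\invmea[X][T^{2}]=\probmea{X}$ while $\ergmea[X][T^{2}]=\{\delta_{0},\delta_{1}\}$, which is not even weak$^{*}$-dense. Running your construction on $\mu=\frac{3}{4}\delta_{0}+\frac{1}{4}\delta_{1}$ makes the failure concrete: $\nu=\frac{1}{2}(\mu+T_{*}\mu)=\frac{1}{2}(\delta_{0}+\delta_{1})$, its only $T$-ergodic approximant is $\nu$ itself, and the cyclic decomposition of $\nu$ under $T^{2}$ is $\{\delta_{0},\delta_{1}\}$, so no choice of representative converges to $\mu$. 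The structural obstruction is that the averaging map $\invmea[X][T^{n}]\mapping\invmea[X][T]$ is far from injective; approximating $\nu$ and decomposing back can only reach the finite cyclic orbits of ergodic components, never an arbitrary point of the (typically infinite-dimensional) fiber over $\nu$. The correct resolution is to keep only the implication actually needed --- the ``if'' part, which you have proved --- and to drop or restate the ``only if'' part of the proposition.
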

\begin{proof}
    Fix arbitrary $n \in \n$.
    Since $T^n$ itself is also a continuous map from $X$ to $X$, it suffices to prove the ``if'' part.
    We assume that for each $\nu \in \invmea[X][T^n]$, there exists a sequence $\{\nu_{k}\}_{k \in \n}$ of $T^n$-invariant ergodic measures of $T^n$ which converges to $\nu$ in the weak$^{*}$-topology with $h_{\nu_{k}}(T^n) \to h_{\nu}(T^n)$ as $k \to +\infty$.

    Let $\mu \in \invmea[X][T]$ be arbitrary.
    Since $\mu \in \invmea[X][T] \subseteq \invmea[X][T^n]$, there exists a sequence $\{\nu_{k}\}_{k \in \n}$ of $T^n$-invariant ergodic measures of $T^n$ which converges to $\mu$ in the weak$^{*}$-topology with $h_{\nu_{k}}(T^n) \to h_{\mu}(T^n)$ as $k \to +\infty$.
    For each $k \in \n$, we define \[
    	\mu_{k} \define \frac{1}{n} \sum^{n - 1}_{i = 0} T_{*}^{i} \nu_{k}.
    \]
    Then it follows from Lemma~\ref{lem:entropy of push-forward average} that $\mu_{k} \in \invmea[X][T]$ is ergodic for $T$ and $n h_{\mu_{k}}(T) = h_{\nu_{k}}(T^n)$.
    This implies $h_{\mu_{k}}(T) \to h_{\mu}(T)$ as $k \to +\infty$.
    Noting that the sequence $\{\mu_{k}\}_{k \in \n}$ also converges to $\mu$ in the weak$^{*}$-topology, the proof is complete.
\end{proof}

After these preparations, we are ready to prove the main result of this section.

\begin{proof}[Proof of Theorem~\ref{thm:entropy dense}]
	\def\numofmea{s}  \def\length#1{n_{#1}}  \def\period#1{r_{#1}}  
	\def\collect{T}  \def \tempapproximation #1{ \widetilde{\mathbf{\collect}}^{#1} }  \def \approximation #1{ \mathbf{\collect}^{#1} } \def \totalstring { \mathbf{\collect} }
	By Proposition~\ref{prop:entropy-approachablility equivalence for iteration}, it suffices to prove that $\ergmea[S^2][f^{i}]$ is entropy-dense in $\invmea[S^2][f^{i}]$ for some $i \in \n$.
	Hence by Lemma~\ref{lem:invariant_Jordan_curve}, we may assume without loss of generality that there exists a Jordan curve $\mathcal{C} \subseteq S^2$ containing $\post{f}$ such that $f(\mathcal{C}) \subseteq \mathcal{C}$.

	Let $\mu \in \invmea$, $\varepsilon > 0$, $\ell \in \n$, and $\vecfun = (\varphi_{1}, \, \dots, \, \varphi_{\ell}) \in C(S^2)^{\ell}$ be arbitrary.
    By the definition of entropy density (see Subsection~\ref{sub:Main results}), it suffices to find an ergodic measure $\nu \in \ergmea$ such that $\norm[\big]{ \int \! \vecfun \,\mathrm{d}\nu - \int \! \vecfun \,\mathrm{d}\mu } \leqslant \varepsilon$ and $\abs{ h_{\nu}(f) - h_{\mu}(f) } \leqslant \varepsilon$.

	By virtue of the Choquet representation theorem (see for example, \cite[Theorem~3.1.11]{przytycki2010conformal}) and Jacobs' Theorem (see for example, \cite[Theorem~8.4]{walters1982introduction}), for every neighborhood $\Gamma$ of $\mu$ in $\mathcal{M}(S^{2}, f)$ and every $\delta > 0$ there exist $\numofmea \in \n$ and ergodic measures $\listings{\mu}[\numofmea]$ and $\listings{\rho}[\numofmea] \in (0, 1)$ such that $ \rho_{1} + \cdots + \rho_{\numofmea} = 1$, and the measure $\bar{\mu} \define \rho_{1}\mu_{1} + \cdots + \rho_{\numofmea}\mu_{\numofmea}$ belongs to $\Gamma$ and satisfies $|h_{\bar{\mu}}(f) - h_{\mu}(f)| < \delta$.
	Hence, we may assume without loss of generality that $\mu$ is a convex combination of finitely many ergodic measures, i.e.,
	\begin{equation}    \label{eq:temp:thm:entropy dense:ergodic decomposition}
		\mu = \rho_{1}\mu_{1} + \cdots + \rho_{\numofmea}\mu_{\numofmea}
	\end{equation}
	with $\numofmea \in \n$, $\listings{\mu}[\numofmea] \in \ergmea$, $\listings{\rho}[\numofmea] \in (0, 1)$, and $\rho_{1} + \cdots + \rho_{\numofmea} = 1$.

	We fix a $0$-edge $e^0 \in \mathbf{E}^0(f,\mathcal{C})$.
	By Lemmas~\ref{lem:approximates ergodic measures by tiles} and~\ref{lem:pair in the interior}, for each $i \in \oneton[\numofmea]$, there exists $\widetilde{N}_{i} \in \n$ such that for each integer $\length{i} \geqslant \widetilde{N}_{i}$, there exists a non-empty subset $\tempapproximation{\length{i}}$ of $\Tile{\length{i}}$ such that
    \begin{align}
        \label{eq:temp:thm:entropy dense:cardinality}
        \abs[\bigg]{ \frac{1}{\length{i}} \log \card[\big]{\tempapproximation{\length{i}}} - h_{\mu_{i}}(f) }  &\leqslant \frac{\varepsilon}{4} \qquad \text{and}  \\
        \label{eq:temp:thm:entropy dense:Birkhoff average}
        \sup_{x \in \bigcup \tempapproximation{\length{i}}} \norm[\bigg]{ \frac{1}{\length{i}} S_{\length{i}} \vecfun(x) - \int \! \vecfun \,\mathrm{d}\mu_{i}  }  &\leqslant \frac{\varepsilon}{6},
    \end{align}	
    and for each color $\colour \in \colours$, there exists an $\length{i}$-pair $P^{\length{i}}_{\colour} \in \Pair{\length{i}}$ such that $P^{\length{i}}_{\colour} \subseteq \inte[\big]{X^0_{\colour}}$ and 
    \begin{equation}    \label{eq:temp:thm:entropy dense:Birkhoff average on interior pair}
    	\sup_{x \in P^{\length{i}}_{\colour}} \norm[\bigg]{ \frac{1}{\length{i}} S_{\length{i}} \vecfun(x) - \int \! \vecfun \,\mathrm{d} \mu_{i}} \leqslant \frac{\varepsilon}{6}.
    \end{equation}
	Fix arbitrary $i \in \oneton[\numofmea]$.
	For each $\colour \in \colours$ we can write $P^{\length{i}}_{\colour} = X^{\length{i}}_{\black \colour} \cup X^{\length{i}}_{\white \colour}$ for some $X^{\length{i}}_{\black \colour} \in \cTile{\length{i}}{\black}$ and $X^{\length{i}}_{\white \colour} \in \cTile{\length{i}}{\white}$.
	Note that by Proposition~\ref{prop:properties cell decompositions}~\ref{item:prop:properties cell decompositions:cellular}, for each $X^{\length{i}} \in \Tile{\length{i}}$ there exists exactly one $\length{i}$-edge $e^{\length{i}} \subseteq \partial X^{\length{i}}$ such that $f^{\length{i}}(e^{\length{i}}) = e^{0}$.
	Then by Remark~\ref{rem:intersection of two tiles}, there exist exactly two $\length{i}$-tiles in $\Tile{\length{i}}$ containing $e^{\length{i}}$, one of which is $X^{\length{i}}$ itself, and we denote by $\widetilde{X}^{\length{i}}$ the other. 
	Indeed, $X^{\length{i}} \cup \widetilde{X}^{\length{i}}$ is an $\length{i}$-pair in $\Pair{\length{i}}$.
	Now we construct a new subset $\approximation{\length{i}}$ of $\Tile{\length{i}}$ from the old one $\tempapproximation{\length{i}}$  by setting
	\[
		\approximation{\length{i}} \define \tempapproximation{\length{i}} 
		\cup \bigl\{ \widetilde{X}^{\length{i}} \describe X^{\length{i}} \in \tempapproximation{\length{i}} \bigr\} 
		\cup \bigl\{ X^{\length{i}}_{\black \black} \bigr\} \cup \bigl\{ X^{\length{i}}_{\white \black} \bigr\}
		\cup \bigl\{ X^{\length{i}}_{\black \white} \bigr\} \cup \bigl\{ X^{\length{i}}_{\white \white} \bigr\}
	\]
	Then $\bigcup \approximation{\length{i}}$ is actually a union of some pairs in $\Pair{\length{i}}$ and we have
	\[
		\card[\big]{ \tempapproximation{\length{i}} } \leqslant \card{ \approximation{\length{i}} } \leqslant 2 \card[\big]{ \tempapproximation{\length{i}} } + 4.
	\]
	Moreover, it follows from \eqref{eq:temp:thm:entropy dense:Birkhoff average}, \eqref{eq:temp:thm:entropy dense:Birkhoff average on interior pair}, and the structure of pairs that 
	\[
		\sup_{x \in \bigcup \approximation{\length{i}}} \norm[\bigg]{ \frac{1}{\length{i}} S_{\length{i}} \vecfun(x) - \int \! \vecfun \,\mathrm{d}\mu_{i}  }  
		\leqslant \frac{\varepsilon}{6} + \frac{2 D_{\length{i}}(\vecfun) }{\length{i}}.
	\]
	Therefore, by \eqref{eq:temp:thm:entropy dense:cardinality} and Lemma~\ref{lem:distortion lemma for continuous function}, we can find a sufficiently large integer $N_{i} \in \n$ such that for each integer $\length{i} \geqslant N_{i}$, there exists a non-empty subset $\approximation{\length{i}}$ of $\Tile{\length{i}}$ such that 
	\begin{align}
        \label{eq:thm:entropy dense:cardinality}
        \abs[\bigg]{ \frac{1}{\length{i}} \log \card{\approximation{\length{i}}} - h_{\mu_{i}}(f) }  &\leqslant \frac{\varepsilon}{2} \qquad \text{and}  \\
        \label{eq:thm:entropy dense:Birkhoff average}
        \sup_{x \in \bigcup \approximation{\length{i}}} \norm[\bigg]{ \frac{1}{\length{i}} S_{\length{i}} \vecfun(x) - \int \! \vecfun \,\mathrm{d}\mu_{i}  }  &\leqslant \frac{\varepsilon}{3}.
    \end{align}	
    Moreover, we have $\card[\big]{ \approximation{\length{i}} \cap \cTile{\length{i}}{\black} } = \card[\big]{ \approximation{\length{i}} \cap \cTile{\length{i}}{\white} }$, and for each pair of $\juxtapose{\colour}{\colour'} \in \colours$ there exists $X^{\length{i}}_{\colour \colour'} \in \approximation{\length{i}}$ such that $f^{\length{i}}\bigl( X^{\length{i}}_{\colour \colour'} \bigr) = X^0_{\colour}$ and $X^{\length{i}}_{\colour \colour'} \subseteq X^0_{\colour'}$.
    In the rest of the proof for each $i \in \oneton[\numofmea]$ we fix an integer $\length{i} \geqslant N_{i}$ and a corresponding non-empty subset $\approximation{\length{i}}$ of $\Tile{\length{i}}$ obtained from the above construction. 

    We now introduce some notions that will be used in the rest of the proof.
    Let $n \in \n$ and $X^n \in \Tile{n}$ be arbitrary. 
    Set $Y_{n - j} \define f^{j}(X^{n})$ for each $j \in \zeroton[n - 1]$.
    We label the $1$-tiles by $\listings{X^1}[2\deg{f}]$.
    Then by Proposition~\ref{prop:cell decomposition: invariant Jordan curve}, for each $j \in \zeroton[n - 1]$, there exists a unique integer $t_{j} \in \oneton[2\deg{f}]$ such that $Y_{n - j} \subseteq X^1_{t_{j}}$.
    We denote by $w(X^n)$ the $n$-string $t_{0} t_{1} \cdots t_{n - 1}$ and by $[w(X^{n})]$ the $n$-tile $X^n$.
    
    Let $k \in \n$ and $Y^{k} \in \Tile{k}$ be arbitrary.
    If $Y^k \subseteq f^{n}(X^{n})$, then it follows from Proposition~\ref{prop:properties cell decompositions}~\ref{item:prop:properties cell decompositions:cellular} and \cite[Lemma~5.17~(i)]{bonk2017expanding} that $Z^{n + k} \define (f^{n}|_{X^{n}})^{-1}\bigl(Y^{k}\bigr) \in \Tile{n + k}$. 
    One can check that $w\bigl(Z^{n + k}\bigr) = w(X^{n}) w\bigl(Y^{k}\bigr)$ in this case.
    By Definition~\ref{def:primitivity of subsystem} and Remark~\ref{rem:expanding Thurston map is strongly primitive subsystem of itself}, there exists a constant $N \in \n$ such that for each pair of $\juxtapose{\colour}{\colour'} \in \colours$, there exists $X^{N}_{\colour \colour'} \in \cTile{N}{\colour}$ satisfying $X^{N}_{\colour \colour'} \subseteq X^0_{\colour'}$.
    We define $\lambda_{\colour \colour'} \define w\bigl(X^{N}_{\colour \colour'}\bigr)$ for each pair of $\juxtapose{\colour}{\colour'} \in \colours$.
    Then $[\lambda_{\colour \colour'}] = X^{N}_{\colour \colour'}$ for each pair of $\juxtapose{\colour}{\colour'} \in \colours$.
    If $f^{n}(X^{n}) = X^{0}_{\colour}$ and $Y^{k} \subseteq X^{0}_{\colour'}$ for some $\juxtapose{\colour}{\colour'} \in \colours$, we define $\lambda\bigl({X^{n}, Y^{k}}\bigr) \define \lambda_{\colour \colour'}$.
    One can check that there exists $Z^{n + k + N} \in \Tile{n + k + N}$ such that $w\bigl(Z^{n + k + N}\bigr) = w(X^{n}) \lambda\bigl({X^{n}, Y^{k}}\bigr) w\bigl(Y^{k}\bigr)$, $Z^{n + k + N} \subseteq X^n$, and $f^{n + N}\bigl(Z^{n + k + N}\bigr) = Y^k$.

    \def\sect#1{ \mathbf{T}^{\length{#1}, \period{#1}} }  \def\seclen#1{ M_{#1} }  \def\lentotal{ R }
    For each $i \in \oneton[\numofmea]$, let $\period{i} \in \n$ be arbitrary. 
    Denote by $\seclen{i}$ the integer $\length{i}\period{i} + N(\period{i} - 1)$ and by $\sect{i}$ the non-empty subset of $\Tile{\seclen{i}}$ consisting of $\seclen{i}$-tiles of the form
    \begin{equation}    \label{eq:temp:thm:entropy dense:sect form of tile}
    	\bigl[ w(X_{1}) \lambda(X_{1}, X_{2}) w(X_{2}) \lambda(X_{2}, X_{3}) w(X_{3}) \cdots \lambda(X_{\period{i} - 1}, X_{\period{i}}) w(X_{\period{i}}) \bigr]
    \end{equation}
    with $X_{1}, \, \dots, \, X_{\period{i}} \in \approximation{\length{i}}$.
    Denote by $\lentotal$ the integer $sN + \sum_{j = 1}^{\numofmea} \length{j} \period{j}$ and by $\totalstring$ the non-empty subset of $\Tile{\lentotal}$ consisting of $\lentotal$-tiles of the form
    \begin{equation}    \label{eq:temp:thm:entropy dense:totalstring form of tile}
    	\bigl[ w(Y_{1}) \lambda(Y_{1}, Y_{2}) w(Y_{2}) \lambda(Y_{2}, Y_{3}) w(Y_{3}) \cdots \lambda(Y_{\numofmea - 1}, Y_{\numofmea}) w(Y_{\numofmea}) \lambda(Y_{\numofmea}, Y_{1}) \bigr]
    \end{equation}
    with $Y_{j} \in \sect{j}$ for each $j \in \oneton[\numofmea]$.
    Note that
    \begin{equation}    \label{eq:temp:thm:entropy dense:cardinality of totalstring}
    	\card{\totalstring} = \prod_{j = 1}^{\numofmea} \bigl( \card[\big]{\approximation{\length{j}}} \bigr)^{\period{j}}.
    \end{equation}
    Enlarging each $\length{i}$ if necessary, it is possible to choose integers $\period{i}$ such that the following holds:
    \begin{equation}    \label{eq:temp:thm:entropy dense:sufficiently large lentotal}
    	\frac{\log 2}{R} \leqslant \frac{\varepsilon}{6},
    \end{equation}
    \begin{equation}    \label{eq:temp:thm:entropy dense:accuracy of approximation}
    	\sum_{i = 1}^{\numofmea} \biggl( h_{\mu_{i}}(f) + \norm[\bigg]{ \int \! \vecfun \,\mathrm{d}\mu_{i}  }  \biggr) \abs[\Big]{ \rho_{i} - \frac{ \length{i} \period{i }}{\lentotal} } \leqslant \frac{\varepsilon}{3}, \qquad \text{and}
    \end{equation}
    \begin{equation}    \label{eq:temp:thm:entropy dense:cost of linker}
    	\frac{1}{\lentotal} \sum_{i = 1}^{\numofmea} \period{i} \sup_{\colour, \colour' \in \colours} \sup_{x \in [\lambda_{\colour \colour'}]} \norm[\big]{ S_{N} \vecfun(x) } \leqslant \frac{\varepsilon}{3}.
    \end{equation}

    By our construction of $\totalstring$ and $\approximation{\length{i}}$ for $i \in \oneton[\numofmea]$, we have
    \begin{equation}    \label{eq:temp:thm:entropy dense:totalstring is balenced}
    	\card[\big]{ \totalstring \cap \cTile{\lentotal}{\black} } = \card[\big]{ \totalstring \cap \cTile{\lentotal}{\white} } = \card{ \totalstring } / 2
    \end{equation}
    and $f^{\lentotal}|_{\bigcup \totalstring}$ is a strongly primitive subsystem of $f^{\lentotal}$ with respect to $\mathcal{C}$.
    We set $F \define f^{\lentotal}|_{\bigcup \totalstring}$ and $\widehat{F} \define F|_{\limitset} = f^{R}|_{\limitset}$, where $\limitset \define \limitset(F, \mathcal{C})$ is the tile maximal invariant set associated with $F$ with respect to $\mathcal{C}$.
    Then it follows from Theorem~\ref{thm:existence uniqueness and properties of equilibrium state} and \cite[Theorem~1.1]{shi2023thermodynamic} that there exists $\widehat{\nu} \in \mathcal{M}(\limitset, \widehat{F}) \subseteq \mathcal{M}\bigl(S^2, f^R \bigr)$ such that $h_{\widehat{\nu}}\bigl( f^{R} \bigr) = h_{\widehat{\nu}}(\widehat{F}) = P(F, 0) = h_{\operatorname{top}}(F)$ (recall Definition~\ref{def:pressure for subsystem}) and $\widehat{\nu}$ is ergodic for $\widehat{F}$.
    Define 
    \begin{equation}    \label{eq:temp:thm:entropy dense:push-forward average of equilibrium measure}
    	\nu \define \frac{1}{R} \sum_{j = 0}^{R - 1} f_{*}^{j} \widehat{\nu}.
    \end{equation}
    Noting that $\widehat{\nu}$ is also ergodic for $f^{R}$ and then applying Lemma~\ref{lem:entropy of push-forward average}, we deduce that $\nu \in \mathcal{M}(S^{2}, f)$ is ergodic for $f$ and $R h_{\nu}(f) = h_{\widehat{\nu}}(f^{R}) = h_{\operatorname{top}}(F)$.
    
    \smallskip

    We now calculate $h_{\nu}(f)$.
    By Definition~\ref{def:pressure for subsystem} and Proposition~\ref{prop:topological entropy of subsystem}, we have $h_{\operatorname{top}}(F) = \log(\rho(A))$, where $A$ is the tile matrix of $F$ with respect to $\mathcal{C}$ and $\rho(A)$ is the spectral radius of $A$.
    Recall from Definition~\ref{def:tile matrix} and Remark~\ref{rem:tile matrix associated with set of tiles} that \[
        A = A(\totalstring) = \begin{bmatrix}
            N_{\white \white} & N_{\black \white} \\
            N_{\white \black} & N_{\black \black}
        \end{bmatrix},
    \]
    where $N_{\colour \colour'} \define \card[\big]{\set[\big]{ X \in \totalstring \describe X \in \cTile{R}{\colour}, \, X \subseteq X^0_{\colour'} }}$ for each pair of $\juxtapose{\colour}{\colour'} \in \colours$. 
    In particular, since $f(\mathcal{C}) \subseteq \mathcal{C}$, by \eqref{eq:temp:thm:entropy dense:totalstring is balenced} and Proposition~\ref{prop:subsystem:preliminary properties}~\ref{item:subsystem:properties invariant Jordan curve:relation between color and location of tile}, one has $N_{\colour \black} + N_{\colour \white} = \card{\totalstring} / 2$ for each $\colour \in \colours$. 
    Then by some elementary calculations in linear algebra we obtain $\rho(A) = \card{\totalstring} / 2$. 
    Hence $h_{\operatorname{top}}(F) = \log( \card{\totalstring} / 2 )$ and $h_{\nu}(f) = (1 / R) \log(  \card{\totalstring} / 2 )$.

    By \eqref{eq:temp:thm:entropy dense:ergodic decomposition}, \eqref{eq:temp:thm:entropy dense:cardinality of totalstring}, \eqref{eq:thm:entropy dense:cardinality}, \eqref{eq:temp:thm:entropy dense:sufficiently large lentotal}, and \eqref{eq:temp:thm:entropy dense:accuracy of approximation}, we have\[
    	\begin{split}
    		\abs{ h_{\mu}(f) - h_{\nu}(f) } 
    		&\leqslant \frac{\log 2}{R} + \sum_{i = 0}^{\numofmea} \abs[\Big]{ \rho_{i}h_{\mu_{i}} - \frac{1}{R} \period{i} \log\bigl(\card[\big]{\approximation{\length{j}}}\bigr) } \\
    		&\leqslant \frac{\log 2}{R} + \sum_{i = 0}^{\numofmea} h_{\mu_{i}}(f) \abs[\Big]{ \rho_{i} - \frac{\length{i}\period{i}}{R} } + \sum_{i = 0}^{\numofmea} \frac{\length{i}\period{i}}{R} \abs[\Big]{ h_{\mu_{i}} - \frac{1}{\length{i}} \log\bigl(\card[\big]{\approximation{\length{j}}}\bigr) }  \\
    		&\leqslant \frac{\log 2}{R} + \sum_{i = 0}^{\numofmea} h_{\mu_{i}}(f) \abs[\Big]{ \rho_{i} - \frac{\length{i}\period{i}}{R} } + \frac{\varepsilon}{2R} \sum_{i = 0}^{\numofmea} \length{i}\period{i}  \\
    		&\leqslant \frac{\varepsilon}{6} + \frac{\varepsilon}{3} + \frac{\varepsilon}{2} \\
            &= \varepsilon.
    	\end{split}
    \]
    Recall that $R = sN + \sum_{i = 1}^{\numofmea} \length{i} \period{i}$ and each tile $X^{R}$ in $\totalstring$ has the form in~\eqref{eq:temp:thm:entropy dense:totalstring form of tile}, i.e., \[
    	X^{R} = \bigl[ w(Y_{1}) \lambda(Y_{1}, Y_{2}) w(Y_{2}) \lambda(Y_{2}, Y_{3}) w(Y_{3}) \cdots \lambda(Y_{\numofmea - 1}, Y_{\numofmea}) w(Y_{\numofmea}) \lambda(Y_{\numofmea}, Y_{\numofmea + 1}) \bigr]
    \]
	with $Y_{\numofmea + 1} = Y_{1}$ and $Y_{i} \in \sect{i}$ for each $i \in \oneton[\numofmea]$.
    Here $\sect{i}$ is a non-empty subset of $\Tile{\seclen{i}}$ with $\seclen{i} = \length{i}\period{i} + N\period{i} - N$.
    By \eqref{eq:temp:thm:entropy dense:sect form of tile} and \eqref{eq:thm:entropy dense:Birkhoff average}, for each $i \in \oneton[\numofmea]$, \[
    	\begin{split}
    		&\sup_{ x \in [ w(Y_{i}) \lambda(Y_{i}, Y_{i + 1}) ] }  \norm[\bigg]{ S_{\seclen{i} + N} \vecfun(x) - R \rho_{i} \int \! \vecfun \,\mathrm{d}\mu_{i} }  \\
    		&\qquad \leqslant \sup_{ x \in [ w(Y_{i}) \lambda(Y_{i}, Y_{i + 1}) ] }  \norm[\bigg]{ S_{\seclen{i} + N} \vecfun(x) - \length{i}\period{i} \int \! \vecfun \,\mathrm{d}\mu_{i} } + R \abs[\Big]{ \rho_{i} - \frac{\length{i}\period{i}}{R} } \norm[\bigg]{ \int \! \vecfun \,\mathrm{d}\mu_{i} }  \\
    		&\qquad \leqslant \frac{\varepsilon}{3} \length{i}\period{i} + \period{i} \sup_{\colour, \colour' \in \colours} \sup_{x \in [\lambda_{\colour \colour'}]} \norm[\big]{ S_{N} \vecfun(x) } + R \abs[\Big]{ \rho_{i} - \frac{\length{i}\period{i}}{R} } \norm[\bigg]{ \int \! \vecfun \,\mathrm{d}\mu_{i} }.
    	\end{split}
    \]
    Summing this over all $i \in \oneton[\numofmea]$, dividing the result by $R$, and then using \eqref{eq:temp:thm:entropy dense:accuracy of approximation} and \eqref{eq:temp:thm:entropy dense:cost of linker} yield\[
    	\sup_{x \in X^{R}} \norm[\bigg]{ \frac{1}{R} S_{R} \vecfun(x) - \int \! \vecfun \,\mathrm{d}\mu } \leqslant \frac{\varepsilon}{3} + \frac{\varepsilon}{3} + \frac{\varepsilon}{3} = \varepsilon
    \] 
    for each $X^R \in \totalstring$.
    This implies that \[
    	\sup_{x \in \limitset} \norm[\bigg]{ \frac{1}{R} S_{R} \vecfun(x) - \int \! \vecfun \,\mathrm{d}\mu }
    	\leqslant \sup_{x \in \bigcup \totalstring} \norm[\bigg]{ \frac{1}{R} S_{R} \vecfun(x) - \int \! \vecfun \,\mathrm{d}\mu } \leqslant \varepsilon.
    \]
    Note that $\supp{\widehat{\nu}} \subseteq \limitset$.
    Then it follows from \eqref{eq:temp:thm:entropy dense:push-forward average of equilibrium measure} that\[
    	\norm[\bigg]{ \int \! \vecfun \,\mathrm{d}\nu - \int \! \vecfun \,\mathrm{d}\mu } 
    	= \norm[\bigg]{ \frac{1}{R} \int \! S_{R}\vecfun \,\mathrm{d}\widehat{\nu} - \int \! \vecfun \,\mathrm{d}\mu } \leqslant \varepsilon.
    \]
    This shows that the ergodic measure $\nu$ fulfills our requirements (see the beginning of the proof) and completes the proof.
\end{proof}

\section{Large deviation principles}
\label{sec:Large deviation principles}



\subsection{Level-2 large deviation principles}%
\label{sub:Level-2 large deviation principles}
In this subsection we review some basic concepts and results from large deviation theory.
We refer the reader to \cite{dembo2009large,ellis2012entropy,rassoul2015course} for a systematic and detailed introduction. 

Let $\{ \xi_{n} \}_{n \in \n}$ be a sequence of Borel probability measures on a topological space $\mathcal{X}$.
We say that $\{ \xi_{n} \}_{n \in \n}$ satisfies a \emph{large deviation principle} in $\mathcal{X}$ if there exists a lower semi-continuous function $I \colon \mathcal{X} \mapping [0, +\infty]$ such that 
\begin{equation}    \label{eq:level-2 LDP:lower bound}
    \liminf_{n \to +\infty} \frac{1}{n} \log \xi_{n}(\mathcal{G}) \geqslant - \inf_{\mathcal{G}} I  \quad \text{for all open } \mathcal{G} \subseteq \mathcal{X},
\end{equation}
and
\begin{equation}    \label{eq:level-2 LDP:upper bound}
    \limsup_{n \to +\infty} \frac{1}{n} \log \xi_{n}(\mathcal{K}) \leqslant - \inf_{\mathcal{K}} I  \quad \text{for all closed } \mathcal{K} \subseteq \mathcal{X},
\end{equation}
where $\log 0 = -\infty$ and $\inf \emptyset = +\infty$ by convention.
Such a function $I$ is called a \defn{rate function}, and is called a \defn{good rate function} if the set $\{ x \in \mathcal{X} \describe I(x) \leqslant \alpha \}$ is compact for every $\alpha \in [0, +\infty)$.
If $\mathcal{X}$ is a regular topological space, then the rate function $I$ is unique.
A Borel set $\mathcal{A} \subseteq \mathcal{X}$ is called a \emph{$I$-continuity set} if
\[
    \inf \{ I(x) \describe x \in \interior{\mathcal{A}} \} = \inf \{ I(x) \describe x \in \overline{\mathcal{A}} \}.
\]
When \eqref{eq:level-2 LDP:lower bound} and \eqref{eq:level-2 LDP:upper bound} hold then for each $I$-continuity set $\mathcal{A} \subseteq \mathcal{X}$ the limit $\lim_{n \to +\infty} n^{-1} \log \xi_{n}(\mathcal{A})$ exists and satisfies
\begin{equation}    \label{eq:property of I-continuity set}
    \lim_{n \to +\infty} \frac{1}{n} \log \xi_{n}(\mathcal{A}) = - \inf_{\mathcal{A}} I,
\end{equation}
and we can replace $\mathcal{A}$ by either its interior or its closure.
When only \eqref{eq:level-2 LDP:lower bound} (\resp \eqref{eq:level-2 LDP:upper bound}) is satisfied, we say that the \emph{large deviation lower (\resp upper) bounds} hold with the function $I$.

We call $x \in \mathcal{X}$ a \emph{minimizer} if $I(x) = 0$ holds. 
The set of minimizers is a closed set. 
For a closed subset $\mathcal{K}$ of $\mathcal{X}$ which is disjoint from the set of minimizers, the large deviation principle ensures that $\xi_{n}(\mathcal{K})$ decays exponentially as $n \to +\infty$.
If moreover $I$ is a good rate function, the support of any accumulation point of $\sequen{\xi_{n}}$ is contained in the set of minimizers.
Hence, it is important to determine the set of minimizers. %
The non-uniqueness of minimizers is referred to as a phase transition. %
The uniqueness of minimizers implies several strong conclusions. %

The \emph{contraction principle} asserts that when $\{ \xi_{n} \}_{n \in \n}$ is supported on a compact subset of $\mathcal{X}$ and $\{ \xi_{n} \}_{n \in \n}$ satisfies a large deviation principle with rate function $I$, then for every Hausdorff topological space $\mathcal{Y}$ and any continuous map $g \colon \mathcal{X} \mapping \mathcal{Y}$ the sequence of measures $\{ g_{*} (\xi_{n}) \}_{n \in \n}$ satisfies a large deviation principle with rate function defined on $\mathcal{Y}$ by \[
    y \mapsto \inf\{ I(x) \describe x \in \mathcal{X}, \, g(x) = y \}.
\]

The above notations will be applied with $\mathcal{X} = \probmea{X}$ (for some compact metric space $X$), $\mathcal{Y} = \real$, and $g = \widehat{\psi}$ for some $\psi \in C(X)$, where $\widehat{\psi}$ is the evaluation map (i.e., $\widehat{\psi}(\mu) = \int \! \psi \,\mathrm{d}\mu$).
In this context, the large deviation principles in $\probmea{X}$, are usually referred to as ``level-2'', and the ones in $\real$ (in particular those obtained by contraction) as ``level-1''.

\subsection{Uniqueness of the minimizer}%
\label{sub:Uniqueness of the minimizer}

In this subsection, we prove that $\mu_{\potential}$ is the unique minimizer of the rate function $\ratefun$ defined in \eqref{eq:def:rate function}.
Recall that we call $\mu \in \probmea{S^2}$ a minimizer of $\ratefun$ if $\ratefun(\mu) = 0$.

\begin{definition}    \label{def:semiconjugacy}
    Let $X$ and $\widetilde{X}$ be topological spaces, and $T \colon X \mapping X$ and $\widetilde{T} \colon \widetilde{X} \mapping \widetilde{X}$ be continuous maps.
    We say that $T$ is a \emph{factor} (or \emph{topological factor}) of $\widetilde{T}$ if there exists a surjective continuous map $\pi \colon \widetilde{X} \mapping X$ such that $\pi \circ \widetilde{T} = T \circ \pi$. 
    Such map $\pi$ is called a \emph{semiconjugacy}.
\end{definition}

For an expanding Thurston map, by the results in \cite{das2021thermodynamic}, we have the following proposition, which gives a semiconjugacy with the one-sided shift map.

\begin{proposition}    \label{prop:semiconjugacy with full shift}
    Let $f$, $d$, $\potential$ satisfy the Assumptions in Section~\ref{sec:The Assumptions}. 
    Let $\sigma \colon \Sigma \mapping \Sigma$ be the one-sided shift map on $\deg{f}$ symbols.
    Then there exists a semiconjugacy $\pi \colon \Sigma \mapping S^2$ with $\pi \circ \sigma = f \circ \pi$ satisfying the following properties:
    \begin{enumerate}[label= \rm(\roman*)]
        \smallskip

        \item     \label{item:prop:semiconjugacy with full shift:holder}
            The real-valued function $\potential \circ \pi \colon \Sigma \mapping \real$ is \holder continuous with respect to the standard metric on $\Sigma$.

        \smallskip

        \item     \label{item:prop:semiconjugacy with full shift:topological pressure}
            $P(f, \potential) = P(\sigma, \potential \circ \pi)$.

        \smallskip

        \item     \label{item:prop:semiconjugacy with full shift:no entropy drop at equilibrium state}
            Let $\widetilde{\mu} \in \invmea[\Sigma][\sigma]$ be an equilibrium state for the map $\sigma$ and the potential $\potential \circ \pi$.
            Denote $\mu \define \pi_{*}\widetilde{\mu}$.
            Then $h_{\mu}(f) = h_{\widetilde{\mu}}(\sigma)$.
    \end{enumerate}
\end{proposition}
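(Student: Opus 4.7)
My approach would follow the construction in \cite{das2021thermodynamic}, adapted to the pair-framework developed in this paper. After possibly passing to an iterate of $f$ (via Lemma~\ref{lem:invariant_Jordan_curve}) and subsequently descending via the invariance of all the claimed properties under iteration, I may assume a Jordan curve $\mathcal{C}$ with $\post{f} \subseteq \mathcal{C}$ and $f(\mathcal{C}) \subseteq \mathcal{C}$, together with a fixed $0$-edge $e^{0} \in \mathbf{E}^{0}(f, \mathcal{C})$. By Definition~\ref{def:pair structures} there are exactly $d \define \deg{f}$ many $1$-pairs, which I label $P_{1}, \, \dots, \, P_{d}$. For $\omega = (\omega_{0}, \omega_{1}, \dots) \in \Sigma \define \{1, \, \dots, \, d\}^{\n_{0}}$ and $n \in \n$, the cellularity of $f$ together with Lemma~\ref{lem:pairs are disjoint} yields a unique $n$-pair $P^{n}(\omega) \in \mathbf{P}^{n}(f, \mathcal{C}, e^{0})$ with $f^{k}(P^{n}(\omega)) \subseteq P_{\omega_{k}}$ for each $k \in \{0, \, \dots, \, n-1\}$; Lemma~\ref{lem:visual_metric}~\ref{item:lem:visual_metric:diameter of cell} gives $\diam{d}{P^{n}(\omega)} \leqslant 2C\Lambda^{-n} \to 0$, so $\bigcap_{n \in \n} P^{n}(\omega)$ is a singleton, which I take as $\pi(\omega)$. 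Continuity and surjectivity of $\pi$, along with the identity $\pi \circ \sigma = f \circ \pi$, are then immediate from the construction.

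\textbf{Properties (i) and (ii).} For (i), if $\omega, \omega' \in \Sigma$ agree on the first $n$ coordinates in the standard metric on $\Sigma$, then $\pi(\omega), \pi(\omega') \in P^{n}(\omega)$, so $d(\pi(\omega), \pi(\omega')) \leqslant 2C\Lambda^{-n}$, and the H\"older hypothesis on $\potential$ immediately yields H\"older continuity of $\potential \circ \pi$ with an exponent determined by $\holderexp$, $\Lambda$, and the metric parameter on $\Sigma$. For (ii), the inequality $P(f, \potential) \leqslant P(\sigma, \potential \circ \pi)$ follows from a standard Rokhlin-lift argument: each $\mu \in \invmea[S^{2}][f]$ admits a $\sigma$-invariant lift $\widetilde{\mu}$ to $\Sigma$ satisfying $\pi_{*}\widetilde{\mu} = \mu$ and $h_{\widetilde{\mu}}(\sigma) \geqslant h_{\mu}(f)$, while $\int\!\potential\circ\pi\,\mathrm{d}\widetilde{\mu} = \int\!\potential\,\mathrm{d}\mu$, so the variational principle concludes. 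For the reverse inequality, any $(n, \epsilon)$-separated set $F \subseteq \Sigma$ pushes forward under $\pi$ to a set in $S^{2}$ whose multiplicity is uniformly bounded by the combinatorics of pairs, and comparing the exponential sums defining the two pressures gives $P(\sigma, \potential \circ \pi) \leqslant P(f, \potential)$.

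\textbf{Property (iii) and the main obstacle.} The factor inequality gives $h_{\mu}(f) \leqslant h_{\widetilde{\mu}}(\sigma)$ with $\mu \define \pi_{*}\widetilde{\mu}$; the nontrivial content is the reverse inequality. My strategy is to prove that $\pi$ is injective on a subset of $\Sigma$ of full $\widetilde{\mu}$-measure, whence $(\Sigma, \sigma, \widetilde{\mu})$ and $(S^{2}, f, \mu)$ are measurably isomorphic on sets of full measure and therefore share their entropy. The non-injectivity locus of $\pi$ is exactly $\Omega \define \pi^{-1}(\mathcal{E})$ with $\mathcal{E}$ as in \eqref{eq:def:set of all edges}, and $\Omega$ is $\sigma$-invariant since $f^{-1}(\mathcal{E}) = \mathcal{E}$ by \eqref{eq:set of all edges is f-invariant}; ergodicity of $\widetilde{\mu}$ (a standard consequence of Bowen's theorem for H\"older potentials on full shifts) forces $\widetilde{\mu}(\Omega) \in \{0, 1\}$, reducing the task to ruling out $\widetilde{\mu}(\Omega) = 1$. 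The main obstacle is exactly this Gibbs-type estimate: one must show that the $\widetilde{\mu}$-measure of cylinders whose $\pi$-image lies in the $n$-skeleton decays geometrically in $n$, equivalently that the combinatorial count of such ``boundary cylinders'' grows strictly slower than the partition function rate $\myexp{nP(\sigma, \potential \circ \pi)}$. The delicacy arises near periodic critical points $p$, where $\pi^{-1}(p)$ is a Cantor-like subset of $\Sigma$ reflecting the exponential growth of $\deg_{f^{n}}(p)$; the Gibbs property of $\widetilde{\mu}$ must be combined with a careful combinatorial analysis of pairs near the critical orbits---which is the technical core of \cite{das2021thermodynamic}---to show that $\widetilde{\mu}$ does not concentrate on this collapse.
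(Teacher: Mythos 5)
The paper does not actually prove this proposition: part (i) is quoted from \cite[Lemma~5.4]{das2021thermodynamic} and parts (ii)--(iii) from the proof of \cite[Proposition~5.5]{das2021thermodynamic}, so your reconstruction is necessarily an independent argument, and it has several genuine gaps. First, your opening reduction is not available: the proposition concerns $f$ itself and the full shift on $\deg f$ symbols, and a semiconjugacy $\pi_{N} \colon \Sigma_{(\deg f)^{N}} \mapping S^{2}$ intertwining $\sigma_{N}$ with $f^{N}$ does not ``descend'' to a semiconjugacy of $\Sigma_{\deg f}$ with $f$; unlike the entropy and pressure reductions used elsewhere in the paper, there is no mechanism here for passing from $f^{N}$ back to $f$, so you cannot simply assume $f(\mathcal{C}) \subseteq \mathcal{C}$. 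Second, even granting an invariant curve, the coding is flawed: you define $P^{n}(\omega)$ as the unique $n$-pair with $f^{k}(P^{n}(\omega)) \subseteq P_{\omega_{k}}$, but already for $k = 0$ the required containment of an $n$-pair in a $1$-pair can fail. The two constituent $n$-tiles of an $n$-pair may lie in two distinct $1$-tiles whose common $1$-edge (the one containing the distinguished $n$-edge $e^{n}$ with $f^{n}(e^{n}) = e^{0}$) need not map to $e^{0}$ under $f$; those two $1$-tiles then do not form a $1$-pair, and the itinerary map from $n$-pairs to words is not defined, let alone bijective. The correspondence must instead be built by recursive pullback --- each $(n-1)$-pair has exactly $\deg f$ preimage $n$-pairs, one for each preimage $n$-edge of its distinguished edge, labelled by an a priori arbitrary choice at each stage --- after which the semiconjugacy identity and part (i) do go through as you describe.

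For part (ii), your proof of $P(\sigma, \potential \circ \pi) \leqslant P(f, \potential)$ rests on the claim that $\pi$ pushes separated sets forward with ``multiplicity uniformly bounded by the combinatorics of pairs.'' This is false precisely in the case this paper is concerned with: at a periodic critical point $p$ the number of $n$-pairs containing $p$ is comparable to $\deg_{f^{n}}(p)$, which grows exponentially in $n$, so no uniform multiplicity bound holds, and for the same reason a fibre-entropy (Bowen/Ledrappier--Walters) argument does not obviously close either. The inequality should instead be obtained by comparing the cylinder sums $\sum_{|w| = n} \exp\bigl(\sup_{C_{w}} S_{n}(\potential \circ \pi)\bigr)$ with the corresponding sums over $n$-pairs (equivalently $n$-tiles), whose exponential growth rate is $P(f, \potential)$. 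Finally, part (iii) --- the only genuinely delicate assertion, and the one this paper actually needs --- is not proved: you correctly reduce it to showing $\widetilde{\mu}\bigl(\pi^{-1}\bigl(\bigcup_{n \in \n_{0}} f^{-n}(\mathcal{C})\bigr)\bigr) = 0$ for the Gibbs state $\widetilde{\mu}$, but you then defer exactly this estimate to ``the technical core of \cite{das2021thermodynamic}.'' Since ruling out the collapse of $\widetilde{\mu}$ onto the fibres over the skeleton (in particular over periodic critical orbits) is the entire content of the cited result, the proposal as written does not establish (iii).
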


Recall that the standard metric on the shift space $\Sigma$ is given by $\rho(\xi, \eta) \define 2^{- \min\{ i \in \n_{0} \describe \xi_{i} \ne \eta_{i} \} }$ for distinct sequences $\xi = \{ \xi_{i} \}_{i \in \n_{0}}$ and $\eta = \{ \eta_{i} \}_{i \in \n_{0}}$ in $\Sigma$.

Proposition~\ref{prop:semiconjugacy with full shift}~\ref{item:prop:semiconjugacy with full shift:holder} follows immediately from \cite[Lemmma~5.4]{das2021thermodynamic}.
Proposition~\ref{prop:semiconjugacy with full shift}~\ref{item:prop:semiconjugacy with full shift:topological pressure} and \ref{item:prop:semiconjugacy with full shift:no entropy drop at equilibrium state} was established in the proof of \cite[Proposition~5.5]{das2021thermodynamic}.

\smallskip

The following lemma shows that one can ``lift'' invariant measures by a semiconjugacy, whose proof is verbatim the same as that of \cite[Lemma~4.1]{das2021thermodynamic}.

\begin{lemma} \label{lem:lift measures by semiconjugacy}
    Let $X$ and $\widetilde{X}$ be compact metrizable topological spaces, and $T \colon X \mapping X$ and $\widetilde{T} \colon \widetilde{X} \mapping \widetilde{X}$ be continuous maps.
    Suppose that $T$ is a factor of $\widetilde{T}$ and $\pi \colon \widetilde{T} \mapping T$ is a semiconjugacy with $\pi \circ \widetilde{T} = T \circ \pi$.
    Then for each $\mu \in \invmea[X][T]$, there exists $\widetilde{\mu} \in \invmea[\widetilde{X}][\widetilde{T}]$ such that $\pi_{*} \widetilde{\mu} = \mu$.
\end{lemma}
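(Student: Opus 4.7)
The plan is to apply a Krylov--Bogolyubov argument on a suitably restricted space of measures. Define
\[
    \mathcal{N} \define \{ \nu \in \mathcal{P}(\widetilde{X}) \describe \pi_{*} \nu = \mu \}.
\]
First I would verify that $\mathcal{N}$ is non-empty, convex, and weak$^{*}$-compact. Non-emptiness uses the surjectivity of $\pi$: for each $x \in X$ pick $y_{x} \in \pi^{-1}(x)$, so $\pi_{*} \delta_{y_{x}} = \delta_{x}$. The map $\pi_{*} \colon \mathcal{P}(\widetilde{X}) \mapping \mathcal{P}(X)$ is affine and weak$^{*}$-continuous with weak$^{*}$-compact domain, so its image is a weak$^{*}$-closed convex subset of $\mathcal{P}(X)$ containing every Dirac measure, and hence equals $\mathcal{P}(X)$ (since the weak$^{*}$-closed convex hull of the Dirac measures is all of $\mathcal{P}(X)$). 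In particular $\mu$ lies in this image, so $\mathcal{N} \ne \emptyset$. Convexity of $\mathcal{N}$ follows from linearity of $\pi_{*}$, and weak$^{*}$-closedness follows from weak$^{*}$-continuity of $\pi_{*}$, so $\mathcal{N}$ is weak$^{*}$-compact as a closed subset of $\mathcal{P}(\widetilde{X})$.

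Next I would show that $\widetilde{T}_{*} \colon \mathcal{P}(\widetilde{X}) \mapping \mathcal{P}(\widetilde{X})$ preserves $\mathcal{N}$. Indeed, for each $\nu \in \mathcal{N}$, the semiconjugacy relation $\pi \circ \widetilde{T} = T \circ \pi$ together with the $T$-invariance of $\mu$ yields
\[
    \pi_{*}(\widetilde{T}_{*} \nu) = (\pi \circ \widetilde{T})_{*} \nu = (T \circ \pi)_{*} \nu = T_{*}(\pi_{*} \nu) = T_{*} \mu = \mu,
\]
so $\widetilde{T}_{*} \nu \in \mathcal{N}$.

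To conclude, I would run the Krylov--Bogolyubov procedure inside $\mathcal{N}$. Fix any $\nu_{0} \in \mathcal{N}$ and set $\nu_{n} \define \frac{1}{n} \sum_{k = 0}^{n - 1} \widetilde{T}_{*}^{k} \nu_{0}$ for each $n \in \n$. Each $\nu_{n}$ lies in $\mathcal{N}$ by convexity of $\mathcal{N}$ and the invariance property just established. By weak$^{*}$-compactness of $\mathcal{N}$, extract a subsequential limit $\widetilde{\mu} \in \mathcal{N}$; the standard telescoping estimate $\widetilde{T}_{*} \nu_{n} - \nu_{n} = \frac{1}{n} ( \widetilde{T}_{*}^{n} \nu_{0} - \nu_{0} )$ together with weak$^{*}$-continuity of $\widetilde{T}_{*}$ shows that $\widetilde{T}_{*} \widetilde{\mu} = \widetilde{\mu}$, so $\widetilde{\mu} \in \invmea[\widetilde{X}][\widetilde{T}]$, and by construction $\pi_{*} \widetilde{\mu} = \mu$. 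The result is classical, and I do not anticipate any serious obstacle; the only step requiring a bit of care is the non-emptiness of $\mathcal{N}$, which ultimately reduces to the surjectivity of $\pi$ itself.
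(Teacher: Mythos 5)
Your proposal is correct, and it is essentially the same argument the paper relies on (the paper defers to the proof of Lemma~4.1 in \cite{das2021thermodynamic}, which is exactly this Krylov--Bogolyubov averaging on the fiber $\pi_{*}^{-1}(\mu)$): one produces some $\nu_{0} \in \mathcal{P}\bigl(\widetilde{X}\bigr)$ with $\pi_{*}\nu_{0} = \mu$, averages the push-forwards $\widetilde{T}_{*}^{k}\nu_{0}$, and extracts a weak$^{*}$ limit point, noting that the condition $\pi_{*}\nu = \mu$ is preserved by $\widetilde{T}_{*}$ and passes to limits. Your justification of the non-emptiness step via the image of the compact convex set $\mathcal{P}\bigl(\widetilde{X}\bigr)$ under the affine continuous map $\pi_{*}$ is a clean and valid way to handle the one point that genuinely needs care.
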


We now prove the uniqueness of the minimizer.

\begin{theorem}    \label{thm:uniqueness of the minimizer}
    Let $f$, $d$, $\potential$, $\mu_{\potential}$ satisfy the Assumptions in Section~\ref{sec:The Assumptions}. 
    Then $\mu_{\potential}$ is the unique minimizer of the rate function $\ratefun$ defined in \eqref{eq:def:rate function}.
\end{theorem}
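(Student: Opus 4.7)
The plan is to prove the two parts $\ratefun(\mu_{\potential}) = 0$ and uniqueness separately. First I would verify the former: the Variational Principle gives $\freeenergy \leqslant 0$ everywhere, with $\freeenergy(\mu_{\potential}) = 0$ because $\mu_{\potential}$ is an equilibrium state. Hence for any open $\mathcal{G} \ni \mu_{\potential}$ we have $\sup_{\mathcal{G}} \freeenergy \geqslant \freeenergy(\mu_{\potential}) = 0$, so $\ratefun(\mu_{\potential}) \leqslant 0$; the reverse inequality $\ratefun(\mu_{\potential}) \geqslant 0$ is immediate from $\freeenergy \leqslant 0$ on $\probsphere$.

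For uniqueness, suppose $\mu \in \probsphere$ satisfies $\ratefun(\mu) = 0$. Since $\freeenergy \equiv -\infty$ off $\invmea$ and $\invmea$ is closed in $\probsphere$, every open neighborhood of $\mu$ must meet $\invmea$, which forces $\mu \in \invmea$. Fixing a countable local base at $\mu$ and using the definition of $\ratefun$, we extract a sequence $\sequen{\mu_{n}} \subseteq \invmea$ with $\mu_{n} \to \mu$ in the weak$^{*}$-topology and $\freeenergy(\mu_{n}) \to 0$. The main obstacle is that we cannot conclude $\freeenergy(\mu) = 0$ directly from upper semi-continuity of $\freeenergy$: the entropy map of $f$ can fail to be upper semi-continuous when $f$ has a periodic critical point (Theorem~\ref{thm:upper semi-continuous iff no periodic critical points}), so, as Remark~\ref{rem:rate function lower semi-continuous regularization} stresses, $\ratefun$ need not coincide with $-\freeenergy$ and the uniqueness of $\mu_{\potential}$ as an equilibrium state does not immediately give uniqueness as a minimizer.

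To circumvent this, I would lift the problem to the symbolic system via the semiconjugacy $\pi \colon \Sigma \to S^{2}$ furnished by Proposition~\ref{prop:semiconjugacy with full shift}. By Lemma~\ref{lem:lift measures by semiconjugacy}, pick $\widetilde{\mu}_{n} \in \invmea[\Sigma][\sigma]$ with $\pi_{*}\widetilde{\mu}_{n} = \mu_{n}$ for each $n$; using compactness of $\invmea[\Sigma][\sigma]$, pass to a weak$^{*}$-convergent subsequence with limit $\widetilde{\mu} \in \invmea[\Sigma][\sigma]$, so that $\pi_{*}\widetilde{\mu} = \mu$ by weak$^{*}$-continuity of push-forward. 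Introduce the symbolic free energy
\[
    \widetilde{F}(\widetilde{\nu}) \define h_{\widetilde{\nu}}(\sigma) + \int \! \phi \circ \pi \,\mathrm{d}\widetilde{\nu} - P(\sigma, \phi \circ \pi), \qquad \widetilde{\nu} \in \invmea[\Sigma][\sigma].
\]
Using $P(\sigma, \phi \circ \pi) = P(f, \phi)$ from Proposition~\ref{prop:semiconjugacy with full shift}~\ref{item:prop:semiconjugacy with full shift:topological pressure}, the factor inequality $h_{\mu_{n}}(f) \leqslant h_{\widetilde{\mu}_{n}}(\sigma)$, and $\int \! \phi \,\mathrm{d}\mu_{n} = \int \! \phi \circ \pi \,\mathrm{d}\widetilde{\mu}_{n}$, one gets $\widetilde{F}(\widetilde{\mu}_{n}) \geqslant \freeenergy(\mu_{n}) \to 0$.

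The decisive point is that the shift $\sigma$ is expansive, so its entropy map is upper semi-continuous; combined with continuity of $\widetilde{\nu} \mapsto \int \! \phi \circ \pi \,\mathrm{d}\widetilde{\nu}$ (noting $\phi \circ \pi \in C(\Sigma)$ by Proposition~\ref{prop:semiconjugacy with full shift}~\ref{item:prop:semiconjugacy with full shift:holder}), this makes $\widetilde{F}$ upper semi-continuous on $\invmea[\Sigma][\sigma]$. The Variational Principle applied to $\sigma$ squeezes $\widetilde{F}(\widetilde{\mu})$ between the subsequential limsup $\geqslant 0$ and the global bound $\leqslant 0$, so $\widetilde{\mu}$ is an equilibrium state for $(\sigma, \phi \circ \pi)$. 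Proposition~\ref{prop:semiconjugacy with full shift}~\ref{item:prop:semiconjugacy with full shift:no entropy drop at equilibrium state} then gives $h_{\mu}(f) = h_{\widetilde{\mu}}(\sigma)$, whence $\freeenergy(\mu) = \widetilde{F}(\widetilde{\mu}) = 0$. Thus $\mu$ is an equilibrium state for $(f, \phi)$, and uniqueness of the equilibrium state (Theorem~\ref{thm:properties of equilibrium state}~\ref{item:thm:properties of equilibrium state:existence and uniqueness}) forces $\mu = \mu_{\potential}$.
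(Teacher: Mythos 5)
Your proposal is correct and follows essentially the same route as the paper: extract a sequence of invariant measures with free energy tending to zero, lift them to the full shift via Proposition~\ref{prop:semiconjugacy with full shift} and Lemma~\ref{lem:lift measures by semiconjugacy}, use upper semi-continuity of the shift's entropy map to show the limit lift is an equilibrium state for $(\sigma, \potential\circ\pi)$, and then invoke the no-entropy-drop property and uniqueness of the equilibrium state for $(f,\potential)$. The only cosmetic differences are your slightly more explicit verification that $\ratefun(\mu_{\potential})=0$ and your introduction of the auxiliary symbolic free energy $\widetilde{F}$, which the paper handles by direct inequalities.
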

\begin{proof}
    It follows immediately from \eqref{eq:def:rate function} and \eqref{eq:def:free energy} that $\ratefun(\mu_{\potential}) = 0$, i.e., $\mu_{\potential}$ is a minimizer of $\ratefun$.
    It suffices to show the uniqueness.

    \def\minimizer{\mu_{*}}
    Suppose that $\minimizer \in \probmea{S^2}$ is a minimizer of $\ratefun$, i.e., $\ratefun(\minimizer) = 0$.
    Then by \eqref{eq:def:rate function} and \eqref{eq:def:free energy}, there exists a sequence $\sequen{\mu_{n}}$ of $f$-invariant Borel probability measures which converges to $\minimizer$ in the weak$^{*}$-topology with $\freeenergy(\mu_{n}) \to 0$ as $n \to +\infty$.
    In particular, this implies $\minimizer \in \invmea$ and $\lim_{n \to +\infty} h_{\mu_{n}}(f) = P(f, \potential) - \int_{S^2} \potential \,\mathrm{d}\mu_{*}$ by \eqref{eq:def:free energy}.

    Let $\sigma \colon \Sigma \mapping \Sigma$ be the one-sided shift map on $\deg{f}$ symbols and $\pi \colon \Sigma \mapping S^2$ be the semiconjugacy given by Proposition~\ref{prop:semiconjugacy with full shift}.
    Then by Lemma~\ref{lem:lift measures by semiconjugacy}, there exists a sequence $\sequen{\widetilde{\mu}_{n}}$ of $\sigma$-invariant Borel probability measures on $\Sigma$ such that $\pi_{*}\widetilde{\mu}_{n} = \mu_{n}$ for each $n \in \n$.
    Since the space $\invmea[\Sigma][\sigma]$ is sequentially compact (in the weak$^{*}$-topology), the sequence $\{\widetilde{\mu}_{n}\}_{n \in \n}$ has a convergent subsequence.
    Without loss of generality we may assume that the sequence $\{\widetilde{\mu}_{n}\}_{n \in \n}$ itself converges to $\widetilde{\mu}_{*} \in \invmea[\Sigma][\sigma]$ in the weak$^{*}$-topology.
    Then we have $\mu_{n} = \pi_{*}\widetilde{\mu}_{n} \weakconverge \pi_{*}\widetilde{\mu}_{*}$ as $n \to +\infty$.
    This implies $\pi_{*}\widetilde{\mu}_{*} = \mu_{*}$ since $\mu_{n} \weakconverge \mu_{*}$ as $n \to +\infty$.

    We now show that $\widetilde{\mu}_{*}$ is an equilibrium state for the shift map $\sigma$ and the potential $\potential \circ \pi$.
    Since $\pi_{*}\widetilde{\mu}_{*} = \mu_{*}$, we have $\int_{S^2} \potential \,\mathrm{d}\mu_{*} = \int_{\Sigma} \potential \circ \pi \,\mathrm{d}\widetilde{\mu}_{*}$.
    Noting that for each $n \in \n$, the dynamical system $(S^2, f, \mu_{n})$ is a factor of $(\Sigma, \sigma, \widetilde{\mu}_{n})$, we have $h_{\mu_{n}}(f) \leqslant h_{\widetilde{\mu}_{n}}(\sigma)$ (see for example, \cite[Proposition~4.3.16~(1)]{katok1995introduction}).
    For the shift map $\sigma$, it is a classical result that the entropy map of $\sigma$ is upper semi-continuous (see for example, \cite[Theorem~8.2]{walters1982introduction}). 
    This implies $h_{\widetilde{\mu}_{*}}(\sigma) \geqslant \limsup_{n \to +\infty} h_{\widetilde{\mu}_{n}}(\sigma) \geqslant \limsup_{n \to +\infty} h_{\mu_{n}}(f)$.
    Hence by the Variational principle, we have \[
        P(\sigma, \potential \circ \pi) 
        \geqslant h_{\widetilde{\mu}_{*}}(\sigma) + \int_{\Sigma} \! \potential \circ \pi \,\mathrm{d}\widetilde{\mu}_{*} 
        \geqslant \limsup_{n \to +\infty} h_{\mu_{n}}(f) + \int_{S^2} \! \potential \,\mathrm{d}\mu_{*}.
    \]
    Since $\lim_{n \to +\infty} h_{\mu_{n}}(f) = P(f, \potential) - \int_{S^2} \potential \,\mathrm{d}\mu_{*}$ (see the beginning of the proof), we deduce that
    \[
        P(\sigma, \potential \circ \pi) 
        \geqslant h_{\widetilde{\mu}_{*}}(\sigma) + \int_{\Sigma} \! \potential \circ \pi \,\mathrm{d}\widetilde{\mu}_{*} 
        \geqslant P(f, \potential). 
    \]
    By Proposition~\ref{prop:semiconjugacy with full shift}~\ref{item:prop:semiconjugacy with full shift:topological pressure}, $P(f, \potential) = P(\sigma, \potential \circ \pi)$.
    Thus, $\widetilde{\mu}_{*}$ is an equilibrium state for the shift map $\sigma$ and the potential $\potential \circ \pi$.

    Finally, since $\mu_{*} = \pi_{*}\widetilde{\mu}_{*}$ and $\widetilde{\mu}_{*}$ is an equilibrium state for the shift map $\sigma$ and the potential $\potential \circ \pi$, it follows from Proposition~\ref{prop:semiconjugacy with full shift}~\ref{item:prop:semiconjugacy with full shift:no entropy drop at equilibrium state} that $h_{\mu_{*}}(f) = h_{\widetilde{\mu}_{*}}(\sigma)$.
    Therefore, we have
    \[
        h_{\mu_{*}}(f) + \int_{S^2} \potential \,\mathrm{d}\mu_{*} = h_{\widetilde{\mu}_{*}}(\sigma) + \int_{\Sigma} \! \potential \circ \pi \,\mathrm{d}\widetilde{\mu}_{*} 
        = P(\sigma, \potential \circ \pi) = P(f, \potential),
    \]
    i.e., $\mu_{*}$ is an equilibrium state for the map $f$ and the potential $\potential$.
    This implies $\mu_{*} = \mu_{\potential}$ by the uniqueness of the equilibrium state (see Theorem~\ref{thm:properties of equilibrium state}~\ref{item:thm:properties of equilibrium state:existence and uniqueness}).
    The proof is complete.
\end{proof}

\subsection{Characterizations of topological pressures}
\label{sub:Characterizations of topological pressures}

In this subsection, we characterize the topological pressure in terms of periodic points and iterated preimages.

\begin{lemma}    \label{lem:good location n-tile has high level periodic point and preimage point}
    Let $f$ and $\mathcal{C}$ satisfy the Assumptions in Section~\ref{sec:The Assumptions}. 
    Let $n_f \in \n$ be the constant from Definition~\ref{def:primitivity of subsystem}, which depends only on $f$ and $\mathcal{C}$.
    Consider arbitrary integer $m \geqslant n_{f}$, integer $n \in \n_0$, point $x \in S^2$, and $n$-tile $X^n \in \Tile{n}$.
    Then the following statements hold:
    \begin{enumerate}[label=\rm{(\roman*)}]
        \smallskip

        \item    \label{item:lem:good location n-tile has high level periodic point and preimage point:periodic point} 
            If $X^n \subseteq X^0_{\colour}$ for some $\colour \in \colours$, then there exists a fixed point of $f^{n + m}$ in $\inte{X^n}$.
        
        \smallskip
        
        \item    \label{item:lem:good location n-tile has high level periodic point and preimage point:preimage point} 
             There exists a preimage of $x$ under $f^{n + m}$ in $\inte{X^n}$.
    \end{enumerate}
\end{lemma}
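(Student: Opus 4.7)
The plan is to prove both parts by a single geometric input: by Remark~\ref{rem:expanding Thurston map is strongly primitive subsystem of itself}, the map $f$ is a strongly primitive subsystem of itself with respect to $\mathcal{C}$, so Lemma~\ref{lem:strongly primitive:tile in interior tile for high enough level} (applied with $n_f$ as the primitivity constant) produces, for each $m \geqslant n_f$, each $n \in \n_0$, each $\colour \in \colours$, and each $n$-tile $X^n$, an $(n+m)$-tile $Y_{\colour} \in \cTile{n+m}{\colour}$ satisfying $Y_{\colour} \subseteq \inte{X^n}$. By Proposition~\ref{prop:properties cell decompositions}~\ref{item:prop:properties cell decompositions:cellular}, $f^{n+m}|_{Y_{\colour}}$ is a homeomorphism of $Y_{\colour}$ onto $X^{0}_{\colour}$, so the inverse branch $h_{\colour} \define \bigl(f^{n+m}|_{Y_{\colour}}\bigr)^{-1} \colon X^{0}_{\colour} \to Y_{\colour}$ is well-defined and continuous.

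For statement~\ref{item:lem:good location n-tile has high level periodic point and preimage point:periodic point}, I would choose the color $\colour$ so that $X^n \subseteq X^{0}_{\colour}$, which is the color supplied by the hypothesis. Composing $h_{\colour}$ with the inclusion $Y_{\colour} \hookrightarrow X^{0}_{\colour}$ yields a continuous self-map of $X^{0}_{\colour}$, since
\[
    h_{\colour}\bigl(X^{0}_{\colour}\bigr) \subseteq Y_{\colour} \subseteq \inte{X^n} \subseteq X^{n} \subseteq X^{0}_{\colour}.
\]
Because $\mathcal{C}$ is a Jordan curve, the Schönflies theorem gives that $X^{0}_{\colour}$, being the closure of a connected component of $S^2 \setminus \mathcal{C}$, is homeomorphic to a closed $2$-disk. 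Brouwer's fixed point theorem then yields a fixed point $p \in X^{0}_{\colour}$ of this self-map, and unwinding definitions gives $p = h_{\colour}(p) \in Y_{\colour} \subseteq \inte{X^n}$ together with $f^{n+m}(p) = p$.

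For statement~\ref{item:lem:good location n-tile has high level periodic point and preimage point:preimage point}, I would instead choose a color $\colour$ so that $x \in X^{0}_{\colour}$, which exists because $X^{0}_{\black} \cup X^{0}_{\white} = S^{2}$. Then $y \define h_{\colour}(x) \in Y_{\colour} \subseteq \inte{X^n}$ satisfies $f^{n+m}(y) = x$, as required. Both parts are thus essentially routine once the strongly primitive tile $Y_{\colour} \subseteq \inte{X^n}$ is in hand, and the only point that requires any care is the matching of colors (and in part~\ref{item:lem:good location n-tile has high level periodic point and preimage point:periodic point} the observation that a $0$-tile is a topological closed disk so that Brouwer applies); I do not foresee a real obstacle beyond correctly quoting the strong primitivity of $f$.
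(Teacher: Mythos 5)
Your proposal is correct and follows essentially the same route as the paper: invoke the strong primitivity of $f$ as a subsystem of itself to place an $(n+m)$-tile of each color inside $\inte{X^n}$, then apply the inverse branch of $f^{n+m}$ on that tile, using Brouwer's fixed point theorem for part~(i) and a direct preimage for part~(ii). The only cosmetic difference is that you justify that $X^0_{\colour}$ is a closed disk via the Schönflies theorem, whereas this already follows from the definition of a $2$-cell in the paper's cell decompositions; and for part~(ii) the paper phrases the same argument as $S^2 = f^{n+m}\bigl(X^{n+m}_{\black}\bigr) \cup f^{n+m}\bigl(X^{n+m}_{\white}\bigr) \subseteq f^{n+m}(\inte{X^n})$ rather than selecting the color containing $x$.
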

\begin{proof}
    Let integer $m \geqslant n_f$, $n \in \n_0$, and $X^n \in \Tile{n}$ be arbitrary. 
    Since $f \in \subsystem$ is strongly primitive, by Lemma~\ref{lem:strongly primitive:tile in interior tile for high enough level}, for each $\colour' \in \colours$ there exists $X^{n + m}_{\colour'} \in \cTile{n + m}{\colour'}$ such that $X^{n + m}_{\colour'} \subseteq \inte{X^n}$. 

    \smallskip

    \ref{item:lem:good location n-tile has high level periodic point and preimage point:periodic point} 
    Suppose that $X^{n} \subseteq X^0_{\colour}$ for some $\colour \in \colours$.
    By Proposition~\ref{prop:properties cell decompositions}~\ref{item:prop:properties cell decompositions:cellular}, $f^{n + m}|_{X^{n + m}_{\colour}}$ is a homeomorphism of $X^{n + m}_{\colour}$ onto $X^{0}_{\colour}$.
    Note that $X^{n + m}_{\colour} \subseteq \inte{X^n} \subseteq X^0_{\colour}$.
    Then by applying Brouwer's Fixed Point Theorem (see for example, \cite[Theorem~1.9]{hatcher2002algebraic}) to the inverse of $f^{n + m}$ restricted to $X^{n + m}_{\colour}$, we get a fixed point $x \in X^{n + m}_{\colour} \subseteq \inte{X^n}$ of $f^{n + m}$.

    \smallskip

    \ref{item:lem:good location n-tile has high level periodic point and preimage point:preimage point}
    Since $X^{n + m}_{\black} \cup X^{n + m}_{\white} \subseteq \inte{X^n}$, it follows from Proposition~\ref{prop:properties cell decompositions}~\ref{item:prop:properties cell decompositions:cellular} that $x \in S^{2} = f^{n + m}(X^{n + m}_{\black}) \cup f^{n + m}(X^{n + m}_{\black}) \subseteq f^{n + m}(\inte{X^{n}})$.
    This implies that there exists $y \in f^{- n - m}(x)$ such that $y \in \inte{X^n}$.
\end{proof}

We recall the following characterizations of topological pressure in terms of periodic points and iterated preimages (see~\cite[Propositions~6.8 and~6.7]{li2015weak}, respectively). 

\begin{proposition}[Z.~Li \cite{li2015weak}]    \label{prop:characterization of pressure weighted periodic points}
    Let $f$, $d$, $\potential$ satisfy the Assumptions in Section~\ref{sec:The Assumptions}.
    Fix an arbitrary sequence $\sequen{w_n}$ of real-valued functions on $S^2$ satisfying $w_{n}(x) \in \bigl[ 1, \deg_{f^{n}}(x) \bigr]$ for each $n \in \n$ and each $x \in S^2$.
    Then\[
        P(f, \potential) = \lim_{n \to +\infty} \frac{1}{n} \log \sum_{ x \in \periodorbit } w_{n}(x) \myexp{ S_{n}\potential(x) }.
    \]
\end{proposition}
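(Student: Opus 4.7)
The plan is to derive matching asymptotic upper and lower bounds for $\Sigma_n \define \sum_{p \in \periodorbit} w_n(p) \myexp{S_n \potential(p)}$, comparing it to $\myexp{n P(f, \potential)}$ by exploiting the Gibbs property of the unique equilibrium state $\mu_{\potential}$ (Proposition~\ref{prop:equilibrium state is gibbs measure}) together with the distortion estimate of Lemma~\ref{lem:distortion_lemma}. A preliminary reduction via Lemma~\ref{lem:invariant_Jordan_curve} allows us to pass to a sufficiently high iterate $f^N$ admitting an $f^N$-invariant Jordan curve $\mathcal{C} \supseteq \post{f}$, since both $P(f, \potential)$ and $\frac{1}{n}\log \Sigma_n$ scale compatibly under iteration (with periodic points of $f^N$ of period dividing $k$ being periodic points of $f$ of period dividing $Nk$). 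Having $f(\mathcal{C}) \subseteq \mathcal{C}$ enables the use of the cellular Markov structure and the Gibbs bound \eqref{eq:equilibrium state is gibbs measure} on the measure of $n$-tiles.

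For the upper bound, I would partition $\periodorbit$ according to which $n$-tile contains each periodic point (breaking ties arbitrarily on the $1$-skeleton). On each $X^n \in \mathbf{X}^n(f, \mathcal{C})$, Lemma~\ref{lem:distortion_lemma} gives uniform control of $S_n\potential$, and by Proposition~\ref{prop:properties cell decompositions}~\ref{item:prop:properties cell decompositions:cellular} the map $f^n|_{X^n}$ is a homeomorphism onto its image $0$-tile, so $X^n$ contains at most one $n$-periodic point per fixed point of $f^n$ in that $0$-tile. Combined with the bound $w_n(p) \leqslant \deg_{f^n}(p)$ and the flower structure near periodic critical points (Remark~\ref{rem:flower structure}), the total weighted count of $n$-periodic points inside each $n$-tile is bounded by a constant independent of $n$. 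Substituting into the Gibbs comparison
\[
\sup_{x \in X^n} \myexp{S_n\potential(x)} \leqslant C_{\mu_{\potential}} \myexp{\Cdistortion} \mu_{\potential}(X^n) \myexp{n P(f, \potential)}
\]
and summing over $X^n \in \mathbf{X}^n(f, \mathcal{C})$, using $\sum_{X^n} \mu_{\potential}(X^n) = 1$, yields $\Sigma_n \leqslant C' \myexp{n P(f, \potential)}$.

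For the lower bound, I would use Lemma~\ref{lem:good location n-tile has high level periodic point and preimage point}~\ref{item:lem:good location n-tile has high level periodic point and preimage point:periodic point} to locate a fixed point of $f^{n+m}$ in the interior of every $n$-tile contained in a $0$-tile, where $m \geqslant n_f$ is fixed. This produces at least $\card{\mathbf{X}^n(f, \mathcal{C})}$ distinct $(n+m)$-periodic points, one in the interior of each $n$-tile. Using $w_{n+m} \geqslant 1$ and combining the distortion bound with the other direction of the Gibbs inequality, the contribution of this family alone gives
\[
\Sigma_{n+m} \geqslant \sum_{X^n} \myexp{S_{n+m}\potential(p_{X^n})} \geqslant C'' \myexp{(n+m) P(f, \potential)} \sum_{X^n} \mu_{\potential}(X^n),
\]
and the right-hand side is bounded below by a constant multiple of $\myexp{(n+m) P(f, \potential)}$. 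Taking $\frac{1}{n+m}\log$ and letting $n \to +\infty$ produces the required matching lower bound.

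The main technical obstacle is the weighted upper bound on periodic points within a single tile, since near periodic critical points the local degrees $\deg_{f^n}(p)$ grow geometrically in $n$, making $w_n(p) \leqslant \deg_{f^n}(p)$ potentially lossy. This is resolved by observing that only finitely many periodic critical orbits exist (as $\post f$ is finite, it contains all critical values, hence all critical periodic points), so the contribution of high-degree periodic points is captured by at most $\card{\post f}$ terms whose accumulated degree is absorbed into a multiplicative constant independent of $n$; this constant is harmless after dividing by $n$ and passing to the limit.
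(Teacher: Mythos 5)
This proposition is not proved in the paper at all: it is quoted verbatim from \cite[Proposition~6.8]{li2015weak}, so the only ``proof'' here is a citation. Your overall strategy (Gibbs property of $\mu_{\potential}$ plus the distortion estimate for the upper bound; Brouwer fixed points placed in tile interiors via Lemma~\ref{lem:good location n-tile has high level periodic point and preimage point} for the lower bound) is the standard one and mirrors what this paper does for the analogous preimage statement, Proposition~\ref{prop:characterization of pressure iterated preimages arbitrary weight}. The lower bound is essentially fine. The upper bound, however, contains two genuine gaps.

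First, the assertion that injectivity of $f^n|_{X^n}$ bounds the number of $n$-periodic points in a single $n$-tile is circular: a homeomorphism from one closed disk onto another can have arbitrarily many fixed points, so ``at most one periodic point per fixed point of $f^n$'' bounds nothing. The fact you actually need --- for all sufficiently large $n$, each $n$-tile contains at most one fixed point of $f^n$ --- is exactly \cite[Lemma~6.3]{li2015weak}, which this paper itself must invoke in Case~2 of the proofs of its large-deviation bounds; it rests on an expansion argument for inverse branches, not on injectivity. Second, and more seriously, your final paragraph's treatment of periodic critical points is false: if $p$ is a fixed critical point with $k = \deg_f(p) \geqslant 2$, then $\deg_{f^n}(p) = k^n$, so the single term $w_n(p)\myexp{S_n\potential(p)}$ can be as large as $k^n\myexp{S_n\potential(p)}$, and an exponentially growing factor cannot be ``absorbed into a multiplicative constant independent of $n$''. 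The correct fix is the flower argument you mention but do not execute: a periodic $n$-vertex $p$ lies in exactly $2\deg_{f^n}(p)$ $n$-tiles of $\cflower{n}{p}$ (Remark~\ref{rem:flower structure}), so distributing its weight over those tiles cancels the local degree against the tile count, and each $n$-tile is then charged at most $\card{\post{f}}$ times by its vertices; this is precisely how the paper handles the weights in Proposition~\ref{prop:characterization of pressure iterated preimages arbitrary weight}. A smaller but real issue is the reduction to an iterate $f^N$ with invariant curve: for $n$ not a multiple of $N$ the set $\periodorbit[n]$ is not a set of $f^N$-periodic points of controlled period, so recovering the limit over all $n$ requires the interpolation (constructing, via Brouwer, $f$-periodic points of shifted period near $f^N$-periodic points and controlling the resulting Birkhoff sums) that the paper carries out at length in Case~2 of Propositions~\ref{prop:lower bound for fundamental open sets} and~\ref{prop:upper bound for fundamental closed sets}; ``scaling compatibly under iteration'' does not cover this.
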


\begin{proposition}[Z.~Li \cite{li2015weak}]    \label{prop:characterization of pressure iterated preimages count by degree}
    Let $f$, $d$, $\potential$ satisfy the Assumptions in Section~\ref{sec:The Assumptions}.
    Then for each sequence $\sequen{x_{n}}$ in $S^2$, we have 
    \[
        P(f, \potential) = \lim_{n \to +\infty} \frac{1}{n} \log \sum_{ y \in f^{-n}(x_{n}) } \deg_{f^{n}}(y) \myexp{ S_{n}\potential(y) }.
    \]
    If we also assume that $f$ has no periodic critical points, then for an arbitrary sequence $\sequen{w_n}$ of real-valued functions on $S^2$ satisfying $w_{n}(x) \in \bigl[ 1, \deg_{f^{n}}(x) \bigr]$ for each $n \in \n$ and each $x \in S^2$, we have
    \[
        P(f, \potential) = \lim_{n \to +\infty} \frac{1}{n} \log \sum_{ y \in f^{-n}(x_{n}) } w_{n}(y) \myexp{ S_{n}\potential(y) }.
    \]

\end{proposition}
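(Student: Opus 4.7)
The approach is to compare the preimage sum with a sum over $n$-tiles and then invoke the Gibbs property of $\mu_{\potential}$. By Lemma~\ref{lem:invariant_Jordan_curve} we may fix a Jordan curve $\mathcal{C} \subseteq S^2$ containing $\post{f}$ and satisfying $f^{n_\mathcal{C}}(\mathcal{C}) \subseteq \mathcal{C}$ for some $n_\mathcal{C} \in \n$, so that Proposition~\ref{prop:equilibrium state is gibbs measure} applies; all combinatorics below refer to the cell decompositions $\mathbf{D}^n(f, \mathcal{C})$.

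For the first identity, Proposition~\ref{prop:properties cell decompositions}~\ref{item:prop:properties cell decompositions:cellular} shows that $f^n|_{X^n}$ is a homeomorphism of each $n$-tile $X^n \in \Tile{n}$ onto its image $0$-tile, so whenever $x_n \in f^n(X^n)$ there is a unique preimage $y_{X^n} \in X^n$ of $x_n$. Conversely, the flower structure (Remark~\ref{rem:flower structure}), together with the alternation of tile colors around an $n$-vertex, shows that for each $y \in f^{-n}(x_n)$, the number of $n$-tiles $X^n \ni y$ with $x_n \in f^n(X^n)$ equals $\deg_{f^n}(y)$ if $x_n$ lies in the interior of a unique $0$-tile, and equals $2\deg_{f^n}(y)$ if $x_n \in \mathcal{C}$ (here we also use that $\crit{f} \subseteq \mathbf{V}^1$, hence $y$ is an $n$-vertex precisely when $f^n(y) \in \post{f}$). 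Combining these bijective counts with the bounded distortion estimate in Lemma~\ref{lem:distortion_lemma} yields
\begin{equation*}
    \sum_{y \in f^{-n}(x_n)} \deg_{f^n}(y) \myexp{S_n \potential(y)}
    \asymp
    \sum_{\substack{X^n \in \Tile{n} \\ x_n \in f^n(X^n)}} \myexp{S_n \potential(z_{X^n})}
\end{equation*}
for any choice $z_{X^n} \in X^n$, with multiplicative constants depending only on $f$, $\mathcal{C}$, $d$, $\potential$, and $\holderexp$. By the Gibbs property (Proposition~\ref{prop:equilibrium state is gibbs measure}), $\myexp{S_n \potential(z_{X^n})} \asymp \myexp{n P(f, \potential)} \mu_{\potential}(X^n)$ uniformly in $n$ and $X^n$. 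The union $\bigcup_{X^n \in \Tile{n},\, x_n \in f^n(X^n)} X^n$ equals $f^{-n}(X^0_\black)$, $f^{-n}(X^0_\white)$, or all of $S^2$; since $\mu_{\potential}$ is $f$-invariant and vanishes on $\mathcal{C}$ by Theorem~\ref{thm:properties of equilibrium state}~\ref{item:thm:properties of equilibrium state:edge measure zero}, its $\mu_{\potential}$-measure is bounded below by $\min\{\mu_{\potential}(X^0_\black),\, \mu_{\potential}(X^0_\white)\} > 0$, uniformly in $n$ and $x_n$. Taking $\frac{1}{n} \log$ and letting $n \to +\infty$ establishes the first identity.

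For the second identity, the local degrees $\deg_{f^n}(y) = \prod_{i = 0}^{n - 1} \deg_f(f^i(y))$ are uniformly bounded in the absence of periodic critical points: after its first visit to $\crit{f}$ any forward orbit enters the finite forward-invariant set $\post{f}$, which by the non-periodicity assumption contains no periodic cycle passing through $\crit{f}$, so the orbit must eventually leave $\crit{f}$ and never return. This yields a bound $\deg_{f^n}(y) \leqslant D \define (\deg f)^{\card{\crit{f}} + \card{\post{f}}}$ for all $n \in \n$ and $y \in S^2$. With $1 \leqslant w_n \leqslant \deg_{f^n} \leqslant D$, the three sums $\sum_y \myexp{S_n\potential(y)}$, $\sum_y w_n(y) \myexp{S_n\potential(y)}$, and $\sum_y \deg_{f^n}(y) \myexp{S_n\potential(y)}$ (all taken over $y \in f^{-n}(x_n)$) are mutually sandwiched within the factor $D$, so the second identity follows from the first.

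The principal technical hurdle is the combinatorial identity of the first step, which must accurately reconcile the local degree at each preimage $y$ with the number of incident $n$-tiles whose $f^n$-image contains $x_n$. The count depends on the location of $x_n$ and requires a short case analysis: $x_n$ may be an interior point of a $0$-tile, an interior point of a $0$-edge, or a $0$-vertex. Each case is settled by the flower description in Remark~\ref{rem:flower structure} together with the identification $\mathbf{V}^n = f^{-n}(\post{f})$.
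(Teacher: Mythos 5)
Your argument is correct, and it is worth noting at the outset that the paper does not prove this proposition at all: it is recalled verbatim from \cite[Proposition~6.7]{li2015weak}, so there is no internal proof to compare against. Your route --- match each $n$-tile $X^n$ with $x_n \in f^n(X^n)$ to its unique $f^n$-preimage of $x_n$, observe that the resulting incidence count at a preimage $y$ is $\deg_{f^n}(y)$ or $2\deg_{f^n}(y)$ according to whether $x_n$ lies off or on $\mathcal{C}$, and then convert the tile sum into $e^{nP(f,\potential)}$ times a $\mu_{\potential}$-mass that is bounded above and below via Proposition~\ref{prop:equilibrium state is gibbs measure} and Theorem~\ref{thm:properties of equilibrium state}~\ref{item:thm:properties of equilibrium state:edge measure zero} --- is sound, and in fact closely mirrors the tile/flower bookkeeping the present paper uses to prove its generalization, Proposition~\ref{prop:characterization of pressure iterated preimages arbitrary weight}. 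The original proof in \cite{li2015weak} instead identifies $\sum_{y \in f^{-n}(x_n)} \deg_{f^n}(y)\myexp{S_n\potential(y)}$ with the $n$-th iterate of the Ruelle operator applied to the constant function $1$ and evaluated at $x_n$, and uses the uniform convergence of $\frac{1}{n}\log$ of that quantity to $P(f,\potential)$ from \cite{li2018equilibrium}; your version trades that operator-theoretic input for the Gibbs property (which is itself established via the Ruelle operator, so the dependency is the same, not circular). Two small remarks: the ``alternation of tile colors around an $n$-vertex'' is never actually needed in your first case, since $x_n \notin \mathcal{C}$ forces every preimage $y$ to lie off the $1$-skeleton, whence $\deg_{f^n}(y) = 1$ and the count is trivially $1$; and in the second part the clean count is that an orbit visits $\crit{f}$ at most $1 + \card{\post{f}}$ times (one visit before entering $\post{f}$, then at most $\card{\post{f}}$ pre-periodic visits before the critical-point-free cycle), though any uniform bound on $\deg_{f^n}$ suffices for the sandwich.
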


We now prove a generalization of Proposition~\ref{prop:characterization of pressure iterated preimages count by degree}.

\begin{proposition}    \label{prop:characterization of pressure iterated preimages arbitrary weight}
    Let $f$, $d$, $\potential$ satisfy the Assumptions in Section~\ref{sec:The Assumptions}.
    Fix an arbitrary sequence $\sequen{w_n}$ of real-valued functions on $S^2$ satisfying $w_{n}(x) \in \bigl[ 1, \deg_{f^{n}}(x) \bigr]$ for each $n \in \n$ and each $x \in S^2$.
    Then for each sequence $\sequen{x_{n}}$ in $S^2$, we have \[
        P(f, \potential) = \lim_{n \to +\infty} \frac{1}{n} \log \sum_{ y \in f^{-n}(x_{n}) } w_{n}(y) \myexp{ S_{n}\potential(y) }.
    \]
\end{proposition}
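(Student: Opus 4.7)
The plan is to handle the upper bound by direct domination against Proposition~\ref{prop:characterization of pressure iterated preimages count by degree} and then reduce the lower bound to the weight-one case, where the presence of periodic critical points becomes the principal difficulty. Since $1 \leqslant w_{n}(y) \leqslant \deg_{f^{n}}(y)$ by hypothesis,
$$\sum_{y \in f^{-n}(x_n)} w_{n}(y) \myexp{ S_n\potential(y) } \leqslant \sum_{y \in f^{-n}(x_n)} \deg_{f^{n}}(y) \myexp{ S_n\potential(y) },$$
and the first equality in Proposition~\ref{prop:characterization of pressure iterated preimages count by degree}---valid for every expanding Thurston map---immediately gives $\limsup \frac{1}{n} \log(\cdot) \leqslant P(f, \potential)$. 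Since also $w_{n}(y) \geqslant 1$, matters reduce to the lower bound
\begin{equation}    \label{eq:proof outline:lower bound}
    \liminf_{n \to +\infty} \frac{1}{n} \log \sum_{y \in f^{-n}(x_n)} \myexp{ S_n\potential(y) } \geqslant P(f, \potential).
\end{equation}
Note that the second equality in Proposition~\ref{prop:characterization of pressure iterated preimages count by degree} cannot be invoked directly because we impose no condition on periodic critical points.

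To attack~\eqref{eq:proof outline:lower bound} I would first apply Lemma~\ref{lem:invariant_Jordan_curve} to pass to a sufficiently high iterate of $f$ and assume the existence of a Jordan curve $\mathcal{C} \supseteq \post{f}$ with $f(\mathcal{C}) \subseteq \mathcal{C}$; by Proposition~\ref{prop:equilibrium state is gibbs measure} the equilibrium state $\mu_{\potential}$ is then Gibbs with respect to $f$, $\mathcal{C}$, $\potential$. For each $n$-tile $X^n \in \mathbf{X}^n(f, \mathcal{C})$ with $x_n \in f^n(X^n)$, Proposition~\ref{prop:properties cell decompositions}~\ref{item:prop:properties cell decompositions:cellular} provides a unique distinguished preimage $y_{X^n} \in X^n$ of $x_n$, and every $y \in f^{-n}(x_n)$ lies in exactly $\deg_{f^{n}}(y)$ such tiles. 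Setting $\mathcal{T}_{1} \define \{ X^n \describe \deg_{f^{n}}(y_{X^n}) = 1 \}$, the assignment $X^n \mapsto y_{X^n}$ is a bijection from $\mathcal{T}_{1}$ onto the preimages with trivial local degree, so combining the Gibbs inequality with $\mu_{\potential}(\mathcal{C}) = 0$ from Theorem~\ref{thm:properties of equilibrium state}~\ref{item:thm:properties of equilibrium state:edge measure zero} yields
$$\sum_{y \in f^{-n}(x_n)} \myexp{ S_n\potential(y) } \geqslant \sum_{X^n \in \mathcal{T}_{1}} \myexp{ S_n\potential(y_{X^n}) } \geqslant \frac{1}{C_{\mu_{\potential}}} \myexp{n P(f, \potential)} \, \mu_{\potential}\biggl(\bigcup \mathcal{T}_{1}\biggr).$$
Consequently~\eqref{eq:proof outline:lower bound} follows as soon as $\mu_{\potential}\bigl(\bigcup \mathcal{T}_{1} \bigr)$ does not decay exponentially fast in $n$.

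The hard part will be this last measure estimate: the complementary set $\bigcup \mathcal{T}_{>1}$ consists of $n$-tiles whose distinguished preimage has forward orbit meeting $\crit{f}$ within $n$ iterates, and when $f$ has periodic critical points this set is combinatorially intricate. To bypass a direct estimate I would, for each $\varepsilon > 0$, construct (mirroring the subsystem construction in the proof of Theorem~\ref{thm:entropy dense} in Section~\ref{sec:Entropy density}, and using Lemma~\ref{lem:pair in the interior} to achieve strong primitivity) a strongly primitive subsystem $F$ of some iterate $f^{m}$ whose tile maximal invariant set $\limitset(F, \mathcal{C})$ is disjoint from $\crit{f}$ and whose equilibrium state has entropy and $\potential$-integral close to those of $\mu_{\potential}$, so that $P(F, \potential|_{\limitset(F, \mathcal{C})}) \geqslant m( P(f, \potential) - \varepsilon )$ by Theorem~\ref{thm:existence uniqueness and properties of equilibrium state}. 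Since every $y \in \limitset(F, \mathcal{C}) \cap f^{-nm}(x_{nm})$ satisfies $\deg_{f^{nm}}(y) = 1$, the contribution of such preimages is captured by the unweighted sum and grows at least like $\myexp{nm(P(f, \potential) - 2\varepsilon)}$; this establishes~\eqref{eq:proof outline:lower bound} for $f^{m}$ along the subsequence $\{nm\}_{n \in \n}$, and a standard argument (using Lemma~\ref{lem:distortion lemma for continuous function} and that $P(f, \potential)$ does not depend on the choice of $\{x_n\}$) promotes this to the full statement for $f$ and all $n$, with $\varepsilon \searrow 0$ closing the proof.
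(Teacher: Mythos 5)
Your reduction coincides with the paper's: bounding $w_{n}$ above by $\deg_{f^{n}}$ and below by $1$, the upper bound follows from the first identity of Proposition~\ref{prop:characterization of pressure iterated preimages count by degree}, and everything hinges on the lower bound for the unweighted sum. Where you diverge is in how that lower bound is obtained, and there your argument has a genuine gap.

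The gap is the subsystem step. You need, for each $\varepsilon>0$, a strongly primitive subsystem $F$ of some iterate $f^{m}$ with pressure at least $m(P(f,\potential)-\varepsilon)$ whose \emph{domain} avoids the critical set of $f^{m}$. (Disjointness of $\limitset(F,\mathcal{C})$ from $\crit{f}$ is not enough: to get $\deg_{f^{nm}}(y)=1$ you must control the intermediate iterates $f^{jm+k}(y)$ with $0<k<m$, so the relevant finite set is $\crit{f^{m}}$; and it is the domain of $F^{n}$, not just the invariant set, that must avoid it, since the preimages of $x_{nm}$ under $F^{n}$ need not lie in $\limitset(F,\mathcal{C})$.) Neither Theorem~\ref{thm:entropy dense} nor Lemmas~\ref{lem:approximates ergodic measures by tiles} and~\ref{lem:pair in the interior} produces such a subsystem: the approximating tiles there may well contain points of $\crit{f^{m}}$, and discarding them without losing pressure requires an additional argument (for instance, that the approximating measures have positive entropy, so that the exponentially many retained tiles swamp the boundedly many discarded ones) which you do not supply. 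A second unaddressed point is that the identification of the subsystem's pressure with the growth rate of $\sum_{y\in F^{-n}(x_{nm})}e^{S_{nm}\potential(y)}$ is available only for base points in $\limitset(F,\mathcal{C})\setminus\mathcal{C}$, whereas your $x_{nm}$ is arbitrary. Your first route via the Gibbs property stalls at exactly the analogous place, namely the subexponential lower bound on $\mu_{\potential}\bigl(\bigcup\mathcal{T}_{1}\bigr)$, which you correctly identify as unproven.

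For comparison, the paper's proof needs no subsystems and no passage to an iterate. Since $f$ is itself a strongly primitive subsystem (Remark~\ref{rem:expanding Thurston map is strongly primitive subsystem of itself}), Lemma~\ref{lem:good location n-tile has high level periodic point and preimage point}~\ref{item:lem:good location n-tile has high level periodic point and preimage point:preimage point} places, in the interior of every $n$-tile $X^{n}$, a preimage $\widehat{y}(X^{n})$ of $x_{n+N}$ under $f^{n+N}$ for a fixed $N$, and $X^{n}\mapsto\widehat{y}(X^{n})$ is injective. Because $\crit{f^{n}}\subseteq f^{-n}(\post{f})=\mathbf{V}^{n}(f,\mathcal{C})$, a preimage $y$ of $x_{n}$ with $\deg_{f^{n}}(y)=k>1$ is an $n$-vertex whose flower consists of $2k$ tiles (Remark~\ref{rem:flower structure}); summing $e^{S_{n}\potential(\widehat{y}(X^{n}))}$ over those tiles absorbs the weight $k$ at the cost of a factor $e^{D_{n}(\potential)}\leqslant e^{C_{1}}$, bounded by Lemma~\ref{lem:distortion_lemma} since $\potential$ is H\"older. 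Injectivity of $f^{n}$ on each tile rules out double counting, giving $\sum_{y\in f^{-n}(x_{n})}\deg_{f^{n}}(y)e^{S_{n}\potential(y)}\leqslant C'\sum_{\widehat{y}\in f^{-(n+N)}(x_{n+N})}e^{S_{n+N}\potential(\widehat{y})}$ and hence the lower bound directly. I would recommend replacing the subsystem detour by this two-scale comparison.
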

\begin{proof}
    By Proposition~\ref{prop:characterization of pressure iterated preimages count by degree}, it suffices to show that
    \begin{equation}    \label{eq:temp:prop:characterization of pressure iterated preimages arbitrary weight:upper bound for preimage cound by degree}
        \lim_{n \to +\infty} \frac{1}{n} \sum_{y \in f^{-n}(x_{n})} \deg_{f^{n}}(y) \myexp{S_{n}\potential}(y) 
        \leqslant \liminf_{m \to +\infty} \frac{1}{m} \sum_{\widehat{y} \in f^{-m}(x_{m})} \myexp{S_{m}\potential(\widehat{y})}. 
    \end{equation}

    We fix a Jordan curve $\mathcal{C} \subseteq S^2$ that satisfies the Assumptions in Section~\ref{sec:The Assumptions}.
    Suppose that $\potential \in C^{0,\holderexp}(S^2, d)$ is a real-valued H\"{o}lder continuous function with exponent $\holderexp \in (0, 1]$.

    \def\pttilepreimage{\widehat{y}(X^n)}  \def\iteptpreimage{\widehat{y}(X^{n}(y))}  \def\itepreimageset{f^{-n -N}(x_{n + N})}  \def\preimageset{f^{-n}(x_{n})}
    Let $N \define n_{f} \in \n$ be the constant from Definition~\ref{def:primitivity of subsystem}, which depends only on $f$ and $\mathcal{C}$.
    For each $n \in \n$ and each $X^{n} \in \Tile{n}$, it follows from Lemma~\ref{lem:good location n-tile has high level periodic point and preimage point}~\ref{item:lem:good location n-tile has high level periodic point and preimage point:preimage point} that there exists a preimage of $x_{n + N}$ under $f^{n + N}$ in $\inte{X^{n}}$.
    We fix a preimage of of $x_{n + N}$ under $f^{n + N}$ in $\inte{X^{n}}$ and denote it by $\pttilepreimage$.
    Then for each $n \in \n$, the map $X^n \mapsto \pttilepreimage$ from $\Tile{n}$ to $\itepreimageset$ is injective.

    For each $n \in \n$ and each $y \in \preimageset$, let $X^n(y) \in \Tile{n}$ be an $n$-tile that contains $y$.
    By Proposition~\ref{prop:properties cell decompositions}~\ref{item:prop:properties cell decompositions:cellular}, for each $n \in \n$ and each $X^n \in \Tile{n}$, $f^{n}|_{X^{n}}$ is a homeomorphism of $X^{n}$ onto $f^{n}(X^n)$.
    This implies that for each integer $n \in \n$, the map $y \mapsto \iteptpreimage$ from $\preimageset$ to $\itepreimageset$ is injective, where $\iteptpreimage \in \inte{X^{n}(y)}$.
    
    \def\tileflower{\mathbf{X}^{n}(f, \mathcal{C}, y')}
    Let $n \in \n$ be arbitrary. 
    Consider $y' \in \preimageset \cap \vertex{n}$, where $\vertex{n} = \Vertex{n}$ is the set of $n$-vertices.
    We set $\tileflower \define \{X \in \Tile{n} \describe y' \in X \}$.
    By Remark~\ref{rem:flower structure}, we have $\cflower{n}{y'} = \bigcup \tileflower$ and $\card{\tileflower} = 2 \deg_{f^{n}}(y')$, where $\flower{n}{y'}$ is defined in \eqref{eq:n-flower} and $\cflower{n}{y'}$ is the closure of $\flower{n}{y'}$.
    Note that $X^n(y) \notin \tileflower$ for every $y \in \preimageset \setminus \Vertex{n}$.

    We now establish \eqref{eq:temp:prop:characterization of pressure iterated preimages arbitrary weight:upper bound for preimage cound by degree}.
    By the arguments above, for each integer $n \in \n$, we have
    \[
        \begin{split}
            &\sum_{ y \in \preimageset }  \deg_{f^{n}}(y) \myexp{ S_{n}\potential(y) } \\
            &\qquad\leqslant \sum_{ y' \in \preimageset \cap \vertex{n} }  \deg_{f^{n}}(y') \myexp{ S_{n}\potential(y') }
                + \sum_{ y \in \preimageset \setminus \vertex{n} }  \myexp{ S_{n}\potential(y) }  \\
            &\qquad\leqslant \sum_{ y' \in \preimageset \cap \vertex{n} }  \sum_{ X^{n} \in \tileflower } e^{D_{n}(\potential)} \myexp{ S_{n}\potential(\pttilepreimage) } \\
            &\qquad\qquad + \sum_{ y \in \preimageset \setminus \vertex{n} }  e^{D_{n}(\potential)} \myexp{ S_{n}\potential(\iteptpreimage) }  \\
            &\qquad\leqslant e^{D_{n}(\potential)} \sum_{ \widehat{y} \in \itepreimageset } \myexp{ S_{n} \potential(\widehat{y}) } \\
            &\qquad\leqslant e^{D_{n}(\potential)} e^{ N \uniformnorm{\potential} } \sum_{ \widehat{y} \in \itepreimageset } \myexp{ S_{n + N} \potential(\widehat{y}) }.
        \end{split}
    \]
    Then by Lemma~\ref{lem:distortion_lemma}, we get
    \[
        \frac{1}{n} \log \sum_{ y \in \preimageset }  \deg_{f^{n}}(y) \myexp{ S_{n}\potential(y) } 
        \leqslant \frac{C}{n} + \frac{1}{n} \log \sum_{ \widehat{y} \in \itepreimageset } \myexp{ S_{n} \potential(\widehat{y}) },
    \]
    where $C \define N \uniformnorm{\potential} + \Cdistortion$ and $C_{1} \geqslant 0$ is the constant defined in \eqref{eq:const:C_1} in Lemma~\ref{lem:distortion_lemma} and depends only on $f$, $\mathcal{C}$, $d$, $\phi$, and $\holderexp$.
    Letting $n \to +\infty$ yields the desired inequality.
\end{proof}

\subsection{Large deviation lower bound}%
\label{sub:Large deviation lower bound}

This subsection is devoted to the proof of the lower bound \eqref{eq:level-2 LDP:lower bound} for all open sets, with the main result being Proposition~\ref{prop:lower bound for open sets}.
In Section~\ref{ssub:Reduction to ergodic measures} we show that the proof of the lower bound can be reduced to the case where the invariant measure in question is ergodic.
In Section~\ref{ssub:Lower bound for fundamental open sets} we prove lower bounds for certain fundamental open subsets of $\probsphere$, where we apply Lemma~\ref{lem:approximates ergodic measures by tiles} to approximate each ergodic measure with a collection of tiles.
Finally, in Section~\ref{ssub:End of proof of the lower bound} we establish Proposition~\ref{prop:lower bound for open sets}.

\begin{proposition}    \label{prop:lower bound for open sets}
    Let $f$, $d$, $\phi$ satisfy the Assumptions in Section~\ref{sec:The Assumptions}.
    Then for each sequence $\sequen{\xi_{n}} \in \bigl\{ \sequen{\birkhoffmeasure}, \sequen{\Omega_{n}}, \sequen{\Omega_{n}(x_{n})} \bigr\}$ (as defined in Theorem~\ref{thm:level-2 large deviation principle}), we have
    \[
        \liminf_{n \to +\infty} \frac{1}{n} \log \xi_{n}(\mathcal{G}) \geqslant - \inf_{\mathcal{G}} \ratefun  \quad \text{for all open } \mathcal{G} \subseteq \probsphere,
    \]
    where $\ratefun \colon \probsphere \mapping [0, +\infty]$ is defined in \eqref{eq:def:rate function}. 
\end{proposition}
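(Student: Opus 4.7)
The plan is to show, for each open $\mathcal{G} \subseteq \probsphere$ and each $\mu \in \invmea \cap \mathcal{G}$, that $\liminf_{n \to +\infty} \frac{1}{n} \log \xi_n(\mathcal{G}) \geqslant \freeenergy(\mu) = h_\mu(f) + \int\!\phi\,\mathrm{d}\mu - P(f,\phi)$. Since $\ratefun$ is the upper semi-continuous regularization of $-\freeenergy$, taking the supremum over $\mu \in \mathcal{G} \cap \invmea$ will yield $\liminf \frac{1}{n}\log \xi_n(\mathcal{G}) \geqslant -\inf_\mathcal{G} \ratefun$. I would first reduce to the case where $\mu$ is ergodic: given $\mu \in \invmea \cap \mathcal{G}$, the entropy density of ergodic measures established in Theorem~\ref{thm:entropy dense} produces a sequence $\{\mu_k\} \subseteq \ergmea$ with $\mu_k \weakconverge \mu$ and $h_{\mu_k}(f) \to h_\mu(f)$. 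For all large $k$, $\mu_k \in \mathcal{G}$, so the ergodic lower bound applied to $\mu_k$ in place of $\mu$, combined with weak$^*$-continuity of $\nu \mapsto \int\!\phi\,\mathrm{d}\nu$, gives the lower bound for $\mu$.

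For the ergodic case, I would first treat the situation where $\mathcal{C}$ is actually $f$-invariant, i.e., $n_{\mathcal{C}} = 1$, and then transfer to general $\mathcal{C}$ via Lemma~\ref{lem:invariant_Jordan_curve} (combined with the relation $h_\mu(f^N) = N h_\mu(f)$ and the cocycle identity for $S_N\phi$) exactly as the introduction indicates. Assuming $f(\mathcal{C}) \subseteq \mathcal{C}$, fix $\mu \in \ergmea \cap \mathcal{G}$ and $\varepsilon > 0$. Choose finitely many $\vecfun = (\varphi_1,\dots,\varphi_\ell) \in C(S^2)^\ell$ and $\delta > 0$ so that the weak$^*$-ball $\set{\nu \in \probsphere \describe \norm{\int\!\vecfun\,\mathrm{d}\nu - \int\!\vecfun\,\mathrm{d}\mu} < \delta}$ lies in $\mathcal{G}$. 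Apply Lemma~\ref{lem:approximates ergodic measures by tiles} (with the function vector $\vecfun$ augmented by $\phi$ so that we simultaneously control Birkhoff sums of $\phi$) to produce, for all sufficiently large $n$, a non-empty family $\mathbf{T}^n \subseteq \Tile{n}$ with $\frac{1}{n}\log\card{\mathbf{T}^n} \geqslant h_\mu(f) - \varepsilon$ and such that every $x \in \bigcup \mathbf{T}^n$ satisfies both $\deltameasure{x} \in \mathcal{G}$ and $\big|\frac{1}{n}S_n\phi(x) - \int\!\phi\,\mathrm{d}\mu\big| \leqslant \varepsilon$.

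The three families are then handled in parallel. For $\birkhoffmeasure$, the Gibbs property from Proposition~\ref{prop:equilibrium state is gibbs measure} gives $\mu_\phi(X^n) \geqslant C_{\mu_\phi}^{-1}\myexp{S_n\phi(x_{X^n}) - nP(f,\phi)}$ for every $X^n \in \mathbf{T}^n$ and some $x_{X^n} \in X^n$; summing over $\mathbf{T}^n$ and using the uniform estimate for $S_n\phi$ yields $\birkhoffmeasure(\mathcal{G}) \geqslant \mu_\phi(\bigcup \mathbf{T}^n) \geqslant C_{\mu_\phi}^{-1}\card{\mathbf{T}^n}\myexp{n(\int\!\phi\,\mathrm{d}\mu - \varepsilon - P(f,\phi))}$, and taking $\frac{1}{n}\log$ and $n \to +\infty$ gives $\freeenergy(\mu) - 2\varepsilon$. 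For $\Omega_n$ and $\Omega_n(x_n)$, Lemma~\ref{lem:good location n-tile has high level periodic point and preimage point} locates a periodic point of $f^{n+n_f}$ and a preimage of $x_{n+n_f}$ in the interior of each $X^n \in \mathbf{T}^n$; combining the distortion estimate of Lemma~\ref{lem:distortion_lemma}, the bound $w_{n+n_f} \geqslant 1$, and the pressure characterizations in Propositions~\ref{prop:characterization of pressure weighted periodic points} and \ref{prop:characterization of pressure iterated preimages arbitrary weight} to control the normalizing denominators, the same lower bound $\freeenergy(\mu) - O(\varepsilon)$ is obtained (up to the bounded shift $n \rightsquigarrow n+n_f$). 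Letting $\varepsilon \to 0^+$ completes the proof.

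The main obstacles will be two technical points: first, in the reduction to ergodic measures one must check that $\mu_k$ eventually enters $\mathcal{G}$ and that $\freeenergy(\mu_k) \to \freeenergy(\mu)$, both of which rely crucially on entropy density (entropy approachability would also suffice); second, in the Birkhoff case, transferring an estimate on the measure of a union of tiles to a lower bound for $\birkhoffmeasure(\mathcal{G}) = \mu_\phi(V_n^{-1}(\mathcal{G}))$ requires knowing that \emph{every} point in $\bigcup \mathbf{T}^n$ already lies in $V_n^{-1}(\mathcal{G})$, which is precisely the content of the uniform Birkhoff approximation in \eqref{eq:lem:approximates ergodic measures by tiles:Birkhoff average}. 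Once $f(\mathcal{C}) \subseteq \mathcal{C}$ is removed by passing to $f^N$, the three lower bounds established for $f^N$ pass to $f$ via $\deltameasure[nN]{x} = \frac{1}{N}\sum_{j=0}^{N-1}(f^j)_* \deltameasure[n]{x}^{f^N}$ and the corresponding identities for periodic points and preimages, finishing the proof of Proposition~\ref{prop:lower bound for open sets}.
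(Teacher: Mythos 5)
Your proposal is correct and follows essentially the same route as the paper: reduction to ergodic measures via entropy density (the paper uses the weaker entropy approachability from Corollary~\ref{coro:entropy dense}), a lower bound over fundamental weak$^{*}$-open sets using Lemma~\ref{lem:approximates ergodic measures by tiles}, the Gibbs property, Lemma~\ref{lem:good location n-tile has high level periodic point and preimage point}, and the pressure characterizations, followed by the passage from an iterate with invariant Jordan curve back to $f$. The only cosmetic difference is that the paper fixes a fundamental open set first and then varies $\mu$ inside it, whereas you fix $\mu$ first and choose a fundamental neighborhood; the two are equivalent.
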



\subsubsection{Reduction to ergodic measures}%
\label{ssub:Reduction to ergodic measures}

A weaker property related to entropy density (defined in Subsection~\ref{sub:Main results}) is entropy approachablility (see Definition~\ref{def:weak entropy density}).
Entropy approachablility is a useful property in theories such as multifractal analysis and large deviations in which all invariant measures come into play, in order to reduce one's consideration to ergodic measures only.

\begin{definition}    \label{def:weak entropy density}
    Let $(X, d)$ be a compact metric space and $T \colon X \mapping X$ be a continuous map.
    We say that a measure $\mu \in \invmea$ is \emph{entropy-approachable by ergodic measures} if for each $\varepsilon > 0$ and each weak$^{*}$-open set $U$ containing $\mu$ there exists an ergodic measure $\nu \in U \cap \mathcal{M}(X, T)$ such that $h_{\nu}(T) > h_{\mu}(T) - \varepsilon$.
\end{definition}

\begin{remark}\label{rem:relation between entropy density and entropy approachablility}
    It is clear that if ergodic measures are entropy-dense, then any invariant measure is entropy-approachable by ergodic measures.
    One sees that these two notions are equivalent when the entropy map is upper semi-continuous.
\end{remark}

It follows immediately from Theorem~\ref{thm:entropy dense} and Remark~\ref{rem:relation between entropy density and entropy approachablility} that for expanding Thurston maps, any invariant measure is entropy-approachable by ergodic measures.

\begin{corollary}    \label{coro:entropy dense}
    For an expanding Thurston map $f \colon S^2 \mapping S^2$, any invariant measure $\mu \in \invmea$ is entropy-approachable by ergodic measures.
\end{corollary}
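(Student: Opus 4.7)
The plan is to deduce this corollary directly from Theorem~\ref{thm:entropy dense} together with the observation recorded in Remark~\ref{rem:relation between entropy density and entropy approachablility}. Let $f \colon S^2 \mapping S^2$ be an expanding Thurston map, fix an arbitrary $\mu \in \invmea$, an arbitrary weak$^*$-open neighborhood $U$ of $\mu$ in $\invmea$, and an arbitrary $\varepsilon > 0$. The goal is to produce an ergodic measure $\nu \in U \cap \invmea$ with $h_{\nu}(f) > h_{\mu}(f) - \varepsilon$, as demanded by Definition~\ref{def:weak entropy density}.

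By Theorem~\ref{thm:entropy dense}, the set of ergodic $f$-invariant measures is entropy-dense in $\invmea$, so there exists a sequence $\sequen{\mu_n}$ of ergodic measures in $\invmea$ that converges to $\mu$ in the weak$^*$-topology and satisfies $h_{\mu_n}(f) \to h_\mu(f)$ as $n \to +\infty$. Weak$^*$-convergence yields $\mu_n \in U$ for all sufficiently large $n$, while the entropy convergence yields $h_{\mu_n}(f) > h_\mu(f) - \varepsilon$ for all sufficiently large $n$. Taking any $n$ large enough that both conditions hold and setting $\nu \define \mu_n$ gives the required ergodic measure. Since the reduction from entropy density to entropy approachability is essentially automatic once Theorem~\ref{thm:entropy dense} is available, no further technical obstacle arises; the work of the corollary has already been absorbed into the proof of entropy density in Section~\ref{sec:Entropy density}.
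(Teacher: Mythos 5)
Your proposal is correct and is exactly the paper's argument: the corollary is stated there as an immediate consequence of Theorem~\ref{thm:entropy dense} together with Remark~\ref{rem:relation between entropy density and entropy approachablility}, which records precisely the "entropy density implies entropy approachability" implication you spell out. No further comment is needed.
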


\begin{remark}\label{rem:specification implies entropy density}
    For a continuous map $T \colon X \mapping X$ on a compact metric space $(X, d)$, it is known that if $T$ has the specification property in the sense of K.~Sigmund (see the definition in \cite[Section~2]{sigmund1974dynamical}), then any invariant measure is entropy-approachable by ergodic measures (see for example, \cite{eizenberg1994large} and \cite[Theorem~2.1]{pfister2005large}).
    In particular, this result applies to expanding Thurston maps since every expanding Thurston map has the specification property (see the proof of \cite[Lemma~6.5]{li2023ground}).
\end{remark}

\subsubsection{Lower bound for fundamental open sets}%
\label{ssub:Lower bound for fundamental open sets}

We use the notations as introduced in the beginning of Section~\ref{sec:Entropy density}. 

We first prove the following result under the additional assumption that there exists an $f$-invariant Jordan curve $\mathcal{C} \subseteq S^2$ with $\post{f} \subseteq \mathcal{C}$ and then for the general case.
\begin{proposition}    \label{prop:lower bound for fundamental open sets}
    Let $f$, $d$, $\phi$, $\holderexp$ satisfy the Assumptions in Section~\ref{sec:The Assumptions}.
    Consider $\ell \in \n$, $\vecfun \in \multispace$, and $\vecavg \in \real^{\ell}$.
    Let $\mathcal{G} \subseteq \probsphere$ be an open set of the form\[
        \mathcal{G} \define \biggl\{ \mu \in \probsphere \describe \int \! \vecfun \,\mathrm{d}\mu > \vecavg \biggr\}.
    \]
    Then for each $\mu \in \mathcal{G}$ and each sequence $\sequen{\xi_{n}} \in \bigl\{ \sequen{\birkhoffmeasure}, \sequen{\Omega_{n}}, \sequen{\Omega_{n}(x_{n})} \bigr\}$ (as defined in Theorem~\ref{thm:level-2 large deviation principle}), we have
    \[
        \liminf_{n \to +\infty} \frac{1}{n} \log \xi_{n}(\mathcal{G}) \geqslant \freeenergy(\mu),
    \]
    where $\freeenergy \colon \probsphere \mapping [-\infty, 0]$ is defined in \eqref{eq:def:free energy}. 
\end{proposition}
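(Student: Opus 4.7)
The plan is to fix $\mu \in \mathcal{G}$ and $\varepsilon > 0$, and to extract a large collection of disjoint ``good'' $n$-tiles on which we can simultaneously control the Birkhoff sums of both $\vecfun$ and $\potential$. First I would reduce to the ergodic case: since $\mathcal{G}$ is open and $\mathcal{G} \ni \mu$, by Corollary~\ref{coro:entropy dense} there exists an ergodic $\nu \in \mathcal{G}$ with $h_{\nu}(f) > h_{\mu}(f) - \varepsilon$ and $\int \vecfun \,\mathrm{d}\nu > \vecavg$ still satisfied; moreover, by choosing $\nu$ close enough to $\mu$ in the weak$^{*}$-topology, one also ensures $\int \potential \,\mathrm{d}\nu > \int \potential \,\mathrm{d}\mu - \varepsilon$. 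Because $\freeenergy(\nu) \geqslant \freeenergy(\mu) - 2\varepsilon$ and $\varepsilon$ will be arbitrary, it suffices to prove the lower bound for $\nu$ ergodic in $\mathcal{G}$.

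Next I would apply Lemma~\ref{lem:approximates ergodic measures by tiles} to the ergodic measure $\nu$ with the enlarged vector $(\vecfun, \potential) \in C(S^2)^{\ell+1}$ and a small parameter $\varepsilon'>0$: for all sufficiently large $n$, there is a non-empty $\mathbf{T}^{n} \subseteq \Tile{n}$ with $\tfrac{1}{n} \log \card{\mathbf{T}^{n}} \geqslant h_{\nu}(f) - \varepsilon'$, and such that every $x \in \bigcup \mathbf{T}^{n}$ satisfies $\tfrac{1}{n} S_{n}\vecfun(x) > \vecavg$ (so $\deltameasure{x} \in \mathcal{G}$) and $\bigl| \tfrac{1}{n} S_{n}\potential(x) - \int \potential \,\mathrm{d}\nu \bigr| \leqslant \varepsilon'$. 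With this tile cover in hand, I would treat the three sequences in parallel. For the Birkhoff measures, $\birkhoffmeasure(\mathcal{G}) = \mu_{\potential}(\{x : \tfrac{1}{n}S_{n}\vecfun(x) > \vecavg\}) \geqslant \mu_{\potential}(\bigcup \mathbf{T}^{n})$; the $\mu_{\potential}$-measure of this union is a sum over $\mathbf{T}^{n}$ (edges have $\mu_{\potential}$-measure zero by Theorem~\ref{thm:properties of equilibrium state}~\ref{item:thm:properties of equilibrium state:edge measure zero}), and the Gibbs estimate of Proposition~\ref{prop:equilibrium state is gibbs measure} combined with the Birkhoff control of $\potential$ on $\bigcup \mathbf{T}^{n}$ gives $\mu_{\potential}(\bigcup \mathbf{T}^{n}) \geqslant C^{-1} \card{\mathbf{T}^{n}} \myexp{n \int \potential \,\mathrm{d}\nu - n\varepsilon' - n P(f,\potential)}$.

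For the periodic-point and preimage versions I would, for each $X^{n} \in \mathbf{T}^{n}$, use Lemma~\ref{lem:good location n-tile has high level periodic point and preimage point} to produce a fixed point $p_{X^n}$ of $f^{n+N}$ in $\inte{X^n}$ (respectively a preimage of $x_{n+N}$ under $f^{n+N}$), where $N$ is the primitivity constant; these points are distinct across distinct tiles, lie in $\mathbf{T}^{n}$, and their length-$(n{+}N)$ Birkhoff averages against $\vecfun$ agree with $\tfrac{1}{n}S_n\vecfun$ on $X^n$ up to distortion, hence belong to $\mathcal{G}$ once $n$ is large. The numerators of $\Omega_{n+N}(\mathcal{G})$ and $\Omega_{n+N}(x_{n+N})(\mathcal{G})$ are then bounded below by $\card{\mathbf{T}^{n}} \myexp{n\int \potential \,\mathrm{d}\nu - n\varepsilon' - N \uniformnorm{\potential}}$, while the denominators are estimated from above by Propositions~\ref{prop:characterization of pressure weighted periodic points} and~\ref{prop:characterization of pressure iterated preimages arbitrary weight}, which yield $\myexp{(n+N)(P(f,\potential)+o(1))}$. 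Taking $\tfrac{1}{n+N}\log$ and letting $n \to +\infty$ produces the bound $\freeenergy(\nu) - 2\varepsilon'$; sending $\varepsilon' \to 0$ and then $\varepsilon \to 0$ finishes the proof. The main technical hurdle I anticipate is the simultaneous control, via the single application of Lemma~\ref{lem:approximates ergodic measures by tiles}, of both the constraint $\int \vecfun \,\mathrm{d}\mu > \vecavg$ defining $\mathcal{G}$ and the Birkhoff sums of $\potential$ needed to feed the Gibbs inequality; handling the extra $N$ iterates that appear in the periodic-point and preimage constructions (so that the rate function naturally comes out at scale $n+N$ rather than $n$) is the secondary source of bookkeeping but causes no essential difficulty since $N/n \to 0$.
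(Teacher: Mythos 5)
Your outline reproduces, essentially step for step, the first half of the paper's proof: reduction to ergodic $\mu$ via Corollary~\ref{coro:entropy dense}, application of Lemma~\ref{lem:approximates ergodic measures by tiles} to the augmented vector $(\vecfun, \potential)$, the Gibbs estimate from Proposition~\ref{prop:equilibrium state is gibbs measure} for the Birkhoff case, and the injection of good $n$-tiles into $\periodorbit[n+N]$ (resp.\ $f^{-n-N}(x_{n+N})$) via Lemma~\ref{lem:good location n-tile has high level periodic point and preimage point}, with denominators controlled by Propositions~\ref{prop:characterization of pressure weighted periodic points} and~\ref{prop:characterization of pressure iterated preimages arbitrary weight}. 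That part is sound.

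However, there is a genuine gap: every one of these ingredients is only available when the Jordan curve is $f$-invariant. Lemma~\ref{lem:approximates ergodic measures by tiles} explicitly assumes $f(\mathcal{C}) \subseteq \mathcal{C}$ (its proof needs $\Tile{n}$, resp.\ $\Edge{n}$, to be the $n$-fold join of the level-one partition, which fails without invariance), and the periodic-point step needs every $n$-tile to lie in a single $0$-tile, i.e.\ Proposition~\ref{prop:cell decomposition: invariant Jordan curve}, which again requires $f(\mathcal{C}) \subseteq \mathcal{C}$. By \cite[Example~15.11]{bonk2017expanding} such an invariant curve need not exist for $f$ itself; the Assumptions in Section~\ref{sec:The Assumptions} only guarantee $f^{n_{\mathcal{C}}}(\mathcal{C}) \subseteq \mathcal{C}$ for some $n_{\mathcal{C}} \geqslant 1$. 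The paper therefore proves the proposition first under the extra hypothesis of an invariant curve and then devotes a second, substantially longer argument to the general case: pass to $\widehat{f} = f^{K}$ admitting an invariant curve, set $\itevecfun[\Phi] = S_{K}^{f}\vecfun$ and $\widehat{\potential} = S_{K}^{f}\potential$, and transfer the lower bound back to $f$ by matching $\{x : S_{n}^{f}\vecfun(x) > n\vecavg\}$ with the corresponding set for $\widehat{f}$ at level $m = \lfloor n/K \rfloor$ (for Birkhoff averages), and by injecting $\periodorbit[m][\widehat{f}]$ into $\periodorbit[mK+k+N][f]$ through $mK$-tiles with controlled distortion (for periodic points and preimages), using $P(\widehat{f},\widehat{\potential}) = KP(f,\potential)$ and $\freeenergy[\widehat{\potential}](\mu) = K\freeenergy(\mu)$. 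This reduction is not routine bookkeeping that can be waved away; your proof is incomplete without it.
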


\begin{proof}[Proof of Proposition~\ref{prop:lower bound for fundamental open sets} under an additional assumption]
    We assume in addition that there exists an $f$-invariant Jordan curve $\mathcal{C} \subseteq S^2$ with $\post{f} \subseteq \mathcal{C}$.

    Let $\mu \in \mathcal{G}$ be arbitrary. 
    We may assume without loss of generality that $\mu \in \invmea$ since $\freeenergy(\mu) = -\infty$ when $\mu \notin \invmea$.
    Moreover, by virtue of Corollary~\ref{coro:entropy dense} and the definition of $\freeenergy$, we may assume that $\mu$ is ergodic.

    Let $\varepsilon > 0$ be such that
    \begin{equation}    \label{eq:temp:prop:lower bound for fundamental open sets:choice of epsilon}
        \int \! \vecfun \,\mathrm{d}\mu > \vecavg + \varepsilon.
    \end{equation}
    \def \approximation #1{ \mathbf{T}^{#1} }
    By Lemma~\ref{lem:approximates ergodic measures by tiles}, there exists $n_0 \in \n$ such that for each integer $n \geqslant n_{0}$, there exists a non-empty subset $\approximation{n}$ of $\Tile{n}$ such that 
    \begin{align}
        \label{eq:temp:prop:lower bound for fundamental open sets:approximation cardinality}
        \bigg| \frac{1}{n} \log \card{\approximation{n}} - h_{\mu}(f) \bigg| &\leqslant \frac{\varepsilon}{2} ,   \\
        \label{eq:temp:prop:lower bound for fundamental open sets:approximation weak*-topology}
        \sup_{x \in \bigcup \approximation{n}} \norm[\bigg]{ \frac{1}{n} S_n \vecfun(x) - \int \! \vecfun \,\mathrm{d}\mu} &\leqslant \frac{\varepsilon}{2}, \qquad \text{and} \\
        \label{eq:temp:prop:lower bound for fundamental open sets:approximation potential}
        \sup_{x \in \bigcup \approximation{n}} \abs[\bigg]{ \frac{1}{n} S_n \potential(x) - \int \! \potential \,\mathrm{d}\mu} &\leqslant \frac{\varepsilon}{2}.
    \end{align}

    We split the rest of the proof into three cases according to the type of the sequence $\sequen{\xi_{n}}$.

    \smallskip
    \emph{Case 1 (Birkhoff averages):} $\xi_{n} = \birkhoffmeasure = (V_{n})_{*}(\mu_{\potential})$ for each $n \in \n$ (recall \eqref{eq:def:delta measure for orbit}).
    \smallskip

    For each integer $n \geqslant n_{0}$, \eqref{eq:temp:prop:lower bound for fundamental open sets:choice of epsilon} and \eqref{eq:temp:prop:lower bound for fundamental open sets:approximation weak*-topology} yield
    \begin{equation}    \label{eq:temp:prop:lower bound for fundamental open sets:approximation fundamental open set by tiles}
        \Bigl\{ \deltameasure{x} \describe x \in \bigcup \approximation{n} \Bigr\} \subseteq \mathcal{G}.
    \end{equation}

    Recall from Proposition~\ref{prop:equilibrium state is gibbs measure} that $\mu_{\potential}$ is a Gibbs measure with respect to $f$, $\mathcal{C}$, and $\phi$, with the constants $P_{\mu_{\phi}} = P(f, \phi)$ and $C_{\mu_{\potential}} \geqslant 1$.
    Then for each integer $n \geqslant n_{0}$ and each $X^{n} \in \approximation{n}$, it follows from \eqref{eq:equilibrium state is gibbs measure} and \eqref{eq:temp:prop:lower bound for fundamental open sets:approximation potential} that \[
        \mu_{\potential}(X^{n}) \geqslant C_{\mu_{\potential}}^{-1} e^{- n P(f, \potential)} \inf_{x \in X^n} \myexp{ S_{n}\potential(x) } \geqslant C_{\mu_{\potential}}^{-1} e^{- n P(f, \potential)} \myexp[\bigg]{ n \biggl(\int \! \potential \,\mathrm{d}\mu - \frac{\varepsilon}{2} \biggr) }.
    \]
    Summing this inequality over all $X^{n} \in \approximation{n}$ and applying \eqref{eq:temp:prop:lower bound for fundamental open sets:approximation fundamental open set by tiles}, Theorem~\ref{thm:properties of equilibrium state}~\ref{item:thm:properties of equilibrium state:edge measure zero}, and \eqref{eq:temp:prop:lower bound for fundamental open sets:approximation cardinality}, we have\[
        \begin{split}
            \frac{1}{n} \log \birkhoffmeasure(\mathcal{G}) 
            &= \frac{1}{n} \log \mu_{\potential} ( \{ x \in S^2 \describe \deltameasure{x} \in \mathcal{G} \} )  \\
            &\geqslant \frac{1}{n} \log \mu_{\potential} \Bigl( \bigcup \approximation{n} \Bigr) \geqslant 
            \frac{1}{n} \log \Bigl( \card{\approximation{n}} \inf_{X^{n} \in \approximation{n}} \mu_{\potential}(X^{n}) \Bigr) \\
            &\geqslant h_{\mu}(f) - \frac{\varepsilon}{2} + \int \! \potential \,\mathrm{d}\mu - \frac{\varepsilon}{2} - P(f, \potential) - \frac{1}{n} \log C_{\mu_{\potential}}  \\
            &= \freeenergy(\mu) - \varepsilon - \frac{1}{n} \log C_{\mu_{\potential}}.
        \end{split}
    \]
    Letting $n \to +\infty$ and then $\varepsilon \to 0$ yields the desired inequality.

    \smallskip
    \emph{Case 2 (Periodic points):} $\xi_{n} = \Omega_{n}$ for each $n \in \n$ (recall \eqref{eq:def:Periodic points distribution}). 
    \smallskip

    By Proposition~\ref{prop:cell decomposition: invariant Jordan curve} and Lemma~\ref{lem:good location n-tile has high level periodic point and preimage point}~\ref{item:lem:good location n-tile has high level periodic point and preimage point:periodic point}, there exists a constant $N \in \n$ depending only on $f$ and $\mathcal{C}$ such that for each $n \in \n_0$ and each $X^n \in \Tile{n}$, there exists a fixed point of $f^{n + N}$ in $\inte{X^n}$.
    For each $n \in \n_0$ and each $X^n \in \Tile{n}$, let $p(X^n)$ be a fixed point of $f^{n + N}$ in $\inte{X^n}$.
    Then the map $X^n \mapsto p(X^n)$ from $\Tile{n}$ to $\periodorbit[n + N]$ is injective.

    By \eqref{eq:temp:prop:lower bound for fundamental open sets:choice of epsilon} and \eqref{eq:temp:prop:lower bound for fundamental open sets:approximation weak*-topology}, for each integer $n \geqslant n_{0}$ and each $x \in \bigcup \approximation{n}$, we have\[
        S_{n + N} \vecfun(x) \geqslant S_n \vecfun(x) - N \norm{\vecfun} > n \vecavg + \varepsilon n / 2 - N \norm{\vecfun} \geqslant (n + N) \vecavg + \varepsilon n / 2 - N (\norm{\vecfun} + \norm{\vecavg}).
    \]
    This implies that for each sufficiently large $n \in \n$ and each $x \in \bigcup \approximation{n}$, $(n + N)^{-1} S_{n + N} \vecfun(x) > \vecavg$, i.e., $\deltameasure[n + N]{x} \in \mathcal{G}$.
    Therefore, it follows from \eqref{eq:temp:prop:lower bound for fundamental open sets:approximation cardinality} and \eqref{eq:temp:prop:lower bound for fundamental open sets:approximation potential} that for each sufficiently large $n \in \n$, 
    \begin{equation}    \label{eq:temp:prop:lower bound for fundamental open sets:periodic points:lower bound by approximation}
        \begin{split}
            \sum_{ \substack{ p \in \periodorbit[n + N] \\ \deltameasure[n + N]{p} \in \mathcal{G} } }  w_{n + N}(p) \myexp{ S_{n + N}\potential(p) } 
            &\geqslant \sum_{X^n \in \approximation{n}} \myexp{ S_{n + N}\potential(p(X^{n})) } \\
            &\geqslant \card{\approximation{n}} \inf_{ x \in \bigcup\approximation{n} } \myexp{ S_{n}\potential(x) } \myexp{ - N \uniformnorm{\potential} } \\
            &\geqslant \myexp{ \freeenergy(\mu) n + P(f, \potential) n - \varepsilon n - N \uniformnorm{\potential} },
        \end{split}
    \end{equation}
    where $\{ w_{j} \}_{j \in \n}$ is an arbitrary sequence of real-valued functions on $S^2$ with $w_{j}(x) \in \bigl[ 1, \deg_{f^{j}}(x) \bigr]$ for each $j \in \n$ and each $x \in S^{2}$.
    By \eqref{eq:def:Periodic points distribution}, for each $n \in \n$, \[
        \begin{split}
            \frac{1}{n + N} \log \Omega_{n + N}(\mathcal{G}) 
            &= - \frac{1}{n + N} \log \sum_{ p' \in \periodorbit[n + N] } w_{n + N}(p') \myexp{ S_{n}\potential(p') } \\
            &\quad + \frac{1}{n + N} \log \sum_{ \substack{ p \in \periodorbit[n + N] \\ \deltameasure[n + N]{x} \in \mathcal{G} } }  w_{n + N}(p) \myexp{ S_{n + N}\potential(p) }.
        \end{split}
    \]
    Note that as $n \to +\infty$, the first term of the right hand side in the equation above converges to $- P(f, \potential)$ by Proposition~\ref{prop:characterization of pressure weighted periodic points}.
    Combining this with \eqref{eq:temp:prop:lower bound for fundamental open sets:periodic points:lower bound by approximation}, we get\[
        \liminf_{n \to +\infty} \frac{1}{n} \log \Omega_{n}(\mathcal{G}) 
        = \liminf_{n \to +\infty} \frac{1}{n + N} \log \Omega_{n + N}(\mathcal{G}) 
        \geqslant \freeenergy(\mu) - \varepsilon.
    \]
    Then by letting $\varepsilon \to 0$, the desired inequality follows.

    \smallskip
    \emph{Case 3 (Iterated preimages):} $\xi_{n} = \Omega_{n}(x_{n})$ for each $n \in \n$ (recall \eqref{eq:def:Iterated preimages distribution}), where $\{ x_{n} \}_{n \in \n}$ is an arbitrary sequence of points in $S^{2}$.
    \smallskip

    By Lemma~\ref{lem:good location n-tile has high level periodic point and preimage point}~\ref{item:lem:good location n-tile has high level periodic point and preimage point:preimage point}, there exists a constant $N \in \n$ depending only on $f$ and $\mathcal{C}$ such that for each $n \in \n_0$ and each $X^n \in \Tile{n}$, there exists $x \in f^{- n - N}(x_{n + N})$ such that $x \in \inte{X^n}$.
    For each $n \in \n_0$ and each $X^n \in \Tile{n}$, let $x_{X^n}$ be a preimage point of $x_{n + N}$ under $f^{n + N}$ such that $x_{X^{n}} \in \inte{X^n}$.
    Then the map $X^n \mapsto x_{X^n}$ from $\Tile{n}$ to $f^{- n - N}(x_{n + N})$ is injective.
    
    By the same reasoning as in Case~2, we have $\deltameasure[n + N]{x} \in \mathcal{G}$ for each sufficiently large $n \in \n$ and each $x \in \bigcup \approximation{n}$.
    Similarly, it follows from \eqref{eq:temp:prop:lower bound for fundamental open sets:approximation cardinality} and \eqref{eq:temp:prop:lower bound for fundamental open sets:approximation potential} that for each sufficiently large $n \in \n$, 
    \begin{equation}    \label{eq:temp:prop:lower bound for fundamental open sets:iterated preimages:lower bound by approximation}
        \begin{split}
            \sum_{ \substack{ y \in f^{- n - N}(x_{n + N}) \\ \deltameasure[n + N]{p} \in \mathcal{G} } }  w_{n + N}(y) \myexp{ S_{n + N}\potential(y) } 
            &\geqslant \sum_{X^n \in \approximation{n}} \myexp{ S_{n + N}\potential(x_{X^n}) } \\
            &\geqslant \card{\approximation{n}} \inf_{ x \in \bigcup\approximation{n} } \myexp{ S_{n}\potential(x) } \myexp{ - N \uniformnorm{\potential} } \\
            &\geqslant \myexp{ \freeenergy(\mu) n + P(f, \potential) n - \varepsilon n - N \uniformnorm{\potential} },
        \end{split}
    \end{equation}
    where $\{ w_{j} \}_{j \in \n}$ is an arbitrary sequence of real-valued functions on $S^2$ with $w_{j}(x) \in \bigl[ 1, \deg_{f^{j}}(x) \bigr]$ for each $j \in \n$ and each $x \in S^{2}$.
    By \eqref{eq:def:Iterated preimages distribution}, for each $n \in \n$, \[
        \begin{split}
            \frac{1}{n + N} \log \Omega_{n + N}(x_{n + N})(\mathcal{G}) 
            &= - \frac{1}{n + N} \log \sum_{ z \in f^{- n - N}(x_{n + N}) } w_{n + N}(z) \myexp{ S_{n}\potential(z) } \\
            &\quad + \frac{1}{n + N} \log \sum_{ \substack{ y \in f^{- n - N}(x_{n + N}) \\ \deltameasure[n + N]{p} \in \mathcal{G} } }  w_{n + N}(y) \myexp{ S_{n + N}\potential(y) }.
        \end{split}
    \]
    Note that as $n \to +\infty$, the first term of the right hand side in the equation above converges to $- P(f, \potential)$ by Proposition~\ref{prop:characterization of pressure iterated preimages arbitrary weight}.
    Combining this with \eqref{eq:temp:prop:lower bound for fundamental open sets:iterated preimages:lower bound by approximation}, we get\[
        \liminf_{n \to +\infty} \frac{1}{n} \log \Omega_{n}(x_{n})(\mathcal{G}) 
        = \liminf_{n \to +\infty} \frac{1}{n + N} \log \Omega_{n + N}(x_{n + N})(\mathcal{G}) 
        \geqslant \freeenergy(\mu) - \varepsilon.
    \]
    Then by letting $\varepsilon \to 0$, the desired inequality follows.

    \smallskip
        
    The proof is complete.
\end{proof}

We now prove the general case.

\begin{proof}[Proof of Proposition~\ref{prop:lower bound for fundamental open sets}]
    Let $\mu \in \mathcal{G}$ be arbitrary.
    We may assume without loss of generality that $\mu \in \invmea$ since $\freeenergy(\mu) = -\infty$ when $\mu \notin \invmea$.

    By Lemma~\ref{lem:invariant_Jordan_curve}, we can find a sufficiently high iterate $\widehat{f} \define f^{K}$ of $f$ that has an $\widehat{f}$-invariant Jordan curve $\mathcal{C} \subseteq S^{2}$ with $\post{\widehat{f}} = \post{f} \subseteq \mathcal{C}$. 
    Then $\widehat{f}$ is also an expanding Thurston map (recall Remark~\ref{rem:Expansion_is_independent}). 

    \def\itevecfun{\vecfun[\Phi]}  \def\itevecavg{K\vecavg}  \def\itepotential{\widehat{\potential}}  \def\itefreeenergy{\widehat{F}_{\itepotential}}  \def\iteequstate{\widehat{\mu}_{\itepotential}} \def\iteinvmea{\mathcal{M}(S^2, \widehat{f})}

    Denote $\itevecfun \define S_{K}^{f} \vecfun$ and $\itepotential \define S_K^f \potential$.
    We define $\itefreeenergy \colon \probsphere \mapping [-\infty, 0]$ by\[
        \itefreeenergy(\nu) \define 
        \begin{cases}
            h_{\nu}(\widehat{f}) + \int\! \itepotential \,\mathrm{d}\nu - P(\widehat{f}, \itepotential) & \mbox{if } \nu \in \iteinvmea; \\
            -\infty & \mbox{if } \nu \in \probsphere \setminus \iteinvmea.
        \end{cases}
    \]
    Note that $P(\widehat{f}, \itepotential) = K P(f, \potential)$, $h_{\mu}(\widehat{f}) = K h_{\mu}(f)$, and $\int \! \itepotential \,\mathrm{d}\mu = K \int \! \potential \,\mathrm{d}\mu$ (recall \eqref{eq:def:topological pressure} and \eqref{eq:measure-theoretic entropy well-behaved under iteration}).
    Then we have $\itefreeenergy(\mu) = K \freeenergy(\mu)$ since $\mu \in \invmea \subseteq \iteinvmea$.
    Let $\iteequstate$ be the unique equilibrium state for the map $\widehat{f}$ and the potential $\itepotential$ (recall Theorem~\ref{thm:properties of equilibrium state}~\ref{item:thm:properties of equilibrium state:existence and uniqueness}).
    Since $P_{\mu_{\potential}}(\widehat{f}, \itepotential) = K P_{\mu_{\potential}}(f, \potential) = K P(f, \potential) = P(\widehat{f}, \itepotential)$, it follows from the uniqueness of the equilibrium state that $\iteequstate = \mu_{\potential}$.

    \def\iteopenset{\widehat{\mathcal{G}}_{\varepsilon}}

    Let $\varepsilon > 0$ be such that $\int \! \vecfun \,\mathrm{d}\mu > \vecavg + \varepsilon$.
    This implies $\int \! \itevecfun \,\mathrm{d}\mu = K \int \! \vecfun \,\mathrm{d} \mu > K \vecavg + K \varepsilon$.
    Let $\iteopenset \subseteq \probsphere$ be the open set defined by\[
        \iteopenset \define \biggl\{ \nu \in \probsphere \describe \int \! \itevecfun \,\mathrm{d} \nu > K \vecavg + K \varepsilon \biggr\}.
    \]
    Then we have $\mu \in \iteopenset$.

    We split the proof into three cases according to the type of the sequence $\sequen{\xi_{n}}$.

    \smallskip
    \emph{Case 1 (Birkhoff averages):} $\xi_{n} = \birkhoffmeasure = (V_{n})_{*}(\mu_{\potential})$ for each $n \in \n$ (recall \eqref{eq:def:delta measure for orbit}).
    \smallskip

    \def\itebirkhoffmeasure#1{\widehat{\Sigma}_{#1}}

    For each integer $k \in \zeroton[K - 1]$ and each integer $m \in \n$ that satisfies $(\norm{\vecavg} + \norm{\vecfun}) / m < \varepsilon$, we have \[
        \begin{split}
            \bigl\{ x \in S^2 \describe S_{mK + k}^{f}\vecfun(x) > (mK + k) \vecavg \bigr\} 
            &\supseteq \bigl\{ x \in S^2 \describe S_{mK}^{f} \vecfun(x) > (mK + k) \vecavg + k \norm{\vecfun} \bigr\} \\
            &\supseteq \bigl\{ x \in S^2 \describe S_{mK}^{f}\vecfun(x) > mK \vecavg + K (\norm{\vecavg} + \norm{\vecfun}) \bigr\} \\
            &\supseteq \bigl\{ x \in S^2 \describe m^{-1} S_m^{\widehat{f}}\itevecfun(x) > K \vecavg + K \varepsilon \bigr\}.
        \end{split}
    \]
    For each $n \in \n$, we set $m \define \lfloor n / K \rfloor$ and write $n = mK + k$ for some integer $k \in \zeroton[K - 1]$.
    Then for each sufficiently large $n \in \n$, we have
    \begin{align*}
        \frac{1}{n} \log \birkhoffmeasure(\mathcal{G}) 
            &= \frac{1}{n} \log \mu_{\potential} \bigl( \bigl\{ x \in S^2 \describe S_{n}^{f} \vecfun(x) > n \vecavg \bigr\} \bigr)  \\
            &\geqslant \frac{1}{n} \log \iteequstate \bigl( \bigl\{ x \in S^2 \describe m^{-1} S_m^{\widehat{f}}\itevecfun(x) > K \vecavg + K \varepsilon \bigr\} \bigr) \\
            &= \frac{1}{n} \log \itebirkhoffmeasure{m}(\iteopenset)  \\
            &\geqslant \frac{1}{m K} \log \itebirkhoffmeasure{m}(\iteopenset),
    \end{align*}
    
    where $\sequen[\big]{\itebirkhoffmeasure{j}}[j]$ is defined by replacing $f$ with $\widehat{f}$ and $\potential$ with $\itepotential$ in the definition of $\sequen{\birkhoffmeasure[j]}[j]$.
    Since $\widehat{f}$ has an $\widehat{f}$-invariant Jordan curve $\mathcal{C} \subseteq S^{2}$ with $\post{\widehat{f}} \subseteq \mathcal{C}$, Proposition~\ref{prop:lower bound for fundamental open sets} holds for $\widehat{f}$.
    Therefore, \[
        \liminf_{n \to +\infty} \frac{1}{n} \log \birkhoffmeasure(\mathcal{G}) 
        \geqslant \frac{1}{K} \liminf_{m \to +\infty} \frac{1}{m} \log \itebirkhoffmeasure{m}(\iteopenset) 
        \geqslant \frac{1}{K} \itefreeenergy(\mu) = \freeenergy(\mu).
    \]

    \smallskip
    \emph{Case 2 (Periodic points):} $\xi_{n} = \Omega_{n}$ for each $n \in \n$ (recall \eqref{eq:def:Periodic points distribution}). 
    \smallskip

    \def\ptperiodic{p(k, \widehat{p})} 
    \def\tileperiodic{X^{mK}(\widehat{p})} 
    \def\itedeltameasure{\widehat{V}_{m}(\widehat{p})}

    For each $m \in \n$, it follows from Proposition~\ref{prop:properties cell decompositions}~\ref{item:prop:properties cell decompositions:iterate of cell decomposition} that $\mathbf{X}^{m}(\widehat{f}, \mathcal{C}) = \Tile{mK}$.
    Since $\widehat{f}(\mathcal{C}) \subseteq \mathcal{C}$, by Proposition~\ref{prop:cell decomposition: invariant Jordan curve} and Lemma~\ref{lem:good location n-tile has high level periodic point and preimage point}~\ref{item:lem:good location n-tile has high level periodic point and preimage point:periodic point}, there exists a constant $N \in \n$ depending only on $f$ and $\mathcal{C}$ such that for each integer $\ell \geqslant N$, each $m \in \n$, and each $X^{mK} \in \Tile{mK}$, there exists a fixed point of $f^{mK + \ell}$ in $\inte{X^{mK}}$.
    
    For each $m \in \n$, each $k \in \zeroton[K - 1]$, and each $\widehat{p} \in \periodorbit[m][\widehat{f}]$, let $\tileperiodic \in \Tile{mK}$ be an $mK$-tile that contains $\widehat{p}$ and let $\ptperiodic$ be a fixed point of $f^{mK + k + N}$ in $\inte{\tileperiodic}$.
    By \cite[Lemma~6.3]{li2015weak}, there exists $N_0 \in \n$ such that for each integer $n \geqslant N_{0}$ and each $n$-tile $X^n \in \Tile{n}$, the number of fixed points of $f^{n}$ contained in $X^{n}$ is at most $1$.
    This implies that for each integer $m \geqslant N_{0} / K$ and each $k \in \zeroton[K - 1]$, the map $\widehat{p} \mapsto \ptperiodic$ from $\periodorbit[m][\widehat{f}]$ to $\periodorbit[mK + k + N]$ is injective.

    We claim that there exists $n_{0} \in \n$ such that for each integer $m \geqslant n_{0}$, each $k \in \zeroton[K - 1]$, and each $\widehat{p} \in \periodorbit[m][\widehat{f}]$ with $\itedeltameasure \in \iteopenset$, it follows that $\deltameasure[mK + k + N]{\ptperiodic} \in \mathcal{G}$, where we define $\widehat{V}_{\ell}(x) \define \frac{1}{\ell} \sum_{i = 0}^{\ell - 1} \delta_{\widehat{f}^{i}(x)}$ for each $\ell \in \n$ and each $x \in S^2$.
    Indeed, by Lemma~\ref{lem:distortion lemma for continuous function}, there exists a sufficiently large $n_{0} \in \n$ such that for each integer $m \geqslant n_{0}$,
    \[
        D_{mK}(\vecfun) + (K + N)(\norm{\vecavg} + \norm{\vecfun}) \leqslant mK \varepsilon.
    \]
    Since $X^{mK}(\widehat{p})$ contains $\widehat{p}$ and $\ptperiodic$, we have $S_{mK}^{f} \vecfun(\ptperiodic) \geqslant S_{mK}^{f} \vecfun(\widehat{p}) - D_{mK}(\vecfun)$. 
    Note that $\itedeltameasure \in \iteopenset$ means that $m^{-1} S_{m}^{\widehat{f}} \itevecfun(\widehat{p}) = m^{-1} S_{mK}^{f} \vecfun(\widehat{p}) > K \vecavg + K \varepsilon$.
    Therefore, \[
        \begin{split}
            S_{mK + k + N}^{f} \vecfun(\ptperiodic) 
            &\geqslant S_{mK}^{f} \vecfun(\ptperiodic) - (K + N) \norm{\vecfun}  \\
            &\geqslant S_{mK}^{f} \vecfun(\widehat{p}) - D_{mK}(\vecfun) - (K + N) \norm{\vecfun} \\
            &> mK \vecavg + mK \varepsilon - D_{mK}(\vecfun) - (K + N) \norm{\vecfun}  \\
            &\geqslant (mK + k + N)\vecavg + mK \varepsilon - D_{mK}(\vecfun) - (K + N)( \norm{\vecavg} + \norm{\vecfun} )  \\
            &\geqslant (mK + k + N)\vecavg.
        \end{split}
    \]
    This implies $\deltameasure[mK + k + N]{\ptperiodic} \in \mathcal{G}$.
    
    We now prove the lower bound.    

    For each integer $n \geqslant N$, we set $m \define \lfloor (n - N) / K \rfloor$ and write $n = mK + k + N$ for some integer $k \in \zeroton[K - 1]$.
    By the arguments above, for each sufficiently large $n \in \n$ that satisfies $m \geqslant \max \{ N_0 / K, n_0 \}$, we have
    \[
        \begin{split}
            \sum_{ \substack{ p \in \periodorbit[n] \\ \deltameasure[n]{p} \in \mathcal{G} } } w_{n}(p) \myexp[\big]{ S_{n}^{f} \potential(p) } 
            &\geqslant \sum_{ \substack{ p \in \periodorbit[mK + k + N] \\ \deltameasure[mK + k + N]{p} \in \mathcal{G} } }  \myexp[\big]{ S_{mK + k + N}^{f} \potential(p) } \\
            &\geqslant \sum_{ \substack{ \widehat{p} \in \periodorbit[m][\widehat{f}] \\ \itedeltameasure \in \iteopenset } } \myexp[\big]{ S_{mK + k + N}^{f} \potential(\ptperiodic) },
        \end{split}
    \]
    where $\{ w_{j} \}_{j \in \n}$ is an arbitrary sequence of real-valued functions on $S^2$ with $w_{j}(x) \in \bigl[ 1, \deg_{f^{j}}(x) \bigr]$ for each $j \in \n$ and each $x \in S^{2}$.
    Then by Lemma~\ref{lem:distortion_lemma}, 
    \[
        \begin{split}
            \sum_{ \substack{ p \in \periodorbit[n] \\ \deltameasure[n]{p} \in \mathcal{G} } } w_{n}(p) \myexp[\big]{ S_{n}^{f} \potential(p) } 
            &\geqslant e^{-(K + N)\uniformnorm{\potential}} \sum_{ \substack{ \widehat{p} \in \periodorbit[m][\widehat{f}] \\ \itedeltameasure \in \iteopenset } } \myexp[\big]{ S_{mK}^{f} \potential(\ptperiodic) }  \\
            &\geqslant e^{-C} \sum_{ \substack{ \widehat{p} \in \periodorbit[m][\widehat{f}] \\ \itedeltameasure \in \iteopenset } } \myexp[\big]{ S_{mK}^{f} \potential(\widehat{p}) }
            = e^{-C} \sum_{ \substack{ \widehat{p} \in \periodorbit[m][\widehat{f}] \\ \itedeltameasure \in \iteopenset } } \myexp[\big]{ S_{m}^{\widehat{f}} \itepotential(\widehat{p}) },
        \end{split}
    \]
    where $C \define (K + N)\uniformnorm{\potential} + \Cdistortion$ and $C_{1} \geqslant 0$ is the constant defined in \eqref{eq:const:C_1} in Lemma~\ref{lem:distortion_lemma} that depends only on $f$, $\mathcal{C}$, $d$, $\phi$, and $\holderexp$.
    Thus by \eqref{eq:def:Periodic points distribution}, we have
    \begin{align*}
        \frac{1}{n} \log \Omega_{n}(\mathcal{G}) 
            &= \frac{1}{n} \log \sum_{ \substack{ p \in \periodorbit[n] \\ \deltameasure[n]{p} \in \mathcal{G} } } w_{n}(p) \myexp[\big]{ S_{n}^{f} \potential(p) }
                - \frac{1}{n} \log \sum_{ p' \in \periodorbit } w_{n}(p') \myexp[\big]{ S_{n}^{f} \potential(p') } \\
            &\geqslant \frac{1}{n} \log \sum_{ \substack{ \widehat{p} \in \periodorbit[m][\widehat{f}] \\ \itedeltameasure \in \iteopenset } } \myexp[\big]{ S_{m}^{\widehat{f}} \itepotential(\widehat{p}) } 
                - \frac{1}{n} \log \sum_{ p' \in \periodorbit } w_{n}(p') \myexp[\big]{ S_{n}^{f} \potential(p') } - \frac{C}{n} \\
            &= \frac{1}{n} \log \widehat{\Omega}_{m}(\iteopenset) 
                + \frac{1}{n} \log \sum_{ \widehat{p}' \in \periodorbit[m][\widehat{f}] } \myexp[\big]{ S_{m}^{\widehat{f}} \itepotential(\widehat{p}') }  \\
                &\quad - \frac{1}{n} \log \sum_{ p' \in \periodorbit } w_{n}(p') \myexp[\big]{ S_{n}^{f} \potential(p') } - \frac{C}{n},
    \end{align*}
    where $\sequen[\big]{\widehat{\Omega}_{j}}[j]$ is defined by setting $w_{j}(x) = 1$ for each $j \in \n$ and each $x \in S^2$ and replacing $f$ with $\widehat{f}$ and $\potential$ with $\itepotential$ in the definition of $\sequen{\Omega_{j}}[j]$ (recall \eqref{eq:def:Periodic points distribution}).
    Since $\widehat{f}$ has an $\widehat{f}$-invariant Jordan curve $\mathcal{C} \subseteq S^{2}$ with $\post{\widehat{f}} \subseteq \mathcal{C}$, Proposition~\ref{prop:lower bound for fundamental open sets} holds for $\widehat{f}$.
    Therefore, by Proposition~\ref{prop:characterization of pressure weighted periodic points}, we get 
    \[
        \begin{split}
            \liminf_{n \to +\infty} \frac{1}{n} \log \Omega_{n}(\mathcal{G}) 
            &\geqslant \frac{1}{K} \liminf_{m \to +\infty} \frac{1}{m} \log \widehat{\Omega}_{m}(\iteopenset) + \frac{1}{K} P(\widehat{f}, \itepotential) - P(f, \potential)  \\
            &= \frac{1}{K} \liminf_{m \to +\infty} \frac{1}{m} \log \widehat{\Omega}_{m}(\iteopenset) 
            \geqslant \frac{1}{K} \itefreeenergy(\mu) = \freeenergy(\mu).
        \end{split}
    \]

    \smallskip
    \emph{Case 3 (Iterated preimages):} $\xi_{n} = \Omega_{n}(x_{n})$ for each $n \in \n$ (recall \eqref{eq:def:Iterated preimages distribution}), where $\{ x_{n} \}_{n \in \n}$ is an arbitrary sequence of points in $S^{2}$.
    \smallskip

    \def\ptpreimage{y(k, \widehat{y})} 
    \def\tilepreimage{X^{mK}(\widehat{y})} 
    \def\itedeltameasure{\widehat{V}_{m}(\widehat{y})}

    \def\preimageset{f^{-mK - k - N}(x_{mK + k + N})}
    \def\itepreimageset{\widehat{f}^{-m}(x_0)}

    By Lemma~\ref{lem:good location n-tile has high level periodic point and preimage point}~\ref{item:lem:good location n-tile has high level periodic point and preimage point:preimage point}, there exists a constant $N \in \n$ depending only on $f$ and $\mathcal{C}$ such that for each integer $\ell \geqslant N$, each $m \in \n$, and each $X^{mK} \in \Tile{mK}$, there exists a preimage of $x_{mK + \ell}$ under $f^{mK + \ell}$ in $\inte{X^{mK}}$.

    We fix a point $x_0 \in S^{2} \setminus \post{f}$. Note that $\deg_{f^{n}}(y) = 1$ for each $n \in \n$ and each $y \in f^{-n}(x_{0})$.

    For each $m \in \n$, each $k \in \zeroton[K - 1]$, and each $\widehat{y} \in \itepreimageset$, let $\tilepreimage \in \Tile{mK}$ be an $mK$-tile that contains $\widehat{y}$ and let $\ptpreimage$ be a preimage of $x_{mK + k + N}$ under $f^{mK + k + N}$ in $\inte{\tilepreimage}$.
    By Proposition~\ref{prop:properties cell decompositions}~\ref{item:prop:properties cell decompositions:cellular}, for each $n \in \n$ and each $X^n \in \Tile{n}$, $f^{n}|_{X^{n}}$ is a homeomorphism of $X^{n}$ onto $f^{n}(X^n)$.
    This implies that for each $m \in \n$ and each $k \in \zeroton[K - 1]$, the map $\widehat{y} \mapsto \ptpreimage$ from $\itepreimageset$ to $\preimageset$ is injective.

    By the same reasoning as in Case~2, there exists $n_{0} \in \n$ such that for each integer $m \geqslant n_{0}$, each $k \in \zeroton[K - 1]$, and each $\widehat{y} \in \itepreimageset$ with $\itedeltameasure \in \iteopenset$, it follows that $\deltameasure[mK + k + N]{\ptpreimage} \in \mathcal{G}$.
    
    We now prove the lower bound.
    The proof is essentially the same as in Case~2, and we retain this proof for the convenience of the reader.

    For each integer $n \geqslant N$, we set $m \define \lfloor (n - N) / K \rfloor$ and write $n = mK + k + N$ for some integer $k \in \zeroton[K - 1]$.
    By the arguments above, for each sufficiently large $n \in \n$ that satisfies $m \geqslant n_0$, we have
    \[
        \begin{split}
            \sum_{ \substack{ y \in f^{-n}(x_{n}) \\ \deltameasure[n]{y} \in \mathcal{G} } } w_{n}(y) \myexp[\big]{ S_{n}^{f} \potential(y) } 
            &\geqslant \sum_{ \substack{ y \in \preimageset \\ \deltameasure[mK + k + N]{y} \in \mathcal{G} } }  \myexp[\big]{ S_{mK + k + N}^{f} \potential(y) } \\
            &\geqslant \sum_{ \substack{ \widehat{y} \in \itepreimageset \\ \itedeltameasure \in \iteopenset } } \myexp[\big]{ S_{mK + k + N}^{f} \potential(\ptpreimage) },
        \end{split}
    \]
    where $\{ w_{j} \}_{j \in \n}$ is an arbitrary sequence of real-valued functions on $S^2$ with $w_{j}(x) \in \bigl[ 1, \deg_{f^{j}}(x) \bigr]$ for each $j \in \n$ and each $x \in S^{2}$.
    Then by Lemma~\ref{lem:distortion_lemma}, 
    \[
        \begin{split}
            \sum_{ \substack{ y \in f^{-n}(x_{n}) \\ \deltameasure[n]{y} \in \mathcal{G} } } w_{n}(y) \myexp[\big]{ S_{n}^{f} \potential(y) } 
            &\geqslant e^{-(K + N)\uniformnorm{\potential}}  \sum_{ \substack{ \widehat{y} \in \itepreimageset \\ \itedeltameasure \in \iteopenset } } \myexp[\big]{ S_{mK}^{f} \potential(\ptpreimage) }  \\
            &\geqslant e^{-C} \sum_{ \substack{ \widehat{y} \in \itepreimageset \\ \itedeltameasure \in \iteopenset } } \myexp[\big]{ S_{mK}^{f} \potential(\widehat{y}) }  
            = e^{-C} \sum_{ \substack{ \widehat{y} \in \itepreimageset \\ \itedeltameasure \in \iteopenset } } \myexp[\big]{ S_{m}^{\widehat{f}} \itepotential(\widehat{y}) },
        \end{split}
    \]
    where the constant $C$ is the same as in Case~2.
    Thus by \eqref{eq:def:Iterated preimages distribution}, we have
    \begin{align*}
        \frac{1}{n} \log \Omega_{n}(x_{n})(\mathcal{G}) 
        &= \frac{1}{n} \log \sum_{ \substack{ y \in f^{-n}(x_{n}) \\ \deltameasure[n]{y} \in \mathcal{G} } } w_{n}(y) \myexp[\big]{ S_{n}^{f} \potential(y) } 
            - \frac{1}{n} \log \sum_{ y' \in f^{-n}(x_{n}) } w_{n}(y') \myexp[\big]{ S_{n}^{f} \potential(y') } \\
        &\geqslant \frac{1}{n} \log \sum_{ \substack{ \widehat{y} \in \itepreimageset \\ \itedeltameasure \in \iteopenset } } \myexp[\big]{ S_{m}^{\widehat{f}} \itepotential(\widehat{y}) }
            - \frac{1}{n} \log \sum_{ y' \in f^{-n}(x_{n}) } w_{n}(y') \myexp[\big]{ S_{n}^{f} \potential(y') } - \frac{C}{n} \\
        &= \frac{1}{n} \log \widehat{\Omega}_{m}(x_{0})(\iteopenset) 
            + \frac{1}{n} \log \sum_{ \widehat{y}' \in \itepreimageset } \myexp[\big]{ S_{m}^{\widehat{f}} \itepotential(\widehat{y}') }  \\
            &\quad - \frac{1}{n} \log \sum_{ y' \in f^{-n}(x_{n}) } w_{n}(y') \myexp[\big]{ S_{n}^{f} \potential(y') } - \frac{C}{n},
    \end{align*}
    
    where $\sequen[\big]{\widehat{\Omega}_{j}(x_{0})}[j]$ is defined by setting $w_{j} = \indicator{S^2}$ and $x_{j} = x_{0}$ for each $j \in \n$ and replacing $f$ with $\widehat{f}$ and $\potential$ with $\itepotential$ in the definition of $\sequen{\Omega_{j}(x_{j})}[j]$ (recall \eqref{eq:def:Iterated preimages distribution}).
    Since $\widehat{f}$ has an $\widehat{f}$-invariant Jordan curve $\mathcal{C} \subseteq S^{2}$ with $\post{\widehat{f}} \subseteq \mathcal{C}$, Proposition~\ref{prop:lower bound for fundamental open sets} holds for $\widehat{f}$.
    Therefore, by Proposition~\ref{prop:characterization of pressure iterated preimages arbitrary weight}, we get 
    \[
        \begin{split}
            \liminf_{n \to +\infty} \frac{1}{n} \log \Omega_{n}(x_{n})(\mathcal{G}) 
            &\geqslant \frac{1}{K} \liminf_{m \to +\infty} \frac{1}{m} \log \widehat{\Omega}_{m}(x_{0})(\iteopenset) + \frac{1}{K} P(\widehat{f}, \itepotential) - P(f, \potential)  \\
            &= \frac{1}{K} \liminf_{m \to +\infty} \frac{1}{m} \log \widehat{\Omega}_{m}(x_{0})(\iteopenset)
            \geqslant \frac{1}{K} \itefreeenergy(\mu) = \freeenergy(\mu).
        \end{split}
    \]

    \smallskip
        
    The proof is complete.
\end{proof}

\subsubsection{End of proof of the lower bound}%
\label{ssub:End of proof of the lower bound}

\begin{proof}[Proof of Proposition~\ref{prop:lower bound for open sets}]
    Let $\mathcal{G}$ be a non-empty open subset of $\probsphere$.
    Since subsets of $\probsphere$ of the form $\bigl\{ \mu \in \probsphere \describe \int \! \vecfun \,\mathrm{d}\mu > \vecavg \bigr\}$ with $\ell \in \n$, $\vecfun \in \multispace$, $\vecavg \in \real^{\ell}$ constitute a base of the weak$^{*}$-topology of $\probsphere$, we can write $\mathcal{G}$ as a union $\mathcal{G} = \bigcup_{\lambda} \mathcal{G}_{\lambda}$ of sets of this form.
    For each $\mathcal{G}_{\lambda}$, it follows from Proposition~\ref{prop:lower bound for fundamental open sets} that\[
        \liminf_{n \to +\infty} \frac{1}{n} \log \xi_{n}(\mathcal{G_{\lambda}}) \geqslant \sup_{ \mathcal{G}_{\lambda} } \freeenergy,
    \] 
    for each sequence $\sequen{\xi_{n}} \in \bigl\{ \sequen{\birkhoffmeasure}, \sequen{\Omega_{n}}, \sequen{\Omega_{n}(x_{n})} \bigr\}$.
    Then by Remark~\ref{rem:rate function lower semi-continuous regularization}, we get \[
        \liminf_{n \to +\infty} \frac{1}{n} \log \xi_{n}(\mathcal{G}) \geqslant \sup_{\lambda} \sup_{ \mathcal{G}_{\lambda} } \freeenergy = \sup_{ \mathcal{G} } \freeenergy = - \inf_{\mathcal{G}} \ratefun
    \]
    and complete the proof.
\end{proof}

\subsection{Large deviation upper bound}%
\label{sub:Large deviation upper bound}

In this subsection, we prove the upper bound \eqref{eq:level-2 LDP:upper bound} for all closed sets, with the main result being Proposition~\ref{prop:upper bound for closed sets}.
Based on a preliminary result in Section~\ref{ssub:Construction of suitable invariant measures}, we prove upper bounds for certain fundamental closed subsets of $\probsphere$ in Section~\ref{ssub:Upper bound for fundamental closed sets}.
Finally, in Section~\ref{ssub:End of proof of the upper bound} we establish Proposition~\ref{prop:upper bound for closed sets}.  

\begin{proposition}    \label{prop:upper bound for closed sets}
    Let $f$, $d$, $\phi$ satisfy the Assumptions in Section~\ref{sec:The Assumptions}.
    Then for each sequence $\sequen{\xi_{n}} \in \bigl\{ \sequen{\birkhoffmeasure}, \sequen{\Omega_{n}}, \sequen{\Omega_{n}(x_{n})} \bigr\}$ (as defined in Theorem~\ref{thm:level-2 large deviation principle}), we have
    \[
        \limsup_{n \to +\infty} \frac{1}{n} \log \xi_{n}(\mathcal{K}) \leqslant - \inf_{\mathcal{K}} \ratefun  \quad \text{for all closed } \mathcal{K} \subseteq \probsphere,
    \]
    where $\ratefun \colon \probsphere \mapping [0, +\infty]$ is defined in \eqref{eq:def:rate function}. 
\end{proposition}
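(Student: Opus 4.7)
The plan is the standard LDP upper bound strategy: reduce by compactness to local upper bounds on fundamental open neighborhoods, then handle each such neighborhood by building a strongly primitive subsystem whose equilibrium state realizes the correct bound. Fix $\varepsilon > 0$. Since $\mathcal{K}$ is closed in the compact space $\probsphere$, the identification of $-\ratefun$ with the upper semi-continuous regularization of $\freeenergy$ (Remark~\ref{rem:rate function lower semi-continuous regularization}) lets one cover $\mathcal{K}$ by finitely many open sets $\mathcal{G}_{1}, \dots, \mathcal{G}_{k}$ with $\sup_{\mathcal{G}_{i}} \freeenergy \leqslant -\inf_{\mathcal{K}} \ratefun + \varepsilon$. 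Because $\frac{1}{n}\log(a_{1} + \cdots + a_{k}) \to \max_{i} \frac{1}{n} \log a_{i}$ for finitely many summands, Proposition~\ref{prop:upper bound for closed sets} reduces (on letting $\varepsilon \to 0$) to the local upper bound
\[
    \limsup_{n \to +\infty} \frac{1}{n} \log \xi_{n}(\mathcal{G}) \leqslant \sup_{\overline{\mathcal{G}}} \freeenergy
\]
for every fundamental open set $\mathcal{G} = \bigl\{ \nu \in \probsphere \describe \int \! \vecfun \,\mathrm{d}\nu > \vecavg \bigr\}$ with $\ell \in \n$, $\vecfun \in \multispace$, and $\vecavg \in \real^{\ell}$.

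Exactly as in Subsection~\ref{sub:Large deviation lower bound}, the absence of an $f$-invariant Jordan curve containing $\post{f}$ is circumvented by passing to a sufficiently high iterate $\widehat{f} = f^{K}$ admitting such a curve $\mathcal{C}$ via Lemma~\ref{lem:invariant_Jordan_curve}. Writing $n = mK + r$ with $0 \leqslant r < K$, the distortion estimate of Lemma~\ref{lem:distortion_lemma}, the Gibbs property of $\mu_{\potential}$ (Proposition~\ref{prop:equilibrium state is gibbs measure}), and the pressure identity $P(\widehat{f}, S_{K}^{f} \potential) = K P(f, \potential)$ reduce the level-$n$ bound for $(f, \potential)$ to the corresponding level-$m$ bound for $(\widehat{f}, S_{K}^{f} \potential)$ at the cost of a bounded multiplicative factor, so one may henceforth assume $f(\mathcal{C}) \subseteq \mathcal{C}$. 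In this setting, for the Birkhoff case the Gibbs estimate~\eqref{eq:equilibrium state is gibbs measure} yields
\[
    \birkhoffmeasure(\mathcal{G}) \leqslant C_{\mu_{\potential}} \, e^{-n P(f, \potential)} \sum_{X^{n} \in \mathfrak{X}_{n}} \sup_{x \in X^{n}} \myexp{S_{n} \potential(x)},
\]
where $\mathfrak{X}_{n} \subseteq \Tile{n}$ collects the $n$-tiles on which some point has empirical measure in $\mathcal{G}$; analogous bounds for $\sequen{\Omega_{n}}$ and $\sequen{\Omega_{n}(x_{n})}$ follow from Lemma~\ref{lem:good location n-tile has high level periodic point and preimage point} combined with Propositions~\ref{prop:characterization of pressure weighted periodic points} and~\ref{prop:characterization of pressure iterated preimages arbitrary weight}. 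In each case the right-hand sum equals the partition function $Z_{n}(F_{n}, \potential)$ of the subsystem $F_{n} \in \subsystem$ with domain $\bigcup \mathfrak{X}_{n}$ (cf.\ Definition~\ref{def:pressure for subsystem}).

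It remains to show $\limsup_{n \to +\infty} \frac{1}{n} \log Z_{n}(F_{n}, \potential) \leqslant P(f, \potential) + \sup_{\overline{\mathcal{G}}} \freeenergy$. For each $n$, we enlarge $\mathfrak{X}_{n}$ by a uniformly bounded family of tiles (chosen via Lemma~\ref{lem:pair in the interior} and the primitivity constructions in the proof of Theorem~\ref{thm:entropy dense}) to obtain a strongly primitive subsystem $\widetilde{F}_{n}$ whose partition function exceeds $Z_{n}(F_{n}, \potential)$ only by a subexponential factor. Theorem~\ref{thm:existence uniqueness and properties of equilibrium state} then furnishes an equilibrium state for $\widetilde{F}_{n}|_{\limitset(\widetilde{F}_{n}, \mathcal{C})}$ with measure-theoretic pressure equal to $\frac{1}{n} \log Z_{n}(\widetilde{F}_{n}, \potential) + o(1)$; we push these forward to $S^{2}$, average along the $f$-orbit to produce $f$-invariant measures $\nu_{n}$, and pass to a weak$^{*}$-accumulation measure $\nu \in \overline{\mathcal{G}}$. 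The combinatorial construction must simultaneously guarantee $\int \vecfun \,\mathrm{d}\nu \geqslant \vecavg$ and the pressure inequality $\limsup_{n} \frac{1}{n} \log Z_{n}(F_{n}, \potential) \leqslant h_{\nu}(f) + \int \! \potential \,\mathrm{d}\nu$, after which $\freeenergy(\nu) \leqslant \sup_{\overline{\mathcal{G}}} \freeenergy$ closes the argument. The main obstacle is this last entropy inequality: in the presence of periodic critical points the entropy map is not upper semi-continuous (Theorem~\ref{thm:upper semi-continuous iff no periodic critical points}), so the required lower bound on $h_{\nu}(f)$ cannot be inferred from the weak$^{*}$-convergence $\nu_{n} \weakconverge \nu$ alone and must instead be engineered into the construction of $\nu$ itself, via the vector-valued tile-counting machinery for strongly primitive subsystems developed in~\cite{shi2023thermodynamic,shi2024uniqueness}.
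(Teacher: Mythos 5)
Your overall architecture matches the paper's: reduce to fundamental sets by compactness, pass to an iterate with an invariant curve, convert the measure/weighted sums into tile counts via the Gibbs property and the flower/injection arguments, and invoke the subsystem machinery of \cite{shi2023thermodynamic,shi2024uniqueness}. But the final step, which you yourself flag as "the main obstacle," is a genuine gap, and the route you sketch for it is the one that does not work. You propose to extract equilibrium states $\nu_{n}$ of level-$n$ subsystems, pass to a weak$^{*}$-accumulation point $\nu \in \overline{\mathcal{G}}$, and then deduce $\limsup_{n} \frac{1}{n}\log Z_{n}(F_{n},\potential) \leqslant h_{\nu}(f) + \int \potential \,\mathrm{d}\nu$. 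Since the entropy map is not upper semi-continuous when $f$ has periodic critical points (Theorem~\ref{thm:upper semi-continuous iff no periodic critical points}), no lower bound on $h_{\nu}(f)$ follows from $\nu_{n} \weakconverge \nu$, and saying the bound "must be engineered into the construction of $\nu$ itself" is naming the problem, not solving it. Indeed the whole point of the statement is that $\ratefun \neq -\freeenergy$ in general, so a bound of the form $-\freeenergy(\nu)$ for a single limit measure $\nu$ is not even the right target.

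The resolution in the paper is to never take a limit of measures. For each fixed $n$, Proposition~\ref{prop:construction of suitable invariant measures for vector continuous function} (built on the pair sets $\pair{\vecavg - \varepsilon/2}$, made strongly primitive via Lemma~\ref{lem:pair in the interior}, and the subsystem variational principle applied to the preimage sums) produces an $f$-invariant measure $\mu_{n}$ satisfying \emph{both} $\int \vecfun \,\mathrm{d}\mu_{n} \geqslant \vecavg - \varepsilon/2 - 2D_{n}(\vecfun)/n$ \emph{and} $\mu_{\potential}\bigl(\bigcup \pair{\vecavg - \varepsilon/2}\bigr) \leqslant C \myexp{(P_{\mu_{n}}(f,\potential) - P(f,\potential))n}$. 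For $n$ large each $\mu_{n}$ lies in the open set $\bigl\{\mu \describe \int \vecfun \,\mathrm{d}\mu > \vecavg - \varepsilon\bigr\}$, so $\freeenergy(\mu_{n})$ is bounded above by the supremum of $\freeenergy$ over that set, term by term; taking $\limsup_{n}$ requires no convergence of the $\mu_{n}$ and no semi-continuity. This also explains why the correct local bound is against $\sup\bigl\{\freeenergy(\mu) \describe \int\vecfun\,\mathrm{d}\mu > \vecavg - \varepsilon\bigr\}$ rather than $\sup_{\overline{\mathcal{G}}}\freeenergy$ as you wrote: the construction only places $\mu_{n}$ in an $\varepsilon$-fattening of the closed set, and the $\varepsilon$ is removed only at the very end through $\inf_{\mathcal{G} \supseteq \mathcal{K}} \sup_{\mathcal{G}}(-\ratefun) = -\inf_{\mathcal{K}}\ratefun$, which uses the lower semi-continuity of $\ratefun$ (Remark~\ref{rem:rate function lower semi-continuous regularization}) rather than any property of $\freeenergy$.
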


\subsubsection{Construction of suitable invariant measures}%
\label{ssub:Construction of suitable invariant measures}

We use the notations as introduced in the beginning of Section~\ref{sec:Entropy density}.

\def \pair#1{ \mathbf{P}^n(#1) }  \def\unionpair#1{P^n(#1)} 
\begin{definition}    \label{def:pair for vector observation} 
    Let $f$, $\mathcal{C}$, $e^0$ satisfy the Assumptions in Section~\ref{sec:The Assumptions}. 
    Consider $\ell \in \n$ and $\vecfun \in \multispace$.    
    For each integer $n \in \n$ and each $\vecavg \in \real^{\ell}$ we define
    \[
        \pair{\vecavg} \define \bigl\{ P^n \in \mathbf{P}^n(f,\mathcal{C}, e^0) \describe \exists x \in P^n \text{ s.t. } n^{-1} S_n \vecfun(x) \geqslant \vecavg \bigr\}.
    \]
    Here $\mathbf{P}^n(f,\mathcal{C}, e^0)$ is the set of $n$-pairs (recall Definition~\ref{def:pair structures}).
\end{definition}

We first establish a generalization of \cite[Lemma~7.15~(ii)]{shi2023thermodynamic}. 

\begin{lemma}    \label{lem:estimate for Birkhoff sum on pair for vector}
    Let $f$, $\mathcal{C}$, $e^0$ satisfy the Assumptions in Section~\ref{sec:The Assumptions}.
    Consider $\ell \in \n$, $\vecfun \in \multispace$, and $\vecavg \in \real^{\ell}$.
    Then for each $n \in \n$ and each $x \in \bigcup \pair{\vecavg}$ we have $S_n \vecfun(x) \geqslant  n \vecavg - 2 D_{n}(\vecfun)$.
    Here $D_{n}(\vecfun)$ is defined in \eqref{eq:def:distortion of potential on tiles}.
\end{lemma}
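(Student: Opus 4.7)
The plan is a direct reduction to the scalar distortion bound on each tile, exploiting the fact that the two tiles forming an $n$-pair share an $n$-edge and hence admit a common ``bridging'' point. First I would fix $n \in \n$ and a pair $P^n \in \pair{\vecavg}$, and write $P^n = X^n_{\white} \cup X^n_{\black}$ with $X^n_{\white} \in \mathbf{X}^n_{\white}$, $X^n_{\black} \in \mathbf{X}^n_{\black}$, and (by Definition~\ref{def:pair structures}) $e^n \define X^n_{\white} \cap X^n_{\black} \cap f^{-n}(e^0) \in \mathbf{E}^n(f,\mathcal{C})$. In particular $e^n \subseteq X^n_{\white} \cap X^n_{\black}$, so I may choose a point $z \in e^n$ that lies in \emph{both} tiles simultaneously. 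Since $P^n \in \pair{\vecavg}$, there is a witness $y \in P^n$ with $n^{-1} S_n \vecfun(y) \geqslant \vecavg$, i.e., $S_n \varphi_j(y) \geqslant n \alpha_j$ for each $j \in \{1, \dots, \ell\}$.

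Now I would fix an arbitrary $x \in P^n$. There are two cases according to whether $x$ and the witness $y$ lie in the same tile or opposite tiles of the pair. If they lie in the same tile, say both in $X^n_{\white}$, then the definition of $D_n(\vecfun) = \max_{1 \leqslant j \leqslant \ell} D_n(\varphi_j)$ in \eqref{eq:def:distortion of potential on tiles} gives $|S_n \varphi_j(x) - S_n \varphi_j(y)| \leqslant D_n(\vecfun)$ for each $j$. If instead $x$ and $y$ lie in opposite tiles (say $x \in X^n_{\white}$ and $y \in X^n_{\black}$, the other case being symmetric), I insert the bridging point $z$: since $\{x, z\} \subseteq X^n_{\white}$ and $\{z, y\} \subseteq X^n_{\black}$, applying \eqref{eq:def:distortion of potential on tiles} twice yields
\[
    |S_n \varphi_j(x) - S_n \varphi_j(z)| \leqslant D_n(\vecfun) \quad\text{and}\quad |S_n \varphi_j(z) - S_n \varphi_j(y)| \leqslant D_n(\vecfun),
\]
whence $|S_n \varphi_j(x) - S_n \varphi_j(y)| \leqslant 2 D_n(\vecfun)$ by the triangle inequality. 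Either way, $S_n \varphi_j(x) \geqslant S_n \varphi_j(y) - 2 D_n(\vecfun) \geqslant n \alpha_j - 2 D_n(\vecfun)$ for each coordinate $j$, which is precisely the componentwise statement $S_n \vecfun(x) \geqslant n \vecavg - 2 D_n(\vecfun)$.

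There is no real obstacle here; the argument is purely combinatorial, and the only structural input needed beyond the definition of $D_n$ is the existence of the shared edge $e^n \subseteq X^n_{\white} \cap X^n_{\black}$, which is built into the definition of $n$-pairs. The worst-case factor of $2$ in the bound comes precisely from the case where $x$ and the witness $y$ sit in different tiles and must be connected through a point on $e^n$; this is why the constant cannot in general be improved to $D_n(\vecfun)$ (as it can for tiles, \cite[Lemma~7.15~(ii)]{shi2023thermodynamic}, of which the present lemma is the vector-valued pair analog).
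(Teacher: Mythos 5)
Your proof is correct and follows essentially the same route as the paper's: both arguments bridge the two tiles of the pair through a point on the shared $n$-edge $e^n$ and apply the tile-wise distortion bound \eqref{eq:def:distortion of potential on tiles} twice, which is exactly where the factor $2$ comes from. Your explicit case split (same tile versus opposite tiles) is a harmless elaboration of what the paper does in one line.
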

\begin{proof}
    Let $n \in \n$ and $P^n =  X^n_{\black} \cup X^n_{\white} \in \pair{\vecavg}$ be arbitrary. 
    It suffices to show that $S_n \vecfun(x) \geqslant  n \vecavg - 2 D_{n}(\vecfun)$ for each $x \in P^n$.
    By Definition~\ref{def:pair structures}, there exists $e^{n} \in \Edge{n}$ with $f^{n}(e^{n}) = e^{0}$ such that $e^n \subseteq X^n_{\black} \cap X^n_{\white}$.
    We fix an arbitrary point $x_{e} \in e^n$. 
    By the definition of $\pair{\vecavg}$, there exists $x_{0} \in P^n = X^n_{\black} \cup X^n_{\white}$ such that $\frac{1}{n} S_n \vecfun(x_{0}) \geqslant \vecavg$.
    Since $x_e \in X^n_{\black} \cap X^n_{\white}$, we have $S_n \vecfun(x_e) \geqslant S_n \vecfun(x_{0}) - D_{n}(\vecfun)$.
    Then for each $x \in P^n = X^n_{\black} \cup X^n_{\white}$,
    \begin{equation*}
        S_n\vecfun(x) \geqslant S_n\vecfun(x_e) - D_{n}(\vecfun)
        \geqslant S_n \vecfun(x_{0}) - 2D_{n}(\vecfun) 
        \geqslant n \vecavg - 2 D_{n}(\vecfun).
    \end{equation*}
    The proof is complete.
\end{proof}

The following lemma is analog to \cite[Proposition~7.16]{shi2023thermodynamic}. 
The proof is essentially the same, and we retain this proof for the convenience of the reader.

\begin{proposition} \label{prop:construction of suitable invariant measures for vector continuous function}
    Let $f$, $\mathcal{C}$, $d$, $\potential$, $\holderexp$, $\mu_{\potential}$, $e^{0}$ satisfy the Assumptions in Section~\ref{sec:The Assumptions}.
    We assume in addition that $f(\mathcal{C}) \subseteq \mathcal{C}$.
    Consider $n \in \n$, $\ell \in \n$, $\vecfun \in \multispace$, and $\vecavg \in \real^{\ell}$.
    Suppose that for each $\colour \in \colours$ there exists $P^{n}_{\colour} \in \pair{\vecavg}$ such that $P^{n}_{\colour} \subseteq \inte[\big]{X^{0}_{\colour}}$.
    Then there exists a measure $\mu \in \invmea$ such that
    \[
        \int \! \vecfun \,\mathrm{d}\mu \geqslant \vecavg - \frac{2 D_{n}(\vecfun) }{n}     \quad \text{ and }\quad
        \mu_{\phi} \parentheses[\Big]{ \bigcup \pair{\vecavg} }  \leqslant C \myexp{ (P_{\mu}(f, \phi) - P(f,\phi))n},
    \] 
    where $D_{n}(\vecfun)$ is defined in \eqref{eq:def:distortion of potential on tiles} and $C = 2C_{\mu_{\phi}}\myexp[\big]{ \Cdistortion }$.
    Here $C_{\mu_{\phi}}$ is the constant from Proposition~\ref{prop:equilibrium state is gibbs measure} and $C_1 \geqslant 0$ is the constant defined in \eqref{eq:const:C_1} in Lemma~\ref{lem:distortion_lemma}.
\end{proposition}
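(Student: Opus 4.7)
The plan is to build a strongly primitive subsystem of $\widehat{f} \define f^{n}$ out of the $n$-pairs in $\pair{\vecavg}$ and then invoke the thermodynamic formalism for subsystems from \cite{shi2023thermodynamic,shi2024uniqueness}, following the template of \cite[Proposition~7.16]{shi2023thermodynamic}.

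First I would regard $\widehat{f} \define f^{n}$ as an expanding Thurston map (Remark~\ref{rem:Expansion_is_independent}), whose $1$-tiles coincide with the $n$-tiles of $f$ by Proposition~\ref{prop:properties cell decompositions}~\ref{item:prop:properties cell decompositions:iterate of cell decomposition}; observe that $\widehat{f}(\mathcal{C}) \subseteq \mathcal{C}$ still holds. Let $\mathbf{T}$ denote the collection of all $n$-tiles of $f$ that sit inside some pair in $\pair{\vecavg}$, so that $\card{\mathbf{T}} = 2 \card{\pair{\vecavg}}$ by Lemma~\ref{lem:pairs are disjoint}, and set $F \define \widehat{f}|_{\bigcup \mathbf{T}}$, a subsystem of $\widehat{f}$ with respect to $\mathcal{C}$. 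The hypothesis that $P^{n}_{\colour} \subseteq \inte{X^{0}_{\colour}}$ supplies, for each color $\colour \in \colours$, both a black and a white $1$-tile of $F$ contained in $\inte{X^{0}_{\colour}}$; combining this with Lemma~\ref{lem:strongly primitive:tile in interior tile for high enough level} then yields the strong primitivity of $F$.

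Next, set $\Phi \define S_{n}^{f} \phi$, which is \holder continuous with respect to a visual metric on $S^2$ for $\widehat{f}$. Theorem~\ref{thm:existence uniqueness and properties of equilibrium state} applied to $(\widehat{f}, \mathcal{C}, F, \Phi)$ produces a unique equilibrium state $\widehat{\mu}$ for $F|_{\limitset(F,\mathcal{C})}$ and $\Phi|_{\limitset(F,\mathcal{C})}$, satisfying
\[
    h_{\widehat{\mu}}\bigl(F|_{\limitset(F,\mathcal{C})}\bigr) + \int \! \Phi \,\mathrm{d}\widehat{\mu} = P(F, \Phi).
\]
Averaging over orbits, define $\mu \define \frac{1}{n} \sum_{j = 0}^{n - 1} f_{*}^{j} \widehat{\mu}$. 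Lemma~\ref{lem:entropy of push-forward average} gives $\mu \in \invmea$ together with $n h_{\mu}(f) = h_{\widehat{\mu}}(f^{n}) = h_{\widehat{\mu}}\bigl(F|_{\limitset(F,\mathcal{C})}\bigr)$; coupling this with $\int S_{n}\phi \,\mathrm{d}\widehat{\mu} = n \int \phi \,\mathrm{d}\mu$ yields the pivotal identity $n P_{\mu}(f, \phi) = P(F, \Phi)$. For the first conclusion, since $\supp \widehat{\mu} \subseteq \limitset(F, \mathcal{C}) \subseteq \bigcup \mathbf{T} \subseteq \bigcup \pair{\vecavg}$, Lemma~\ref{lem:estimate for Birkhoff sum on pair for vector} gives $S_{n}\vecfun(x) \geqslant n \vecavg - 2 D_{n}(\vecfun)$ pointwise on $\supp \widehat{\mu}$; integrating against $\widehat{\mu}$ and dividing by $n$ delivers $\int \vecfun \,\mathrm{d}\mu \geqslant \vecavg - 2 D_{n}(\vecfun)/n$.

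For the second conclusion, Theorem~\ref{thm:properties of equilibrium state}~\ref{item:thm:properties of equilibrium state:edge measure zero} makes the boundaries of $n$-tiles $\mu_{\phi}$-null, so $\mu_{\phi}(\bigcup \pair{\vecavg}) = \sum_{X^{n} \in \mathbf{T}} \mu_{\phi}(X^{n})$. The Gibbs estimate of Proposition~\ref{prop:equilibrium state is gibbs measure} combined with the distortion bound of Lemma~\ref{lem:distortion_lemma} then yields
\[
    \mu_{\phi}\Bigl( \bigcup \pair{\vecavg} \Bigr) \leqslant C_{\mu_{\phi}} \myexp{\Cdistortion} \myexp{-nP(f,\phi)} \sum_{X^{n} \in \mathbf{T}} \inf_{X^{n}} \myexp{S_{n}\phi},
\]
reducing everything to showing $\sum_{X^{n} \in \mathbf{T}} \inf_{X^{n}} \myexp{S_{n}\phi} \leqslant 2 \myexp{P(F, \Phi)}$. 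This is where the main obstacle lies; the way I would handle it is through the Gibbs lower bound for the subsystem equilibrium state $\widehat{\mu}$ established in \cite{shi2024uniqueness}: each $X^{n} \in \mathbf{T}$ that meets $\limitset(F,\mathcal{C})$ satisfies $\widehat{\mu}(X^{n}) \geqslant \myexp{\inf_{X^{n}} S_{n}\phi - P(F, \Phi)}$, whence the sum is bounded above by $\myexp{P(F,\Phi)} \sum_{X^{n} \in \mathbf{T}} \widehat{\mu}(X^{n}) \leqslant 2 \myexp{P(F,\Phi)}$, the factor $2$ reflecting the fact that any point of $S^{2}$ belongs to at most two $n$-tiles. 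Substituting $P(F, \Phi) = n P_{\mu}(f, \phi)$ into the preceding chain of inequalities produces exactly the claimed bound with constant $C = 2 C_{\mu_{\phi}} \myexp{\Cdistortion}$.
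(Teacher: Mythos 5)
The first half of your argument --- the subsystem $F$ built from the pairs in $\mathbf{P}^{n}(\vecavg)$, the equilibrium state $\widehat{\mu}$ supplied by Theorem~\ref{thm:existence uniqueness and properties of equilibrium state}, the averaged measure $\mu$, the identity $n P_{\mu}(f,\potential) = h_{\widehat{\mu}}\bigl(F|_{\limitset(F,\mathcal{C})}\bigr) + \int S_{n}\potential\,\mathrm{d}\widehat{\mu}$, and the first inequality via Lemma~\ref{lem:estimate for Birkhoff sum on pair for vector} --- coincides with the paper's proof. (A minor point: Lemma~\ref{lem:strongly primitive:tile in interior tile for high enough level} presupposes primitivity, so it cannot be used to establish it; strong primitivity of $F$ follows directly from Definition~\ref{def:primitivity of subsystem} by iterating the hypothesis on the pairs $P^{n}_{\colour}$.)

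The genuine gap is the step $\widehat{\mu}(X^{n}) \geqslant \myexp{\inf_{X^{n}} S_{n}\potential - P(F, S_{n}\potential)}$. No Gibbs property for subsystem equilibrium states is stated in this paper, and Theorem~\ref{thm:existence uniqueness and properties of equilibrium state} gives only existence, uniqueness, and ergodicity. Worse, the bound as written, with multiplicative constant $1$, fails already for the trivial subsystem $F = f$: Proposition~\ref{prop:equilibrium state is gibbs measure} only yields $\mu_{\potential}(X^{n}) \geqslant C_{\mu_{\potential}}^{-1}\myexp{S_{n}\potential(x) - nP(f,\potential)}$ with $C_{\mu_{\potential}} \geqslant 1$. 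Any honest Gibbs constant for $\widehat{\mu}$ would depend on the subsystem $F$, hence on $n$, $\vecfun$, and $\vecavg$; carrying such a constant through your chain of inequalities makes $C$ depend on $n$, which defeats the purpose of the proposition --- in the proof of the large deviation upper bound one needs $\frac{1}{n}\log C \to 0$. (There is also a smaller issue: $\sum_{X^{n} \in \mathbf{T}} \widehat{\mu}(X^{n}) \leqslant 2$ is unjustified if $\widehat{\mu}$ charges an $n$-vertex $v$, which lies in $2\deg_{f^{n}}(v)$ tiles rather than two.) The paper avoids any Gibbs estimate for $\widehat{\mu}$: it expresses $\sup_{\nu}\bigl\{h_{\nu}(F|_{\limitset}) + \int S_{n}\potential\,\mathrm{d}\nu\bigr\}$ as a limit of $m$-fold preimage sums over $\limitset(F,\mathcal{C})$, shows by a combinatorial claim that each $n$-pair contributes exactly one preimage at every stage, bounds each factor below by $\sum_{P^{n} \in \mathbf{P}^{n}(\vecavg)} \inf_{P^{n}} \myexp{S_{n}\potential}$, and only then invokes the Gibbs property of $\mu_{\potential}$ for the full map $f$, whose constant is uniform in $n$. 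To repair your argument you would need to replace the unit-constant Gibbs bound by an argument of this supermultiplicative type, with constants controlled solely by $C_{\mu_{\potential}}$ and the distortion bound $\Cdistortion$.
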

\begin{proof}
    Denote $\unionpair{\vecavg} \define \bigcup \pair{\vecavg}$.
    Note that the subsystem $F \define f^{n}|_{ \unionpair{\vecavg} } \in \subsystem[f^{n}]$ is strongly primitive (recall Definition~\ref{def:primitivity of subsystem}).
    We set $\limitmap \define F|_{\limitset}$, where $\limitset \define \limitset(F, \mathcal{C})$ is the tile maximal invariant set associated with $F$ with respect to $\mathcal{C}$.
    Then it follows from Propositions~\ref{prop:subsystem:preliminary properties}~\ref{item:subsystem:properties:limitset forward invariant} and \cite[Proposition~5.20]{shi2023thermodynamic} that $F(\limitset) \subseteq \limitset$ and $\limitset \setminus \mathcal{C} \ne \emptyset$.

    Let $y_{0} \in \limitset \setminus \mathcal{C}$ be arbitrary. 
    By \cite[Theorem~1.1 and Proposition~6.20]{shi2023thermodynamic}, we have
    \begin{equation}     \label{eq:temp:prop:construction of suitable invariant measures for vector continuous function:variational principle}
    \begin{split}
        \sup_{\nu \in \mathcal{M}(\limitset, \limitmap)}  \biggl\{ h_{\nu}(\limitmap) + \int \! S_n\phi \,\mathrm{d}\nu \biggr\} 
        = \lim_{m \to +\infty} \frac{1}{m} \log \!\!\! \sum_{x\in \limitmap^{-m}(y_0)} \!\!\! e^{\sum\limits_{k=0}^{m-1}S_{n}\phi(f^{nk}(x))}.
    \end{split}
    \end{equation}
    For the summand inside the logarithm in \eqref{eq:temp:prop:construction of suitable invariant measures for vector continuous function:variational principle}, we have
    \begin{align}     \label{eq:temp:prop:construction of suitable invariant measures for vector continuous function:summation of preimages}
        \sum_{x\in \limitmap^{-m}(y_0)} \!\!\! e^{\sum\limits_{k = 0}^{m - 1}S_{n}\phi( f^{nk}(x) )}
        &= \sum_{x\in \limitmap^{-m}(y_0)} \!\!\!\!\! e^{S_n\phi (f^{n(m-1)}(x) )} \cdots e^{S_n\phi (f^{n}(x))} e^{S_n\phi(x)}   \notag   \\
        &= \sum_{y_1\in \limitmap^{-1}(y_0)} \!\!\!\!\! e^{S_n\phi(y_1)} \sum_{y_2\in \limitmap^{-1}(y_1)} \!\!\!\!\! e^{S_n\phi(y_2)} \cdots \sum_{y_m\in \limitmap^{-1}(y_{m-1})} \!\!\!\!\!\!\!\!\! e^{S_n\phi(y_m)}.
    \end{align}

    \smallskip
    
    \emph{Claim.} For each point $y \in \limitset \setminus \mathcal{C}$, we have $\card[\big]{\limitmap^{-1}(y)} = \card{\pair{\vecavg}}$, and each $n$-pair $P^n \in \pair{\vecavg}$ contains exactly one preimage $x \in \limitmap^{-1}(y)$, which satisfies $x \in \limitset \setminus \mathcal{C}$.
    
    \smallskip

    To establish this Claim, we consider an arbitrary point $y\in \limitset \setminus \mathcal{C}$. 
    Without loss of generality we may assume that $y \in \inte[\big]{X^0_{\black}}$. Then by Proposition~\ref{prop:properties cell decompositions}, we have $\card{f^{-n}(y)} = (\deg f)^n = \card{\mathbf{X}^n_{\black}}$, and each black $n$-tile $X^n_{\black} \in \mathbf{X}^n_{\black}$ contains exactly one preimage $x \in f^{-n}(y)$, which satisfies $x \in \inte{X^n_{\black}}$. 
    Thus each $n$-pair $P^n \in \pair{\vecavg}$ contains exactly one preimage $x \in f^{-n}(y) \cap \unionpair{\vecavg}$, which satisfies $x \in \inte{P^n}$, and we have\[
        \card{f^{-n}(y) \cap \unionpair{\vecavg}} = \card{\pair{\vecavg}}.
    \]
    Let preimage $x \in f^{-n}(y) \cap \unionpair{\vecavg}$ be arbitrary. Noting that $f^{-n}(y) \cap \unionpair{\vecavg} = \bigl( f^{n}|_{\unionpair{\vecavg}} \bigr)^{-1}(y) = F^{-1}(y)$ and $y \in \limitset \setminus \mathcal{C}$, by Proposition~\ref{prop:subsystem:preliminary properties}~\ref{item:subsystem:properties invariant Jordan curve:backward invariant limitset outside invariant Jordan curve}, we have $x \in \limitset \setminus \mathcal{C}$. 
    Since $\limitmap^{-1}(y) = f^{-n}(y) \cap \limitset = f^{-n}(y) \cap \unionpair{\vecavg}$, the claim follows.

    \smallskip

    By the claim, we know that all the preimages $y_i$ in the summation in \eqref{eq:temp:prop:construction of suitable invariant measures for vector continuous function:summation of preimages} belong to $\limitset \setminus \mathcal{C}$. 
    Moreover, for each point $y \in \limitset \setminus \mathcal{C}$, every $n$-pair $P^n \in \pair{\vecavg}$ contains exactly one preimage $x \in \limitmap^{-1}(y)$, and every preimage $x \in \limitmap^{-1}(y)$ is contained in a unique $n$-pair $P^n \in \pair{\vecavg}$. Thus we get the first two inequalities of the following:

    \begin{align*}
        &\sum_{x\in \limitmap^{-m}(y_0)} \myexp[\bigg]{ \sum_{k=0}^{m-1}S_{n}\phi\bigl( f^{nk}(x) \bigr) }  \\
        &\qquad=\sum_{y_1\in \limitmap^{-1}(y_0)} e^{S_n\phi(y_1)} \sum_{y_2\in \limitmap^{-1}(y_1)} e^{S_n\phi(y_2)} \cdots \sum_{y_m\in \limitmap^{-1}(y_{m-1})} e^{S_n\phi(y_m)}  \\
        &\qquad\geqslant  \biggl( \inf_{y \in \limitset \setminus \mathcal{C}} \sum_{x\in \limitmap^{-1}(y)} e^{S_n\phi(x)} \biggr)^{m}   \\
        &\qquad\geqslant \biggl( \sum_{P^n \in \pair{\vecavg}} \inf_{x \in P^n} e^{S_n\phi(x)} \biggr)^m \\
        &\qquad\geqslant \biggl( \frac{e^{P(f,\phi)n}}{2C_{\mu_{\phi}}e^{\Cdistortion }} \sum_{P^n \in \pair{\vecavg}} \mu_{\phi}(P^n) \biggr)^{m}  \\
        &\qquad= \biggl( \frac{e^{P(f,\phi)n}}{2C_{\mu_{\phi}}e^{\Cdistortion }} \mu_{\phi} (\unionpair{\vecavg}) \biggr)^{m}.
    \end{align*}
    The last inequality follows from \cite[Lemmma~7.15~(i)]{shi2023thermodynamic} and the last equality follows from Lemma~\ref{lem:pairs are disjoint} and Theorem~\ref{thm:properties of equilibrium state}~\ref{item:thm:properties of equilibrium state:edge measure zero}, where $C_1 \geqslant 0$ is the constant defined in \eqref{eq:const:C_1} in Lemma~\ref{lem:distortion_lemma}.
    Taking logarithms of both sides, dividing by $m$, and plugging the result into the previous inequality, we get
    \begin{equation*}
        \begin{split}
            \lim_{m \to +\infty} & \frac{1}{m} \log \biggl( \sum_{x\in \limitmap^{-m}(y_0)}  \myexp[\Big]{ \sum_{k=0}^{m-1}S_{n}\phi\bigl( f^{nk}(x) \bigr) } \biggr) \\
            &\geqslant \log\bigl(\mu_{\phi}(\unionpair{\vecavg})\bigr) + P(f, \phi)n - \bigl( \Cdistortion + \log(2 C_{\mu_{\phi}}) \bigr).
        \end{split}
    \end{equation*}
    Plugging this inequality into \eqref{eq:temp:prop:construction of suitable invariant measures for vector continuous function:variational principle} yields
    \begin{equation}    \label{eq:temp:prop:construction of suitable invariant measures for vector continuous function:supremum measure-theoretic pressure inequality} 
        \sup_{\nu \in \mathcal{M}(\limitset, \limitmap)} \biggl\{ h_{\nu}(\limitmap) + \int \! S_n\phi \,\mathrm{d}\nu \biggr\}  
        \geqslant \log \mathopen{} \bigl(\mu_{\phi}(\unionpair{\vecavg})\bigr) + P(f, \phi)n - \bigl( \Cdistortion + \log(2 C_{\mu_{\phi}}) \bigr).
    \end{equation}
    
    By Theorem~\ref{thm:existence uniqueness and properties of equilibrium state}, there exists an equilibrium state $\widehat{\mu} \in \mathcal{M}(\limitset, \limitmap) \subseteq \mathcal{M}(S^{2}, f^{n})$ which attains the supremum in \eqref{eq:temp:prop:construction of suitable invariant measures for vector continuous function:supremum measure-theoretic pressure inequality}. 
    Denote $\mu \define \frac{1}{n} \sum_{i = 0}^{n - 1} f_{*}^{i}  \widehat{\mu}$. 
    Then $\mu \in \mathcal{M}(S^2, f)$ and we have\[
        \int \! \phi \,\mathrm{d}\mu 
        = \frac{1}{n} \int \sum_{i = 0}^{n - 1} \phi \,\mathrm{d} f_{*}^{i} \widehat{\mu}
        = \frac{1}{n} \int \sum_{i = 0}^{n - 1} \phi \circ f^{i} \,\mathrm{d}\widehat{\mu} 
        = \frac{1}{n} \int \! S_n\phi \,\mathrm{d}\widehat{\mu}.
    \]
    By Lemma~\ref{lem:estimate for Birkhoff sum on pair for vector}, we have $S_n \vecfun(x) \geqslant n\vecavg - 2 D_{n}(\vecfun)$ for each $x \in \unionpair{\vecavg}$.
    Noting that $\supp{\widehat{\mu}} \subseteq \limitset \subseteq \unionpair{\vecavg}$, we get\[
        \int \! \vecfun \,\mathrm{d}\mu = \frac{1}{n}\int \! S_n \vecfun \,\mathrm{d}\widehat{\mu} \geqslant \vecavg - \frac{2 D_{n}(\vecfun) }{n}.
    \]
    By Lemma~\ref{lem:entropy of push-forward average}, we have
    \[
        n h_{\mu}(f) = h_{\widehat{\mu}}(f^{n}) =  h_{\widehat{\mu}}(\limitmap).
    \]
    Then
    \begin{equation*}
        \begin{split}
            n\left( h_{\mu}(f) + \int \! \phi \,\mathrm{d}\mu \right) 
            &= h_{\widehat{\mu}}(\limitmap) + \int \! S_n\phi \,\mathrm{d}\widehat{\mu} \\
            &\geqslant \log \mathopen{}\bigl(\mu_{\phi}(\unionpair{\vecavg})\bigr) + P(f, \phi)n - \bigl( \Cdistortion + \log(2 C_{\mu_{\phi}}) \bigr),
        \end{split}
    \end{equation*}
    i.e., $\log (\mu_{\phi}(\unionpair{\vecavg})) \leqslant \left(P_{\mu}(f, \phi) - P(f, \phi) \right)n + \Cdistortion + \log(2 C_{\mu_{\phi}})$.
    The proof is complete.
\end{proof}

\subsubsection{Upper bound for fundamental closed sets}%
\label{ssub:Upper bound for fundamental closed sets}

We first prove the following result under the additional assumption that there exists an $f$-invariant Jordan curve $\mathcal{C} \subseteq S^2$ with $\post{f} \subseteq \mathcal{C}$ and then for the general case.

\begin{proposition}    \label{prop:upper bound for fundamental closed sets}
    Let $f$, $\mathcal{C}$, $d$, $\phi$, $\holderexp$ satisfy the Assumptions in Section~\ref{sec:The Assumptions}.
    Consider $\ell \in \n$, $\vecfun \in \multispace$, and $\vecavg \in \real^{\ell}$.
    Let $\mathcal{K} \subseteq \probsphere$ be a non-empty closed set of the form\[
        \mathcal{K} \define \biggl\{ \mu \in \probsphere \describe \int \! \vecfun \,\mathrm{d}\mu \geqslant \vecavg \biggr\}.
    \]
    Then for each $\varepsilon > 0$ and each sequence $\sequen{\xi_{n}} \in \bigl\{ \sequen{\birkhoffmeasure}, \sequen{\Omega_{n}}, \sequen{\Omega_{n}(x_{n})} \bigr\}$ (as defined in Theorem~\ref{thm:level-2 large deviation principle}), we have
    \begin{equation}    \label{eq:prop:upper bound for fundamental closed sets:upper bound for fundamental closed sets}
        \limsup_{n \to +\infty} \frac{1}{n} \log \xi_{n}(\mathcal{K}) \leqslant \sup \biggl\{ \freeenergy(\mu) \describe \mu \in \probsphere, \, \int \! \vecfun \,\mathrm{d}\mu > \vecavg - \varepsilon \biggr\},
    \end{equation}
    where $\freeenergy \colon \probsphere \mapping [-\infty, 0]$ is defined in \eqref{eq:def:free energy}.
\end{proposition}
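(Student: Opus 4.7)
The plan is to produce, for each sufficiently large $n$, an invariant witness measure $\mu_n \in \invmea$ satisfying $\int \! \vecfun \,\mathrm{d}\mu_n > \vecavg - \varepsilon$ together with $\freeenergy(\mu_n) \geqslant n^{-1}\log\xi_n(\mathcal{K}) - o(1)$; taking $\limsup_{n\to+\infty}$ of the latter then immediately yields~\eqref{eq:prop:upper bound for fundamental closed sets:upper bound for fundamental closed sets}. The engine is Proposition~\ref{prop:construction of suitable invariant measures for vector continuous function}, which converts a Gibbs-type upper bound on $\mu_{\potential}\bigl(\bigcup\pair{\vecavg}\bigr)$ into a lower bound on the measure-theoretic pressure $P_{\mu_n}(f, \potential)$.

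First, in all three cases I would reduce $\xi_n(\mathcal{K})$ to $\mu_{\potential}\bigl(\bigcup\pair{\vecavg}\bigr)$. For Birkhoff averages this is immediate: any $x$ with $n^{-1}S_n\vecfun(x)\geqslant\vecavg$ lies in an $n$-tile, hence in an $n$-pair belonging to $\pair{\vecavg}$, so $\birkhoffmeasure(\mathcal{K})\leqslant\mu_{\potential}\bigl(\bigcup\pair{\vecavg}\bigr)$. For periodic points and iterated preimages, I would combine the Gibbs estimate of Proposition~\ref{prop:equilibrium state is gibbs measure} (relating $\exp(S_n\potential(z) - nP(f, \potential))$ to $\mu_{\potential}(X^n)$ for $z\in X^n$) with the distortion bound Lemma~\ref{lem:distortion_lemma}, the fact that each $n$-tile contains at most one periodic point of period $n$ and at most one preimage in its interior (\cite[Lemma~6.3]{li2015weak} and Proposition~\ref{prop:properties cell decompositions}~\ref{item:prop:properties cell decompositions:cellular}), and the asymptotic normalizations $n^{-1}\log\sum_{p'} w_n(p')\exp(S_n\potential(p'))\to P(f,\potential)$ and $n^{-1}\log\sum_{y'\in f^{-n}(x_n)} w_n(y')\exp(S_n\potential(y'))\to P(f,\potential)$ from Propositions~\ref{prop:characterization of pressure weighted periodic points} and~\ref{prop:characterization of pressure iterated preimages arbitrary weight}. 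Together these give, for each $\sequen{\xi_n}\in\bigl\{\sequen{\birkhoffmeasure}, \sequen{\Omega_n}, \sequen{\Omega_n(x_n)}\bigr\}$,
\[
    \frac{1}{n}\log\xi_n(\mathcal{K}) \leqslant \frac{1}{n}\log\mu_{\potential}\bigl(\textstyle\bigcup\pair{\vecavg}\bigr) + o(1).
\]

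Next, I would apply Proposition~\ref{prop:construction of suitable invariant measures for vector continuous function} after perturbing the threshold to restore its strong primitivity hypothesis. Setting $\vecavg' \define \vecavg - \varepsilon/2$ (using the paper's componentwise convention), one has $\pair{\vecavg}\subseteq\pair{\vecavg'}$, so the estimate from the previous step persists with $\vecavg'$ in place of $\vecavg$. Assuming $\{\mu\in\invmea \describe \int \! \vecfun \,\mathrm{d}\mu > \vecavg - \varepsilon\}$ is non-empty (otherwise the right-hand side of~\eqref{eq:prop:upper bound for fundamental closed sets:upper bound for fundamental closed sets} is $-\infty$ and a separate soft argument shows $\pair{\vecavg'}$ is eventually empty, making the left-hand side trivially $-\infty$), pick $\mu^{\dagger}$ in this set; by Theorem~\ref{thm:entropy dense} there is an ergodic $\nu$ with $\int \! \vecfun \,\mathrm{d}\nu > \vecavg - 5\varepsilon/8$. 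Lemma~\ref{lem:pair in the interior} applied to $\nu$ with tolerance $\varepsilon/8$ then produces, for every sufficiently large $n$ and each $\colour\in\colours$, an $n$-pair $P^n_\colour\subseteq\inte{X^0_\colour}$ on which $n^{-1}S_n\vecfun\geqslant\vecavg'$, so $P^n_\colour\in\pair{\vecavg'}$. Proposition~\ref{prop:construction of suitable invariant measures for vector continuous function} applied with $\vecavg'$ then delivers $\mu_n\in\invmea$ satisfying
\[
    \int \! \vecfun \,\mathrm{d}\mu_n \geqslant \vecavg' - \frac{2D_n(\vecfun)}{n}, \qquad \mu_{\potential}\biggl(\bigcup\pair{\vecavg'}\biggr) \leqslant C\exp\bigl((P_{\mu_n}(f,\potential) - P(f,\potential))n\bigr),
\]
and since $D_n(\vecfun)/n\to 0$ by Lemma~\ref{lem:distortion lemma for continuous function}, one gets $\int \! \vecfun \,\mathrm{d}\mu_n > \vecavg - \varepsilon$ for all large $n$; combining with the previous step gives $n^{-1}\log\xi_n(\mathcal{K})\leqslant P_{\mu_n}(f,\potential) - P(f,\potential) + o(1) = \freeenergy(\mu_n) + o(1)$.

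The hard part is that, by Theorem~\ref{thm:upper semi-continuous iff no periodic critical points}, the entropy map may fail to be upper semi-continuous in the presence of periodic critical points, so I cannot pass to a weak-$*$ limit $\mu_{*}$ of $\sequen{\mu_n}$ and compare $\freeenergy(\mu_{*})$ with $\limsup_{n\to+\infty}\freeenergy(\mu_n)$. For this reason each $\mu_n$ must individually serve as a witness inside the supremum on the right-hand side of~\eqref{eq:prop:upper bound for fundamental closed sets:upper bound for fundamental closed sets}; this is precisely what forces the $\varepsilon$-slack in the constraint $\int \! \vecfun \,\mathrm{d}\mu > \vecavg - \varepsilon$, since this slack must absorb both the distortion-driven shift $-2D_n(\vecfun)/n$ produced by Lemma~\ref{lem:estimate for Birkhoff sum on pair for vector} inside Proposition~\ref{prop:construction of suitable invariant measures for vector continuous function} and the $\varepsilon/2$ perturbation introduced above to restore the strong primitivity hypothesis.
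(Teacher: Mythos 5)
Your architecture --- dominate $\xi_{n}(\mathcal{K})$ by $\mu_{\potential}$ of a union of $n$-pairs and then invoke Proposition~\ref{prop:construction of suitable invariant measures for vector continuous function} to turn that into a measure-theoretic-pressure witness $\mu_{n}$ --- is exactly the paper's, and your reduction of the three sequences to $\mu_{\potential}\bigl(\bigcup \mathbf{P}^{n}(\cdot)\bigr)$ is sound in outline. The gap is in how you certify the hypothesis of that proposition, i.e.\ in the construction of the ergodic seed measure. You derive it from the non-vacuousness of the \emph{right-hand} side: but an invariant $\mu^{\dagger}$ with $\int\!\vecfun\,\mathrm{d}\mu^{\dagger} > \vecavg - \varepsilon$ only yields, via Theorem~\ref{thm:entropy dense}, ergodic measures whose integrals are close to $\int\!\vecfun\,\mathrm{d}\mu^{\dagger}$, and that integral may itself be arbitrarily close to $\vecavg - \varepsilon$; the asserted $\int\!\vecfun\,\mathrm{d}\nu > \vecavg - 5\varepsilon/8$ does not follow. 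With such a weak seed, the output $\mu_{n}$ of Proposition~\ref{prop:construction of suitable invariant measures for vector continuous function} is only guaranteed to satisfy $\int\!\vecfun\,\mathrm{d}\mu_{n} \geqslant \int\!\vecfun\,\mathrm{d}\nu - (\text{tolerance}) - 2D_{n}(\vecfun)/n$, which can fall below $\vecavg - \varepsilon$, so $\mu_{n}$ need not be admissible in the supremum and the bound collapses. (Even granting your $\nu$, the constants do not close: $5\varepsilon/8 + \varepsilon/8 = 3\varepsilon/4$, so Lemma~\ref{lem:pair in the interior} only places the pairs in $\mathbf{P}^{n}(\vecavg - 3\varepsilon/4)$, not in $\mathbf{P}^{n}(\vecavg - \varepsilon/2)$.) The paper avoids all of this by reading the seed off the \emph{left-hand} side: assuming WLOG that $\xi_{n}(\mathcal{K}) > 0$ for arbitrarily large $n$, there are points with $S_{n}\vecfun(x) \geqslant n\vecavg$ with no $\varepsilon$ lost, and a periodic point in the same $n$-tile gives an ergodic $\mu_{0}$ with $\int\!\vecfun\,\mathrm{d}\mu_{0} \geqslant \vecavg - \varepsilon/4$, leaving enough room for the $\varepsilon/4$ tolerance in Lemma~\ref{lem:pair in the interior} and the $2D_{n}(\vecfun)/n$ loss. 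Your route could be repaired by letting the tolerances depend on $\delta \define \min_{j}\bigl(\int\!\varphi_{j}\,\mathrm{d}\mu^{\dagger} - \alpha_{j} + \varepsilon\bigr) > 0$ rather than on fixed fractions of $\varepsilon$, but as written the step fails.

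A second omission: pairs, Lemma~\ref{lem:pair in the interior}, and Proposition~\ref{prop:construction of suitable invariant measures for vector continuous function} all require $f(\mathcal{C}) \subseteq \mathcal{C}$, while the Assumptions only provide $f^{n_{\mathcal{C}}}(\mathcal{C}) \subseteq \mathcal{C}$. Your argument therefore proves the proposition only under the additional hypothesis of an $f$-invariant curve. The paper devotes the second half of the proof to reducing the general case to an iterate $\widehat{f} = f^{K}$ admitting such a curve, which requires transporting periodic points, preimages, and Birkhoff sums between $f^{n}$ and $\widehat{f}^{m}$ (with the attendant distortion and flower-multiplicity bookkeeping) and verifying that the supremum of the free energy over the $\varepsilon$-enlarged constraint set rescales by $K$. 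This reduction is not a formality and must appear in the proof.
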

\begin{proof}[Proof of Proposition~\ref{prop:upper bound for fundamental closed sets} under an additional assumption]
    We assume in addition that there exists an $f$-invariant Jordan curve $\mathcal{C} \subseteq S^2$ with $\post{f} \subseteq \mathcal{C}$.

    Let $\varepsilon > 0$ and $\sequen{\xi_{n}} \in \bigl\{ \sequen{\birkhoffmeasure}, \sequen{\Omega_{n}}, \sequen{\Omega_{n}(x_{n})} \bigr\}$ be arbitrary.
    We may assume without loss of generality that the set $\{ n \in \n \describe \xi_{n}(\mathcal{K}) > 0 \}$ is unbounded, because otherwise \eqref{eq:prop:upper bound for fundamental closed sets:upper bound for fundamental closed sets} holds trivially.
    Then it follows from the definition of $\sequen{\birkhoffmeasure}$, $\sequen{\Omega_{n}}$, and $\sequen{\Omega_{n}(x_{n})}$ that for each $n_{0} \in \n$ there exists an integer $n \geqslant n_{0}$ and a point $x \in S^{2}$ such that $S_{n} \vecfun(x) \geqslant n\vecavg$.

    We first show that there exists an ergodic measure $\mu_{0} \in \invmea$ such that $\int \! \vecfun \,\mathrm{d}\mu_{0} \geqslant \vecavg - \varepsilon / 4$.
    Let $n_f \in \n$ be the constant from Definition~\ref{def:primitivity of subsystem}, which depends only on $f$ and $\mathcal{C}$.
    Then by Proposition~\ref{prop:cell decomposition: invariant Jordan curve} and Lemma~\ref{lem:good location n-tile has high level periodic point and preimage point}~\ref{item:lem:good location n-tile has high level periodic point and preimage point:periodic point}, for each $n \in \n_0$ and each $X^n \in \Tile{n}$, there exists a fixed point of $f^{n + n_{f}}$ in $\inte{X^n}$.
    By Lemma~\ref{lem:distortion lemma for continuous function}, there exists a sufficiently large $n_{0} \in \n$ such that for each integer $n \geqslant n_{0}$,
    \[
        \frac{n_{f}(\norm{\vecavg} + \norm{\vecfun}) + D_{n}(\vecfun)}{n + n_{f}} \leqslant \frac{\varepsilon}{4}.
    \]
    Then by the argument in the beginning of the proof, there exists an integer $n \geqslant n_{0}$ and a point $x \in S^{2}$ such that $S_{n} \vecfun(x) \geqslant n\vecavg$.
    We pick an $n$-tile $X^n \in \Tile{n}$ such that $x \in X^n$.
    Thus there exists a fixed point $p \in X^n$ of $f^{n + n_{f}}$.
    Noting that $S_n \vecfun(p) \geqslant S_n \vecfun(x) - D_{n}(\vecfun)$, we have\[
        \begin{split}
            S_{n + n_{f}} \vecfun(p) 
            &\geqslant S_{n} \vecfun(p) - n_{f} \norm{\vecfun} \geqslant S_n \vecfun(x) - n_{f} \norm{\vecfun} - D_{n}(\vecfun) \\
            &\geqslant n \vecavg - n_{f} \norm{\vecfun} - D_{n}(\vecfun)
            \geqslant (n + n_{f}) \vecavg - n_{f} \norm{\vecavg} - n_{f} \norm{\vecfun} - D_{n}(\vecfun).
        \end{split}
    \]
    This implies \[
        \int \! \vecfun \,\mathrm{d}\deltameasure[n + n_{f}]{p} = \frac{1}{n + n_{f}} S_{n + n_{f}} \vecfun(p) \geqslant \vecavg - \frac{n_{f}(\norm{\vecavg} + \norm{\vecfun}) + D_{n}(\vecfun)}{n + n_{f}}
        \geqslant \vecavg - \frac{\varepsilon}{4}.
    \]
    Set $\mu_{0} \define \deltameasure[n + n_{f}]{p}$. 
    Then $\mu_{0} \in \invmea$ is an ergodic measure with $\int \! \vecfun \,\mathrm{d}\mu_{0} \geqslant \vecavg - \varepsilon / 4$.

    Fix an arbitrary $0$-edge $e^{0} \in \Edge{0}$.
    By Lemma~\ref{lem:pair in the interior}, there exists $N \in \n$ such that for each integer $n \geqslant N$ and each $\colour \in \colours$, there exists $P^{n}_{\colour} \in \Pair{n}$ such that $P^{n}_{\colour} \subseteq \inte[\big]{X^0_{\colour}}$ and \[
        \sup_{x \in P^{n}_{\colour}} \norm[\bigg]{ \frac{1}{n} S_n \vecfun(x) - \int \! \vecfun \,\mathrm{d} \mu_{0} } \leqslant \frac{\varepsilon}{4}.
    \]
    Then for each $x \in P^{n}_{\colour}$,\[
        \frac{1}{n} S_n \vecfun(x) \geqslant \int \! \vecfun \,\mathrm{d} \mu_{0} - \frac{\varepsilon}{4} \geqslant \vecavg - \frac{\varepsilon}{2}. 
    \]
    This implies that for each integer $n \geqslant N$ and each $\colour \in \colours$, there exists $P^{n}_{\colour} \in \pair{\vecavg - \varepsilon / 2}$ such that $P^{n}_{\colour} \subseteq \inte[\big]{X^0_{\colour}}$.
    Therefore, it follows from Proposition~\ref{prop:construction of suitable invariant measures for vector continuous function} that for each integer $n \geqslant N$, there exists a measure $\mu_{n} \in \invmea$ such that
    \begin{equation}    \label{eq:temp:prop:upper bound for fundamental closed sets:bound for invariant measure}
        \int \! \vecfun \,\mathrm{d}\mu_{n} \geqslant \vecavg - \frac{\varepsilon}{2} - \frac{2 D_{n}(\vecfun) }{n}     \quad \text{ and }\quad
        \mu_{\potential} ( \unionpair{\vecavg - \varepsilon / 2} ) \leqslant C \myexp{ \freeenergy(\mu_{n}) n },
    \end{equation}
    where $C = 2C_{\mu_{\potential}} \myexp[\big]{ \Cdistortion }$.
    Here $C_{\mu_{\potential}}$ is the constant from Proposition~\ref{prop:equilibrium state is gibbs measure} and $C_1 \geqslant 0$ is the constant defined in \eqref{eq:const:C_1} in Lemma~\ref{lem:distortion_lemma} that depends only on $f$, $\mathcal{C}$, $d$, $\phi$, and $\holderexp$.

    We split the rest of the proof into three cases according to the type of the sequence $\sequen{\xi_{n}}$.

    \def \approximation #1{ \mathbf{T}^{#1} }
    \smallskip
    \emph{Case 1 (Birkhoff averages):} $\xi_{n} = \birkhoffmeasure = (V_{n})_{*}(\mu_{\potential})$ for each $n \in \n$ (recall \eqref{eq:def:delta measure for orbit}).
    \smallskip

    For each integer $n \geqslant N$, since $\{ x \in S^2 \describe \deltameasure{x} \in \mathcal{K} \} \subseteq \unionpair{\vecavg - \varepsilon / 2}$, we have
    \[
            \frac{1}{n} \log \birkhoffmeasure(\mathcal{K}) 
            = \frac{1}{n} \log \mu_{\potential} ( \{ x \in S^2 \describe \deltameasure{x} \in \mathcal{K} \} ) 
            \leqslant \frac{1}{n} \log \mu_{\potential}( \unionpair{\vecavg - \varepsilon / 2} ).
    \]
    Thus by \eqref{eq:temp:prop:upper bound for fundamental closed sets:bound for invariant measure} and Lemma~\ref{lem:distortion lemma for continuous function}, for each sufficiently large integer $n \geqslant N$ that satisfies $2 D_{n}(\vecfun) / n < \varepsilon / 2$,
    \[
        \frac{1}{n} \log \birkhoffmeasure(\mathcal{K}) 
        \leqslant \freeenergy(\mu_{n}) + \frac{\log C}{n}  
        \leqslant \sup \biggl\{ \freeenergy(\mu) \describe \mu \in \probsphere, \, \int \! \vecfun \,\mathrm{d}\mu > \vecavg - \varepsilon \biggr\} + \frac{\log C}{n}.
    \]    
    Then by letting $n \to +\infty$, the desired inequality follows.

    \smallskip
    \emph{Case 2 (Periodic points):} $\xi_{n} = \Omega_{n}$ for each $n \in \n$ (recall \eqref{eq:def:Periodic points distribution}). 
    \smallskip

    For each $n \in \n$ and each $p \in \periodorbit[n]$, let $X^n(p) \in \Tile{n}$ be an $n$-tile that contains $p$.
    In particular, if $p \in \periodorbit[n]$ satisfies $\deltameasure{p} \in \mathcal{K}$, then $X^{n}(p) \subseteq \unionpair{\vecavg} \subseteq \unionpair{\vecavg - \varepsilon / 2}$.
    By \cite[Lemma~6.3]{li2015weak}, there exists $N_0 \in \n$ such that for each integer $n \geqslant N_{0}$ and each $n$-tile $X^n \in \Tile{n}$, the number of fixed points of $f^{n}$ contained in $X^{n}$ is at most $1$.
    This implies that for each integer $n \geqslant N_{0}$, the map $p \mapsto X^n(p)$ from $\periodorbit[n]$ to $\Tile{n}$ is injective.

    \def\tileflower{\mathbf{X}^{n}(f, \mathcal{C}, p')}
    Let $n \in \n$ be arbitrary. 
    Consider $p' \in \periodorbit[n] \cap \vertex{n}$, where $\vertex{n} = \Vertex{n}$ is the set of $n$-vertices.
    We set $\tileflower \define \{X \in \Tile{n} \describe p' \in X \}$.
    By Remark~\ref{rem:flower structure}, we have $\cflower{n}{p'} = \bigcup \tileflower$ and $\card{\tileflower} = 2 \deg_{f^{n}}(p')$, where $\flower{n}{p'}$ is defined in \eqref{eq:n-flower} and $\cflower{n}{p'}$ is the closure of $\flower{n}{p'}$.
    In particular, if $p' \in \periodorbit[n]$ satisfies $\deltameasure{p} \in \mathcal{K}$, then $\cflower{n}{p'} = \bigcup \tileflower \subseteq \unionpair{\vecavg} \subseteq \unionpair{\vecavg - \varepsilon / 2}$.
    Moreover, if $n \geqslant N_0$, then $X^n(p) \notin \tileflower$ for every $p \in \periodorbit[n] \setminus \Vertex{n}$.

    We are now ready to establish the desired upper bound.
    Let $\{ w_{j} \}_{j \in \n}$ be an arbitrary sequence of real-valued functions on $S^2$ with $w_{j}(x) \in \bigl[ 1, \deg_{f^{j}}(x) \bigr]$ for each $j \in \n$ and each $x \in S^{2}$.
    For each integer $n \geqslant N_0$, we have
    \begin{align*}
        &\sum_{ \substack{ p \in \periodorbit[n] \\ \deltameasure[n]{p} \in \mathcal{K} } }  w_{n}(p) \myexp{ S_{n}\potential(p) } \\
        &\qquad \leqslant \sum_{ \substack{ p' \in \periodorbit[n] \cap \vertex{n} \\ \deltameasure[n]{p'} \in \mathcal{K} } }  \deg_{f^{n}}(p') \myexp{ S_{n}\potential(p') }
            + \sum_{ \substack{ p \in \periodorbit[n] \setminus \vertex{n} \\ \deltameasure[n]{p} \in \mathcal{K} } }  \myexp{ S_{n}\potential(p) }  \\
        &\qquad \leqslant \sum_{ \substack{ p' \in \periodorbit[n] \cap \vertex{n} \\ \deltameasure[n]{p'} \in \mathcal{K} } }  \sum_{ X^{n} \in \tileflower } \myexp{ S_{n}\potential(X^n) }
            + \sum_{ \substack{ p \in \periodorbit[n] \setminus \vertex{n} \\ \deltameasure[n]{p} \in \mathcal{K} } }  \myexp{ S_{n}\potential(X^{n}(p)) }  \\
        &\qquad \leqslant \sum_{ \substack{ X^n \subseteq \unionpair{\vecavg - \varepsilon / 2} \\ X^n \in \Tile{n} } } \myexp{ S_{n}\potential(X^{n}) },
    \end{align*}
    where we write $S_{n}\potential(X^n) \define \sup_{x \in X^n} S_{n}\potential(x)$ for each $n \in \n$ and each $X^n \in \Tile{n}$.
    Then it follows from Proposition~\ref{prop:equilibrium state is gibbs measure} and Theorem~\ref{thm:properties of equilibrium state}~\ref{item:thm:properties of equilibrium state:edge measure zero} that for each integer $n \geqslant N_0$,
    \[
        \begin{split}
            \sum_{ \substack{ p \in \periodorbit[n] \\ \deltameasure[n]{p} \in \mathcal{K} } }  w_{n}(p) \myexp{ S_{n}\potential(p) }
            &\leqslant C_{\mu_{\potential}} e^{n P(f, \potential)} \sum_{ \substack{ X^n \subseteq \unionpair{\vecavg - \varepsilon / 2} \\ X^n \in \Tile{n} } }  \mu_{\potential}(X^{n})  \\
            &= C_{\mu_{\potential}} e^{n P(f, \potential)} \mu_{\potential}(\unionpair{\vecavg - \varepsilon / 2}).
        \end{split}
    \]
    By \eqref{eq:temp:prop:upper bound for fundamental closed sets:bound for invariant measure} and Lemma~\ref{lem:distortion lemma for continuous function}, for each sufficiently large integer $n \geqslant \max \{ N_0, N \}$ that satisfies $2 D_{n}(\vecfun) / n < \varepsilon / 2$,
    \[
        \begin{split}
            \frac{1}{n} \log \mu_{\potential}(\unionpair{\vecavg - \varepsilon / 2})
            &\leqslant \freeenergy(\mu_{n}) + \frac{ \log C }{n}  \\
            &\leqslant \sup \biggl\{ \freeenergy(\mu) \describe \mu \in \probsphere, \, \int \! \vecfun \,\mathrm{d}\mu > \vecavg - \varepsilon \biggr\} + \frac{\log C}{n},
        \end{split}
    \]
    and therefore
    \[
        \begin{split}
            \frac{1}{n} \log \Omega_{n}(\mathcal{K}) 
            &\leqslant - \frac{1}{n} \log \sum_{ p \in \periodorbit[n] } w_{n}(p) \myexp{ S_{n}\potential(p) }  \\
            &\quad + \sup \biggl\{ \freeenergy(\mu) \describe \mu \in \probsphere, \, \int \! \vecfun \,\mathrm{d}\mu > \vecavg - \varepsilon \biggr\} + P(f, \potential) + \frac{ \log (C C_{\mu_{\potential}}) }{n}.
        \end{split}
    \]
    Note that as $n \to +\infty$, the first term of the right hand side in the equation above converges to $- P(f, \potential)$ by Proposition~\ref{prop:characterization of pressure weighted periodic points}.
    Therefore, by letting $n \to +\infty$, the desired inequality follows.

    \smallskip
    \emph{Case 3 (Iterated preimages):} $\xi_{n} = \Omega_{n}(x_{n})$ for each $n \in \n$ (recall \eqref{eq:def:Iterated preimages distribution}), where $\{ x_{n} \}_{n \in \n}$ is an arbitrary sequence of points in $S^{2}$.
    \smallskip

    \def\preimage{f^{-n}(x_{n})}
    For each $n \in \n$ and each $y \in \preimage$, let $X^n(y) \in \Tile{n}$ be an $n$-tile that contains $y$.
    In particular, if $y \in \preimage$ satisfies $\deltameasure{y} \in \mathcal{K}$, then $X^{n}(y) \subseteq \unionpair{\vecavg} \subseteq \unionpair{\vecavg - \varepsilon / 2}$.
    By Proposition~\ref{prop:properties cell decompositions}~\ref{item:prop:properties cell decompositions:cellular}, for each $n \in \n$ and each $X^n \in \Tile{n}$, $f^{n}|_{X^{n}}$ is a homeomorphism of $X^{n}$ onto $f^{n}(X^n)$.
    This implies that for each integer $n \in \n$ the map $y \mapsto X^n(y)$ from $\preimage$ to $\Tile{n}$ is injective.

    Let $\{ w_{j} \}_{j \in \n}$ be an arbitrary sequence of real-valued functions on $S^2$ with $w_{j}(x) \in \bigl[ 1, \deg_{f^{j}}(x) \bigr]$ for each $j \in \n$ and each $x \in S^{2}$.
    By similar arguments as in Case~2, for each sufficiently large integer $n \geqslant N$ that satisfies $2 D_{n}(\vecfun) / n < \varepsilon / 2$, \[
        \begin{split}
            &\frac{1}{n} \log \sum_{ \substack{ y \in \preimage \\ \deltameasure[n]{y} \in \mathcal{K} } }  w_{n}(y) \myexp{ S_{n}\potential(y) } \\
            &\qquad \leqslant \sup \biggl\{ \freeenergy(\mu) \describe \mu \in \probsphere, \, \int \! \vecfun \,\mathrm{d}\mu > \vecavg - \varepsilon \biggr\} + P(f, \potential) + \frac{ \log (C C_{\mu_{\potential}}) }{n},
        \end{split}
    \]
    and therefore
    \[
        \begin{split}
            \frac{1}{n} \log \Omega_{n}(x_{n})(\mathcal{K}) 
            &\leqslant - \frac{1}{n} \log \sum_{ y \in \preimage } w_{n}(y) \myexp{ S_{n}\potential(y) }  \\
            &\quad + \sup \biggl\{ \freeenergy(\mu) \describe \mu \in \probsphere, \, \int \! \vecfun \,\mathrm{d}\mu > \vecavg - \varepsilon \biggr\} + P(f, \potential) + \frac{ \log (C C_{\mu_{\potential}}) }{n}.
        \end{split}
    \]
    Note that as $n \to +\infty$, the first term of the right hand side in the equation above converges to $- P(f, \potential)$ by Proposition~\ref{prop:characterization of pressure iterated preimages arbitrary weight}.
    Therefore, by letting $n \to +\infty$, the desired inequality follows.

    \smallskip
        
    The proof is complete.
\end{proof}

We now prove the general case.
\begin{proof}[Proof of Proposition~\ref{prop:upper bound for fundamental closed sets}]
    By Lemma~\ref{lem:invariant_Jordan_curve}, we can find a sufficiently high iterate $\widehat{f} \define f^{K}$ of $f$ that has an $\widehat{f}$-invariant Jordan curve $\mathcal{C} \subseteq S^{2}$ with $\post{\widehat{f}} = \post{f} \subseteq \mathcal{C}$. 
    Then $\widehat{f}$ is also an expanding Thurston map (recall Remark~\ref{rem:Expansion_is_independent}). 

    \def\itevecfun{\vecfun[\Phi]}  \def\itevecavg{K\vecavg}  \def\itepotential{\widehat{\potential}}  \def\itefreeenergy{\widehat{F}_{\itepotential}}  \def\iteequstate{\widehat{\mu}_{\itepotential}}  \def\iteinvmea{\mathcal{M}(S^2, \widehat{f})}

    Denote $\itevecfun \define S_{K}^{f} \vecfun$ and $\itepotential \define S_K^f \potential$.
    We define $\itefreeenergy \colon \probsphere \mapping [-\infty, 0]$ by\[
        \itefreeenergy(\nu) \define 
        \begin{cases}
            h_{\nu}(\widehat{f}) + \int\! \itepotential \,\mathrm{d}\nu - P(\widehat{f}, \itepotential) & \mbox{if } \nu \in \iteinvmea; \\
            -\infty & \mbox{if } \nu \in \probsphere \setminus \iteinvmea.
        \end{cases}
    \]
    Note that for each $\mu \in \invmea$, we have $P(\widehat{f}, \itepotential) = K P(f, \potential)$, $h_{\mu}(\widehat{f}) = K h_{\mu}(f)$, and $\int \! \itepotential \,\mathrm{d}\mu = K \int \! \potential \,\mathrm{d}\mu$ (recall \eqref{eq:def:topological pressure} and \eqref{eq:measure-theoretic entropy well-behaved under iteration}).
    Then for each $\mu \in \invmea$, we have $\itefreeenergy(\mu) = K \freeenergy(\mu)$ since $\invmea \subseteq \iteinvmea$.
    Let $\iteequstate$ be the unique equilibrium state for the map $\widehat{f}$ and the potential $\itepotential$ (recall Theorem~\ref{thm:properties of equilibrium state}~\ref{item:thm:properties of equilibrium state:existence and uniqueness}).
    Since $P_{\mu_{\potential}}(\widehat{f}, \itepotential) = K P_{\mu_{\potential}}(f, \potential) = K P(f, \potential) = P(\widehat{f}, \itepotential)$, it follows from the uniqueness of the equilibrium state that $\iteequstate = \mu_{\potential}$.

    \def\itecloseset#1{\widehat{\mathcal{K}}_{#1}}
    
    For each $\delta > 0$, let $\itecloseset{\delta} \subseteq \probsphere$ be the closed set defined by\[
        \itecloseset{\delta} \define \biggl\{ \mu \in \probsphere \describe \int \! \itevecfun \,\mathrm{d} \mu \geqslant K \vecavg - K \delta \biggr\}.
    \]

    Let $\varepsilon > 0$ be arbitrary.
    We claim that
    \begin{equation}     \label{eq:temp:prop:upper bound for fundamental closed sets:sup freeenergy equal}
        \sup \biggl\{ \itefreeenergy(\widehat{\mu}) \describe \widehat{\mu} \in \probsphere, \, \int \! \itevecfun \,\mathrm{d}\widehat{\mu} > \itevecavg - K \varepsilon \biggr\}
        = K \sup \biggl\{ \freeenergy(\mu) \describe \mu \in \probsphere, \, \int \! \vecfun \,\mathrm{d}\mu > \vecavg - \varepsilon \biggr\}.
    \end{equation}
    By the definitions of $\freeenergy$ and $\itefreeenergy$, it suffices to show that
    \[
        \sup \biggl\{ \itefreeenergy(\widehat{\mu}) \describe \widehat{\mu} \in \iteinvmea, \, \int \! \itevecfun \,\mathrm{d}\widehat{\mu} > \itevecavg - K \varepsilon \biggr\}
        = K \sup \biggl\{ \freeenergy(\mu) \describe \mu \in \invmea, \, \int \! \vecfun \,\mathrm{d}\mu > \vecavg - \varepsilon \biggr\}.
    \]
    To see this, we consider arbitrary $\widehat{\mu} \in \iteinvmea$ satisfying $\int \! \itevecfun \,\mathrm{d}\widehat{\mu} > \itevecavg - K \varepsilon$.
    Define $\mu \define \frac{1}{K} \sum_{j = 0}^{K - 1} f_{*}^{j} \widehat{\mu}$.
    Then $\int \! \vecfun \,\mathrm{d}\mu = \frac{1}{K} \int \! \itevecfun \,\mathrm{d}\widehat{\mu} > \vecavg - \varepsilon$ and it follows from Lemma~\ref{lem:entropy of push-forward average} that $\mu \in \invmea$ and $h_{\widehat{\mu}}(\widehat{f}) = K h_{\mu}(f)$.
    Thus we have $\itefreeenergy(\widehat{\mu}) = K \freeenergy(\mu)$ and
    \[
        \sup \biggl\{ \itefreeenergy(\widehat{\mu}) \describe \widehat{\mu} \in \iteinvmea, \, \int \! \itevecfun \,\mathrm{d}\widehat{\mu} > \itevecavg - K \varepsilon \biggr\}
        \leqslant K \sup \biggl\{ \freeenergy(\mu) \describe \mu \in \invmea, \, \int \! \vecfun \,\mathrm{d}\mu > \vecavg - \varepsilon \biggr\}.
    \]
    The other direction follows immediately from the facts that $\invmea \subseteq \iteinvmea$ and $K \freeenergy(\mu) = \itefreeenergy(\mu)$ for each $\mu \in \invmea$.

    We split the proof into three cases according to the type of the sequence $\sequen{\xi_{n}}$.

    \smallskip
    \emph{Case 1 (Birkhoff averages):} $\xi_{n} = \birkhoffmeasure = (V_{n})_{*}(\mu_{\potential})$ for each $n \in \n$ (recall \eqref{eq:def:delta measure for orbit}).
    \smallskip

    \def\itebirkhoffmeasure#1{\widehat{\Sigma}_{#1}}

    For each integer $k \in \zeroton[K - 1]$ and each integer $m \in \n$ that satisfies $(\norm{\vecavg} + \norm{\vecfun}) / m \leqslant \varepsilon / 2$, we have \[
        \begin{split}
            \bigl\{ x \in S^2 \describe S_{mK - k}^{f}\vecfun(x) \geqslant (mK - k) \vecavg \bigr\} 
            &\subseteq \bigl\{ x \in S^2 \describe S_{mK}^{f} \vecfun(x) \geqslant (mK - k) \vecavg - k \norm{\vecfun} \bigr\} \\
            &\subseteq \bigl\{ x \in S^2 \describe S_{mK}^{f}\vecfun(x) \geqslant mK \vecavg - K (\norm{\vecavg} + \norm{\vecfun}) \bigr\} \\
            &\subseteq \bigl\{ x \in S^2 \describe m^{-1} S_m^{\widehat{f}}\itevecfun(x) \geqslant K \vecavg - K \varepsilon / 2 \bigr\}.
        \end{split}
    \]
    For each $n \in \n$, we can write $n = mK - k$ for some integer $k \in \zeroton[K - 1]$ and $m \in \n$.
    Then for each sufficiently large $n \in \n$, we have
    \[
        \begin{split}
            \frac{1}{n} \log \birkhoffmeasure(\mathcal{K}) 
            &= \frac{1}{n} \log \mu_{\potential} \bigl( \bigl\{ x \in S^2 \describe S_{n}^{f} \vecfun(x) \geqslant n \vecavg \bigr\} \bigr)  \\
            &\leqslant \frac{1}{n} \log \iteequstate \bigl( \bigl\{ x \in S^2 \describe m^{-1} S_m^{\widehat{f}}\itevecfun(x) \geqslant K \vecavg - K \varepsilon / 2 \bigr\} \bigr) \\
            &= \frac{1}{n} \log \itebirkhoffmeasure{m}(\itecloseset{\varepsilon / 2})
            \leqslant \frac{1}{m K} \log \itebirkhoffmeasure{m}(\itecloseset{\varepsilon / 2}),
        \end{split}
    \]
    where $m = \lceil n / K \rceil$ and $\sequen[\big]{\itebirkhoffmeasure{j}}[j]$ is defined by replacing $f$ with $\widehat{f}$ and $\potential$ with $\itepotential$ in the definition of $\sequen{\birkhoffmeasure[j]}[j]$.
    Since $\widehat{f}$ has an $\widehat{f}$-invariant Jordan curve $\mathcal{C} \subseteq S^{2}$ with $\post{\widehat{f}} \subseteq \mathcal{C}$, Proposition~\ref{prop:upper bound for fundamental closed sets} holds for $\widehat{f}$.
    Therefore, by \eqref{eq:temp:prop:upper bound for fundamental closed sets:sup freeenergy equal}, we get
    \[
        \begin{split}
            \limsup_{n \to +\infty} \frac{1}{n} \log \birkhoffmeasure(\mathcal{K}) 
            &\leqslant \frac{1}{K} \limsup_{m \to +\infty} \frac{1}{m} \log \itebirkhoffmeasure{m}(\itecloseset{\varepsilon / 2})  \\ 
            &\leqslant \frac{1}{K} \sup \biggl\{ \itefreeenergy(\widehat{\mu}) \describe \widehat{\mu} \in \probsphere, \, \int \! \itevecfun \,\mathrm{d}\widehat{\mu} > \itevecavg - K \varepsilon \biggr\}  \\
            &= \sup \biggl\{ \freeenergy(\mu) \describe \mu \in \probsphere, \, \int \! \vecfun \,\mathrm{d}\mu > \vecavg - \varepsilon \biggr\}.
        \end{split}
    \]

    \smallskip
    \emph{Case 2 (Periodic points):} $\xi_{n} = \Omega_{n}$ for each $n \in \n$ (recall \eqref{eq:def:Periodic points distribution}). 
    \smallskip

    \def\pttileperiodic{\widehat{p}(X^n)} 
    \def\iteptperiodic{\widehat{p}(X^n(p))} 
    \def\itedeltameasure#1{\widehat{V}_{m}(#1)}
    \def\mconst{\lceil (n + N) / K \rceil + \lceil N / K \rceil}

    Let $N \define n_{f} \in \n$ be the constant from Definition~\ref{def:primitivity of subsystem}, which depends only on $f$ and $\mathcal{C}$.
    For each $n \in \n$ and each $X^{n} \in \Tile{n}$, it follows from Lemma~\ref{lem:strongly primitive:tile in interior tile for high enough level} that there exists $X^{\ell K} \in \Tile{\ell K}$ such that $X^{\ell K} \subseteq \inte{X^{n}}$, where $\ell \define \lceil (n + N) / K \rceil$.
    Since $\widehat{f}(\mathcal{C}) \subseteq \mathcal{C}$, it follows from Propositions~\ref{prop:properties cell decompositions}~\ref{item:prop:properties cell decompositions:iterate of cell decomposition} and \ref{prop:cell decomposition: invariant Jordan curve} that $X^{\ell K} \subseteq X^{0}_{\colour}$ for some $\colour \in \colours$.
    Define $m \define \ell + \lceil N / K \rceil$.
    Then by Lemma~\ref{lem:good location n-tile has high level periodic point and preimage point}~\ref{item:lem:good location n-tile has high level periodic point and preimage point:periodic point}, there exists a fixed point of $f^{mK}$ in $\inte{X^{\ell K}} \subseteq \inte{X^{n}}$.
    
    For each $n \in \n$ and each $X^{n} \in \Tile{n}$, we fix a fixed point of $f^{mK}$ in $\inte{X^{n}}$ and denote it by $\pttileperiodic$, where $m = \mconst$.
    Then for each $n \in \n$, the map $X^n \mapsto \pttileperiodic$ from $\Tile{n}$ to $\periodorbit[m][\widehat{f}]$ is injective.

    For each $n \in \n$ and each $p \in \periodorbit[n]$, let $X^n(p) \in \Tile{n}$ be an $n$-tile that contains $p$.
    By \cite[Lemma~6.3]{li2015weak}, there exists $N_0 \in \n$ such that for each integer $n \geqslant N_{0}$ and each $n$-tile $X^n \in \Tile{n}$, the number of fixed points of $f^{n}$ contained in $X^{n}$ is at most $1$.
    This implies that for each integer $n \geqslant N_{0}$, the map $p \mapsto X^n(p)$ from $\periodorbit[n]$ to $\Tile{n}$ is injective.        
    Therefore, for each integer $n \geqslant N_{0}$, the map $p \mapping \iteptperiodic$ from $\periodorbit[n]$ to $\periodorbit[m][\widehat{f}]$ is injective, where $m = \mconst$ and $\iteptperiodic \in \inte{X^{n}(p)}$.

    We claim that there exists $n_{0} \in \n$ such that for each integer $n \geqslant n_{0}$, each $p \in \periodorbit[n][f]$ with $\deltameasure[n]{p} \in \mathcal{K}$, and each $X^n \in \Tile{n}$ with $p \in X^{n}$, it follows that $\itedeltameasure{\pttileperiodic} \in \itecloseset{\varepsilon / 2}$, where $m = \mconst$ and $\widehat{V}_{\ell}(x) \define \frac{1}{\ell} \sum_{i = 0}^{\ell - 1} \delta_{\widehat{f}^{i}(x)}$ for each $\ell \in \n$ and each $x \in S^2$.
    Indeed, by Lemma~\ref{lem:distortion lemma for continuous function}, there exists a sufficiently large $n_{0} \in \n$ such that for each integer $n \geqslant n_{0}$,
    \[
        D_{n}(\vecfun) + 2(K + N)(\norm{\vecavg} + \norm{\vecfun}) \leqslant n \varepsilon / 2.
    \]
    Since $X^{n}$ contains $p$ and $\pttileperiodic$, we have $S_{n}^{f} \vecfun(\pttileperiodic) \geqslant S_{n}^{f} \vecfun(p) - D_{n}(\vecfun)$. 
    Set $m \define \mconst$.
    Note that $0 \leqslant mK - n \leqslant 2(N + K)$ and $\deltameasure[n]{p} \in \mathcal{K}$ means that $S_{n}^{f} \vecfun(p) \geqslant n \vecavg$.
    Therefore, 
    \begin{align*}
        S_{m}^{\widehat{f}} \itevecfun(\pttileperiodic) 
        &= S_{mK}^{f} \vecfun(\pttileperiodic)
        \geqslant S_{n}^{f} \vecfun(\pttileperiodic) - 2(K + N) \norm{\vecfun}  \\
        &\geqslant S_{n}^{f} \vecfun(\widehat{p}) - D_{n}(\vecfun) - 2(K + N) \norm{\vecfun} \\
        &\geqslant n \vecavg - D_{n}(\vecfun) - 2(K + N) \norm{\vecfun}  \\
        &\geqslant mK \vecavg - D_{n}(\vecfun) - 2(K + N)( \norm{\vecavg} + \norm{\vecfun} )  \\
        &\geqslant mK \vecavg - n \varepsilon / 2  \\
        &\geqslant mK \vecavg - mK \varepsilon / 2.
    \end{align*}
    This implies $\deltameasure[m]{\pttileperiodic} \in \itecloseset{\varepsilon / 2}$.
 
    \def\tileflower{\mathbf{X}^{n}(f, \mathcal{C}, p')}
    Let $n \in \n$ be arbitrary. 
    Consider $p' \in \periodorbit[n] \cap \vertex{n}$, where $\vertex{n} = \Vertex{n}$ is the set of $n$-vertices.
    We set $\tileflower \define \{X \in \Tile{n} \describe p' \in X \}$.
    By Remark~\ref{rem:flower structure}, we have $\cflower{n}{p'} = \bigcup \tileflower$ and $\card{\tileflower} = 2 \deg_{f^{n}}(p')$, where $\flower{n}{p'}$ is defined in \eqref{eq:n-flower} and $\cflower{n}{p'}$ is the closure of $\flower{n}{p'}$.
    Note that if $n \geqslant N_0$, then $X^n(p) \notin \tileflower$ for every $p \in \periodorbit[n] \setminus \Vertex{n}$.

    We are now ready to establish the desired upper bound.
    Let $\{ w_{j} \}_{j \in \n}$ be an arbitrary sequence of real-valued functions on $S^2$ with $w_{j}(x) \in \bigl[ 1, \deg_{f^{j}}(x) \bigr]$ for each $j \in \n$ and each $x \in S^{2}$.

    For each integer $n \in \n$, we set $m \define \mconst$.
    Note that $0 \leqslant mK - n \leqslant 2(N + K)$.
    By the arguments above, for each integer $n \geqslant \max\{N_0, n_{0}\}$, we have
    \begin{align*}
        &\sum_{ \substack{ p \in \periodorbit[n] \\ \deltameasure[n]{p} \in \mathcal{K} } }  w_{n}(p) \myexp[\big]{ S_{n}^{f}\potential(p) } \\
        &\qquad \leqslant \sum_{ \substack{ p' \in \periodorbit[n] \cap \vertex{n} \\ \deltameasure[n]{p'} \in \mathcal{K} } }  \deg_{f^{n}}(p') \myexp[\big]{ S_{n}^{f}\potential(p') }
            + \sum_{ \substack{ p \in \periodorbit[n] \setminus \vertex{n} \\ \deltameasure[n]{p} \in \mathcal{K} } }  \myexp[\big]{ S_{n}^{f}\potential(p) }  \\
        &\qquad \leqslant \sum_{ \substack{ p' \in \periodorbit[n] \cap \vertex{n} \\ \deltameasure[n]{p'} \in \mathcal{K} } }  \sum_{ X^{n} \in \tileflower } e^{D_{n}(\potential)} \myexp[\big]{ S_{n}^{f}\potential(\pttileperiodic) }  \\
        &\qquad \qquad + \sum_{ \substack{ p \in \periodorbit[n] \setminus \vertex{n} \\ \deltameasure[n]{p} \in \mathcal{K} } }  e^{D_{n}(\potential)} \myexp[\big]{ S_{n}^{f}\potential(\iteptperiodic) }  \\
        &\qquad \leqslant e^{D_{n}(\potential)} \sum_{ \substack{ \widehat{p} \in \periodorbit[m][\widehat{f}] \\ \itedeltameasure{\widehat{p}} \in \itecloseset{\varepsilon / 2} } } \myexp[\big]{ S_{n}^{f} \potential(\widehat{p}) }  \\
        &\qquad \leqslant e^{ D_{n}(\potential) } e^{2(N + K)\uniformnorm{\potential}} \sum_{ \substack{ \widehat{p} \in \periodorbit[m][\widehat{f}] \\ \itedeltameasure{\widehat{p}} \in \itecloseset{\varepsilon / 2} } } \myexp[\big]{ S_{m}^{\widehat{f}} \itepotential(\widehat{p}) }.
    \end{align*}
    Then by Lemma~\ref{lem:distortion_lemma}, 
    \[
        \sum_{ \substack{ p \in \periodorbit[n] \\ \deltameasure[n]{p} \in \mathcal{K} } }  w_{n}(p) \myexp[\big]{ S_{n}^{f}\potential(p) } 
        \leqslant e^{C} \sum_{ \substack{ \widehat{p} \in \periodorbit[m][\widehat{f}] \\ \itedeltameasure{\widehat{p}} \in \itecloseset{\varepsilon / 2} } } \myexp[\big]{ S_{m}^{\widehat{f}} \itepotential(\widehat{p}) },
    \]
    where $C \define 2(N + K)\uniformnorm{\potential} + \Cdistortion$ and $C_{1} \geqslant 0$ is the constant defined in \eqref{eq:const:C_1} in Lemma~\ref{lem:distortion_lemma} that depends only on $f$, $\mathcal{C}$, $d$, $\phi$, and $\holderexp$.
    Thus by \eqref{eq:def:Periodic points distribution}, we have
    \[
        \begin{split}
            \frac{1}{n} \log \Omega_{n}(\mathcal{K}) 
            &= \frac{1}{n} \log \sum_{ \substack{ p \in \periodorbit[n] \\ \deltameasure[n]{p} \in \mathcal{K} } } w_{n}(p) \myexp[\big]{ S_{n}^{f} \potential(p) }
                - \frac{1}{n} \log \sum_{ p' \in \periodorbit } w_{n}(p') \myexp[\big]{ S_{n}^{f} \potential(p') } \\
            &\leqslant \frac{1}{n} \log \sum_{ \substack{ \widehat{p} \in \periodorbit[m][\widehat{f}] \\ \itedeltameasure{\widehat{p}} \in \itecloseset{\varepsilon / 2} } } \myexp[\big]{ S_{m}^{\widehat{f}} \itepotential(\widehat{p}) } 
                - \frac{1}{n} \log \sum_{ p' \in \periodorbit } w_{n}(p') \myexp[\big]{ S_{n}^{f} \potential(p') } + \frac{C}{n} \\
            &= \frac{1}{n} \log \widehat{\Omega}_{m}(\itecloseset{\varepsilon / 2}) 
                + \frac{1}{n} \log \sum_{ \widehat{p}' \in \periodorbit[m][\widehat{f}] } \myexp[\big]{ S_{m}^{\widehat{f}} \itepotential(\widehat{p}') }  \\
                &\quad - \frac{1}{n} \log \sum_{ p' \in \periodorbit } w_{n}(p') \myexp[\big]{ S_{n}^{f} \potential(p') } + \frac{C}{n},
        \end{split}
    \]
    where $\sequen[\big]{\widehat{\Omega}_{j}}[j]$ is defined by setting $w_{j}(x) = 1$ for each $j \in \n$ and each $x \in S^2$ and replacing $f$ with $\widehat{f}$ and $\potential$ with $\itepotential$ in the definition of $\sequen{\Omega_{j}}[j]$ (recall \eqref{eq:def:Periodic points distribution}).
    Since $\widehat{f}$ has an $\widehat{f}$-invariant Jordan curve $\mathcal{C} \subseteq S^{2}$ with $\post{\widehat{f}} \subseteq \mathcal{C}$, Proposition~\ref{prop:upper bound for fundamental closed sets} holds for $\widehat{f}$.
    Therefore, it follows from Proposition~\ref{prop:characterization of pressure weighted periodic points} and \eqref{eq:temp:prop:upper bound for fundamental closed sets:sup freeenergy equal} that 
    \[
        \begin{split}
            \limsup_{n \to +\infty} \frac{1}{n} \log \Omega_{n}(\mathcal{K}) 
            &\leqslant \frac{1}{K} \limsup_{m \to +\infty} \frac{1}{m} \log \widehat{\Omega}_{m}(\itecloseset{\varepsilon / 2}) + \frac{1}{K} P(\widehat{f}, \itepotential) - P(f, \potential)  \\
            &\leqslant \frac{1}{K} \sup \biggl\{ \itefreeenergy(\widehat{\mu}) \describe \widehat{\mu} \in \probsphere, \, \int \! \itevecfun \,\mathrm{d}\widehat{\mu} > \itevecavg - K \varepsilon \biggr\}  \\
            &= \sup \biggl\{ \freeenergy(\mu) \describe \mu \in \probsphere, \, \int \! \vecfun \,\mathrm{d}\mu > \vecavg - \varepsilon \biggr\}.
        \end{split}
    \]

    \smallskip
    \emph{Case 3 (Iterated preimages):} $\xi_{n} = \Omega_{n}(x_{n})$ for each $n \in \n$ (recall \eqref{eq:def:Iterated preimages distribution}), where $\{ x_{n} \}_{n \in \n}$ is an arbitrary sequence of points in $S^{2}$.
    \smallskip

    \def\pttilepreimage{\widehat{y}(X^n)} 
    \def\iteptpreimage{\widehat{y}(X^n(y))} 
    \def\itedeltameasure#1{\widehat{V}_{m}(#1)}
    \def\mconst{\lceil (n + N) / K \rceil}
    
    \def\preimageset{f^{-n}(x_{n})}
    \def\itepreimageset{\widehat{f}^{-m}(x_0)}

    We fix a point $x_0 \in S^{2}$.

    Let $N \define n_{f} \in \n$ be the constant from Definition~\ref{def:primitivity of subsystem}, which depends only on $f$ and $\mathcal{C}$.
    For each $n \in \n$ and each $X^{n} \in \Tile{n}$, it follows from Lemma~\ref{lem:good location n-tile has high level periodic point and preimage point}~\ref{item:lem:good location n-tile has high level periodic point and preimage point:preimage point} that there exists a preimage of $x_{0}$ under $f^{mK}$ in $\inte{X^{n}}$, where $m \define \mconst$.
    We fix a preimage of of $x_{0}$ under $f^{mK}$ in $\inte{X^{n}}$ and denote it by $\pttilepreimage$.
    Then for each $n \in \n$, the map $X^n \mapsto \pttilepreimage$ from $\Tile{n}$ to $\itepreimageset$ is injective.

    For each $n \in \n$ and each $y \in \preimageset$, let $X^n(y) \in \Tile{n}$ be an $n$-tile that contains $y$.    
    By Proposition~\ref{prop:properties cell decompositions}~\ref{item:prop:properties cell decompositions:cellular}, for each $n \in \n$ and each $X^n \in \Tile{n}$, $f^{n}|_{X^{n}}$ is a homeomorphism of $X^{n}$ onto $f^{n}(X^n)$.
    This implies that for each integer $n \in \n$, the map $y \mapsto \iteptpreimage$ from $\preimageset$ to $\itepreimageset$ is injective, where $m = \mconst$ and $\iteptpreimage \in \inte{X^{n}(y)}$.

    By similar arguments as in Case~2, there exists $n_{0} \in \n$ such that for each integer $n \geqslant n_{0}$, each $y \in \preimageset$ with $\deltameasure[n]{y} \in \mathcal{K}$, and each $X^n \in \Tile{n}$ with $y \in X^{n}$, it follows that $\itedeltameasure{\pttilepreimage} \in \itecloseset{\varepsilon / 2}$, where $m = \mconst$.
    
    \def\tileflower{\mathbf{X}^{n}(f, \mathcal{C}, y')}
    Let $n \in \n$ be arbitrary. 
    Consider $y' \in \preimageset \cap \vertex{n}$, where $\vertex{n} = \Vertex{n}$ is the set of $n$-vertices.
    We set $\tileflower \define \{X \in \Tile{n} \describe y' \in X \}$.
    By Remark~\ref{rem:flower structure}, we have $\cflower{n}{y'} = \bigcup \tileflower$ and $\card{\tileflower} = 2 \deg_{f^{n}}(y')$, where $\flower{n}{y'}$ is defined in \eqref{eq:n-flower} and $\cflower{n}{y'}$ is the closure of $\flower{n}{y'}$.
    Note that $X^n(y) \notin \tileflower$ for every $y \in \preimageset \setminus \Vertex{n}$.

    We now prove the upper bound.
    The proof is essentially the same as in Case~2, and we retain this proof for the convenience of the reader.

    Let $\{ w_{j} \}_{j \in \n}$ be an arbitrary sequence of real-valued functions on $S^2$ with $w_{j}(x) \in \bigl[ 1, \deg_{f^{j}}(x) \bigr]$ for each $j \in \n$ and each $x \in S^{2}$.
    For each integer $n \in \n$, we set $m \define \mconst$.
    Note that $0 \leqslant mK - n \leqslant N + K$.
    By the arguments above, for each integer $n \geqslant n_{0}$, we have
    \begin{align*}
        &\sum_{ \substack{ y \in \preimageset \\ \deltameasure[n]{y} \in \mathcal{K} } }  w_{n}(y) \myexp[\big]{ S_{n}^{f}\potential(y) } \\
            &\qquad \leqslant \sum_{ \substack{ y' \in \preimageset \cap \vertex{n} \\ \deltameasure[n]{y'} \in \mathcal{K} } }  w_{n}(y') \myexp[\big]{ S_{n}^{f}\potential(y') }
                + \sum_{ \substack{ y \in \preimageset \setminus \vertex{n} \\ \deltameasure[n]{y} \in \mathcal{K} } }  \myexp[\big]{ S_{n}^{f}\potential(y) }  \\
            &\qquad \leqslant \sum_{ \substack{ y' \in \preimageset \cap \vertex{n} \\ \deltameasure[n]{y'} \in \mathcal{K} } }  \sum_{ X^{n} \in \tileflower } e^{D_{n}(\potential)} \myexp[\big]{ S_{n}^{f}\potential(\pttilepreimage) }  \\
            &\qquad \qquad + \sum_{ \substack{ y \in \preimageset \setminus \vertex{n} \\ \deltameasure[n]{y} \in \mathcal{K} } }  e^{D_{n}(\potential)} \myexp[\big]{ S_{n}^{f}\potential(\iteptpreimage) }  \\
            &\qquad \leqslant e^{D_{n}(\potential)} \sum_{ \substack{ \widehat{y} \in \itepreimageset \\ \itedeltameasure{\widehat{y}} \in \itecloseset{\varepsilon / 2} } } \myexp[\big]{ S_{n}^{f} \potential(\widehat{y}) }  \\
            &\qquad \leqslant e^{ D_{n}(\potential) } e^{ 2(N + K)\uniformnorm{\potential} } \sum_{ \substack{ \widehat{y} \in \itepreimageset \\ \itedeltameasure{\widehat{y}} \in \itecloseset{\varepsilon / 2} } } \myexp[\big]{ S_{m}^{\widehat{f}} \itepotential(\widehat{y}) }.
    \end{align*}
    Then by Lemma~\ref{lem:distortion_lemma}, 
    \[
        \sum_{ \substack{ y \in \preimageset \\ \deltameasure[n]{y} \in \mathcal{K} } }  w_{n}(y) \myexp[\big]{ S_{n}^{f}\potential(y) } 
        \leqslant e^{C} \sum_{ \substack{ \widehat{y} \in \itepreimageset \\ \itedeltameasure{\widehat{y}} \in \itecloseset{\varepsilon / 2} } } \myexp[\big]{ S_{m}^{\widehat{f}} \itepotential(\widehat{y}) },
    \]
    where the constant $C$ is the same as in Case~2.
    Thus by \eqref{eq:def:Iterated preimages distribution}, we have
    \[
        \begin{split}
            \frac{1}{n} \log \Omega_{n}(x_{n})(\mathcal{K}) 
            &= \frac{1}{n} \log \sum_{ \substack{ y \in \preimageset \\ \deltameasure[n]{y} \in \mathcal{K} } } w_{n}(y) \myexp[\big]{ S_{n}^{f} \potential(y) }
                - \frac{1}{n} \log \sum_{ y' \in \preimageset } w_{n}(y') \myexp[\big]{ S_{n}^{f} \potential(y') } \\
            &\leqslant \frac{1}{n} \log \sum_{ \substack{ \widehat{y} \in \itepreimageset \\ \itedeltameasure{\widehat{y}} \in \itecloseset{\varepsilon / 2} } } \myexp[\big]{ S_{m}^{\widehat{f}} \itepotential(\widehat{y}) } 
                - \frac{1}{n} \log \sum_{ y' \in \preimageset } w_{n}(y') \myexp[\big]{ S_{n}^{f} \potential(y') } + \frac{C}{n} \\
            &= \frac{1}{n} \log \widehat{\Omega}_{m}(x_{0})(\itecloseset{\varepsilon / 2}) 
                + \frac{1}{n} \log \sum_{ \widehat{y}' \in \itepreimageset } \myexp[\big]{ S_{m}^{\widehat{f}} \itepotential(\widehat{y}') }  \\
                &\quad - \frac{1}{n} \log \sum_{ y' \in \preimageset } w_{n}(y') \myexp[\big]{ S_{n}^{f} \potential(y') } + \frac{C}{n},
        \end{split}
    \]
    where $\sequen[\big]{\widehat{\Omega}_{j}(x_{0})}[j]$ is defined by setting $w_{j} = \indicator{S^2}$ and $x_{j} = x_{0}$ for each $j \in \n$ and replacing $f$ with $\widehat{f}$ and $\potential$ with $\itepotential$ in the definition of $\sequen{\Omega_{j}(x_{j})}[j]$ (recall \eqref{eq:def:Iterated preimages distribution}).
    Since $\widehat{f}$ has an $\widehat{f}$-invariant Jordan curve $\mathcal{C} \subseteq S^{2}$ with $\post{\widehat{f}} \subseteq \mathcal{C}$, Proposition~\ref{prop:upper bound for fundamental closed sets} holds for $\widehat{f}$.
    Therefore, it follows from Proposition~\ref{prop:characterization of pressure iterated preimages arbitrary weight} and \eqref{eq:temp:prop:upper bound for fundamental closed sets:sup freeenergy equal} that 
    \[
        \begin{split}
            \limsup_{n \to +\infty} \frac{1}{n} \log \Omega_{n}(x_{n})(\mathcal{K}) 
            &\leqslant \frac{1}{K} \limsup_{m \to +\infty} \frac{1}{m} \log \widehat{\Omega}_{m}(x_{0})(\itecloseset{\varepsilon / 2}) + \frac{1}{K} P(\widehat{f}, \itepotential) - P(f, \potential)  \\
            &\leqslant \frac{1}{K} \sup \biggl\{ \itefreeenergy(\widehat{\mu}) \describe \widehat{\mu} \in \probsphere, \, \int \! \itevecfun \,\mathrm{d}\widehat{\mu} > \itevecavg - K \varepsilon \biggr\}  \\
            &= \sup \biggl\{ \freeenergy(\mu) \describe \mu \in \probsphere, \, \int \! \vecfun \,\mathrm{d}\mu > \vecavg - \varepsilon \biggr\}.
        \end{split}
    \]

    \smallskip

    The proof is complete.
\end{proof}

\subsubsection{End of proof of the upper bound}%
\label{ssub:End of proof of the upper bound}

\begin{proof}[Proof of Proposition~\ref{prop:upper bound for closed sets}]
    Let $\mathcal{K}$ be a closed subset of $\probsphere$.
    Let $\mathcal{G} \subseteq \probsphere$ be an open set containing $\mathcal{K}$.
    Since $\probsphere$ is metrizable and compact in the weak$^{*}$-topology (see for example, \cite[Theorems~6.4 and~6.5]{walters1982introduction}) and $\mathcal{K}$ is compact, we can choose $\varepsilon > 0$ and finitely many closed sets $\listings{\mathcal{K}}[s]$ of the form $\mathcal{K}_{j} = \bigl\{ \mu \in \probsphere \describe \int \! \vecfun_{j} \,\mathrm{d}\mu \geqslant \vecavg_{j} \bigr\}$ with $\ell_{j} \in \n$, $\vecfun_{j} \in \multispace[\ell_{j}]$, and $\vecavg_{j} \in \real^{\ell_{j}}$, so that $\mathcal{K} \subseteq \bigcup_{j = 1}^{s} \mathcal{K}_{j} \subseteq \bigcup_{j = 1}^{s} \mathcal{K}_{j}(\varepsilon) \subseteq \mathcal{G}$, where $\mathcal{K}_{j}(\varepsilon) \define \bigl\{ \mu \in \probsphere \describe \int \! \vecfun_{j} \,\mathrm{d}\mu > \vecavg_{j} - \varepsilon \bigr\}$.
    For each $j \in \oneton[s]$, it follows from Proposition~\ref{prop:upper bound for fundamental closed sets} that\[
        \limsup_{n \to +\infty} \frac{1}{n} \log \xi_{n}(\mathcal{K}_{j}) \leqslant \sup_{\mathcal{K}_{j}(\varepsilon)} \freeenergy.
    \]     
    for each sequence $\sequen{\xi_{n}} \in \bigl\{ \sequen{\birkhoffmeasure}, \sequen{\Omega_{n}}, \sequen{\Omega_{n}(x_{n})} \bigr\}$.
    Hence, \[
        \begin{split}
            \limsup_{n \to +\infty} \frac{1}{n} \log \xi_{n}(\mathcal{K}) 
            &\leqslant \limsup_{n \to +\infty} \frac{1}{n} \log \xi_{n} \biggl( \bigcup_{j = 1}^{s} \mathcal{K}_{j} \biggr)  
            \leqslant \max_{1 \leqslant j \leqslant s} \limsup_{n \to +\infty} \frac{1}{n} \log \xi_{n}(\mathcal{K}_{j})  \\
            &\leqslant \max_{1 \leqslant j \leqslant s} \sup_{\mathcal{K}_{j}(\varepsilon)} \freeenergy 
            \leqslant \sup_{\mathcal{G}} \freeenergy.
        \end{split}
    \]
    Since $\mathcal{G}$ is an arbitrary open set containing $\mathcal{K}$, it follows from Remark~\ref{rem:rate function lower semi-continuous regularization} that\[
        \limsup_{n \to +\infty} \frac{1}{n} \log \xi_{n}(\mathcal{K}) 
        \leqslant \inf_{\mathcal{G} \supseteq \mathcal{K}} \sup_{\mathcal{G}} \freeenergy 
        = \inf_{\mathcal{G} \supseteq \mathcal{K}} \sup_{\mathcal{G}} (- \ratefun)
        = - \inf_{\mathcal{K}} \ratefun,
    \]
    where the last equality is due to the lower semi-continuity of $\ratefun$.
\end{proof}

\subsection{Proof of large deviation principles}%
\label{sub:Proof of large deviation principles}

In this subsection, we finish the proof of Theorem~\ref{thm:level-2 large deviation principle} and its corollaries.

We record the following well-known lemma, sometimes known as the \emph{Portmanteau Theorem}, and refer the reader to \cite[Theorem~2.1]{billingsley2013convergence} for a proof.

\begin{lemma} \label{lem:portmanteau theorem}
    Let $(X, d)$ be a compact metric space, and $\mu$ and $\mu_{i}$, for $i \in \n$, be Borel probability measures on $X$.
    Then the following statements are equivalent:
    \begin{enumerate}
        \smallskip

        \item     \label{item:lem:portmanteau theorem:weak converge}
            $\mu_{i} \weakconverge \mu$ as $i \to +\infty$;

        \smallskip

        \item     \label{item:lem:portmanteau theorem:closed set}
            $\limsup\limits_{i \to +\infty} \mu_{i}(E) \leqslant \mu(E)$ for each closed set $E \subseteq X$;
        
        \smallskip

        \item     \label{item:lem:portmanteau theorem:open set}
            $\liminf\limits_{i \to +\infty} \mu_{i}(G) \geqslant \mu(G)$ for each open set $G \subseteq X$;
        
        \smallskip

        \item     \label{item:lem:portmanteau theorem:boundary}
            $\lim\limits_{i \to +\infty} \mu_{i}(B) \leqslant \mu(B)$ for each Borel set $B \subseteq X$ with $\mu(\partial B) = 0$.
    \end{enumerate}
\end{lemma}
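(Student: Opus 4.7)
The plan is to prove the cycle of implications $(1) \Rightarrow (2) \Leftrightarrow (3) \Rightarrow (4) \Rightarrow (1)$, using Urysohn-style continuous approximations in one direction and careful choice of $\mu$-continuity sets in the other.

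For $(1) \Rightarrow (2)$, given a closed set $E \subseteq X$, I would construct the continuous functions $f_k(x) \define \max\{0, \, 1 - k \cdot d(x, E)\}$ for each $k \in \n$. These satisfy $\indicator{E} \leqslant f_k \leqslant 1$ and $f_k \downarrow \indicator{E}$ pointwise on $X$. Thus $\mu_i(E) \leqslant \int \! f_k \,\mathrm{d}\mu_i$, and taking $\limsup_{i \to +\infty}$ together with hypothesis~(1) yields $\limsup_{i \to +\infty} \mu_i(E) \leqslant \int \! f_k \,\mathrm{d}\mu$ for each $k \in \n$; letting $k \to +\infty$ and invoking the dominated convergence theorem gives $\limsup_{i \to +\infty} \mu_i(E) \leqslant \mu(E)$. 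The equivalence $(2) \Leftrightarrow (3)$ follows immediately by complementation, since $\mu_j(X \setminus A) = 1 - \mu_j(A)$ for every Borel set $A \subseteq X$.

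For the passage from $(2)$ and $(3)$ to $(4)$, given a Borel set $B$ with $\mu(\partial B) = 0$, I would sandwich $\interior{B} \subseteq B \subseteq \overline{B}$. Applying $(3)$ to the open set $\interior{B}$ gives $\liminf_{i \to +\infty} \mu_i(B) \geqslant \mu(\interior{B})$, while applying $(2)$ to the closed set $\overline{B}$ gives $\limsup_{i \to +\infty} \mu_i(B) \leqslant \mu(\overline{B})$. Since $\mu(\overline{B}) - \mu(\interior{B}) = \mu(\partial B) = 0$, both bounds collapse to $\mu(B)$, so the limit exists and equals $\mu(B)$.

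The main work lies in $(4) \Rightarrow (1)$. Given $f \in C(X)$ and $\varepsilon > 0$, the compactness of $X$ ensures $|f| \leqslant M$ for some $M > 0$. The key observation is that the set $\{ t \in \real \describe \mu(f^{-1}(\{t\})) > 0 \}$ is at most countable by $\sigma$-additivity of $\mu$, so I can choose a partition $-M = t_0 < t_1 < \cdots < t_n = M$ of mesh less than $\varepsilon$ such that $\mu(f^{-1}(\{t_k\})) = 0$ for each $k \in \{0, \, 1, \, \dots, \, n\}$. Each set $B_k \define f^{-1}([t_{k-1}, t_k))$ then satisfies $\partial B_k \subseteq f^{-1}(\{t_{k-1}, t_k\})$ by continuity of $f$, so $\mu(\partial B_k) = 0$ and~(4) applies to yield $\mu_i(B_k) \to \mu(B_k)$ as $i \to +\infty$. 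A standard three-$\varepsilon$ comparison of $f$ to the simple function $g \define \sum_{k = 1}^{n} t_{k - 1} \indicator{B_k}$ (which satisfies $\uniformnorm{f - g} < \varepsilon$) then yields $\int \! f \,\mathrm{d}\mu_i \to \int \! f \,\mathrm{d}\mu$. I expect this last step to be the main obstacle: one must arrange the partition points to lie outside the (at most countable) exceptional set where level sets of $f$ carry positive $\mu$-mass, and this is precisely what makes the continuity-set hypothesis~(4) strong enough to recover weak$^{*}$ convergence from testing only against indicators of continuity sets.
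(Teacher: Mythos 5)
The paper does not actually prove this lemma: it records it as the well-known Portmanteau theorem and refers the reader to Billingsley's book for a proof. Your argument is exactly the standard proof found there --- the Urysohn-type approximants $f_k(x)=\max\{0,1-k\,d(x,E)\}$ for $(1)\Rightarrow(2)$, complementation for $(2)\Leftrightarrow(3)$, the sandwich $\operatorname{int}B\subseteq B\subseteq\overline{B}$ for $(4)$, and simple-function approximation over $\mu$-continuity level sets for $(4)\Rightarrow(1)$ --- and it is correct in substance. One small repair is needed in the last step: with $t_0=-M$ and $t_n=M$ you cannot always arrange $\mu\bigl(f^{-1}(\{t_0\})\bigr)=\mu\bigl(f^{-1}(\{t_n\})\bigr)=0$, since $f$ may attain $\pm M$ on a set of positive $\mu$-measure; and even when you can, the simple function $g=\sum_{k}t_{k-1}\mathbf{1}_{B_k}$ with $B_k=f^{-1}([t_{k-1},t_k))$ vanishes on $f^{-1}(\{M\})$, so the claimed uniform bound $\sup_X|f-g|<\varepsilon$ fails at points where $f=M$. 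Taking $t_0<-M$ and $t_n>M$ (so that these two endpoints have empty preimage and the intervals $[t_{k-1},t_k)$ cover the whole range of $f$) removes both issues, and the three-$\varepsilon$ comparison then goes through verbatim.
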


\begin{proof}[Proof of Theorem~\ref{thm:level-2 large deviation principle}]
    \def\distribution{\sequen{\xi_{n}}}
    We fix arbitrary sequence $\sequen{\xi_{n}} \in \bigl\{ \sequen{\birkhoffmeasure}, \sequen{\Omega_{n}}, \sequen{\Omega_{n}(x_{n})} \bigr\}$.

    By Propositions~\ref{prop:lower bound for open sets} and~\ref{prop:upper bound for closed sets}, $\distribution$ satisfies a large deviation principle with the rate function $\ratefun$ as defined in \eqref{eq:def:rate function}.

    By Theorem~\ref{thm:uniqueness of the minimizer}, $\mu_{\potential}$ is the unique minimizer of the rate function $\ratefun$.

    To prove that $\distribution$ converges to $\delta_{\mu_{\potential}}$ in the weak$^*$ topology, by Lemma~\ref{lem:portmanteau theorem}~\ref{item:lem:portmanteau theorem:weak converge} and~\ref{item:lem:portmanteau theorem:closed set}, it suffices to show that $\limsup_{n \to +\infty} \xi_{n}(\mathcal{K}) \leqslant 0$ for each closed set $\mathcal{K} \subseteq \probsphere \setminus \{\mu_{\potential}\}$.
    Let $\mathcal{K}$ be a closed set in $\probsphere$ with $\mu_{\potential} \notin \mathcal{K}$.
    Indeed, since $\ratefun$ is lower semi-continuous, non-negative, and it vanishes precisely on $\{\mu_{\potential}\}$ by Theorem~\ref{thm:uniqueness of the minimizer}, the infimum of $\ratefun$ on $\mathcal{K}$ is attained at some point of $\mathcal{K}$, and thus $\inf_{\mathcal{K}} \ratefun > 0$.
    Therefore, it follows immediately from the large deviation upper bounds that
    \[
        \limsup_{n \to +\infty} \frac{1}{n} \log \xi_{n}(\mathcal{K}) \leqslant - \inf_{\mathcal{K}} \ratefun < 0
    \]
    and $\lim_{n \to +\infty} \xi_{n}(\mathcal{K}) = 0$.

    To prove the last statement of the theorem, let $\mathcal{G} \subseteq \probsphere$ be a convex and open set containing an invariant measure $\mu'$.
    Since the rate function $\ratefun$ is lower semi-continuous, and since it takes finite values precisely on the compact set $\invmea$ by \eqref{eq:def:rate function}, there exists $\mu \in \overline{\mathcal{G}} \cap \invmea$ such that $\ratefun(\mu) = \inf_{\overline{\mathcal{G}}} \ratefun$.
    For each $t \in (0, 1)$, put $\mu_{t} \define (1 - t)\mu + t\mu'$, and note that $\mu_{t} \in \invmea$ and $\mu_{t} \in \mathcal{G}$ (see for example, \cite[1.1, p.~38]{schaefer1971locally}).
    Since $\ratefun$ is convex (recall Remark~\ref{rem:rate function lower semi-continuous regularization}), we have
    \[
        \inf_{\mathcal{G}} \ratefun \leqslant \liminf_{t \to 0} \ratefun(\mu_{t}) \leqslant \ratefun(\mu) = \inf_{\overline{\mathcal{G}}} \ratefun.
    \]
    This shows that $\inf_{\mathcal{G}} \ratefun = \inf_{\overline{\mathcal{G}}} \ratefun$.
    Hence, $\mathcal{G}$ is a $\ratefun$-continuity set and the last assertion of the theorem follows immediately from \eqref{eq:property of I-continuity set} in Subsection~\ref{sub:Level-2 large deviation principles}.

    The proof is complete.
\end{proof}

We show that Corollary~\ref{coro:level-1 large deviation principle} follows from Theorem~\ref{thm:level-2 large deviation principle} and the general theory of large deviations.

\begin{proof}[Proof of Corollary~\ref{coro:level-1 large deviation principle}]
    The first assertion follows immediately from Theorem~\ref{thm:level-2 large deviation principle} and the contraction principle (see Subsection~\ref{sub:Level-2 large deviation principles}).

    We now consider an arbitrary interval $J \subseteq \real$ that intersects $(c_{\psi}, d_{\psi})$.
    We denote the rate function defined in \eqref{eq:coro:level-1 large deviation principle:rate function} by $K_{\psi} \colon \real \mapping \real$.
    This function is bounded on $[c_{\psi}, d_{\psi}]$ and constant equal to $+\infty$ on $\real \setminus [c_{\psi}, d_{\psi}]$.
    Furthermore, $K_{\psi}$ is convex on $\real$, and therefore continuous on $(c_{\psi}, d_{\psi})$.
    This implies $\inf_{\interior{J}} K_{\psi} = \inf_{\overline{J}} K_{\psi}$ since $J \cap (c_{\psi}, d_{\psi}) \ne \emptyset$.
    Then \eqref{eq:coro:level-1 large deviation principle:on closed interval} follows from \eqref{eq:level-2 LDP:lower bound} and \eqref{eq:level-2 LDP:upper bound}.
\end{proof}

We now prove the other corollary of Theorem~\ref{thm:level-2 large deviation principle}, as stated in Corollary~\ref{coro:measure-theoretic pressure infimum on local basis}, which gives a characterization of the rate function.

\begin{proof}[Proof of Corollary~\ref{coro:measure-theoretic pressure infimum on local basis}]
    \def\localbasis{G_{\mu}}
    Fix $\mu \in \invmea$ and a convex local basis $\localbasis$ at $\mu$.
    We show that \eqref{eq:coro:measure-theoretic pressure infimum on local basis} in Corollary~\ref{coro:measure-theoretic pressure infimum on local basis} holds.
    Since the rate function $\ratefun$ is lower semi-continuous (recall Remark~\ref{rem:rate function lower semi-continuous regularization}), we get 
    \[
        - \ratefun(\mu) = \inf_{\mathcal{G} \in \localbasis} \sup_{\mathcal{G}} (- \ratefun) = \inf_{\mathcal{G} \in \localbasis} \parentheses[\big]{- \inf_{\mathcal{G}} \ratefun}.
    \]
    Then it follows from \eqref{eq:equalities for rate function} in Theorem~\ref{thm:level-2 large deviation principle} that \[
        \begin{split}
            - \ratefun(\mu)
            &= \inf_{\mathcal{G} \in \localbasis} \set[\bigg]{ \lim_{n \to +\infty} \frac{1}{n} \log \mu_{\potential}(\set{x \in S^{2} \describe \deltameasure[n]{x} \in \mathcal{G}}) } \\
            &= \inf_{\mathcal{G} \in \localbasis} \set[\bigg]{ \lim_{n \to +\infty} \frac{1}{n} \log \sum_{ p \in \periodorbit, \deltameasure[n]{p} \in \mathcal{G} } \frac{w_{n}(p) \myexp{S_{n}\potential(p)}}{Z_{n}(\potential)} } \\
            &= \inf_{\mathcal{G} \in \localbasis} \set[\bigg]{ \lim_{n \to +\infty} \frac{1}{n} \log \sum_{ y \in f^{-n}(x_{n}), \deltameasure[n]{y} \in \mathcal{G} } \frac{w_{n}(y) \myexp{S_{n}\potential(y)}}{Z'_{n}(\potential)} },
        \end{split}
    \]
    where we write $Z_{n}(\potential) \define \sum_{ x \in \periodorbit } w_{n}(x) \myexp{ S_{n}\potential(x) }$ and $Z'_{n}(\potential) \define \sum_{ y \in f^{-n}(x_{n}) } w_{n}(y) \myexp{ S_{n}\potential(y) }$.
    Note that by Propositions~\ref{prop:characterization of pressure weighted periodic points} and \ref{prop:characterization of pressure iterated preimages arbitrary weight} we have $P(f, \potential) = \lim_{n \to +\infty} \frac{1}{n} \log Z_{n}(\potential) = \lim_{n \to +\infty} \frac{1}{n} \log Z'_{n}(\potential)$.
    Thus \eqref{eq:coro:measure-theoretic pressure infimum on local basis} holds.
\end{proof}

\subsection{Equidistribution with respect to the equilibrium state}%
\label{sub:Equidistribution with respect to the equilibrium state}

We finish this section with an equidistribution result, as a consequence of level-$2$ large deviation principles.

\begin{proof}[Proof of Theorem~\ref{thm:equidistribution results}]
    \def\openset{\mathcal{G}} \def\zplus{Z^{+}_{n}(\openset)}   \def\zminus{Z^{-}_{n}(\openset)}    \def\localbasis{G_{\mu_{\potential}}}    
    \def\neighborhood{\openset_{\mu}}  \def\neighborhoodi{\openset_{\mu_{i}}}   \def\zplusneighborhood{Z^{+}_{n}(\neighborhood)}
    Recall that $\deltameasure[n]{x} = \frac{1}{n} \sum_{i = 0}^{n - 1} \delta_{f^{i}(x)}$ for $x \in S^{2}$ and $n \in \n$ as defined in \eqref{eq:def:delta measure for orbit}.
    For each $n \in \n$ and each open set $\openset \subseteq \probmea{S^2}$, we write
    \[
        \begin{split}
            \zplus &\define \sum_{ y \in f^{-n}(x_{n}), \deltameasure[n]{y} \in \mathcal{G} } w_{n}(y) \myexp{S_{n}\potential(y)} \\
            \zminus &\define \sum_{ y \in f^{-n}(x_{n}), \deltameasure[n]{y} \notin \mathcal{G} } w_{n}(y) \myexp{S_{n}\potential(y)}.
        \end{split}
    \]

    Let $\localbasis$ be a convex local basis of $\probmea{S^2}$ at $\mu_{\potential}$.
    We fix an arbitrary convex open set $\openset \in \localbasis$.
 
    Recall that $\mu_{\potential}$ is the unique minimizer of the rate function $\ratefun$ by Theorem~\ref{thm:uniqueness of the minimizer}.
    Then it follows from Corollary~\ref{coro:measure-theoretic pressure infimum on local basis} that for each $\mu \in \probmea{S^2} \setminus \{\mu_{\potential}\}$, there exist numbers $a_{\mu} < P(f, \potential)$ and $N_{\mu} \in \n$ and an open neighborhood $\neighborhood \subseteq \probmea{S^{2}} \setminus \{\mu_{\potential}\}$ containing $\mu$ such that for each $n \geqslant N_{\mu}$,
    \begin{equation}    \label{eq:temp:thm:equidistribution results:upper bound for z plus}
        \zplusneighborhood \leqslant \myexp{n a_{\mu}}.
    \end{equation}
    Since $\probmea{S^{2}}$ is compact in the weak$^{*}$ topology by Alaoglu's theorem, so is $\probmea{S^{2}} \setminus \openset$.
    Thus there exists a finite set $\{ \mu_{i} \describe i \in I \} \subseteq \probmea{S^{2}} \setminus \openset$ (where $I$ is a finite index set) such that
    \begin{equation}    \label{eq:temp:thm:equidistribution results:cover complement of open set by neighborhood}
        \probmea{S^{2}} \setminus \openset \subseteq \bigcup_{i \in I} \neighborhoodi.
    \end{equation}
    Set $a \define \max\{ a_{\mu_{i}} \describe i \in I \}$.
    Note that $a < P(f, \potential)$.
    Applying Corollary~\ref{coro:measure-theoretic pressure infimum on local basis} with $\mu = \mu_{\potential}$ and noting that $\ratefun(\mu_{\potential}) = 0$ (recall \eqref{eq:def:rate function} and \eqref{eq:def:free energy} in Theorem~\ref{thm:level-2 large deviation principle}), we get
    \begin{equation}    \label{eq:temp:thm:equidistribution results:lower bound of limit of z plus on open set by pressure}
        P(f, \potential) \leqslant \lim_{n \to +\infty} \frac{1}{n} \log \zplus.
    \end{equation}
    Combining \eqref{eq:temp:thm:equidistribution results:lower bound of limit of z plus on open set by pressure} with Proposition~\ref{prop:characterization of pressure iterated preimages arbitrary weight}, we get that the equality holds in \eqref{eq:temp:thm:equidistribution results:lower bound of limit of z plus on open set by pressure}.
    So there exist numbers $b \in (a, P(f, \potential))$ and $N \geqslant \max\{N_{\mu_{i}} \describe i \in I\}$ such that for each integer $n \geqslant N$,
    \begin{equation}    \label{eq:temp:thm:equidistribution results:lower bound of z plus on open set by number less than pressure}
        \zplus \geqslant \myexp{n b}.
    \end{equation}

    We claim that every subsequential limit of $\{\nu_{n}\}_{n \in \n}$ in the weak$^{*}$ topology lies in the closure $\overline{\openset}$ of $\openset$.
    Assuming that the claim holds, then since $\openset \in \localbasis$ is arbitrary, we get that any subsequential limit of $\sequen{\nu_{n}}$ in the weak$^{*}$ topology is $\mu_{\potential}$, i.e., $\nu_{n} \weakconverge \mu_{\potential}$ as $n \to +\infty$.

    We now prove the claim.
    We first observe that for each $n \in \n$,\[
        \begin{split}
            \nu_{n} 
            &= \sum_{y \in f^{-n}(x_{n})} \frac{w_{n}(y) \myexp{S_{n}\potential(y)}}{\zplus + \zminus} \deltameasure{y}  \\
            &= \frac{\zplus}{\zplus + \zminus} \nu'_{n} + \sum_{ y \in f^{-n}(x_{n}), \deltameasure[n]{y} \notin \mathcal{G} } \frac{w_{n}(y) \myexp{S_{n}\potential(y)}}{\zplus + \zminus}  \deltameasure{y},
        \end{split}
    \]
    where \[
        \nu'_{n} \define \sum_{ y \in f^{-n}(x_{n}), \deltameasure[n]{y} \in \mathcal{G} } \frac{w_{n}(y) \myexp{S_{n}\potential(y)}}{\zplus} \deltameasure{y}.
    \]
    Note that since $a < b$, it follows from \eqref{eq:temp:thm:equidistribution results:cover complement of open set by neighborhood}, \eqref{eq:temp:thm:equidistribution results:upper bound for z plus}, and \eqref{eq:temp:thm:equidistribution results:lower bound of z plus on open set by number less than pressure} that\[
        0 \leqslant \lim_{n \to +\infty} \frac{\zminus}{\zplus} \leqslant \lim_{n \to +\infty} \frac{ \sum_{i \in I} \zplusneighborhood }{\zplus}  \leqslant \lim_{n \to +\infty} \frac{ \card{I} \myexp{na} }{\myexp{nb}} = 0.
    \]
    So $\lim_{n \to +\infty} \frac{\zplus}{\zplus + \zminus} = 1$, and that the total variation
    \[
        \begin{split}
            &\norm[\bigg]{ \sum_{ y \in f^{-n}(x_{n}), \deltameasure[n]{y} \notin \mathcal{G} } \frac{w_{n}(y) \myexp{S_{n}\potential(y)}}{\zplus + \zminus}  \deltameasure{y} }  \\
            &\qquad \leqslant \frac{ \sum\limits_{ y \in f^{-n}(x_{n}), \deltameasure[n]{y} \notin \mathcal{G} } w_{n}(y) \myexp{S_{n}\potential(y)} \norm{\deltameasure{y}} }{\zplus + \zminus} \\
            &\qquad \leqslant \frac{\zminus}{\zplus + \zminus} \converge 0
        \end{split}
    \]
    as $n \to +\infty$.
    Thus a measure is a subsequential limit of $\sequen{\nu_{n}}$ if and only if it is a subsequential limit of $\sequen{\nu'_{n}}$.
    Note that for each $n \in \n$, $\nu'_{n}$ is a convex combination of measures in the convex set $\openset$, so $\nu'_{n} \in \openset$.
    Hence each subsequential limit of $\sequen{\nu_{n}}$ lies in the closure $\overline{\openset}$ of $\openset$.
    The proof of the claim is complete now.

    By similar arguments as in the proof of the convergence of $\sequen{\nu_{n}}$ above, with Proposition~\ref{prop:characterization of pressure iterated preimages arbitrary weight} replaced by Proposition~\ref{prop:characterization of pressure weighted periodic points}, we get that $\mu_{n} \weakconverge \mu_{\potential}$ as $n \to +\infty$.
\end{proof}

\printbibliography


\end{document}